\newtheorem{lemma}{Lemma}[subsection]
\newtheorem{remark}[lemma]{Remark}
\newtheorem{theorem}[lemma]{Theorem}
\newtheorem{corollary}[lemma]{Corollary}
\newtheorem{definition}[lemma]{Definition}
\newtheorem{proposition}[lemma]{Proposition}
\newtheorem{assumption}[lemma]{Assumption}
\newtheorem{example}[lemma]{Example}
\newtheorem{theoremintro}{Theorem}
\newcommand{\CC}{\mathbb{C}}
\newcommand{\gotM}{\mathfrak{m}}
\newcommand{\gotN}{\mathfrak{n}}
\DeclareMathOperator{\irr}{Irr}
\newcommand{\supp}{\operatorname{supp}}
\newcommand{\m}{\mathfrak{m}}
\renewcommand{\subset}{\subseteq}
\newcommand{\id}{\operatorname{Id}}
\newcommand{\sgn}{\mathrm{sgn}}
\newcommand{\Hom}{\mathrm{Hom}}
\newcommand{\Rep}{\mathrm{Rep}}
\newcommand{\Irr}{\mathrm{Irr}}
\newcommand{\triv}{\mathrm{triv}}
\newcommand{\Ad}{\mathrm{Ad}}
\newcommand{\Lie}{\mathrm{Lie}}
\newcommand{\gkd}{\mathrm{GKdim}}
\newcommand{\spci}{\mathrm{spc}}
\newcommand{\spc}{\mathrm{Spc}}
\newcommand{\ind}{\mathrm{ind}}
\newcommand{\res}{\mathrm{res}}
\newcommand{\WF}{\mathrm{WF}}
\newcommand{\frb}{\mathrm{Fr}}
\newcommand{\ZZ}{Z_{\mathbf{G^\vee}}}
\begin{document}

	\title[Weak Arthur packets are weakly spherical]{Ramification of weak Arthur packets for $p$-adic groups}
	
	
	\begin{abstract}

 Weak Arthur packets have long been instrumental in the study of the unitary dual and automorphic spectrum of reductive Lie groups, and were recently introduced in the $p$-adic setting by Ciubotaru--Mason-Brown--Okada.
 
   
For split odd orthogonal and symplectic $p$-adic groups, we explicitly determine the decomposition of weak Arthur packets into Arthur packets that arise from endoscopic transfer. We establish a characterization of the Arthur packets that partake in such decompositions by means of ramification properties of their constituents.
  
A notion of weak sphericity for an irreducible representation is introduced: The property of containing fixed vectors with respect to a (not necessarily hyperspecial) maximal compact subgroup. We show that this property determines the weak Arthur packets in a precise sense.

    

As steps towards this description, we explore alignments between Langlands-type reciprocities for finite and $p$-adic groups, and their dependence on the geometry of the unipotent locus of the dual Langlands group.

Weak sphericity is shown to match with Lusztig's canonical quotient spaces that feature in the geometric theory for Weyl group representations, while the fine composition of weak Arthur packets is found to be governed by the partition of the unipotent locus into special pieces.

	\end{abstract}
	
	\author{Maxim Gurevich}
	\address{Department of Mathematics, Technion -- Israel Institute of Technology, Haifa, Israel.}
	\email{maxg@technion.ac.il}
	
	\author{Emile Okada}
	\address{Department of Mathematics, National University of Singapore, Singapore.}
	\email{okada@nus.edu.sg}
	
	\date{\today}
	
	\maketitle
 
\tableofcontents

\section{Introduction}

Weak Arthur packets, as defined via microlocal invariants as opposed to endoscopic transfer, were first studied in \cite{barbaschvogan} under the guise of special unipotent representations.
These packets formed the basis of their investigation of the automorphic part of 
the unitary spectrum of complex reductive groups.

Later introduced into the representation theory of real reductive Lie groups in \cite[Section 13]{abv}, weak Arthur packets decompose into unipotent Arthur packets, forming the building blocks of the Adams--Barbasch--Vogan conjectural description of the automorphic spectrum. 


Given a reductive group $G$, selecting a unipotent conjugacy class $\mathcal{O}^\vee$ in its complex Langlands dual group  determines a corresponding unipotent infinitesimal character $\chi_{\mathcal{O}^\vee}$ for $G$-representations. In brief, the weak Arthur packet consists of the irreducible $G$-representations whose Gelfand-Kirillov dimension is minimal among those admitting $\chi_{\mathcal{O}^\vee}$ as their infinitesimal character.

Our work deals with the composition of the analogous notion for reductive $p$-adic groups of classical type.

Let $F$ be a non-Archimedean local field of characteristic $0$, with a large enough residue characteristic (see Section \ref{sec:maxcpcts}). Let $G$ be an $F$-split symplectic or special odd orthogonal group. Its Langlands dual group is the complex reductive group 
\[
\mathbf{G^\vee} = \left\{  \begin{array}{ll}  \mathbf{Sp}_{2n} &\;,\mbox{ if } G = \mathrm{SO}_{2n+1}(F) \\  
\mathbf{SO}_{2n+1} &\;,\mbox{ if } G = \mathrm{Sp}_{2n}(F)
\end{array}   \right.\;.
\]

Let $\mathcal{U}^\vee$ denote the finite set of unipotent conjugacy classes in $\mathbf{G^\vee}(\mathbb{C})$.

We fix $s_G =1$ for the case of symplectic $G$, and $s_G=-1$ for an odd orthogonal $G$.

\subsection{What are weak Arthur packets for $G$ ?}

The theory of local Arthur packets \cite{Abook} attaches to each \textit{$A$-parameter} $\psi\in \Psi(G)$ a finite subset $\Pi_{\psi}^{A}$ of $\irr(G)$, the collection of isomorphism classes of irreducible smooth complex $G$-representations.


We recall that $A$-parameters for the group $G$ may be viewed as $\mathbf{G^\vee}(\mathbb{C})$-conjugation classes of continuous group homomorphisms
\[
\psi:W_F\times \mathrm{SL}_2(\mathbb C) \times \mathrm{SL}_2(\mathbb C) \to  \mathbf{G^\vee}(\mathbb{C})\;,
\]
whose restriction to $\mathrm{SL}_2(\mathbb C)\times \mathrm{SL}_2(\mathbb C)$ is algebraic, while $\psi(W_F)$ is bounded and consists of semisimple elements. Here, $W_F$ is the Weil group of the local field $F$.

The local Langlands reciprocity attaches an \textit{infinitesimal character} $\chi_{\pi}$ to each representation $\pi\in \irr(G)$. This is a $\mathbf{G^\vee}(\mathbb{C})$-conjugation class of a homomorphism
\[
\chi_{\pi}: W_F \to \mathbf{G^\vee}(\mathbb{C})\;.
\]
We write $\irr_{\chi}(G)\subset \irr(G)$ for irreducible representations $\pi$ admitting $\chi_{\pi}= \chi$.

A known property of local Arthur packets is that representations within a single packet share a common infinitesimal character. In other words, for each $\psi\in \Psi(G)$ a map $\chi_{\psi}$ exists, so that $\Pi^{A}_{\psi}\subset \irr_{\chi_{\psi}}(G)$. 

One distinguished family in $\Psi(G)$ are the basic unipotent $A$-parameters, defined to be those whose restriction to the Weil-Deligne group $W_F\times \mathrm{SL}_2(\mathbb C)$ is trivial. 

Basic unipotent $A$-parameters are visibly in bijection with algebraic homomorphisms $\mathrm{SL}_2\to \mathbf{G^\vee}$, and thus, by the Jacobson-Morozov theorem, are indexed by the classes in $\mathcal{U}^\vee$: $\{\psi_{\mathcal{O}^\vee}\}_{\mathcal{O^\vee}\in \mathcal{U}^\vee}\subset \Psi(G)$.

To each class $\mathcal{O^\vee}\in \mathcal{U}^\vee$ we write the asssociated \textit{basic unipotent} Arthur packet 
\[
\Pi_{\mathcal{O^\vee}}:= \Pi^{A}_{\psi_{\mathcal{O^\vee}}}\subset \irr(G)\;,
\]

and its infinitesimal character as $\chi_{\mathcal{O}^\vee} := \chi_{\psi_{\mathcal{O}^\vee}}$.

A basic unipotent Arthur packet $\Pi_{\mathcal{O}^\vee}$ consists of all \textit{anti-tempered} (that is, Aubert-dual to tempered) representations in $\irr_{\chi_{\mathcal{O}^\vee}}(G)$. 

Since special odd orthogonal groups are not simply connected and admit a non-trivial complex character, a curious complexity appears in our study of that case. Considering that quadratic unramified character $\kappa_0$ of $G$ that arises from the \textit{spinor norm}, we write, in the orthogonal case, $\kappa_0\otimes \Pi\subset \Irr(G)$ for the set of representations obtained by $\kappa_0$-twisting of a given set $\Pi\subset \Irr(G)$.


We say that the resulting Arthur packet 
\[
\Pi_{-1,\mathcal{O}^\vee}: = \kappa_0 \otimes\Pi_{\mathcal{O}^\vee}
\]
is \textit{quasi-basic unipotent}, and record its infinitesimal character as $\chi_{-1,\mathcal{O}^\vee}$.

We also mark $\Pi_{1,\mathcal{O}^\vee} = \Pi_{\mathcal{O}^\vee}$ and $\chi_{1,\mathcal{O}^\vee}=\chi_{\mathcal{O}^\vee}$ in all cases.

%
%



The second ingredient needed for the definition of weak Arthur packets is the Gelfand-Kirillov dimension of an irreducible representation. As commonly done in the $p$-adic setting, we extract this invariant out of the finer concept of the \textit{algebraic wavefront set}.

Let $\mathcal{N}_F$ be the set of nilpotent $\Ad(G)$-orbits in the Lie algebra $\Lie(G)$.

Briefly, the logarithm of the Harish-Chandra--Howe character \cite{hcbook} of a representation $\pi\in \irr(G)$ gives a distribution around $0\in \Lie(G)$, which is expanded as a linear combination $\sum_{\mathcal{O}\in \mathcal{N}_F} c_{\mathcal{O}}(\pi) \widehat{\mu}_{\mathcal{O}}$ of Fourier transforms of nilpotent orbital integrals $\{\widehat{\mu}_{\mathcal{O}}\}_{\mathcal{O}\in \mathcal{N}_F}$. 

The \textit{Gelfand-Kirillov dimension} of $\pi\in \irr(G)$ is then set to be 
\[
\gkd(\pi) = \max\left\{\frac12 \dim(\mathcal{O})\;:\; \mathcal{O}\in \mathcal{N}_F,\; c_{\mathcal{O}}(\pi)\neq 0  \right\}\;,
\]
where $\dim(\mathcal{O})$ is the Zariski dimension of the orbit as an algebraic variety.

\begin{definition}[Based on \cite{cmbo-arthur}]\label{defi:intro}
For a unipotent conjugacy class $\mathcal{O}^\vee\in \mathcal{U}^\vee$ and a sign $z\in \{\pm1\}$ ($z=1$, when $G$ is symplectic), its associated weak Arthur packet is constructed as 
\[
\Pi^{w}_{z,\mathcal{O}^\vee} = \left\{\pi\in \irr_{\chi_{z,\mathcal{O}^\vee}}(G)\;: \; \gkd(\pi) \leq \gkd(\sigma),\; 
\mbox{for all } \sigma \in \irr_{\chi_{z,\mathcal{O}^\vee}}(G) \right\}\;.
\]
\end{definition}

It was proved in \cite{cmbo-arthur} that an inclusion $\Pi_{1,\mathcal{O}^\vee}\subset \Pi_{1,\mathcal{O}^\vee}^{w}$ holds, for all $\mathcal{O}^\vee\in \mathcal{U}^\vee$. 

A follow-up conjecture \cite[Conjecture 3.1.2]{cmbo-arthur} was raised, whose claim is that in a natural analogy with the Lie groups case, each weak Arthur packet $\Pi^{w}_{1,\mathcal{O}^\vee}$ is a union of Arthur packets.

This conjecture, for the case of the classical groups in hand, is resolved as a consequence of our Theorem \ref{thm:B}, and also through a parallel work of \cite{liulo-weak}.







\subsection{The 'Arthur-closure' of the weakly spherical spectrum}

Our main result phrases a new characterization of weak Arthur packets in terms of ramification properties of irreducible representations.

Let us note that basic unipotent infinitesimal characters $\{\chi_{z,\mathcal{O}^\vee}\}_{\mathcal{O}^\vee\in \mathcal{U}^\vee,z=\pm1}$ are unramified homomorphisms, that is, they are trivial on the inertia subgroup $I_F< W_F$. In particular, that designates the representations in $\irr_{\chi_{z,\mathcal{O}^\vee}}(G)$ as unipotent in the sense of Lusztig \cite{Lu-unip1}.

For each $\mathcal{O}^\vee\in\mathcal{U}^\vee$ and a possible sign $z=\pm1$, that also means that a choice of a hyperspecial open compact subgroup $K_0< G$ gives rise to a unique \textit{spherical} representation $\delta_{z,\mathcal{O}^\vee}\in \irr_{\chi_{z,\mathcal{O}^\vee}}(G)$, that is, a representation possessing a non-zero $K_0$-invariant vector. Indeed, the semisimple conjugacy class of the element $s_{z,\mathcal{O}^\vee} = \chi_{z,\mathcal{O}^\vee}(\frb)\in \mathbf{G^\vee}(\mathbb{C})$, $\frb\in W_F$ being a choice of a Frobenius element, is the Satake parameter that determines $\delta_{z,\mathcal{O}^\vee}$. 


It is known \cite[Proposition 6.4]{moeg09} that $\Pi_{z,\mathcal{O}^\vee}$ is the unique (spherical) Arthur packet containing the irreducible spherical representation $\delta_{z,\mathcal{O}^\vee}$.



We now formulate a characterization of weak Arthur packets further extending this phenomenon. 

\begin{definition}
    A representation $\pi\in \irr(G)$ is \textit{weakly spherical}, when a (any) maximal open compact subgroup $K<G$ exists, for which $\pi$ possesses a non-zero $K$-invariant vector.
\end{definition}

Although literature typically emphasizes the hyperspecial case, due to its role in the automorphic context, the groups $G$ under consideration admit a finite sequence of non-conjugate maximal compact subgroups that are conveniently classified using the Iwahori-Matsumoto theory \cite{IM}.

Let $\Pi^{sph}_{z,\mathcal{O}^\vee}\subset \Pi_{\mathcal{O}^\vee}$ be the subset of weakly spherical representations in the spherical Arthur packet. Hence, we are given a chain of containments
\begin{equation}\label{eq:intro-chain}
\delta_{z,\mathcal{O}^\vee}\in \Pi^{\mathrm{sph}}_{z,\mathcal{O}^\vee}\subset \Pi_{z,\mathcal{O}^\vee}\subset \Pi^{w}_{z,\mathcal{O}^\vee}\;.
\end{equation}



Recalling the overlapping nature of Arthur packets, the following definition is meaningful.
\begin{definition}
Let $\mathcal{O}^\vee\in \mathcal{U}^\vee$ be a unipotent conjugacy class, and $z\in\{\pm1\}$ be a sign ($z=1$, when $G$ is symplectic).

An $A$-parameter $\psi\in \Psi(G)$ with infinitesimal character $\chi_{\psi}= \chi_{z,\mathcal{O}^\vee}$, and its associated Arthur packet $\Pi_{\psi}\subset \irr_{\chi_{z,\mathcal{O}^\vee}}(G)$, are said to be \textit{weakly spherical} (or, $1$-weakly spherical), whenever
\[
\Pi_{\psi}\cap \Pi^{\mathrm{sph}}_{z,\mathcal{O}^\vee}\neq \emptyset
\]
holds. 

In the orthogonal case, the $A$-parameter $\psi$ and Arthur packet $\Pi_{\psi}$ are said to be \textit{$-1$-weakly spherical}, whenever the Arthur packet $\kappa_0 \otimes \Pi_{\psi}\subset \irr_{\chi_{-z,\mathcal{O}^\vee}}(G)$ is weakly spherical.
\end{definition}

In essence, we say that an Arthur packet is weakly spherical when it admits a quasi-basic unipotent infinitesimal character, and contains an anti-tempered weakly spherical representation.

We are now ready to state the main result.

\begin{theoremintro}\label{thm:intro1}
\begin{enumerate}
    \item  Let $\psi\in \Psi(G)$ be an $A$-parameter with infinitesimal character $\chi_{\psi}= \chi_{z,\mathcal{O}^\vee}$, for a conjugacy class $\mathcal{O}^\vee\in \mathcal{U}^\vee$, and a sign $z\in\{\pm1\}$.
    
Then, $\psi$ is $zs_G$-weakly spherical, if and only if, an inclusion of packets \[
\Pi^{A}_{\psi}\subset \Pi^{w}_{z,\mathcal{O}^\vee}
\]
holds.

    \item For each conjugacy class $\mathcal{O}^\vee\in \mathcal{U}^\vee$, the weak Arthur packet $\Pi^{w}_{z,\mathcal{O}^\vee}$ consists of the union of all $zs_G$-weakly spherical Arthur packets admitting the infinitesimal character $\chi_{z,\mathcal{O}^\vee}$.
    
\end{enumerate}

\end{theoremintro}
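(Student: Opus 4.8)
The plan is to establish assertion (1) first; assertion (2) then follows by combining (1) with the decomposition of weak Arthur packets into Arthur packets furnished by Theorem~\ref{thm:B}, because the Arthur packets occurring inside $\Pi^{w}_{z,\mathcal O^\vee}$ are exactly those of infinitesimal character $\chi_{z,\mathcal O^\vee}$ contained in it, and by (1) these are precisely the $zs_G$-weakly spherical ones. For (1), I first reduce to the case $zs_G=1$ by twisting with the unramified quadratic character $\kappa_0$: this twist preserves Gelfand--Kirillov dimension, carries $\Pi^{w}_{1,\mathcal O^\vee}$ bijectively onto $\Pi^{w}_{-1,\mathcal O^\vee}$ and $\Pi_{1,\mathcal O^\vee}$ onto $\Pi_{-1,\mathcal O^\vee}$, and interchanges the two notions of weak sphericity, so that after applying it one may assume $\chi_\psi=\chi_{z,\mathcal O^\vee}$ with $zs_G=1$, i.e.\ that ``$zs_G$-weakly spherical'' literally means $\Pi^{A}_{\psi}\cap\Pi^{\mathrm{sph}}_{z,\mathcal O^\vee}\neq\emptyset$. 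Write $\chi=\chi_{z,\mathcal O^\vee}$, let $d$ denote the Barbasch--Vogan--Spaltenstein duality from nilpotent orbits of $\mathbf{G^\vee}$ to nilpotent orbits in $\Lie(G)$, and let $\mathcal O^\vee_\psi$ be the nilpotent orbit in $\mathbf{G^\vee}$ cut out by the Arthur $\mathrm{SL}_2$ factor of $\psi$.

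The first half of the argument isolates the Gelfand--Kirillov content. I would invoke two inputs: (i) by \cite{cmbo-arthur} the minimal value of $\gkd$ on $\irr_{\chi}(G)$ equals $m_\chi:=\tfrac12\dim d(\mathcal O^\vee)$, attained on the anti-tempered members, which moreover all have wavefront set the complex orbit $d(\mathcal O^\vee)$; and (ii) by the wavefront-set estimates of M\oe glin--Waldspurger and Okada, the maximum of $\gkd$ over $\Pi^{A}_{\psi}$ equals $\tfrac12\dim d(\mathcal O^\vee_\psi)$. Since every member of $\Pi^{A}_{\psi}$ has Gelfand--Kirillov dimension at least $m_\chi$, the inclusion $\Pi^{A}_{\psi}\subset\Pi^{w}_{z,\mathcal O^\vee}$ holds if and only if that maximum equals $m_\chi$; as $\mathcal O^\vee_\psi\subset\overline{\mathcal O^\vee}$ (a standard property of unipotent $A$-parameters with infinitesimal character $\chi$) and $d$ is order-reversing, $\overline{d(\mathcal O^\vee_\psi)}\supset\overline{d(\mathcal O^\vee)}$, so this is equivalent to $d(\mathcal O^\vee_\psi)=d(\mathcal O^\vee)$, and, by Lusztig's characterisation of the special pieces of $\mathbf{G^\vee}$ as the fibres of $d$, to the condition that $\mathcal O^\vee_\psi$ lie in the special piece determined by $\mathcal O^\vee$. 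Thus (1) is reduced to the equivalence
\[
\Pi^{A}_{\psi}\cap\Pi^{\mathrm{sph}}_{z,\mathcal O^\vee}\neq\emptyset
\quad\Longleftrightarrow\quad
\mathcal O^\vee_\psi\ \text{lies in the special piece determined by}\ \mathcal O^\vee.
\]

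The second half proves this equivalence and carries the bulk of the work. First I would pin down $\Pi^{\mathrm{sph}}_{z,\mathcal O^\vee}$: using the Iwahori--Matsumoto classification \cite{IM} of the maximal compact subgroups of $G$ together with the classification of unipotent representations (\cite{Lu-unip1}, and Kazhdan--Lusztig for affine Hecke algebras), one shows that among the members of the anti-tempered packet $\Pi_{z,\mathcal O^\vee}$ --- parametrised, in M\oe glin's normalisation \cite{moeg09}, by the characters of the component group $A(\mathcal O^\vee)$ of a nilpotent centraliser --- the weakly spherical ones are exactly those whose parametrising character factors through Lusztig's canonical quotient $\overline{A}(\mathcal O^\vee)$, so that $\Pi^{\mathrm{sph}}_{z,\mathcal O^\vee}$ is in bijection with $\widehat{\overline{A}(\mathcal O^\vee)}$; this is the ``weak sphericity matches the canonical quotient'' assertion. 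For the forward implication of the displayed equivalence one uses that any $\pi\in\Pi^{\mathrm{sph}}_{z,\mathcal O^\vee}\cap\Pi^{A}_{\psi}$ is anti-tempered, hence $\WF(\pi)=d(\mathcal O^\vee)$, while $\pi\in\Pi^{A}_{\psi}$ forces $\overline{\WF(\pi)}\subset\overline{d(\mathcal O^\vee_\psi)}$; to close the loop --- that is, to see that the presence of a \emph{weakly spherical} anti-tempered member also prevents $\Pi^{A}_{\psi}$ from having members of strictly larger Gelfand--Kirillov dimension --- and to carry out the reverse, constructive implication (given $\mathcal O^\vee_\psi$ in the special piece, exhibit a weakly spherical anti-tempered member of $\Pi^{A}_{\psi}$), I would work with M\oe glin's explicit description of unipotent Arthur packets for symplectic and odd orthogonal groups in terms of symbols, propagating a character of $A(\mathcal O^\vee)$ that factors through $\overline{A}(\mathcal O^\vee)$ along the minimal degenerations inside the special piece and checking at each step that it remains realised in the packet of the degenerated parameter. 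The structural fact that makes this tracking possible, and that unites the two halves of the proof, is Lusztig's realisation of a special piece as a quotient by its canonical quotient group --- the very group $\overline{A}(\mathcal O^\vee)$ shown to govern weak sphericity.

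I expect this last combinatorial step --- the interlocking of the dictionaries ``weak sphericity $\leftrightarrow$ canonical quotient'' and ``weak Arthur packets $\leftrightarrow$ special pieces'' --- to be the principal obstacle: it does not appear to follow from soft arguments and seems to require the symbol/partition combinatorics of M\oe glin's packets matched against the explicit description of special pieces and canonical quotients in classical types (in the spirit of Kraft--Procesi and Lusztig). A subsidiary difficulty is hidden in the opening reduction: since $\kappa_0$, although unramified, need not be trivial on every maximal compact subgroup, relating the $z=1$ and $z=-1$ cases in the orthogonal situation requires careful bookkeeping of which maximal compacts gain or lose fixed vectors under the twist, which is also where the center conventions peculiar to $\mathrm{SO}_{2n+1}$ must be reconciled with the parametrisations above.
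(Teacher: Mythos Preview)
Your overall architecture matches the paper's: Theorem~\ref{thm:intro1} is deduced there as an immediate corollary of Theorems~\ref{thm:B}, \ref{thm:C}, \ref{thm:D}, which correspond respectively to your ``first half'' (inclusion in $\Pi^w_{z,\mathcal O^\vee}$ $\Leftrightarrow$ Arthur-$\mathrm{SL}_2$ orbit in the special piece), your ``weak sphericity $\leftrightarrow$ canonical quotient'' identification, and your ``propagate a character along minimal degenerations'' step. Two differences are worth flagging.

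First, your input (ii)---that $\max_{\pi\in\Pi^A_\psi}\gkd(\pi)=\tfrac12\dim d(\mathcal O^\vee_\psi)$ for every $\psi$ with this infinitesimal character---is not available as a black box; it is essentially equivalent to Theorem~\ref{thm:B} itself. The paper does not invoke a general wavefront upper bound for $A$-packets. Instead it shows (Theorem~\ref{thm:almost-intro}, Proposition~\ref{prop:intro}) that an $L$-parameter with $\chi_\phi=\chi_{z,\mathcal O^\vee}$ and $\mathcal O^\vee_\phi$ in the special piece is \emph{unique} and of Arthur type, identifies it with an explicit near-tempered $A$-parameter, and then proves directly (Proposition~\ref{prop:unionAL}) that the union of these near-tempered $A$-packets coincides with the union of their base $L$-packets. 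Applying \cite{cmowavefront} to that $L$-packet picture gives Theorem~\ref{thm:B}. So your (ii) is the output, not the input.

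Second, the paper does not perform your $\kappa_0$-twist reduction to $zs_G=1$; it treats both signs in parallel by computing $K_i$-fixed vectors versus $\kappa_0|_{K_i}$-eigenvectors through a single Hecke-algebra deformation to the Weyl group (Corollary~\ref{cor:ofsection5}), so your anticipated bookkeeping of which maximal compacts see $\kappa_0$ is replaced by a Springer-side calculation. For the crux you correctly isolate, the paper's tools are Atobe--Xu's $A$-packet intersection calculus (Theorem~\ref{thm:lastin3}, giving Theorem~\ref{thm:D}) rather than M\oe glin's symbol combinatorics, and Waldspurger--La's algorithmic Green theory (Section~\ref{sect:walds-la}) to decide which full Springer representations $\Sigma(\mathcal O^\vee,\epsilon)$ contain the constituents $E^{s_G}_i$ detecting weak sphericity.
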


In particular, it follows that all constituents of weak Arthur packets are unitarizable representations.

The result of Theorem \ref{thm:intro1} naturally prompts the question of whether a comparable definition of weak Arthur packets for broader families of infinitesimal characters could be characterized through similar ramification properties. Progress in this area should be coupled with further advancement in connecting wavefront invariants with the Langlands reciprocity.


Our path towards the proof of Theorem \ref{thm:intro1} passes through two preparatory tasks of standalone interest.

One is an explicit description of representations inside weak Arthur packets in terms of $A$-parameters and the Langlands reciprocity.

The other is an explicit description of the anti-tempered weakly spherical representations of $\Pi^{\mathrm{sph}}_{s_G,\mathcal{O}^\vee}$ in terms of an \textit{enhanced} Langlands parameterization.

\subsection{Constituents of weak Arthur packets}

An explicit decomposition of weak Arthur packets will be given in terms of the geometry of the unipotent locus of $\mathbf{G^\vee}(\mathbb{C})$. 




%
%





For a class $\mathcal{O}^\vee\in \mathcal{U}^\vee$, let us denote as $\overline{\mathcal{O}^\vee} \subset \mathcal{U}^\vee$ the set of conjugacy classes contained in the the topological closure of $\mathcal{O}^\vee$.


A prominent subset of conjugacy classes $\mathcal{U}^\vee_{\spci} \subset \mathcal{U}^\vee$ are known as the \textit{special} unipotent classes, as introduced in \cite[\S 13.1.1]{lusztigbook}

Pivoting around this concept, for any $\mathcal{O}^\vee\in \mathcal{U}^\vee$ we define its \textit{relative special piece} as


\[
\spc(\mathcal{O}^\vee) = \overline{\mathcal{O}^\vee} - \bigcup_{ \mathcal{O'}^\vee\in \mathcal U_{\spci}\,:\,\mathcal{O}^\vee\neq \mathcal{O'}^\vee\in \overline{\mathcal{O}^\vee}} \overline{\mathcal{O'}^\vee} \subset \mathcal{U}^\vee\;.
\]

Those sets, in our case of groups of classical type, are always of power-of-$2$ cardinality, while their inherited topological partial orders are of a hypercube lattice form. 


The resulting disjoint division
\[
\mathcal{U}^\vee = \bigsqcup_{\mathcal{O}^\vee\in \mathcal{U}^\vee_{\spci}} \spc(\mathcal{O}^\vee)
\]
of the unipotent locus into its \textit{special pieces} has been thoroughly studied in the context of perverse sheaves and singularity theory by \cite{lusztiggreenpolys,spal-book,,kp-aster,sommersspecialpieces}. In particular, explicit combinatorial descriptions of the special pieces for classical groups in terms of the standard parameterization of $\mathcal{U}^\vee$ by partitions trace back to \cite{kp-aster}.







For an effective description of weak Arthur packets we now need to recall further details of the local Langlands reciprocity. Indeed, we take this reciprocity as an established consequence of the Arthur theory for classical groups.

The reciprocity provides a map $\pi \mapsto \phi(\pi)$ from $\irr(G)$ onto $\Phi(G)$, the collection of $L$-parameters for the group $G$. Each fiber of the reciprocity map over a given $L$-parameter $\phi\in \Phi(G)$ is known as an $L$-packet 
\[
\Pi_{\phi}\subset \irr_{\chi_{\phi}}(G)\;.
\]
Here, in similarity with Arthur packets, $\chi_{\phi}$ is the infinitesimal character shared by all constituents of $\Pi_{\phi}$.

An $L$-parameter may be viewed as the $\mathbf{G^\vee}(\mathbb{C})$-conjugation class of a continuous group homomorphism
$$\phi:W_F\times \mathrm{SL}_2(\mathbb C) \to  \mathbf{G^\vee}(\mathbb{C})\;,$$
whose restriction to $\mathrm{SL}_2(\mathbb C)$ is algebraic, while $\phi(W_F)$ consists of semisimple elements.


For $\phi\in \Phi(G)$, we take note of the unipotent element
\[
u_\phi = \phi\left(1,\begin{pmatrix}
    1 & 1 \\ 0 & 1
\end{pmatrix}\right)\in \mathbf{G^\vee}(\mathbb{C})\;,
\]
and write $\mathcal{O}_{\phi}^\vee\in \mathcal{U}^\vee$ for the conjugacy class containing $u_{\phi}$.

This invariant is of high relevance for our discussion as, for example, exhibited by the main result of \cite{cmowavefront}. The latter states (Theorem \ref{thm:cmbo}) that for certain $L$-parameters $\phi\in \Phi(G)$ (i.e. having an unramified and real $\chi_{\phi}$),
the algebraic wavefront sets of all representations Aubert-dual to consituents of $\Pi_{\phi}$ are determined by $\mathcal{O}^\vee_{\phi}$ through means of Barbasch--Vogan--Lusztig--Spaltenstein duality.

Finally, we recall that the collection of $A$-parameters is naturally embedded $\Psi(G)\hookrightarrow \Phi(G)$, $\psi \mapsto \phi_{\psi}$, into the collection of $L$-parameters, as a sub-collection that is often regarded as $L$-parameters of \textit{Arthur-type}. In this setup, an inclusion $\Pi_{\phi_\psi}\subset \Pi^{A}_{\psi}$ of packets holds, for all $\psi\in \Psi(G)$.

\begin{proposition}\label{prop:intro}

For any $\mathcal{O}^\vee_1\in \mathcal{U}^\vee$ and $\mathcal{O}^{\vee}_2\in \spc(\mathcal{O}^\vee_1)$, and a possible sign $z\in\{\pm1\}$ there is a unique $L$-parameter $\phi=\phi_{z,\mathcal{O}^\vee_1, \mathcal{O}^{\vee}_2}\in \Phi(G)$ which satisfies $\chi_{\phi} = \chi_{z,\mathcal{O}^\vee_1}$ and $\mathcal{O}^\vee_{\phi} = \mathcal{O}^{\vee}_2$, and it is of Arthur-type.
\end{proposition}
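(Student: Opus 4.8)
The plan is to build the $L$-parameter $\phi$ directly from combinatorial data attached to the pair $(\mathcal{O}^\vee_1, \mathcal{O}^\vee_2)$, exploiting the fact that both classes lie in the explicitly understood special piece $\spc(\mathcal{O}^\vee_1)$. First I would recall the partition-theoretic description of $\mathcal{U}^\vee$ for $\mathbf{Sp}_{2n}(\mathbb{C})$ or $\mathbf{SO}_{2n+1}(\mathbb{C})$, and the Kraft--Procesi combinatorics of special pieces, so that $\mathcal{O}^\vee_1$ corresponds to a special partition $\lambda_1$ and $\mathcal{O}^\vee_2$ to a partition $\lambda_2$ lying in the hypercube of partitions between the Spaltenstein dual pair governing the piece. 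Since $\chi_{z,\mathcal{O}^\vee_1}$ is the unramified infinitesimal character attached to the $\mathrm{SL}_2$-triple of $\mathcal{O}^\vee_1$ (twisted by $\kappa_0$ when $z s_G = -1$), the constraint $\chi_\phi = \chi_{z,\mathcal{O}^\vee_1}$ pins down the semisimple part of $\phi$ up to the allowed Weil-group data; the constraint $\mathcal{O}^\vee_\phi = \mathcal{O}^\vee_2$ pins down the conjugacy class of the unipotent $u_\phi$. The core of the argument is that there is exactly one way to package these into an $L$-parameter of Arthur type.

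The key construction step: to exhibit an $A$-parameter $\psi$ with $\phi_\psi = \phi$. Here I would use the structure of $A$-parameters as maps from $W_F \times \mathrm{SL}_2 \times \mathrm{SL}_2$. The first $\mathrm{SL}_2$ (the ``Arthur'' factor) should be chosen so that its image, together with the Weil-group part, realizes the unramified infinitesimal character $\chi_{z,\mathcal{O}^\vee_1}$ — i.e.\ the Arthur $\mathrm{SL}_2$ carries (a piece of) the Jacobson--Morozov $\mathrm{SL}_2$ of $\mathcal{O}^\vee_1$ — while the diagonal $\mathrm{SL}_2$ contribution to $u_\phi = \phi_\psi(1, \mathrm{diag\ unipotent})$ lands in $\mathcal{O}^\vee_2$. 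Concretely, decomposing the standard representation of $\mathbf{G^\vee}$ under $\psi|_{\mathrm{SL}_2 \times \mathrm{SL}_2}$ as a sum of $V_a \boxtimes V_b$ (irreducibles of the two $\mathrm{SL}_2$'s), the infinitesimal character depends on the multiset $\{a+b-1\}$ (shifted), recovering $\lambda_1$, and $\mathcal{O}^\vee_\phi$ is read off from $\{a \cdot b\}$-type data (the partition obtained by collapsing), which should be arranged to give $\lambda_2$. The combinatorial heart is that as $\mathcal{O}^\vee_2$ ranges over $\spc(\mathcal{O}^\vee_1)$, these ``box-splitting'' moves on the rows of $\lambda_1$ are precisely parameterized by the hypercube, each one legal (i.e.\ respecting the $\mathbf{Sp}/\mathbf{SO}$ parity constraints on $A$-parameters) and giving a distinct $\psi$.

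For uniqueness, suppose $\phi, \phi'$ are two $L$-parameters with the stated invariants. Both have the same semisimple part (forced by $\chi_\phi = \chi_{\phi'} = \chi_{z,\mathcal{O}^\vee_1}$ together with the classification of $L$-parameters by their restriction to $W_F$ and the $\mathrm{SL}_2$-type), so they differ only in how the unipotent $\mathrm{SL}_2$ sits inside the centralizer of $\chi_\phi(W_F)$; the condition that this unipotent lies in $\mathcal{O}^\vee_2$, combined with the rigidity of the special-piece combinatorics (each $\lambda_2 \in \spc(\lambda_1)$ arises from $\lambda_1$ by a unique admissible splitting pattern), forces $\phi = \phi'$. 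I expect the main obstacle to be precisely this last rigidity claim: showing that within the special piece the map ``$A$-parameter with infinitesimal character $\chi_{z,\mathcal{O}^\vee_1}$'' $\mapsto$ ``$\mathcal{O}^\vee_\phi$'' is injective with image exactly $\spc(\mathcal{O}^\vee_1)$. This will require carefully matching the hypercube order on special pieces (from \cite{kp-aster}) against the lattice of admissible ways to redistribute $\mathrm{SL}_2 \times \mathrm{SL}_2$-isotypic components, checking that the parity/self-duality constraints defining genuine $A$-parameters for $\mathbf{G^\vee}$ cut things down to exactly the power-of-$2$ many options predicted. The verification that the resulting $\phi$ is of Arthur type is then automatic from the construction, since we produce it as $\phi_\psi$.
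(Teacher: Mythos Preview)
Your existence sketch is essentially the paper's: you construct an $A$-parameter $\psi$ whose $\mathrm{SL}_2^L\times\mathrm{SL}_2^A$-decomposition encodes the passage from $\lambda_1$ to $\lambda_2$ inside the special piece, and then set $\phi=\phi_\psi$. The paper does exactly this via the explicit near-tempered table $\gotM_{\lambda,J}$ (Lemma \ref{lem:half-intro}). So far so good, though your description of how $\chi_\phi$ and $\mathcal{O}^\vee_\phi$ are read off the $(a,b)$-data is imprecise (it is not simply ``the multiset $\{a+b-1\}$''; compare the eigenvalue computation in Remark \ref{rem:inf-ch-comp} and the formula \eqref{eq:phipsi}).

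The genuine gap is in uniqueness. Your argument asserts that two $L$-parameters with equal $\chi_\phi$ ``have the same semisimple part'' and hence differ only in how the unipotent $\mathrm{SL}_2$ sits in a fixed centralizer. This is not how unramified $L$-parameters work: $\chi_\phi(\mathrm{Fr})=\phi(\mathrm{Fr})\cdot\phi\bigl(1,\mathrm{diag}(q^{1/2},q^{-1/2})\bigr)$ mixes the Frobenius image with the $\mathrm{SL}_2$-torus, so the infinitesimal character does \emph{not} pin down $\phi|_{W_F}$ separately. Consequently your ``rigidity'' claim, which is phrased entirely inside the lattice of $A$-parameters, does not touch the real difficulty, namely ruling out arbitrary (a priori non-Arthur-type) $L$-parameters $\phi$ with $\chi_\phi=\chi_{z,\mathcal{O}^\vee_1}$ and $\mathcal{O}^\vee_\phi=\mathcal{O}^\vee_2$. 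The paper handles this in Theorem \ref{thm:almost-intro} by an induction on the largest part $a_{\max}$ of $\lambda$: one computes the exact multiplicities of the extremal eigenvalues $zq^{\pm(a_{\max}-1)/2}$ of $s_{\chi_\phi}$, uses them to force the number and Frobenius-twist of summands $\zeta\otimes\nu_{a_{\max}}$ appearing in $\phi$ (via Proposition \ref{prop:lambdaphi}), strips those off, and recurses to a smaller group. This eigenvalue bookkeeping is precisely what bridges the gap between ``same infinitesimal character plus same unipotent class'' and ``same $L$-parameter'', and it is absent from your proposal.
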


While $L$-packets may not be well-behaved with respect to the Aubert involution $\pi\mapsto \pi^t$ on $\irr(G)$, Arthur packets are better situated. This is arithmetically manifested in the involution $\psi\mapsto \psi^t$ on $\Psi(G)$ given by a transposition of the pair of $SL_2(\mathbb{C})$ components of the parameter (see \cite[Theorem 1.6]{atobe-crelle1}).

Considering any $L$-parameter $\phi=\phi_{z,\mathcal{O}^\vee_1, \mathcal{O}^{\vee}_2}=\phi_{\psi}$ of the form stated in Proposition \ref{prop:intro}, with corresponding $\psi=\psi_{z,\mathcal{O}^\vee_1, \mathcal{O}^{\vee}_2}\in \Psi(G),$ we denote the Arthur packet associated with its transposed parameter as
\begin{equation}\label{eq:introApack}
\Pi_{z,\mathcal{O}^\vee_1,\mathcal{O}^{\vee}_2} := \Pi^{A}_{\psi^t}\subset \irr_{\chi_{z,\mathcal{O}^\vee}}(G)\;.
\end{equation}

In particular, we see that for any $\mathcal{O}^\vee\in \mathcal{U}^\vee$,  $\psi_{z,\mathcal{O}^\vee,\mathcal{O}^\vee}^t= \psi_{z,\mathcal{O}^\vee}$ is the quasi-basic unipotent parameter, while $\Pi_{z,\mathcal{O}^\vee,\mathcal{O}^\vee}= \Pi_{z,\mathcal{O}^\vee}$ is the associated anti-tempered, or spherical, Arthur packet.

\begin{theoremintro}\label{thm:B}
For any unipotent conjugacy class $\mathcal{O}^\vee\in \mathcal{U}^\vee$ and a sign $z\in\{\pm1\}$, we have 

\begin{enumerate}
    \item  a decomposition into a (disjoint) union of $L$-packets
\[
(\Pi^{w}_{z,\mathcal{O}^\vee})^t = \bigsqcup_{\mathcal{O}^{1\vee} \in \spc(\mathcal{O}^\vee)} \Pi_{\phi_{z,\mathcal{O}^\vee,\mathcal{O}^{1\vee}}}\;,
\]
where $(\Pi^{w}_{z,\mathcal{O}^\vee})^t = \{\pi^t\;:\;\pi\in\Pi^{w}_{z,\mathcal{O}^\vee}\}$ is the Aubert duality image of the weak Arthur packet.

\item a (non-disjoint) decomposition into a union of local Arthur packets
\[
\Pi^{w}_{z,\mathcal{O}^\vee} = \bigcup_{\mathcal{O}^{1\vee} \in \spc(\mathcal{O}^\vee)} \Pi_{z,\mathcal{O}^\vee,\mathcal{O}^{1\vee}}\;.
\]

\end{enumerate}

\end{theoremintro}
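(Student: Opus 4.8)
\textbf{Proof proposal for Theorem \ref{thm:B}.}

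The plan is to deduce the statement from the combination of Theorem \ref{thm:intro1}, Proposition \ref{prop:intro}, the explicit description of weak Arthur packet constituents (the first preparatory task mentioned in the introduction), and the wavefront-set theorem of \cite{cmowavefront} recalled as Theorem \ref{thm:cmbo}. I would begin with part (1), since the Aubert-dual picture is where $L$-packets behave well. First I would show the right-hand side is contained in the left: fix $\mathcal{O}^{1\vee}\in\spc(\mathcal{O}^\vee)$, let $\phi=\phi_{z,\mathcal{O}^\vee,\mathcal{O}^{1\vee}}=\phi_\psi$ be the Arthur-type parameter from Proposition \ref{prop:intro}, and let $\pi\in\Pi_\phi\subset\Pi^A_\psi$. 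By \cite{cmowavefront} (Theorem \ref{thm:cmbo}), applicable since $\chi_\phi=\chi_{z,\mathcal{O}^\vee}$ is unramified and real, the algebraic wavefront set of $\pi^t$ (equivalently $\gkd(\pi^t)$) is governed by $\mathcal{O}^\vee_\phi=\mathcal{O}^{1\vee}$ via Barbasch--Vogan--Lusztig--Spaltenstein duality. The key numerical input is that the BVLS-dual of any $\mathcal{O}^{1\vee}\in\spc(\mathcal{O}^\vee)$ equals the BVLS-dual of $\mathcal{O}^\vee$ itself: this is precisely the defining property of special pieces (every class in a special piece has the same Spaltenstein dual as the special class governing the piece). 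Hence $\gkd(\pi^t)$ attains the minimal value among representations with infinitesimal character $\chi_{z,\mathcal{O}^\vee}$, so $\pi^t\in\Pi^w_{z,\mathcal{O}^\vee}$, i.e. $\pi\in(\Pi^w_{z,\mathcal{O}^\vee})^t$.

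For the reverse containment in (1), I would take $\pi\in\Pi^w_{z,\mathcal{O}^\vee}$ and consider $\pi^t$, which has $\chi_{\pi^t}=\chi_{z,\mathcal{O}^\vee}$. Write $\phi=\phi(\pi^t)$ for its $L$-parameter and $\mathcal{O}^\vee_\phi=u_\phi$'s class. Using again the wavefront bound and the minimality defining membership in the weak packet, $\gkd(\pi)$ minimal forces $\mathcal{O}^\vee_\phi$ to have BVLS-dual equal to that of $\mathcal{O}^\vee$; combined with the constraint $\mathcal{O}^\vee_\phi\subseteq\overline{\mathcal{O}^\vee}$ coming from the shared infinitesimal character (the unipotent part of any $L$-parameter with infinitesimal character $\chi_{z,\mathcal{O}^\vee}$ lies in $\overline{\mathcal{O}^\vee}$, a standard fact about unramified infinitesimal characters and the associated variety), this places $\mathcal{O}^\vee_\phi$ in $\spc(\mathcal{O}^\vee)$ by the very definition of the relative special piece. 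Then I invoke Proposition \ref{prop:intro} to identify $\phi$ with the unique Arthur-type parameter $\phi_{z,\mathcal{O}^\vee,\mathcal{O}^\vee_\phi}$, whence $\pi^t\in\Pi_{\phi_{z,\mathcal{O}^\vee,\mathcal{O}^\vee_\phi}}$. Disjointness of the union follows because distinct $L$-parameters have disjoint $L$-packets, and the parameters $\phi_{z,\mathcal{O}^\vee,\mathcal{O}^{1\vee}}$ are pairwise distinct as $\mathcal{O}^{1\vee}$ varies (they have distinct $\mathcal{O}^\vee_\phi$). This also re-proves, via Theorem \ref{thm:intro1} and the chain \eqref{eq:intro-chain}, the Ciubotaru--Mason-Brown--Okada conjecture that weak packets are unions of Arthur packets.

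Part (2) is then obtained from part (1) by applying the Aubert involution, which is an involution on $\irr(G)$, together with the compatibility $(\Pi^A_\psi)^t=\Pi^A_{\psi^t}$ of Arthur packets with Aubert duality \cite[Theorem 1.6]{atobe-crelle1}. Indeed, applying $(\cdot)^t$ to both sides of (1) and using $\Pi_{\phi_{z,\mathcal{O}^\vee,\mathcal{O}^{1\vee}}}^t\subset(\Pi^A_{\psi_{z,\mathcal{O}^\vee,\mathcal{O}^{1\vee}}})^t=\Pi^A_{\psi_{z,\mathcal{O}^\vee,\mathcal{O}^{1\vee}}^t}=\Pi_{z,\mathcal{O}^\vee,\mathcal{O}^{1\vee}}$ gives the inclusion $\Pi^w_{z,\mathcal{O}^\vee}\subset\bigcup\Pi_{z,\mathcal{O}^\vee,\mathcal{O}^{1\vee}}$; the reverse inclusion is exactly the ``if'' direction of Theorem \ref{thm:intro1}(1) applied to each $\psi_{z,\mathcal{O}^\vee,\mathcal{O}^{1\vee}}^t$, once one checks these parameters are $zs_G$-weakly spherical --- which is where the second preparatory task (the explicit description of anti-tempered weakly spherical representations via enhanced Langlands parameters, matched to Lusztig's canonical quotient) enters. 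The union is non-disjoint in general because Arthur packets overlap. I expect the main obstacle to be the reverse containment in (1): specifically, pinning down that $\gkd$-minimality of $\pi$ forces the wavefront/BVLS constraint on $\mathcal{O}^\vee_\phi$ in both directions requires knowing that the wavefront set of $\pi^t$ is \emph{exactly} (not just bounded by) the BVLS-dual of $\mathcal{O}^\vee_\phi$, and transporting this through Aubert duality and the two $SL_2$-transposition — this is where Theorem \ref{thm:cmbo} and the first preparatory description of weak-packet constituents must be deployed with care, handling the $\kappa_0$-twist bookkeeping in the orthogonal case and the role of the sign $zs_G$.
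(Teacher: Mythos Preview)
Your argument for part (1) is essentially the paper's: the wavefront computation of Theorem \ref{thm:cmbo}, the BVLS-duality constancy on special pieces, and the uniqueness in Proposition \ref{prop:intro} are exactly what the paper packages into Proposition \ref{prop:weakWF}, Lemma \ref{lem:half-intro} and Theorem \ref{thm:almost-intro} to obtain the disjoint $L$-packet decomposition of $(\Pi^w_{z,\mathcal{O}^\vee})^t$.

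The genuine gap is in part (2). You propose to get the reverse inclusion $\bigcup_{\mathcal{O}^{1\vee}}\Pi_{z,\mathcal{O}^\vee,\mathcal{O}^{1\vee}}\subset\Pi^w_{z,\mathcal{O}^\vee}$ from the ``if'' direction of Theorem \ref{thm:intro1}(1), but this is circular: in the paper's logic Theorem \ref{thm:intro1} is a \emph{corollary} of Theorems \ref{thm:B}, \ref{thm:C}, \ref{thm:D}, and its ``if'' direction uses Theorem \ref{thm:B}(2) as input. Verifying (via \ref{thm:C} and \ref{thm:D}) that the parameters $(\psi_{z,\gotM_{\lambda,J}})^t$ are weakly spherical does not, by itself, place their packets inside $\Pi^w$. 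What is actually needed, after Aubert-dualizing, is that every $\pi\in\Pi^A_{\psi_{z,\gotM_{\lambda,J}}}$ has $L$-parameter equal to some $\phi_{z,\lambda,\mu}$ with $\mu\in\spc(\lambda)$. The paper supplies this as Proposition \ref{prop:unionAL}, proved by induction on $|J|$ through Moeglin's parameterization: if $\epsilon^\psi_\pi\in\widehat{\mathcal{S}_{\phi_\psi}}$ then Lemma \ref{lem:lpacket} forces $\pi\in\Pi_{\phi_\psi}$, and otherwise an Atobe-type move (Corollary \ref{prop:ui-cor}) drops $\pi$ into $\Pi^A_{\psi_{z,\gotM_{\lambda,J'}}}$ for some $J'\subsetneq J$. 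This step requires the Moeglin--Xu--Atobe machinery of Section \ref{sect:moeg-param} and cannot be replaced by an appeal to Theorem \ref{thm:intro1}.
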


As mentioned, Theorem \ref{thm:B} should be compared with the parallel work in \cite{liulo-weak} and could be viewed as its explication.



\subsection{Weakly spherical constituents of the spherical Arthur packet}

Another aspect of this study describes the location of weakly spherical representations within the spherical Arthur packet $\Pi_{z,\mathcal{O}^\vee}$ and the characterization of all additional Arthur packets that may contain these representations. 

While essential for the proof of Theorem \ref{thm:intro1}, it is also notable that our findings (Section \ref{sect:redtospringer}) relate this problem with the Springer correspondence for the (finite) Weyl group $W_G$ of $\mathbf{G^\vee}$.

 
Each conjugacy class $\mathcal{O}^\vee\in \mathcal{U}^\vee$ comes equipped with a component group $A(\mathcal{O}^\vee) = Z(u)/Z(u)^{\circ}$,
that is, the group of connected components of the centralizer subgroup $Z(u)< \mathbf{G^\vee}(\mathbb{C})$ of a representative $u\in \mathcal{O}^\vee$. 

It is a finite $2$-group, whose character group $\widehat{A(\mathcal{O}^\vee)}$ parameterizes constituents of the Arthur packet $\Pi_{\mathcal{O}^\vee}$. That is the scope of the enhanced Langlands reciprocity, that may be constructed through two distinct, yet interplaying, approaches.

The first is inherent in the design of Arthur's theory for classical groups, which produces local Arthur packets by means of endoscopic transfer. Pending a choice of a Whittaker datum for $G$, a parameterization
\begin{equation}\label{eq-intro}
\Pi_{z,\mathcal{O}^\vee} = \left\{\delta(z,\mathcal{O}^\vee,\epsilon)\;:\; \epsilon\in \widehat{A(\mathcal{O}^\vee)}_0\right\}
\end{equation}

for the anti-tempered Arthur packet is canonically assigned, where $\widehat{A(\mathcal{O}^\vee)}_0 < \widehat{A(\mathcal{O}^\vee)}$ is a specified subgroup (of index $1$ or $2$).

Here, the trivial character $\epsilon=\mathrm{trv}$ would give the aforementioned  spherical representation $\delta(z,\mathcal{O}^\vee,\mathrm{trv}) = \delta_{z,\mathcal{O}^\vee}\in \Pi_{z,\mathcal{O}^\vee}$.



A second approach traces back to the Kazhdan-Lusztig \cite{KL} geometric construction of the irreducible spectrum of affine Hecke algebras, which established Langlands reciprocity for $G$-representations in the principal Bernstein block. 

Lusztig in \cite{Lu-unip1}, using further geometric methods, later gave a parameterisation for all unipotent representations in $\irr(G)$, providing further means for reinterpreting the parameterization in \eqref{eq-intro}.

The issue of matching between Lusztig's and Arthur's parameterizations of the enhanced (unipotent) Langlands reciprocity was treated in \cite{waldspurgerendoscopy} for the case of odd orthogonal groups, via pinning of endoscopic identities.

In our analysis we rely on Assumption \ref{assumpt:param} for a similar expected match in the symplectic case, which remains to be clarified due to an apparent gap in existing literature.

\subsubsection{The Springer leap}

The geometric point of view brings this discussion nearer to the role of the group $\widehat{A(\mathcal{O}^\vee)}$ as irreducible local systems on the variety $\mathcal{O}^\vee$.

This is the perspective of the Springer correspondence. It attaches to each irreducible (complex) $W_G$-representation, a pair $(\mathcal{O}_{\sigma}^\vee, \epsilon_{\sigma})$, consisting of a conjugacy class $\mathcal{O}_{\sigma}^\vee\in \mathcal{U}^\vee$ and an irreducible local system $\epsilon_{\sigma}\in \widehat{A(\mathcal{O}_{\sigma}^\vee)}$ on it.

From a separate angle Lusztig provides in \cite[Section 4.2]{lusztigbook} a division of the set of isomorphism of irreducible $W_G$-representations into \textit{families}
\[
\irr(W_G) = \bigsqcup_{\mathfrak{c}} \irr_{\mathfrak{c}} (W_G)\;,
\]
according to the two-sided Kazhdan-Lusztig cell $\mathfrak{c}$ on which the representation is supported.

A part of Lusztig's theory sets up a bijection between those cells and the set of special orbits of $\mathcal{U}^\vee$ via the Springer correspondence. Accordingly, for $\mathcal{O}^\vee\in \mathcal{U}^\vee$, we write $\mathfrak{c}(\mathcal{O}^\vee)$ for the cell associated with the special piece to which $\mathcal{O}^\vee$ belongs.



Yet, detection of familial affiliations of representations in $\irr(W_G)$ in terms of their Springer parameters is a subtle issue.

To that aim the subsets 
\begin{equation}\label{eq:ach-sg}
A^{\dagger}(\mathcal{O}^\vee)= \left\{ \epsilon\in \widehat{A(\mathcal{O}^\vee)}\;:\; \exists \sigma \in \irr_{\mathfrak{c}( \mathcal{O}^\vee)}(W_G)\mbox{ s.t. }(\mathcal{O}_{\sigma}^\vee, \epsilon_{\sigma})= (\mathcal{O}^\vee,\epsilon)\right\}
\end{equation}
of the character groups $\widehat{A(\mathcal{O}^\vee)}$ are defined.

It was shown by Achar-Sage \cite{ach-sage}, that $A^{\dagger}(\mathcal{O}^\vee)$ is in fact an explicitly described subgroup, dual to what is known as \textit{Lusztig's canonical quotient} of the component group $A(\mathcal{O}^\vee)$.

\subsubsection{Characterization of weak sphericity}

Returning to our original question, we now present an answer of similar nature in terms of the enhanced Langlands parameterization.

\begin{theoremintro}\label{thm:C}

For any unipotent conjugacy class $\mathcal{O}^\vee\in \mathcal{U}^\vee$ and a sign $z\in\{\pm1\}$, the set of anti-tempered $zs_G$-weakly spherical representations in $\irr_{\chi_{z, \mathcal{O}^\vee}}(G)$ is given as
\[
\Pi^{\mathrm{sph}}_{z, \mathcal{O}^\vee} = \left\{  \delta(z, \mathcal{O}^\vee,\epsilon)\;:\;
\epsilon \in A^{\dagger}(\mathcal{O}^\vee)
\right\}\;,
\]
in terms of the parameterization of \eqref{eq-intro}.
\end{theoremintro}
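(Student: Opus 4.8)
The plan is to reduce the statement to a purely combinatorial comparison between two descriptions of a subset of $\widehat{A(\mathcal{O}^\vee)}$: the set $A^\dagger(\mathcal O^\vee)$ coming from the Springer correspondence and the familial structure on $\irr(W_G)$, and the set of characters $\epsilon$ for which $\delta(z,\mathcal O^\vee,\epsilon)$ has a nonzero $K$-fixed vector for some maximal compact $K<G$. The first step is to establish the correspondence between anti-tempered representations with a fixed unramified (basic unipotent) infinitesimal character and irreducible objects in a module category over the affine Hecke algebra with equal parameters, via the Kazhdan--Lusztig/Borel equivalence for the principal block (and Lusztig's extension for all unipotent representations). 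Under this dictionary, the weakly spherical condition — possessing fixed vectors under \emph{some} maximal compact $K$ — translates into the representation containing, upon restriction to a finite Hecke algebra $\mathcal H_{W_J}$ attached to a maximal proper parabolic-type subset $J$ of the affine Dynkin diagram, the sign or trivial isotypic component, i.e.\ the statement that the corresponding $W_G$-representation (after taking the associated graded / passing to the asymptotic algebra) meets a certain explicit list of $W_J$-induced modules.

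Second, I would invoke the reduction carried out in Section \ref{sect:redtospringer} (which the excerpt advertises): weak sphericity of $\delta(z,\mathcal O^\vee,\epsilon)$ is governed by whether the Springer representation $\sigma$ with Springer parameter $(\mathcal O^\vee,\epsilon)$ lies in the two-sided cell $\mathfrak c(\mathcal O^\vee)$ attached to the special piece of $\mathcal O^\vee$. Concretely, a weakly spherical constituent should correspond to those $\epsilon$ for which the local system $(\mathcal O^\vee,\epsilon)$ is ``as large as possible'' within the cell, which is exactly the defining property \eqref{eq:ach-sg} of $A^\dagger(\mathcal O^\vee)$. This is where the Achar--Sage theorem enters: it identifies $A^\dagger(\mathcal O^\vee)$ with the dual of Lusztig's canonical quotient, giving it a group structure and an explicit combinatorial description in terms of partitions, which one then matches against the explicit classification of maximal compact subgroups and their fixed-vector spaces via Iwahori--Matsumoto theory \cite{IM}.

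Third, the sign twist and the $z s_G$ bookkeeping must be handled. For symplectic $G$ one has $s_G=1$ and only $z=1$ occurs, so this is vacuous; for odd orthogonal $G$, the twist by $\kappa_0$ interchanges the two ``halves'' of the parameterization, and one checks that $\kappa_0\otimes\delta(z,\mathcal O^\vee,\epsilon)=\delta(-z,\mathcal O^\vee,\epsilon')$ for an explicit (affine) modification $\epsilon\mapsto\epsilon'$ of the character, compatible with the subgroup $A^\dagger(\mathcal O^\vee)$ since the latter is intrinsic to $\mathcal O^\vee$. Here I would lean on Assumption \ref{assumpt:param} to align Lusztig's and Arthur's enhanced parameterizations, so that ``$\delta(z,\mathcal O^\vee,\epsilon)$'' as defined through endoscopy in \eqref{eq-intro} agrees with the geometrically defined object whose fixed vectors we have been computing; the Waldspurger pinning \cite{waldspurgerendoscopy} covers the orthogonal case and Assumption \ref{assumpt:param} the symplectic one.

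\textbf{Main obstacle.} The crux is the second step: proving that weak sphericity is detected precisely by membership of the Springer representation in the cell $\mathfrak c(\mathcal O^\vee)$, i.e.\ that the ``$K$-spherical'' constituents of $\Pi_{z,\mathcal O^\vee}$ are exactly the images under Springer of $\irr_{\mathfrak c(\mathcal O^\vee)}(W_G)$ with orbit $\mathcal O^\vee$. One direction (a weakly spherical constituent forces the Springer parameter into the cell, hence $\epsilon\in A^\dagger(\mathcal O^\vee)$) should follow from a wavefront/Gelfand--Kirillov dimension estimate combined with the cell-theoretic interpretation of the asymptotic Hecke algebra; the converse — that every $\epsilon\in A^\dagger(\mathcal O^\vee)$ is actually realized by some maximal compact $K$ — requires producing enough maximal compacts, and this is exactly where the hypercube-lattice / power-of-$2$ structure of the relative special pieces and the Iwahori--Matsumoto classification must be used to exhibit the $K$-fixed vectors explicitly. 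I expect the bulk of the technical work, and the reliance on the residue-characteristic hypothesis, to sit in making this realization statement uniform across all classical $\mathcal O^\vee$.
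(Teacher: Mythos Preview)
Your outline captures the overall architecture (Borel--Casselman plus Kazhdan--Lusztig to pass to Hecke modules, then a finite-group statement), and the handling of the $\kappa_0$-twist and of Assumption \ref{assumpt:param} is in line with the paper. But there is a genuine gap in the second step, and it is precisely the step you flag as the main obstacle.

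First, the object that arises on the Weyl-group side after the $q\to 1$ deformation is \emph{not} the irreducible top-degree Springer representation $\sigma(\mathcal O^\vee,\epsilon)$ but the \emph{full} Springer fibre cohomology $\Sigma(\mathcal O^\vee,\epsilon)=\Hom_{A(\mathcal O^\vee)}(\epsilon,H^\ast(\mathcal B_{\mathcal O^\vee}))$. The paper proves (Theorem \ref{thm:mainSpringer}, via Lusztig's asymptotic algebra) that $\dim\delta(1,\mathcal O^\vee,\epsilon)^{K_i}=\dim\Sigma(\mathcal O^\vee,\epsilon)^{W_{n_G,i}}$; so weak sphericity becomes the question of whether $\Sigma(\mathcal O^\vee,\epsilon)$ contains one of a short explicit list of $W_{n_G}$-irreducibles $E^{s_G}_i$ (Proposition \ref{prop:firstreduction}). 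Your phrasing in terms of $\sigma$ and two-sided cells misses this: saying that ``$\sigma$ lies in $\mathfrak c(\mathcal O^\vee)$'' is literally the definition \eqref{eq:ach-sg} of $\epsilon\in A^\dagger(\mathcal O^\vee)$, so that formulation is circular rather than a reduction.

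Second, and this is the real content, the equivalence ``$\Sigma(\mathcal O^\vee,\epsilon)$ contains some $E^{s_G}_i$'' $\Leftrightarrow$ ``$\epsilon\in A^\dagger(\mathcal O^\vee)$'' is \emph{not} a soft cell-theoretic fact. The paper proves it (Proposition \ref{prop:dagger}) by invoking the recent algorithmic Green theory of Waldspurger and La for the decomposition of $\Sigma(\mathcal O^\vee,\epsilon)$: one chooses specific parameters $(\Delta_G,\tau_G)$ for their partial order, shows that the $E^{s_G}_i$ are automatically $\leq_{\Delta_G,\tau_G}$-maximal whenever they occur, and then runs their tableau algorithm explicitly to see that the first row of any tableau picks out exactly the complement of $X_{\lambda,\epsilon}$, whence the remaining rows collapse to at most one further row precisely when $X_{\lambda,\epsilon}\subset X^{s_G}_\lambda$, which is the combinatorial form of $\epsilon\in A^\dagger$ (Lemma \ref{lem:walds-dagger}). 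Neither a wavefront/GK-dimension bound nor a count of maximal compacts against the hypercube of the special piece substitutes for this computation; your proposal does not supply any mechanism to detect the constituents of the full $\Sigma$, and that is where the argument would stall.
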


It follows from the analysis of Achar-Sage that canonical group embeddings 
\[
\iota_{\mathcal{O}^\vee_1,\mathcal{O}^\vee}: A^{\dagger}(\mathcal{O}^\vee_1)\hookrightarrow A^{\dagger}(\mathcal{O}^\vee)
\]
exist, for any $\mathcal{O}^\vee\in \mathcal{U}^\vee$ and any $\mathcal{O}^\vee_1 \in \spc(\mathcal{O}^\vee)$.

They are compatible with the partial order on $\mathcal{U}^\vee$, in the sense that 
\[
\iota_{\mathcal{O}^\vee_1,\mathcal{O}^\vee}\circ\iota_{\mathcal{O}^\vee_2, \mathcal{O}^\vee_1} = \iota_{\mathcal{O}^\vee_2,\mathcal{O}^\vee}
\]
holds, whenever $\mathcal{O}^\vee_2\in \spc(\mathcal{O}^\vee_1)$ and $\mathcal{O}^\vee_1\in \spc(\mathcal{O}^\vee)$.

\begin{definition}\label{defi:prim}
Given $\mathcal{O}^\vee\in \mathcal{U}^\vee$ and $\mathcal{O}^\vee_1 \in \spc(\mathcal{O}^\vee)$, we say that a character $\epsilon \in A^{\dagger}(\mathcal{O}^\vee)$ is \textit{$\mathcal{O}^\vee_1$-primitive}, when there is no class $\mathcal{O}^\vee_1 \leq \mathcal{O}^\vee_2\lneq \mathcal{O}^\vee$ (in the topological partial order on $\mathcal{U}^\vee$) with $\epsilon\in \mathrm{Im}(\iota_{\mathcal{O}^\vee_2,\mathcal{O}^\vee})$.
\end{definition}


The following theorem fully characterizes the set of Arthur packets that contain a given weakly spherical anti-tempered representation that admits a quasi-basic infinitesimal character.

\begin{theoremintro}\label{thm:D}
Let $\mathcal{O}^\vee\in \mathcal{U}^\vee$ be a unipotent conjugacy class, $z\in \{\pm1\}$ a possible sign, and 
\[
\delta = \delta(z,\mathcal{O}^\vee,\epsilon)\in \Pi_{z,\mathcal{O}^\vee}
\]
an anti-tempered $zs_G$-weakly spherical representation (according to Theorem \ref{thm:C}), parameterized by a character $\epsilon\in  A^{\dagger}(\mathcal{O}^\vee)$.

Let $\Pi = \Pi^{A}_{\psi}\subset \irr(G)$ be an Arthur packet, associated to an $A$-parameter $\psi\in \Psi(G)$. 

Then, an inclusion $\delta\in \Pi$ is valid, if and only if, $\Pi = \Pi_{z,\mathcal{O}^\vee, \mathcal{O}^\vee_1}$, for a class $\mathcal{O}^\vee_1\in \spc(\mathcal{O}^\vee)$, for which $\epsilon$ is $\mathcal{O}^\vee_1$-primitive.








\end{theoremintro}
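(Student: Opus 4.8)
The plan is to deduce Theorem \ref{thm:D} from the two preparatory results, Theorem \ref{thm:B} and Theorem \ref{thm:C}, by carefully tracking Aubert duality. First I would observe that since $\delta = \delta(z,\mathcal{O}^\vee,\epsilon)$ is anti-tempered, its Aubert dual $\delta^t$ is tempered, and for a tempered representation the $L$-packet and the Arthur packet coincide, with $\mathcal{O}^\vee_{\phi(\delta^t)} = \mathcal{O}^\vee$ (since $\delta$ lies in $\Pi_{z,\mathcal{O}^\vee}$, whose transposed parameter is the basic unipotent one). Now, for an arbitrary $A$-parameter $\psi$ with $\Pi = \Pi^A_\psi$, the inclusion $\delta\in\Pi$ is equivalent, via the Aubert-duality equivariance $(\Pi^A_\psi)^t = \Pi^A_{\psi^t}$ of Arthur packets \cite[Theorem 1.6]{atobe-crelle1}, to $\delta^t\in \Pi^A_{\psi^t}$. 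So the question becomes: for which $A$-parameters $\psi'$ (namely $\psi' = \psi^t$) does the Arthur packet $\Pi^A_{\psi'}$ contain the fixed tempered representation $\delta^t$? Since $\delta^t$ has infinitesimal character $\chi_{z,\mathcal{O}^\vee}$ and $u_{\phi(\delta^t)}\in\mathcal{O}^\vee$, any such $\psi'$ must itself have infinitesimal character $\chi_{z,\mathcal{O}^\vee}$, and moreover the $L$-packet $\Pi_{\phi_{\psi'}}$ must contain $\delta^t$ because a tempered representation lies in $\Pi^A_{\psi'}$ only if it lies in the contained $L$-packet $\Pi_{\phi_{\psi'}}\subset \Pi^A_{\psi'}$ — here I would invoke that $\delta^t$ being tempered forces it into the tempered-part $L$-packet.

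Next I would use Theorem \ref{thm:B}(1): the decomposition $(\Pi^w_{z,\mathcal{O}^\vee})^t = \bigsqcup_{\mathcal{O}^{1\vee}\in\spc(\mathcal{O}^\vee)} \Pi_{\phi_{z,\mathcal{O}^\vee,\mathcal{O}^{1\vee}}}$ is \emph{disjoint}, so $\delta^t$ lies in exactly one of these $L$-packets, say $\Pi_{\phi_{z,\mathcal{O}^\vee,\mathcal{O}^\vee_1}}$ for a unique $\mathcal{O}^\vee_1\in\spc(\mathcal{O}^\vee)$. By Proposition \ref{prop:intro} the parameter $\phi_{z,\mathcal{O}^\vee,\mathcal{O}^\vee_1}$ is of Arthur type, $= \phi_{\psi_{z,\mathcal{O}^\vee,\mathcal{O}^\vee_1}}$, and so the only candidate $A$-parameters $\psi'$ with $\delta^t\in\Pi_{\phi_{\psi'}}$ and the right infinitesimal character are exactly those with $\phi_{\psi'} = \phi_{z,\mathcal{O}^\vee,\mathcal{O}^\vee_1}$; since the map $\psi\mapsto\phi_\psi$ is injective on $\Psi(G)$ for classical groups, $\psi' = \psi_{z,\mathcal{O}^\vee,\mathcal{O}^\vee_1}$ is forced. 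Transposing back, $\psi = (\psi')^t = \psi^t_{z,\mathcal{O}^\vee,\mathcal{O}^\vee_1}$, hence $\Pi = \Pi^A_\psi = \Pi^A_{\psi^t_{z,\mathcal{O}^\vee,\mathcal{O}^\vee_1}} = \Pi_{z,\mathcal{O}^\vee,\mathcal{O}^\vee_1}$ by the definition \eqref{eq:introApack}. This already pins down $\Pi$ to be of the asserted form $\Pi_{z,\mathcal{O}^\vee,\mathcal{O}^\vee_1}$; it remains to identify \emph{which} $\mathcal{O}^\vee_1$ arise, i.e. to show $\delta^t\in \Pi_{\phi_{z,\mathcal{O}^\vee,\mathcal{O}^\vee_1}}$ if and only if $\epsilon$ is $\mathcal{O}^\vee_1$-primitive.

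For the final identification I would combine Theorem \ref{thm:C} with the structure of the embeddings $\iota_{\mathcal{O}^\vee_1,\mathcal{O}^\vee}\colon A^\dagger(\mathcal{O}^\vee_1)\hookrightarrow A^\dagger(\mathcal{O}^\vee)$ coming from Achar--Sage. The point is that the tempered $L$-packet $\Pi_{\phi_{z,\mathcal{O}^\vee,\mathcal{O}^\vee_1}}$ is itself parameterized by characters of $A(\mathcal{O}^\vee_1)$ (it is a tempered packet attached to the class $\mathcal{O}^\vee_1$), and under Aubert duality the anti-tempered $\delta(z,\mathcal{O}^\vee,\epsilon)$ lands in this $L$-packet precisely when the character $\epsilon$, restricted to the relevant canonical quotient, factors through the quotient associated to $\mathcal{O}^\vee_1$ but through no strictly larger (i.e. lower in the closure order, hence larger quotient) class $\mathcal{O}^\vee_2$ with $\mathcal{O}^\vee_1\leq\mathcal{O}^\vee_2\lneq\mathcal{O}^\vee$ — which is exactly the condition that $\epsilon\in\mathrm{Im}(\iota_{\mathcal{O}^\vee_1,\mathcal{O}^\vee})$ minimally, i.e. $\mathcal{O}^\vee_1$-primitivity in the sense of Definition \ref{defi:prim}. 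Concretely I would show: (a) $\delta(z,\mathcal{O}^\vee,\epsilon)^t\in\Pi_{\phi_{z,\mathcal{O}^\vee,\mathcal{O}^\vee_2}}$ forces $\epsilon\in\mathrm{Im}(\iota_{\mathcal{O}^\vee_2,\mathcal{O}^\vee})$, using the explicit description of $\Pi_{z,\mathcal{O}^\vee,\mathcal{O}^\vee_2}$ from the proof of Theorem \ref{thm:B}; and (b) conversely, among all $\mathcal{O}^\vee_2$ with $\epsilon\in\mathrm{Im}(\iota_{\mathcal{O}^\vee_2,\mathcal{O}^\vee})$, the disjointness in Theorem \ref{thm:B}(1) together with the compatibility $\iota_{\mathcal{O}^\vee_1,\mathcal{O}^\vee}\circ\iota_{\mathcal{O}^\vee_2,\mathcal{O}^\vee_1} = \iota_{\mathcal{O}^\vee_2,\mathcal{O}^\vee}$ forces $\delta^t$ to sit in the $L$-packet for the \emph{minimal} such class, which is $\mathcal{O}^\vee_1$-primitivity.

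I expect the main obstacle to be step (a)/(b) of the last paragraph: pinning down exactly how the character $\epsilon$ of $A(\mathcal{O}^\vee)$ parameterizing $\delta$ relates, under Aubert duality, to the character of $A(\mathcal{O}^\vee_1)$ parameterizing $\delta^t$ inside the tempered $L$-packet. This requires the compatibility of the endoscopic (Arthur) enhanced parameterization with Aubert duality across the two infinitesimal-character-equal parameters $\psi_{z,\mathcal{O}^\vee,\mathcal{O}^\vee_1}$ and its transpose — the Atobe-type results on Aubert duality for $A$-packets give the existence of the bijection, but matching it with the Achar--Sage embeddings $\iota_{\mathcal{O}^\vee_1,\mathcal{O}^\vee}$ (i.e. with Lusztig's canonical quotients) is precisely the ``Springer leap'' and presumably relies on the explicit combinatorics developed in Section \ref{sect:redtospringer} together with Assumption \ref{assumpt:param} in the symplectic case. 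Everything else — the reduction via Aubert duality, the uniqueness of $\mathcal{O}^\vee_1$, and the Arthur-type/injectivity bookkeeping — is formal given Theorems \ref{thm:B} and \ref{thm:C}.
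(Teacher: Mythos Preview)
Your argument contains a critical error in the second paragraph. You claim that ``a tempered representation lies in $\Pi^A_{\psi'}$ only if it lies in the contained $L$-packet $\Pi_{\phi_{\psi'}}\subset \Pi^A_{\psi'}$.'' This is false. The tempered representation $\delta^t$ lies in exactly one $L$-packet, namely the tempered one $\Pi_{\phi_{(z,\ldots,z),\lambda}} = \Pi_{\phi_{z,\mathcal{O}^\vee,\mathcal{O}^\vee}}$; but it may well lie in many $A$-packets $\Pi^A_{\psi'}$ for non-tempered $\psi'$, and in those cases $\phi_{\psi'}\neq \phi(\delta^t)$ and $\delta^t\notin\Pi_{\phi_{\psi'}}$. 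Your reasoning would force $\psi' = \psi_{z,\gotM_\lambda}$ uniquely, hence $\Pi = \Pi_{z,\mathcal{O}^\vee}$ only, contradicting the very statement of Theorem~\ref{thm:D} (and the triangular-partition example in the introduction, where $\pi_1$ lies in two distinct $A$-packets). The disjoint $L$-packet decomposition of Theorem~\ref{thm:B}(1) tells you where $\delta^t$ sits as an $L$-packet member, but nothing about which overlapping $A$-packets it belongs to.

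The paper does not deduce Theorem~\ref{thm:D} from Theorems~\ref{thm:B} and~\ref{thm:C}. It proves a separate structural result, Theorem~\ref{thm:lastin3}, which directly characterizes all $A$-parameters $\psi'$ with $\Pi^A_{\psi'}\cap \Pi^f_{z,\lambda}\neq\emptyset$: they are exactly the near-tempered $\psi_{z,\gotM_{\lambda,J}}$ for $J\subset\mathbb{J}(\lambda)$, and the intersection is governed by $\mathcal{O}^\vee_{T_J(\lambda)}$-primitivity (via Lemma~\ref{lem:Js} and Proposition~\ref{prop:prim}). The proof of Theorem~\ref{thm:lastin3} is the substantive work: it uses Atobe's exhaustive description of $A$-packet intersections (Proposition~\ref{prop:ui-exhaust}), the explicit moves of Proposition~\ref{prop:ui-nt}, and the sign constraints of Lemmas~\ref{lem:key-tech} and~\ref{it:Js3} to rule out all non-near-tempered $\psi'$. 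This Moeglin--Xu--Atobe combinatorics is precisely the missing ingredient; neither the $L$-packet decomposition of Theorem~\ref{thm:B} nor the Springer-theoretic Theorem~\ref{thm:C} can substitute for it.
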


\subsection{Example: Triangular partitions}

One appealing family of examples for our analysis appears when considering the dual unipotent conjugacy class $\mathcal{O}^\vee$ in $\mathbf{SO}_{4k(k+1)+1}(\mathbb{C})$ that is indexed by the triangular partition $(4k+1, 4k-1, 4k-3\ldots,1)$, for $k\geq1$.

The spherical Arthur packet $\Pi_{\mathcal{O}^\vee}\subset \irr(\mathrm{Sp}_{4k(k+1)}(F))$ in this case contains $\left|\widehat{A(\mathcal{O}^\vee)}_0\right|= 4^k$ representations. 

A total of $\left|\Pi_{\mathcal{O}^\vee}^{s}\right|= \left|A^{\dagger}(\mathcal{O}^\vee)\right|= 2^k$ out of them are weakly spherical. Those weakly spherical representations can be found as consituents of a total of $|\spc(\mathcal{O}^\vee)| =2^k$ distinct weakly spherical Arthur packets. 

Counting the union of those Arthur packets brings us to $5^k$ distinct constituents of the weak Arthur packet $\Pi^{w}_{\mathcal{O}^\vee}$.

Let us also consider the case of $G= \mathrm{Sp}_8(F)$ in greater precision. Here, $\mathcal{O}_{135}^\vee$ is the unipotent conjugacy class indexed by the partition $(135)$ in the Langlands dual group $\mathbf{SO}_9(\mathbb{C})$. 

Viewed as a representation of $W_F\times \mathrm{SL}_2(\mathbb{C}) \times \mathrm{SL}_2(\mathbb{C})$, the associated basic unipotent $A$-parameter  is given as
\[
\psi_{\mathcal{O}_{135}^\vee} = (1\otimes \nu_1\otimes \nu_1) \oplus (1\otimes \nu_1\otimes \nu_3) \oplus (1\otimes \nu_1\otimes \nu_5)\;,
\]
where $\nu_k$ is the $k$-dimensional irreducible $\mathrm{SL}_2(\mathbb{C})$-representation, while $1$ denotes the trivial representation of $W_F$.

The tempered Arthur packet $\Pi_{\phi_{\mathcal{O}_{135}^\vee,\mathcal{O}_{135}^\vee}}$ is known to contain a supercuspidal representation $\pi_{sc}$. Indeed, in \cite[Section 3.4]{atobe-crelle2} a total of $9$ distinct Arthur packets were exhibited to contain $\pi_{sc}$. 

Since supercuspidal irreducible representations are self-Aubert-dual, one of those Arthur packets is the anti-tempered packet $\Pi_{\mathcal{O}_{135}^\vee}$. In further detail, the packet has $4$ constituents
\[
\Pi_{\mathcal{O}_{135}^\vee} = \left\{ \begin{array}{rl} \delta_{\mathcal{O}_{135}^\vee}=\delta(\mathcal{O}_{135}^\vee,+++), & \pi_1=\delta(\mathcal{O}_{135}^\vee,--+), \\ \pi_2= \delta(\mathcal{O}_{135}^\vee,+--), & \pi_{sc}=\delta(\mathcal{O}_{135}^\vee,-+-)\end{array}\right\}\;,
\]
including the spherical representation $\delta_{\mathcal{O}_{135}^\vee}$. 

The group of characters $\widehat{A(\mathcal{O}_{135}^\vee)}_0$ is identified with a subgroup of $(\mathbb{Z}/2\mathbb{Z})^3$, read as signs attached to each part of the partition. The subgroup $A^{\dagger}(\mathcal{O}_{135}^\vee)$ is of order $2$, marking, by Theorem \ref{thm:C},
\[
\Pi_{\mathcal{O}_{135}^\vee}^{\mathrm{sph}} = \{ \delta_{\mathcal{O}_{135}^\vee}, \pi_1\}
\]
as the set of weakly spherical representations in $\Pi_{\mathcal{O}_{135}^\vee}$. 

Now, the relative special piece is given as $\spc(\mathcal{O}_{135}^\vee) = \{\mathcal{O}_{135}^\vee, \mathcal{O}_{144}^\vee\}$. The additional Arthur-type $L$-parameter 
\[
\phi_{\mathcal{O}_{135}^\vee, \mathcal{O}_{144}^\vee} = (1\otimes \nu_1) \oplus (q^{1/2} \otimes \nu_4) \oplus (q^{-1/2} \otimes \nu_4)\in \Phi(G)
\]
produces a singleton $L$-packet $\Pi_{\phi_{\mathcal{O}_{135}^\vee, \mathcal{O}_{144}^\vee}}= \{\tau\}$, while the resulting weak Arthur packet then consists of the $5$ representations
\[
\Pi_{\mathcal{O}_{135}^\vee}^{w}= \Pi_{\mathcal{O}_{135}^\vee} \cup \Pi_{ \mathcal{O}_{135}^\vee, \mathcal{O}_{144}^\vee}= \{\delta_{\mathcal{O}_{135}^\vee} ,\pi_1, \pi_2, \pi_{sc}, \tau^t\}\;.
\]
The additional weakly spherical Arthur packet $\Pi_{\mathcal{O}_{135}^\vee, \mathcal{O}_{144}^\vee}= \{\pi_1, \pi_{sc}, \tau^t\}$ is given by the $A$-parameter
\[
\phi^t_{\mathcal{O}_{135}^\vee, \mathcal{O}_{144}^\vee} = (1\otimes \nu_1\otimes \nu_1) \oplus (1\otimes \nu_2 \otimes \nu_4)\in \Psi(G)\;.\]

Indeed, the inclusion $\pi_1 \in \Pi_{\mathcal{O}_{135}^\vee, \mathcal{O}_{144}^\vee}$ follows, by Theorem \ref{thm:D}, from $\mathcal{O}_{144}^\vee$-primitivity of the element $(--+)\in A^{\dagger}(\mathcal{O}_{135}^\vee)$, since $\widehat{A(\mathcal{O}_{144}^\vee)}_0$ is a trivial group.

\subsection{Methods}

Theorem \ref{thm:intro1} is in fact an immediate corollary of the combination of Theorems \ref{thm:B},\ref{thm:C}, and \ref{thm:D}.

Each of these latter three results necessitates the application of distinct toolkits of recently developed techniques, which we will now outline.

For Theorem \ref{thm:B}, Gelfand--Kirillov dimensions of representations are extracted out of the nilpotent orbits that comprise the algebraic wavefront invariant. Here, we utilize the recent advancements in \cite{cmowavefront} that provide an explicit knowledge of these invariants for certain cases of unipotent
representations.


The proof of Theorem \ref{thm:D} hinges on techniques within the combinatorial theory of Arthur packet intersections, as developed in recent years by Xu \cite{xu21} and Atobe \cite{atobe-crelle1,atobe-crelle2}. For given tempered representations, whose parameters are associated with the Lusztig canonical quotient, we apply this theory to exhaust all possible Arthur packets that may contain a specified representation.

To that aim we recall the essentials of the theory of Moeglin parameters for Arthur packets in Sections \ref{sect:Aexplicit} and \ref{sect:moeg-param}, narrowing our focus to what we term \textit{near-tempered} Arthur packets in Section \ref{sect:near-temp}. This analysis suffices to complete the proofs of Theorems \ref{thm:B} and \ref{thm:D} in Sections \ref{sect:wArthur} and \ref{sect:weak-sph}, respectively.

Theorem \ref{thm:C} is tackled through categorical equivalences and deformation techniques, facilitating a direct reduction to the representation theory of finite Weyl groups. This is the concern of the last three sections.

Section \ref{sect:redtospringer} presents a full proof scheme for Theorem \ref{thm:C}.
The main tools are the Borel-Casselman equivalence between the category of Iwahori-spherical representations and the module category of the Iwahori-Hecke algebra, and Iwahori and Matsumoto's classification of maximal compact subgroups.

Section \ref{sec:hecke} elaborates on how the finite-dimensional module categories of Hecke algebras, through the Kazhdan--Lusztig construction, provide the proof for the pivotal reduction, Theorem \ref{thm:mainSpringer}.

A critical aspect that arises is the fact that the convolution algebras attached to maximal compact subgroups may lie in the class of \textit{extended} (finite) Hecke algebras. To harness the full strength of deformation techniques for computations of invariants, we reproduce, in Section \ref{sec:finiteheckealgebras}, the theory of Lusztig's asymptotic algebras in the extended case.

Lastly, Section \ref{sect:walds-la} delves into the representation theory of finite signed permutations groups, that is, the Weyl groups of classical Lie type.  Its goal is to prove Theorem \ref{prop:combinatoricsprop}, the final ingredient in the proof of Theorem \ref{thm:C}, which relates to the decomposition of (full) Springer fibre representations into irreducible constituents. 

Indeed, such decompositions, referred to as Green theory for their links with classical Green functions, are typically challenging to access. Yet, recent advancements by Waldspurger \cite{waldspurger} and La \cite{la} supply a novel algorithmic approach. We employ their results to exhibit a link (Proposition \ref{prop:dagger}) between those Springer representations whose parameterization is associated with Lusztig's canonical quotient and a Weyl group analogue of weak-sphericity.

\subsection{Acknowledgements}

The authors wish to express their gratitude to Dan Ciubotaru. 
Early investigations into weakly spherical representations including bounds on the $L$-parameters and a close form of Proposition \ref{prop:firstreduction} were worked out in a private correspondence with him. We appreciate his permission to include the latter result in this work.


Special acknowledgments are owed to several individuals: Nadya Gurevich for her insights on minimizers of the Gelfand--Kirillov dimension, Lei Zhang for elucidating emerging tools connecting Arthur theory with wavefront invariants, Shilin Yu and Daniel Wong for highlighting the embedding of special pieces into Lusztig's canonical quotient, and Lucas Mason-Brown for sharing expertise on Arthur packets for real groups. Additional appreciation goes to Bin Xu, Chengbo Zhu, Jia Jun Ma, and Wee Teck Gan for engaging discussions that contributed to this work.

Investigations into the themes that lead up to this work began at the January 2023 "Representation Theory, Combinatorics, and Geometry" program hosted by the Institute for Mathematical Sciences in Singapore, which provided valuable opportunities for the exchange of ideas. We also thank the Mathematics departments at the National University of Singapore and the Technion Institute for their subsequent hospitality and support, during reciprocal visits.

This research is supported by the Israel Science Foundation (Grant Number: 737/20).

\section{Background}
\subsection{Classical $p$-adic groups}\label{sec:maxcpcts}

Repeating some notions, we fix $F$, a non-Archimedean local field of characteristic $0$. We write $\mathfrak{p}_F < \mathfrak{O}_F < F$ for its ring of integers, and the maximal ideal of the ring. The residue field $\mathfrak{O}_F/ \mathfrak{p}_F$ is finite of size $q=p^h$.

Let $N_G\geq 3$ be a fixed integer.

Throughout this work we will be concerned with the totally disconnected locally compact group $G = G_{N_G}$, defined as the $F$-split special orthogonal group $G = \mathrm{SO}_{N_G+1}(F)$, when $N_G$ is even, or ($F$-split) symplectic group $G = \mathrm{Sp}_{N_G-1}(F)$, when $N_G$ is odd.

We set 
\[
s_G = \left\{ \begin{array}{ll} -1 & N_G\mbox{ is even}  \\ 
 1 & N_G\mbox{ is odd} \end{array} \right.\;,
\]
and write $n_G$ for the rank of the simple group $G$, that is, either $N_G-s_G = 2n_G$ or $N_G-s_G = 2n_G+1$.

We assume that $p> 6n_G$ for applications of results from \cite{cmbo-arthur} (Theorem \ref{thm:cmbo})\footnote{ The second author can confirm that not much effort was put into this bound and it is expected that $p>2$ will suffice.}.

The group $G$ may be concretely realized as follows. 

Consider a $(N_G-s_G)$-dimensional $F$-vector space $V$, with a basis 
\[
e_1,\ldots,e_{n_G},v, f_{n_G},\ldots,f_1\in V\;,
\]
when $N_G$ is even, or  
\[
e_1,\ldots,e_{n_G}, f_{n_G},\ldots,f_1\in V\;,
\]
when $N_G$ is odd.

Let $B$ be a bilinear form on $V$ given as 
%
\[
B(e_i,e_j) = B(f_i,f_j) = 0, \; B(e_i,f_j) = \delta_{i,j},\; B(f_j,e_i) = -s_G\delta_{i,j} \quad 1\le i,j\le n_G\;,
\]
(here, $\delta_{i,j}$ is the Kronecker delta function), and
\[
B(f_j,v)= B(e_i,v)= B(v,e_i) = B(v,f_i) = 0, \quad B(v,v) = 1, \quad 1\le i,j\le n_G\;,
\]
if defined.

The group $G$ is then realized as the identity connected component of the isometry group of the form $B$.
%
%

\subsubsection{Weyl groups and compact subgroups}\label{sect:iwa-decomp}

The above-designated Witt basis for the form $B$ gives rise to an integral $\mathfrak{O}_F$-structure for $V$. The corresponding isometry group $G_{\mathfrak{O}_F}< G$ of the $\mathfrak{O}_F$-lattice constitutes a hyperspecial maximal compact subgroup of $G$.

It also gives rise to an (open compact) Iwahori subgroup $I_G< G_{\mathfrak{O}_F}$, that is defined as the pullback of the subgroup of upper-triangular matrices through the resulting projection $G_{\mathfrak{O}_F} \to G_{\mathfrak{O}_F/ \mathfrak{p}_F}$.

Let us consider the maximal $F$-torus $T< G$ of transformations that are diagonal with respect to the Witt basis, and its integral form $T_{\mathfrak{O}_F} = T\cap G_{\mathfrak{O}_F} = T\cap I_G$.

The (finite) Weyl group 
\[
W_G = N(G,T) / T 
\]
of $G$ arises when taking a quotient of the normalizer subgroup of $T$ in $G$. 

The Iwahori-Weyl group of $G$ is defined as the quotient
\[
\widetilde{W_G} = N(G,T) / T_{\mathfrak{O}_F} \;,
\]
clearly equipped with a projection $p: \widetilde{W_G}\to W_G$.

The Iwahori decomposition \cite[Theorem 2.16]{IM} describes the double cosets of $I_G$ in $G$ as
 \begin{equation}\label{eq:iwah-decomp}
 G=\bigsqcup_{w\in \widetilde{W_G}} I_G  w I_G\;.
 \end{equation}

Moreover, Iwahori-Matsumoto give a correspondence between  the set of compact subgroups $I_G < K< G$ and the set of finite subgroups $W_K < \widetilde{W_G}$. 

A particular focus of this work is on the set of \textit{maximal} open compact subgroups of $G$. It is known \cite[Section 14.7]{garrett}, that up to $G$-conjugation, a maximal compact open subgroup of $G$ must contain $I_G$. Thus, the classification of such groups reduces to the classification of finite subgroups of $\widetilde{W_G}$.

Indeed, as we recall in greater detail in Section \ref{sect:IM}, each maximal finite subgroup of $\widetilde{W_G}$ is conjugate to a member of the sequence of subgroups 
\[
W_{K_0}, W_{K_1},\ldots, W_{K_{n_G}}<\widetilde{W_G}\;,
\]
whose corresponding compact groups $I_G < K_i < G$ we now describe explicitly.

Embedding $G$ in a matrix form using the Witt basis for $B$, we write
\begin{equation}
    K_i = \begin{pmatrix}
         \mathfrak O_F & \mathfrak O_F & \mathfrak p_F^{-1} \\
         \mathfrak p_F & G^{i}_{\mathfrak O_F} & \mathfrak O_F \\
         \mathfrak p_F  & \mathfrak p_F & \mathfrak O_F
    \end{pmatrix} \cap G, \quad 0\le i \le n_G\;,
\end{equation}
where $G^{i}_{\mathfrak O_F}< G_{N_G-2i}$ stands for the integral form of the lower-rank group of same type as $G$. 

Note, that taking $i = 0$ recovers the hyperspecial maximal compact subgroup $K_0= G_{\mathfrak O_F}$ featuring at the outset of our analysis.

Let us also note that, when $N_G$ is odd, the symplectic group $G$ is simply connected. This fact causes the maximal compact subgroups of $G$ to coincide with maximal parahoric subgroups from the standard Bruhat-Tits theory. Indeed, there are precisely $n_G+1$ conjugacy classes of maximal parahoric subgroups.

%
%
%
%
Yet, in the odd orthogonal case of even $N_G$, when $G$ is no longer simply connected, the classification of its maximal compact subgroups slightly diverges from the parahoric case. For $0< i\leq n_G$,  each of the groups $K_i$ contains a maximal parahoric as a subgroup of index $2$.


\subsection{Unipotent locus of the dual group}

\subsubsection{Symplectic and orthogonal partitions}\label{sec:partitions}

The toplogical structure of $\mathcal{U}^\vee$ is described through combinatorics of integers partitions, which we explicate here.

We treat partitions as the set $\mathcal{P}$ of tuples $\lambda= (0 < \lambda_1\leq \lambda_2\leq\ldots \leq \lambda_{\ell(\lambda)})$ of integers. 

For an integer $N\geq1$, its partitions $\mathcal{P}(N)\subset \mathcal{P}$ consist of $\lambda\in \mathcal{P}$ with $|\lambda|:= \sum_{i=1}^{\ell(\lambda)} \lambda_i = N$.

The multiplicity of a part $m(c,\lambda) = \#\{ i\,:\, \lambda_i = c\}$
is defined for each integer $c\in \mathbb{Z}_{>0}$ and a partition $\lambda\in \mathcal{P}$.

Given integers $a\leq b$ and a partition $\lambda\in \mathcal{P}$ as above, we set the \textit{interval} $\lambda_{a \leftrightarrow b}\in \mathcal{P}$ to be the partition consisting of all parts $\lambda'_i$ of $\lambda$ with $a\leq \lambda'_i \leq b$. In other words, 
\[
\lambda_{a \leftrightarrow b} = (\lambda_i\leq \lambda_{i+1}\leq\ldots \leq \lambda_j)\in \mathcal{P}\;,
\]
so that $i$ is the minimal index with $a\leq \lambda_i$ and $j$ is the maximal index with $\lambda_j \leq b$.

For $\lambda\in \mathcal{P}$, we set 
\[
\supp(\lambda)= \{c\in\mathbb{Z}_{>0} \,:\, m(c,\lambda)>0\}
\]
to be its support. We also write $\supp(\lambda) = \{c_1 < \ldots < c_t\}$ and describe partitions in the common notation of
\[
\lambda = (c_1^{m(c_1,\lambda)}\,c_2^{m(c_2,\lambda)}\ldots c_t^{m(c_t,\lambda)})\in \mathcal{P}\;.
\]

For $\lambda^1,\lambda^2\in \mathcal{P}$, we write $\lambda^1\cup\lambda^2\in \mathcal{P}$ for the partition that is given by multiplicities $m(c,\lambda^1\cup \lambda^2) = m(c,\lambda^1) + m(c,\lambda^2)$, for all parts $c\in \mathbb{Z}_{>0}$. 

Similarly, we write $\lambda^1\setminus\lambda^2$ for the construction given by $m(c,\lambda^1\setminus \lambda^2) = m(c,\lambda^1)-m(c,\lambda^2)$, whenever those are all non-negative numbers.

Let us denote $\mathcal{P}^1_0$ (respectively, $\mathcal{P}^{-1}_0$) the subset of $\mathcal{P}$ of partitions whose support consists of odd (respectively, even) integers.

For each choice of a sign $s\in \{\pm1\}$, we define the sets of partitions
\[
\mathcal{P}^{s} = \{\lambda \in \mathcal{P}\;:\; \exists \mu\in \mathcal{P}^{s}_0,\; \exists \nu\in \mathcal{P},\; \lambda = \mu\cup \nu\cup \nu \} \;,
\]
and write $\mathcal{P}^{s}(N) = \mathcal{P}(N) \cap \mathcal{P}^{s}$, for every integer $N\geq1$.

\begin{proposition}\label{prop:part-orbit}(e.g. \cite[Theorem 2.2]{kp82})
The conjugacy classes in $\mathcal{U}^\vee$ are in bijection with $\mathcal{P}^{s_G}(N_G)$.

A class $\mathcal{O}^\vee_{\lambda}\in \mathcal{U}^\vee$ that corresponds to a partition $\lambda$ consist of unipotent matrices whose multiset of lengths of Jordan blocks is given by the multiset of parts of $\lambda$.
\end{proposition}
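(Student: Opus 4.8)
## Proof plan for Proposition \ref{prop:part-orbit}

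The statement is the classical Gerstenhaber--Springer--Steinberg classification of unipotent conjugacy classes in complex classical groups, so the plan is to reduce everything to linear algebra of nilpotent Jordan forms and then track the constraint imposed by the invariant bilinear form. First I would pass from the unipotent element $u\in\mathbf{G^\vee}(\mathbb{C})$ to the nilpotent element $X = u - \id$ (or $X=\log u$, using that $p$ is large so the Cayley/exponential map makes sense; over $\mathbb{C}$ one simply takes $X=u-\id$), noting that $u$ and $X$ have the same Jordan block sizes and that $Z_{\mathbf{G^\vee}}(u)=Z_{\mathbf{G^\vee}}(X)$, so conjugacy classes match up. For the ambient group $\GL(V)$, $X$ is classified up to conjugacy by its Jordan type, i.e. a partition of $\dim V = N_G - s_G$; the task is to determine which partitions occur for nilpotent elements lying in $\Lie(\mathbf{G^\vee})$, and that the $\mathbf{G^\vee}(\mathbb{C})$-orbit is then a single $\GL(V)$-orbit intersected with $\Lie(\mathbf{G^\vee})$ — or rather, that each such $\GL(V)$-class that meets $\Lie(\mathbf{G^\vee})$ meets it in a single $\mathbf{G^\vee}(\mathbb{C})$-orbit, modulo the one subtlety below.

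The combinatorial heart is the following: a nilpotent $X$ preserving a non-degenerate bilinear form $B$ with $B(Xa,b) = -B(a,Xb)$ decomposes $V$ as an orthogonal direct sum of $X$-indecomposable pieces, and for a form of symmetric type (orthogonal case, $s_G=-1$, $\mathbf{G^\vee}=\mathbf{SO}_{N_G+1}$ acting on a space of dimension $N_G+1$, which is odd) one shows that Jordan blocks of even size must pair up, while for a form of alternating type (symplectic case, $s_G=1$, $\mathbf{G^\vee}=\mathbf{Sp}_{2n}$) blocks of odd size must pair up. This is exactly the condition defining $\mathcal{P}^{s_G}$: writing $\lambda = \mu\cup\nu\cup\nu$ with $\mu\in\mathcal{P}^{s_G}_0$ records that the parts of the ``wrong'' parity (even in the orthogonal/symmetric case, odd in the symplectic/alternating case) occur with even multiplicity and get absorbed into $\nu\cup\nu$, while the parts of the ``right'' parity — the self-dual indecomposable forms — are unconstrained and make up $\mu$. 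I would prove this by the standard induction on $\dim V$: pick a Jordan block of maximal size $k$, produce a cyclic $X$-invariant nondegenerate or totally isotropic subspace $W$ according to the parity of $k$ and the type of $B$, split $V = W \oplus W^\perp$, and recurse. The dimension/size bookkeeping (an even-size block in an odd-dimensional symmetric space, etc.) forces exactly the parity-pairing, and also shows $N_G+1$ is even in no case for the orthogonal group on an odd-dimensional space — consistent with $|\lambda| = N_G$ after removing the forced extra ``$v$'' direction; here I should be careful that the proposition states $\mathcal{P}^{s_G}(N_G)$, i.e. partitions of $N_G$, whereas $\dim V = N_G - s_G$: in the orthogonal case $\dim V = N_G+1$ and the partition of $N_G+1$ parameterizing the $\mathbf{O}_{N_G+1}$-class always has an odd number of odd parts and, after noting the passage to $\mathbf{SO}$ and the bijection $\mathcal{P}^{-1}(N_G)\to\{\text{orthogonal partitions of }N_G+1\}$, one recovers $\mathcal{P}^{-1}(N_G)$; in the symplectic case $\dim V = N_G - 1 = 2n$ and symplectic partitions of $2n$ are precisely $\mathcal{P}^{1}(N_G)$ by the same parity analysis. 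I would state this matching as a short lemma and cite \cite{kp82} for the precise normalization.

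The one genuine obstacle — and the place where $\mathbf{SO}$ versus $\mathbf{O}$ and $\mathbf{Sp}$ differ — is showing the orbit is \emph{single} under the connected group, not just under the full isometry group. For $\mathbf{Sp}_{2n}$ (connected, and $\mathbf{Sp}=\mathbf{Sp}$) there is no issue: the $\mathbf{Sp}_{2n}(\mathbb{C})$-orbit equals the $\GL$-orbit intersected with the symplectic Lie algebra, by the uniqueness up to isometry of the decomposition into indecomposables plus Witt-type cancellation. For $\mathbf{SO}_{N_G+1}$ with $N_G+1$ \emph{odd}, the center $\{\pm\id\}$ is not in $\mathbf{SO}$, but $-\id\notin\mathbf{SO}_{\text{odd}}$ has determinant $-1$, so $\mathbf{O}_{N_G+1} = \mathbf{SO}_{N_G+1}\times\{\pm\id\}$ and $-\id$ is central, hence acts trivially by conjugation; therefore the $\mathbf{O}$-orbit equals the $\mathbf{SO}$-orbit and no ``very even'' splitting phenomenon occurs (that only happens for $\mathbf{O}_{2n}$ with all parts even, which is outside our range since $N_G+1$ is odd). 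So in both of our cases the $\mathbf{G^\vee}(\mathbb{C})$-orbits are in bijection with the admissible partitions with no further identification, and the map $\mathcal{O}^\vee_\lambda\mapsto\lambda$ is well-defined and bijective. I would close by recording that the Jordan block sizes are a complete isomorphism invariant of the pair $(V,X)$ as a $\mathbb{C}[X]$-module, which gives the last sentence of the proposition, and that this whole package is exactly \cite[Theorem 2.2]{kp82} (together with \cite[Theorem 2.2]{kp82}'s orthogonal/symplectic bookkeeping), so in the paper itself this can be stated as a citation with the parity convention spelled out for the reader's convenience.
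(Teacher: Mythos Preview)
The paper does not prove this proposition at all; it is stated with a citation to \cite{kp82} as a classical result, so there is no ``paper's own proof'' to compare against. Your overall strategy (pass to nilpotents, classify Jordan types, impose the parity constraint coming from the bilinear form, check that the connected group gives a single orbit) is exactly the standard one and is correct in outline.

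However, your bookkeeping confuses $G$ with $\mathbf{G^\vee}$ and this makes several of your concrete claims wrong. Langlands duality swaps types $B$ and $C$: when $s_G=-1$ the group $G$ is $\mathrm{SO}_{N_G+1}(F)$ but the dual group is $\mathbf{G^\vee}=\mathbf{Sp}_{N_G}$ (symplectic, acting on an $N_G$-dimensional space), and when $s_G=1$ the group $G$ is $\mathrm{Sp}_{N_G-1}(F)$ while $\mathbf{G^\vee}=\mathbf{SO}_{N_G}$ (orthogonal, acting on an $N_G$-dimensional space). Thus the space $V$ of dimension $N_G-s_G$ you invoke is the one carrying $G$, not $\mathbf{G^\vee}$; the correct standard representation of $\mathbf{G^\vee}$ has dimension exactly $N_G$ in both cases, and the partitions in question are partitions of $N_G$ with no shift needed. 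Consequently your alleged bijection $\mathcal{P}^{-1}(N_G)\to\{\text{orthogonal partitions of }N_G+1\}$ is spurious and does not exist; once you use the correct dual group, $\mathcal{P}^{-1}(N_G)$ \emph{is} the set of symplectic partitions of $N_G$ and $\mathcal{P}^{1}(N_G)$ \emph{is} the set of orthogonal partitions of $N_G$, directly. With the cases straightened out, your argument about parity of block sizes and the $\mathbf{O}$-versus-$\mathbf{SO}$ issue (no very-even splitting in odd orthogonal) goes through as written.
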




Let us write 
\[
\mathcal{P}^{mf}= \{\lambda\in \mathcal{P}\;:\; m(c,\lambda)\leq1, \forall c\in \mathbb{Z}_{>0}\}
\]
for the set of multiplicity-free partitions.

For a partition $\lambda\in \mathcal{P}$, a unique decomposition $\lambda = \lambda^{mf} \cup \lambda^{m} \cup \lambda^{m}$ with $\lambda^{mf}\in \mathcal{P}^{mf}$ and $\lambda^{m}\in \mathcal{P}$ exists.

Clearly, for any partition $\lambda\in \mathcal{P}$,  the condition $\lambda\in \mathcal{P}^{\pm1}$ is equivalent to $\lambda^{mf}\in \mathcal{P}^{\pm1}_0$.

Let us now fix a sign $s\in \{\pm1\}$ and a partition $\lambda\in \mathcal{P}^s$.

There are unique partitions $\lambda^{gp}\in \mathcal{P}_{0}^{s}$ and $\lambda^{bp}\in \mathcal{P}_{0}^{-s}$, so that $\lambda = \lambda^{gp} \cup \lambda^{bp} \cup \lambda^{bp}$ holds.

In particular, $\lambda^{mf} = (\lambda^{gp})^{mf}$ holds.

Let us denote the set 
\[
S(\lambda) := \supp (\lambda^{gp})\subset \mathbb{Z}_{>0}\;.
\]

We consider its power set 
\[
P(\lambda) = \{A\subset S(\lambda)\}\;,
\]
which we view as a $\mathbb{F}_2$-vector space, and in particular, a group of cardinality $2^{|S(\lambda)|}$. 

Alternatively, elements of $P(\lambda)$ may be taken as boolean functions on $S(\lambda)$.

As a vector space, $P(\lambda)$ is equipped with a natural pairing given by $\langle A, B\rangle = (-1)^{|A\cap B|}$, for $A,B\subset S(\lambda)$. The pairing provides a canonical identification of its dual group of complex characters $\widehat{P(\lambda)}$ with $P(\lambda)$ itself. 

Recalling that $S_0(\lambda):=\supp(\lambda^{mf})$ is a subset of $S(\lambda)$, we let 
\[
P(\lambda)^0 = P(\lambda) / \{\emptyset, S_0(\lambda)\}
\]
be a quotient group of $P(\lambda)$ by a two-element subgroup.

We also take note the subgroups
\[
P(\lambda)_0 = \{A\in P(\lambda)\;:\; |A\cap S_0(\lambda)|\mbox{ is even}\}\;,\;
P(\lambda)' = \{A\in P(\lambda)\;:\; |A|\mbox{ is even}\}< P(\lambda)\;.
\]

Clearly, the previous identification $\widehat{P(\lambda)}\cong P(\lambda)$ factors through $\widehat{P(\lambda)^0}\cong P(\lambda)_0$.

Moreover, when $s=1$ and $|\lambda|$ is odd, the set $S_0(\lambda)$ must be of odd cardinality. Thus, the pairing $\langle\,,\rangle$ restricts to a perfect pairing between the subgroups $P(\lambda)'$ and $P(\lambda)_0$, giving a natural meaning to an identity $\widehat{P(\lambda)'}\cong P(\lambda)_0$ in this case.

\begin{proposition}\cite[Theorem 5.1.6,Corollary 6.1.6]{collingwoodmcgovern}
\label{prop:comp-comb}
\begin{enumerate}
    \item 
For a partition $\lambda\in \mathcal{P}^{1}(2n+1)$, the associated unipotent conjugacy class $\mathcal{O}^\vee_{\lambda}$ of the group $\mathbf{SO}_{2n+1}(\mathbb{C})$ has a natural identification of its component group $A(\mathcal{O}^\vee_{\lambda})$ with the $2$-group $P(\lambda)'$.

In particular, the character group $\widehat{A(\mathcal{O}^\vee_{\lambda})}$ is thus identified with $P(\lambda)_0$.

\item 
For a partition $\lambda\in \mathcal{P}^{-1}(2n)$, the associated unipotent conjugacy class $\mathcal{O}^\vee_{\lambda}$ of the group $\mathbf{Sp}_{2n}(\mathbb{C})$ has a natural identification of its component group $A(\mathcal{O}^\vee_{\lambda})$ with the $2$-group $P(\lambda)$.

In particular, the character group $\widehat{A(\mathcal{O}^\vee_{\lambda})}$ is thus identified with $P(\lambda)$ as well.

Let $z\in A(\mathcal{O}^\vee_{\lambda})$ be the representative of the central element $-1\in \mathbf{Sp}_{2n}(\mathbb{C})$ in the component group. 

Then, $z$ corresponds to the element $S_0(\lambda)\in P(\lambda)$ under the above identification, and the character subgroup
\[
\widehat{   A(\mathcal{O}^\vee_{\lambda}) / (z)  } < 
\widehat{   A(\mathcal{O}^\vee_{\lambda}) }
\]
corresponds to the subgroup $P(\lambda)_0< P(\lambda)$.

\end{enumerate}

\end{proposition}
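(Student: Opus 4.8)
The statement is, once translated into the partition formalism of this subsection, the classical computation of the component group $A(\mathcal O^\vee_\lambda)=Z_{\mathbf G^\vee(\mathbb C)}(u)/Z_{\mathbf G^\vee(\mathbb C)}(u)^\circ$ of the centralizer of a representative $u\in\mathcal O^\vee_\lambda$. The plan is to recall that computation from \cite{collingwoodmcgovern}, to identify the resulting $2$-group with $P(\lambda)$, resp.\ $P(\lambda)'$, and then to read off the image of the central element and the relevant character subgroup.

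First I would set up the standard picture. By Proposition~\ref{prop:part-orbit} the unipotent $u$ decomposes the defining representation $V$ of $\mathbf G^\vee(\mathbb C)$ into Jordan blocks of sizes the parts of $\lambda$; fixing an $\mathfrak{sl}_2$-triple through $\log u$ we write $V=\bigoplus_c M_c\otimes J_c$, with $J_c$ the irreducible $\mathfrak{sl}_2$-module of dimension $c$ and $M_c$ the multiplicity space, $\dim M_c=m(c,\lambda)$. The ambient (symmetric or alternating) form restricts on each isotypic summand to a tensor product of a form on $M_c$ with the invariant form on $J_c$, the latter being symmetric exactly for odd $c$; hence the form induced on $M_c$ is symmetric precisely for the odd parts $c$ when $\mathbf G^\vee=\mathbf{SO}_{2n+1}$ and precisely for the even parts $c$ when $\mathbf G^\vee=\mathbf{Sp}_{2n}$, that is, in both cases exactly for $c\in S(\lambda)=\supp(\lambda^{gp})$ --- here using that the uniqueness of the splitting $\lambda=\lambda^{gp}\cup\lambda^{bp}\cup\lambda^{bp}$ forces $\lambda^{gp}$ to be the sub-partition of odd, resp.\ even, parts of $\lambda$. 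By \cite{collingwoodmcgovern} the reductive part of $Z_{\mathbf G^\vee(\mathbb C)}(u)$ (before imposing the determinant-one condition treated below) is $\prod_c\mathrm{Isom}(M_c)$, an orthogonal group for $c\in S(\lambda)$ and a symplectic group otherwise; as symplectic groups are connected while $\det\colon\pi_0(\mathrm O(m))\xrightarrow{\sim}\{\pm1\}$, this produces a canonical isomorphism $\pi_0(\prod_c\mathrm{Isom}(M_c))\cong P(\lambda)$ sending $A\subseteq S(\lambda)$ to the component acting with determinant $-1$ on $M_c$ exactly for $c\in A$.

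For part~(2), $\mathbf G^\vee=\mathbf{Sp}_{2n}$ imposes no extra determinant constraint, so $A(\mathcal O^\vee_\lambda)\cong P(\lambda)$ outright. The central element $-1\in\mathbf{Sp}_{2n}(\mathbb C)$ acts as $(-\mathrm{id}_{M_c})_c$, and $\det(-\mathrm{id}_{M_c})=(-1)^{m(c,\lambda)}$, so its class is the subset $\{c\in S(\lambda):m(c,\lambda)\text{ odd}\}$, which coincides with $S_0(\lambda)=\supp(\lambda^{mf})$; thus $z\leftrightarrow S_0(\lambda)$. The self-duality pairing $\langle A,B\rangle=(-1)^{|A\cap B|}$ identifies $\widehat{A(\mathcal O^\vee_\lambda)}$ with $P(\lambda)$, and a character $B$ is trivial on $z$ iff $|B\cap S_0(\lambda)|$ is even, i.e.\ iff $B\in P(\lambda)_0$, giving the identification of $\widehat{A(\mathcal O^\vee_\lambda)/(z)}$ with $P(\lambda)_0$. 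For part~(1), $\mathbf G^\vee=\mathbf{SO}_{2n+1}$, and an element $(g_c)_c$ of the reductive part has determinant $\prod_c\det(g_c)^c=\prod_{c\in S(\lambda)}\det(g_c)$ on $V$, since the symplectic factors have determinant $1$ and the odd exponents may be dropped on the orthogonal factors; as $S(\lambda)$ consists here of odd parts, the determinant-one condition cuts out exactly the subsets $A\subseteq S(\lambda)$ of even cardinality (each such class is realized by a product of reflections, one in each $\mathrm O(m_c)$ with $c\in A$), so $A(\mathcal O^\vee_\lambda)\cong P(\lambda)'$. Finally, since $|\lambda|=2n+1$ is odd, $|S_0(\lambda)|$ is odd --- parts of even multiplicity contribute evenly to $|\lambda|$, while $S_0(\lambda)$ is the set of odd parts of odd multiplicity, whose number therefore has the parity of $|\lambda|$ --- so, as observed in the discussion preceding the proposition, $\langle\,,\rangle$ restricts to a perfect pairing $P(\lambda)'\times P(\lambda)_0\to\{\pm1\}$ and identifies $\widehat{A(\mathcal O^\vee_\lambda)}=\widehat{P(\lambda)'}$ with $P(\lambda)_0$.

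The one step demanding genuine care is the first: determining, for each part $c$, whether the form induced on $M_c$ is symmetric or alternating, since this is what dictates which parts contribute a disconnected orthogonal factor, and it is exactly the datum encoded by the hypothesis $\lambda\in\mathcal P^{1}(2n+1)$, resp.\ $\lambda\in\mathcal P^{-1}(2n)$, together with the splitting $\lambda=\lambda^{gp}\cup\lambda^{bp}\cup\lambda^{bp}$. This is a routine $\mathfrak{sl}_2$-computation performed in \cite{collingwoodmcgovern}, but one that must be run with consistent sign conventions; once it is settled, the rest is the determinant bookkeeping above, whose only interaction with the ambient combinatorics is the parity of $|S_0(\lambda)|$ in the orthogonal case --- precisely what makes part~(1) yield the clean identification $\widehat{A(\mathcal O^\vee_\lambda)}\cong P(\lambda)_0$ rather than merely $\widehat{A(\mathcal O^\vee_\lambda)}\cong P(\lambda)/\{\emptyset,S(\lambda)\}$.
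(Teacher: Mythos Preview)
The paper gives no proof of this proposition: it is stated as a citation of \cite[Theorem 5.1.6, Corollary 6.1.6]{collingwoodmcgovern} and used as a black box. Your proposal is a correct and well-organized reconstruction of the classical argument behind that citation, carried out in the paper's partition language; in particular, your identification of the form type on each multiplicity space $M_c$, the determinant bookkeeping in the $\mathbf{SO}_{2n+1}$ case, the computation of the class of $-1\in\mathbf{Sp}_{2n}$ as $S_0(\lambda)$, and the parity argument yielding the perfect pairing between $P(\lambda)'$ and $P(\lambda)_0$ are all accurate and match what the paper asserts without proof in the paragraph preceding the proposition.
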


We denote by $\widehat{A(\mathcal{O}^\vee_{\lambda})}_0<\widehat{A(\mathcal{O}^\vee_{\lambda})}$ the subgroup that corresponds to $P(\lambda)_0$ under the identifications of Proposition \ref{prop:comp-comb}.

\subsubsection{Lusztig's canonical quotient}\label{sect:quot}

Still holding to a fixed partition $\lambda\in \mathcal{P}^s$, we now write 
\[
S(\lambda) = \{\mu_{1}< \ldots < \mu_{k}\}
\]
and obtain 
\[
\lambda^{mf} = (\mu_{\alpha_1}\ldots \mu_{\alpha_r})\;,
\]
for indices $1\leq \alpha_1< \ldots < \alpha_r \leq k$. 


We now define an equivalence relation $\sim$ on $S(\lambda)$.

When $s=-1$, we impose the relations
\[
\mu_{\alpha_{r-2j-1}}\,\sim \,\mu_{\alpha_{r-2j-1}+1}\,\sim\,\mu_{\alpha_{r-2j-1}+2}\,\sim\,\cdots\,\sim\, \mu_{\alpha_{r-2j}}\;,
\]
for each $0\leq j < \lfloor r/2 \rfloor$, and
\[
\mu_{1}\,\sim\,\mu_{2}\,\sim\,\cdots\,\sim\, \mu_{\alpha_{1}}\;,
\]
in case $r$ is odd.

When $s=1$, we impose the relations
\[
\mu_{\alpha_{r-2j}}\,\sim \,\mu_{\alpha_{r-2j-1}+1}\,\sim\,\mu_{\alpha_{r-2j-1}+2}\,\sim\,\cdots\,\sim\, \mu_{\alpha_{r-2j+1}}\;,
\]
for each $0< j <  r/2 $, and
\[
\mu_{\alpha_r}\,\sim\,\mu_{\alpha_r+1}\,\sim\,\mu_{\alpha_r+2}\,\sim\,\cdots\,\sim\, \mu_{k}\;.
\]

Let $S^{\dagger}(\lambda):= S(\lambda) / \sim$ denote the resulting set of equivalence classes, and $p: S(\lambda) \to S^{\dagger}(\lambda)$ the natural projection.

Let $P^{\dagger}(\lambda) = \{B\subset S^{\dagger}(\lambda)\}$ be the power set of $S^{\dagger}(\lambda)$, viewed again as a $2$-group.

Considering $P(\lambda)$ and $P^{\dagger}(\lambda)$ as spaces of boolean functions on $S(\lambda)$ and $S^{\dagger}(\lambda)$, we can pullback through the projection $p$ to obtain an embedding $p^\ast:P^{\dagger}(\lambda) \hookrightarrow P(\lambda)$ of groups.

More concretely, we identify
\[
P^{\dagger}(\lambda) \cong p^\ast(P^{\dagger}(\lambda)) =\{ A\subset S(\lambda)\;:\; \exists B \in S^{\dagger} (\lambda),\; A= p^{-1}(B)\}
\]
as the subgroup of $P(\lambda)$ consisting of functions that are constant on $\sim$-classes.

We also set $P^{\dagger}(\lambda)_0 = P^{\dagger}(\lambda) \cap P(\lambda)_0 $.

\begin{proposition}\label{prop:quotient}
    Under the realizations of the character groups $\widehat{A(\mathcal{O}^\vee_{\lambda})}$ of Proposition \ref{prop:comp-comb} as $P(\lambda)$ (in case $\mathbf{G^\vee} = \mathbf{Sp}_{2n}$) or as $P(\lambda)_0$ (in case $\mathbf{G^\vee} = \mathbf{SO}_{2n+1}$), the subset 
    \[
    A^{\dagger}(\mathcal{O}^\vee_{\lambda}) < \widehat{A(\mathcal{O}^\vee_{\lambda})}
\]
that was defined in equation \eqref{eq:ach-sg}, is mapped to the subgroup $P^{\dagger}(\lambda)_0$. 



\end{proposition}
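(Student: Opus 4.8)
The plan is to reduce the proposition to a purely combinatorial identification and then transport it through the character--group dualities already recorded in Proposition \ref{prop:comp-comb} and the discussion of Section \ref{sect:quot}. First I would invoke the theorem of Achar--Sage \cite{ach-sage}: for every $\mathcal{O}^\vee\in\mathcal{U}^\vee$ the set $A^{\dagger}(\mathcal{O}^\vee)$ is a subgroup of $\widehat{A(\mathcal{O}^\vee)}$, equal to the group of characters factoring through Lusztig's canonical quotient $\bar{A}(\mathcal{O}^\vee)$ of $A(\mathcal{O}^\vee)$, equivalently the annihilator of $\ker\!\big(A(\mathcal{O}^\vee)\twoheadrightarrow\bar{A}(\mathcal{O}^\vee)\big)$. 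After identifying $\widehat{A(\mathcal{O}^\vee_\lambda)}$ with $P(\lambda)$ (in the symplectic case) or with $P(\lambda)_0$ (in the odd orthogonal case) via Proposition \ref{prop:comp-comb}, the proposition becomes the assertion that the projection $A(\mathcal{O}^\vee_\lambda)\twoheadrightarrow\bar{A}(\mathcal{O}^\vee_\lambda)$ is, on the dual side, the inclusion $p^\ast\big(P^{\dagger}(\lambda)\big)\hookrightarrow P(\lambda)$ of those functions on $S(\lambda)$ that are constant along $\sim$--classes, intersected with the appropriate subgroup of index at most two.

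The combinatorial heart of the proof is to match the equivalence relation $\sim$ of Section \ref{sect:quot} with the known explicit description of $\bar{A}(\mathcal{O}^\vee_\lambda)$ for classical groups, as given in \cite{ach-sage} (see also \cite{sommersspecialpieces,lusztigbook}). In that description $\bar{A}(\mathcal{O}^\vee_\lambda)$ is obtained from $A(\mathcal{O}^\vee_\lambda)$, which by Proposition \ref{prop:comp-comb} is $(\mathbb{Z}/2\mathbb{Z})^{S(\lambda)}$ subject to a single linear relation, by partitioning $S(\lambda)$ into consecutive blocks --- formed by pairing the distinguished parts $\mu_{\alpha_1}<\cdots<\mu_{\alpha_r}$ of $\lambda^{mf}$ from the top downwards and absorbing the intervening parts of $S(\lambda)$ --- and then collapsing each block to a single coordinate. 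I would verify, separately for the two signs $s=s_G$, that these blocks are precisely the $\sim$--classes of Section \ref{sect:quot}: for symplectic $G$ ($s=-1$) the blocks are $\{\mu_{\alpha_{r-2j-1}},\ldots,\mu_{\alpha_{r-2j}}\}$ together with a leftover block $\{\mu_1,\ldots,\mu_{\alpha_1}\}$ when $r$ is odd, and for odd orthogonal $G$ ($s=1$) they are shifted by one distinguished part with the top block $\{\mu_{\alpha_r},\ldots,\mu_k\}$ singled out. Since ``collapse a block'' is dual to ``be constant on that block'', this identifies $\widehat{\bar{A}(\mathcal{O}^\vee_\lambda)}$ inside $\widehat{A(\mathcal{O}^\vee_\lambda)}$ with $p^\ast\big(P^{\dagger}(\lambda)\big)$.

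It then remains to reconcile the two--element ambiguities distinguishing $P(\lambda)$, $P(\lambda)_0$ and $P(\lambda)^0$. In the symplectic case $\widehat{A(\mathcal{O}^\vee_\lambda)}\cong P(\lambda)$, but any character of the quotient $\bar{A}(\mathcal{O}^\vee_\lambda)$ is automatically trivial on the central class $z$, which corresponds to $S_0(\lambda)\in P(\lambda)$; hence it already lies in $P(\lambda)_0$, and $A^{\dagger}(\mathcal{O}^\vee_\lambda)=p^\ast\big(P^{\dagger}(\lambda)\big)\cap P(\lambda)_0=P^{\dagger}(\lambda)_0$. In the odd orthogonal case $\widehat{A(\mathcal{O}^\vee_\lambda)}$ is already $P(\lambda)_0$ and $p^\ast\big(P^{\dagger}(\lambda)\big)$ meets it in $P^{\dagger}(\lambda)_0$; here I would additionally check that the two--element kernel $\{\emptyset,S_0(\lambda)\}$ underlying $P(\lambda)^0$ lifts to a two--element subgroup visible at the level of $S^{\dagger}(\lambda)$, so that all dualities descend compatibly through the canonical quotient. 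One should also note that the proposition concerns arbitrary $\lambda\in\mathcal{P}^{s_G}$, not only partitions labelling special orbits; this is harmless, since both the Achar--Sage theorem and the combinatorial formula for $\bar{A}(\mathcal{O}^\vee_\lambda)$ are stated for all orbits, and $\sim$ in Section \ref{sect:quot} is likewise defined for all $\lambda$.

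I expect the second paragraph to be the main obstacle. Lusztig's canonical quotient appears in the literature in several mutually shifted normalizations --- indexing the support from the smallest part rather than the largest, or encoding it through the Barbasch--Vogan--Lusztig--Spaltenstein dual --- so the bulk of the work is a careful translation of those conventions into the top--down interval pairing of Section \ref{sect:quot}, with particular attention to the parity--dependent clauses (the $r$--odd block $\{\mu_1,\ldots,\mu_{\alpha_1}\}$ for $s=-1$, and the top block above $\mu_{\alpha_r}$ for $s=1$) which encode exactly the ``leftover'' coordinate of the component group.
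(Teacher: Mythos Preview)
Your approach is correct and is essentially the same as the paper's: invoke Achar--Sage to identify $A^{\dagger}(\mathcal{O}^\vee_\lambda)$ with the characters of Lusztig's canonical quotient, and then match the latter's known combinatorial description with $P^{\dagger}(\lambda)_0$. The paper simply cites \cite[Theorem 2.1]{ach-sage} for the first step and \cite[Section 3.4]{achar-quotient} for the explicit combinatorics you propose to verify by hand, so your writeup is a more detailed unpacking of the same two-line proof.
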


\begin{proof}
    For example, this is content of the combinatorial description in \cite[Section 3.4]{achar-quotient}, after identifying the canonical quotient with our definition in \eqref{eq:ach-sg} using \cite[Theorem 2.1]{ach-sage}.
\end{proof}

\begin{remark}

A more common approach in literature arrives at the character subgroup $A^{\dagger}(\mathcal{O}^\vee_{\lambda})$ in terms of dualizing a quotient space construction, rather than a subspace construction. That approach is compatible with our presentation in the following sense. 

Fixing an isomorphism $\widehat{P^{\dagger}(\lambda)}\cong P^{\dagger}(\lambda)$ using the standard pairing in the same manner as the isomorphism $\widehat{P(\lambda)}\cong P(\lambda)$ was exhibited, we may consider $p^\ast$ as an embedding of dual groups. In particular, dualizing it produces an onto group homomorphism
\[
\widehat{p^\ast}: P(\lambda) \to P^{\dagger}(\lambda)\;,
\]
that can easily be explicated.

This is the quotient map that gives rise to Lusztig's canonical quotient of $A(\mathcal{O}^\vee_{\lambda})$ through the identifications of Proposition \ref{prop:comp-comb}.
\end{remark}

\subsubsection{Special pieces}\label{sect:special-piece}

For a partition $\lambda\in\mathcal{P}^s$ and an element $\theta \in S^{\dagger}(\lambda)$, we set the integers 
\[
\theta_{\min}= \min\{ \mu\in S(\lambda)\: :\: p(\mu)=\theta\}\;,\quad \theta_{\max}= \max\{ \mu\in S(\lambda)\: :\: p(\mu)=\theta\}\;.
\]

In these terms we define the \textit{blocks} of the partition $\lambda$ to be the set of partitions
\[
\mathrm{Blk}(\lambda) = \{ \lambda(\theta)\}_{\theta\in S^{\dagger}(\lambda)}\;, 
\]
given as intervals of $\lambda$ of the form
\[
\lambda(\theta) = \lambda_{ \theta_{\min}\leftrightarrow \theta_{\max}}\in \mathcal{P}^s\;,
\]
for each $\theta\in S^{\dagger}(\lambda)$. It will be convenient to refer to the elements of $S^{\dagger}(\lambda)$, the indexing set of $\mathrm{Blk}(\lambda)$, also as blocks.

It follows that a decomposition
\[
\lambda = \lambda^{\#} \cup \lambda^{\#} \cup \bigcup_{\theta\in S^{\dagger}(\lambda)} \lambda(\theta)\;,
\]
holds, for a partition $\lambda^{\#}\in \mathcal{P}^{-s}$. 

We take note of the set of integers
\[
\mathbb{I}(\lambda): = \supp(\lambda^{bp}\setminus \lambda^{\#})\;.
\]

\begin{definition}
    A partition $\lambda\in \mathcal{P}^{s}$, for $s\in \{\pm1\}$, is said to be \textit{special}, when one of the following equivalent conditions holds:
\begin{enumerate}
    \item Each of the blocks $\lambda(\theta)\in \mathrm{Blk}(\lambda)$ satisfies $\lambda(\theta) \in \mathcal{P}^{\pm1}_0$ (i.e. $\lambda(\theta)^{gp}=\lambda(\theta)$).

\item Equality $\lambda^{gp} = \bigcup_{\theta\in S^{\dagger}(\lambda)} \lambda(\theta)$ holds in $\mathcal{P}^{s}$.

\item The set $\mathbb{I}(\lambda)$ is empty.

\end{enumerate}

We denote by $\mathcal{P}^s_{\spci} \subset \mathcal{P}^s$ the set of special partitions, and set $\mathcal{P}^s_{\spci}(N) = \mathcal{P}(N)\cap \mathcal{P}^{s}_{\spci}$.

\end{definition}

We may now characterize the \textit{special} classes in the unipotent locus of $\mathbf{G^\vee}(\mathbb{C})$ as
\[
\mathcal{U}_{\spci}^\vee = \{\mathcal{O}^\vee_{\lambda}\in \mathcal{U}^\vee\;:\; \lambda\in \mathcal{P}^s_{\spci}\}\;.
\]

Given a partition $\lambda\in \mathcal{P}^{s}(N)$ and a subset $I\subset \mathbb{I}(\lambda)$, we define an operation
\[
T^I(\lambda) = \lambda \; \cup_{c\in I} (c-1\; c+1) \setminus \cup_{c\in I}(c^2)\in  \mathcal{P}^{s}(N) \;,
\]
where $(02)=(2)\in \mathcal{P}$ is assumed if necessary.

It is easily verified that 
\[
\mathbb{I}(T^I(\lambda)) = \mathbb{I}(\lambda) \setminus I
\]
holds.

In particular, $T^{\mathbb{I}(\lambda)}(\lambda)$ is a special partition, for any $\lambda\in \mathcal{P}^{s}$.

Moreover, operation of the form $T^{I}$ are compatible with the block structure of partitions in the following sense. For any $\lambda' = T^I(\lambda)\in \mathcal{P}^{s}$, there is a surjective map
\begin{equation}\label{eq:iotaast}
\iota^\ast = \iota^\ast_{\lambda,\lambda'}: S^{\dagger}(\lambda') \to S^{\dagger}(\lambda)\;,
\end{equation}
so that a decomposition
\[
\lambda(\theta) = (\theta_{\min}\,\theta_{\max}) 
 \cup\bigcup_{\theta'\in (\iota^\ast)^{-1}(\theta)} \left(\lambda'(\theta') \setminus (\theta'_{\min}\,\theta'_{\max}) \right) \cup \bigcup_{c\in I(\theta)} (c^{m(c,\lambda)})
\]
holds, for any block $\theta\in S^{\dagger}(\lambda)$, where $I(\theta) = I\cap \supp(\lambda(\theta))$.

Let us also describe the family of inverted operations to those of the form $T^I$ on $\mathcal{P}^{s}(N)$.

For a partition $\lambda\in \mathcal{P}^{s}$, we say that a block $\theta\in S^{\dagger}(\lambda)$ is \textit{admissible}, if either a block $\theta'\in S^{\dagger}(\lambda)$ exists, for which $\theta'_{\max} =\theta_{\min}-2$, or one of 
\[
\theta_{\max}=\theta_{\min}=2\,,\quad \theta_{\max}> \theta_{\min}=2 \in 
S_0(\lambda)
\]
holds.

Let us denote by $S^{\dagger\dagger}(\lambda)\subset S^{\dagger}(\lambda)$ the set of admissible blocks.

We write the set of integers
\[
\mathbb{J}(\lambda) = \{\theta_{\min}-1\::\: \theta\in S^{\dagger\dagger}(\lambda)\}\;.
\]

For a partition $\lambda\in \mathcal{P}^{s}(N)$ and a subset $J\subset \mathbb{J}(\lambda)$, we define
\[
T_J(\lambda) = \lambda \;\cup_{c\in J}(c^2)  \setminus \cup_{c\in J} (c-1\; c+1) \in  \mathcal{P}^{s}(N) \;.
\]

It is then evident that equalities $\mathbb{J}(T_J(\lambda)) =\mathbb{J}(\lambda) \setminus J$ and $\mathbb{I}(T_J(\lambda)) =\mathbb{I}(\lambda) \cup J$ hold, and that $T^J(T_J(\lambda)) = \lambda$. 

\begin{definition}
    For any partition $\lambda\in \mathcal{P}^{s}(N)$, we let the \textit{relative special piece} of $\lambda$ be the set of partitions
    \[
\spc(\lambda) = \{T_J(\lambda)\,:\, J\subset \mathbb{J}(\lambda)\}\;\subset\; \mathcal{P}^s(N)\;.
    \]    
\end{definition}


\begin{proposition}\label{prop:spc-comb}
For any conjugacy class $\mathcal{O}^\vee_{\lambda}\in \mathcal{U}^\vee$ as in Proposition \ref{prop:part-orbit}, given by a partition $\lambda\in \mathcal{P}^{s_G}(N_G)$, we have a combinatorial description
\[
\spc(\mathcal{O}^\vee_{\lambda})  = \{\mathcal{O}^\vee_{\mu}\;:\;\mu\in \spc(\lambda)\}\subset \mathcal{U}^\vee
\]
for the relative special piece of the unipotent locus of $\mathbf{G^\vee}(\mathbb{C})$ that is defined by $\mathcal{O}_{\lambda}^\vee$.

In particular, the set $\spc(\mathcal{O}^\vee_{\lambda})$ consists of $2^{|\mathbb{J}(\lambda)|}$ conjugacy classes.

\end{proposition}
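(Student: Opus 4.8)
The statement to prove is Proposition~\ref{prop:spc-comb}: that the combinatorially defined relative special piece $\spc(\lambda) = \{T_J(\lambda) : J \subset \mathbb{J}(\lambda)\}$ matches the geometric relative special piece $\spc(\mathcal{O}^\vee_\lambda)$ obtained by removing from $\overline{\mathcal{O}^\vee_\lambda}$ the closures of all strictly smaller special orbits in its closure. The plan is to leverage the existing literature on special pieces for classical groups — particularly the combinatorial description of Kraft--Procesi \cite{kp-aster} and the refinements in \cite{sommersspecialpieces} — rather than to reprove the geometry from scratch.

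First I would establish the two facts about the closure order that make the argument go through: (i) the dominance order on $\mathcal{P}^{s_G}(N_G)$ computes the closure order on $\mathcal{U}^\vee$ (this is classical, via Gerstenhaber--Hesselink and the $B$-collapse for symplectic/orthogonal types), and (ii) under the operations $T^I$ and $T_J$, the partition $T_J(\lambda)$ is $\leq \lambda$ in dominance, with $T^{\mathbb{I}(\mu)}(\mu)$ the unique smallest special partition $\geq \mu$ — this follows from the explicit shape of $T^I$ (replacing a pair $(c^2)$ by $(c-1\,\,c+1)$ strictly decreases in dominance) together with the already-recorded identity $\mathbb{I}(T^I(\lambda)) = \mathbb{I}(\lambda)\setminus I$ and $T^J(T_J(\lambda)) = \lambda$. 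From these, one sees that $T^{\mathbb{I}(\cdot)}$ is precisely the map sending an orbit to the smallest special orbit whose closure contains it; this is exactly the content of the Barbasch--Vogan/Spaltenstein ``special-ization'' map in the classical-groups case, and it identifies $\spc(\mathcal{O}^\vee_{T_J(\lambda)})$ with the special piece of the fixed special orbit $\mathcal{O}^\vee_{T^{\mathbb{I}(\lambda)}(\lambda)}$.

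Next I would verify the set-theoretic equality. For the inclusion $\spc(\lambda)\subseteq\spc(\mathcal{O}^\vee_\lambda)$: given $J\subset\mathbb{J}(\lambda)$, the partition $T_J(\lambda)$ satisfies $T_J(\lambda)\leq\lambda$ and $T^{\mathbb{I}(T_J(\lambda))}(T_J(\lambda)) = T^{\mathbb{I}(\lambda)\cup J}(T_J(\lambda))$; one checks this equals $T^{\mathbb{I}(\lambda)}(\lambda)$ when $\lambda$ is the special representative (i.e.\ when $\mathbb{I}(\lambda)=\emptyset$ and $\mathbb{J}(\lambda)$ is the full set of admissible blocks), so $T_J(\lambda)$ lies in the special piece of the same special orbit, hence is not below any \emph{other} special orbit in $\overline{\mathcal{O}^\vee_\lambda}$. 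For the reverse inclusion, one must show every orbit in the geometric special piece arises as some $T_J(\lambda)$; here I would cite the Kraft--Procesi classification \cite{kp-aster} which describes the special piece of a special orbit in type $B/C/D$ exactly as a set obtained by such pair-of-parts modifications indexed by subsets of the ``gaps'' — matching our $\mathbb{J}$. The cardinality count $|\spc(\mathcal{O}^\vee_\lambda)| = 2^{|\mathbb{J}(\lambda)|}$ is then immediate once injectivity of $J\mapsto T_J(\lambda)$ is noted, which follows because the identity $\mathbb{I}(T_J(\lambda)) = \mathbb{I}(\lambda)\cup J$ recovers $J$ from the image (given $\mathbb{I}(\lambda)$).

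The main obstacle I anticipate is the bookkeeping needed to match our specific normalization of the equivalence relation $\sim$ on $S(\lambda)$ — which defines the blocks $S^\dagger(\lambda)$, and hence the admissible set $S^{\dagger\dagger}(\lambda)$ and $\mathbb{J}(\lambda)$ — with the pairing-of-parts conventions used in \cite{kp-aster,sommersspecialpieces}. The two sign cases $s=\pm1$ are handled by slightly different recipes in our Section~\ref{sect:quot}, and in the symplectic ($s=1$, $\mathbf{G^\vee}=\mathbf{SO}_{2n+1}$) case there is the extra subtlety of the block containing $\mu_{\alpha_r}$ absorbing all larger parts; reconciling this with the literature's treatment of the ``very even''/boundary parts is where care is required. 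I expect no conceptual difficulty beyond this, since the hypercube-lattice structure (already asserted in the introduction) is forced once the bijection $J\leftrightarrow T_J(\lambda)$ and the identities $\mathbb{I}(T_J(\lambda))=\mathbb{I}(\lambda)\cup J$, $\mathbb{J}(T_J(\lambda))=\mathbb{J}(\lambda)\setminus J$ are in hand.
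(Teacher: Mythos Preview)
Your approach is essentially the same as the paper's: both hinge on the Kraft--Procesi/Spaltenstein characterization that $T^{\mathbb{I}(\mu)}(\mu)$ is the minimal special orbit whose closure contains $\mathcal{O}^\vee_\mu$, and both deduce the cardinality $2^{|\mathbb{J}(\lambda)|}$ from the bijection $J\mapsto T_J(\lambda)$ recovered via $\mathbb{I}(T_J(\lambda))=\mathbb{I}(\lambda)\cup J$.

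The paper organizes the argument more cleanly than your proposal. It first treats the case where $\lambda$ is special (so $\spc(\mathcal{O}^\vee_\lambda)$ is an honest special piece and the Kraft--Procesi citation applies directly), and then handles general $\lambda$ by the single observation
\[
\spc(\mathcal{O}^\vee_\lambda)=\spc(\mathcal{O}^\vee_{\lambda'})\cap\overline{\mathcal{O}^\vee_\lambda},\qquad \lambda'=T^{\mathbb{I}(\lambda)}(\lambda),
\]
together with the combinatorial check that for $J_1,J_2\subset\mathbb{J}(\lambda')$ one has $\mathcal{O}^\vee_{T_{J_1}(\lambda')}\leq\mathcal{O}^\vee_{T_{J_2}(\lambda')}$ iff $J_2\subset J_1$. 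Your forward-inclusion argument (``one checks this equals $T^{\mathbb{I}(\lambda)}(\lambda)$ when $\lambda$ is the special representative'') is phrased as though it only covers the special case, but the identity $T^{\mathbb{I}(\lambda)\cup J}(T_J(\lambda))=T^{\mathbb{I}(\lambda)}(\lambda)$ actually holds in general since $T^J\circ T_J=\mathrm{id}$ --- so your argument is complete once that sentence is rewritten. Your anticipated obstacle about matching the $S^\dagger(\lambda)$ conventions with \cite{kp-aster,sommersspecialpieces} does not arise: the paper only cites Kraft--Procesi for the specialization map $\mu\mapsto T^{\mathbb{I}(\mu)}(\mu)$, not for the block combinatorics, so no convention-translation is needed.
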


\begin{proof}


Let us first assume that $\lambda\in \mathcal{P}^{s_G}_{\spci}(N_G)$ is a special partition.

In \cite[Proposition 4.2]{kp-aster} (attributed to Spaltenstein), it was shown that for all $\mu\in \mathcal{P}^{s_G}(N_G)$, the class $\mathcal{O}^\vee=\mathcal{O}^\vee_{T^{\mathbb{I}(\mu)}(\mu)}\in \mathcal{U}^\vee_{\spci}$ is the unique special unipotent class which satisfies $\mathcal{O}_{\mu}^\vee\in \overline{\mathcal{O}^\vee}$ and is minimal with respect to the topological partial order on $\mathcal{U}^\vee$.

It then follows that
\[
\spc(\mathcal{O}^\vee_{\lambda}) = \{\mathcal{O}^\vee_{\mu}\;:\;\mu\in \mathcal{P}^{s_G}(N_G)\,, \; T^{\mathbb{I}(\mu)}(\mu) = \lambda\}\;.
\]

Since $T^{\mathbb{I}(\mu)}(\mu) = \lambda$ is equivalent to $\mu=T_{\mathbb{I}(\mu)}(\lambda)$, our statement now follows for the case of a special partition.

Let us now take a general partition $\lambda\in \mathcal{P}^{s_G}(N_G)$. 

We mark the special partition $\lambda'= T^{\mathbb{I}(\lambda)}(\lambda)$. By definition of relative special pieces we have $\spc(\mathcal{O}^\vee_{\lambda}) = \spc(\mathcal{O}^\vee_{\lambda'}) \cap \overline{\mathcal{O}^\vee_{\lambda}}$.

Recalling the combinatorial description of the topological order on $\mathcal{U}^\vee$, it is easy to verify that for any subsets $J_1,J_2\subset \mathbb{J}(\lambda')$, an inclusion $\mathcal{O}^\vee_{T_{J_1}(\lambda')} \in \overline{\mathcal{O}^\vee_{T_{J_2}(\lambda')}}$ holds, if and only if, $J_2\subset J_1$.

In particular, since $\lambda = T_{\mathbb{I}(\lambda)}(\lambda')$, we obtain a description 
\[
\spc(\mathcal{O}^\vee_{\lambda}) = \{ \mathcal{O}^\vee_{T_J(\lambda')} \;:\; \mathbb{I}(\lambda)\subset J\subset \mathbb{J}(\lambda')  \}\;.
\]

The full statement now follows, when observing that $\mathbb{I}(\lambda) \cup \mathbb{J}(\lambda) = \mathbb{J}(\lambda')$ and that $T_J(\lambda) = T_{\mathbb{I}(\lambda) \cup J}(\lambda')$, for any subset $J\subset \mathbb{J}(\lambda)$.

\end{proof}

\begin{remark}
It follows from Proposition \ref{prop:spc-comb} and its proof that the topological partial order on the conjugacy classes of $\spc(\mathcal{O}^\vee_{\lambda})\subset \mathcal{U}^\vee$ is naturally isomorphic to the hypercube lattice of the subsets of $\mathbb{J}(\lambda)$.

\end{remark}

We record several basic properties of the construction.

\begin{lemma}\label{lem:even}
Suppose that $c-1, c+1\in \mathbb{J}(\lambda)$, for an integer $c$ and a partition $\lambda\in \mathcal{P}^{s_G}(N_G)$.

Then, $c\not\in S_0(\lambda)$, and, in particular, the multiplicity $m(c,\lambda)$ is even.
\end{lemma}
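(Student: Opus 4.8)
The plan is to decode the hypothesis combinatorially and then force the shape of the two blocks it produces. By the definition of $\mathbb{J}(\lambda)$, the assumptions $c-1,c+1\in\mathbb{J}(\lambda)$ supply admissible blocks $\theta,\theta'\in S^{\dagger\dagger}(\lambda)$ with $\theta_{\min}=c$ and $\theta'_{\min}=c+2$; in particular $c,c+2\in S(\lambda)$. Since $S(\lambda)=\supp(\lambda^{gp})$ with $\lambda^{gp}\in\mathcal{P}^{s_G}_0$, all elements of $S(\lambda)$ share a single parity, so $c+1\notin S(\lambda)$ and $c,c+2$ are consecutive in $S(\lambda)$; writing $S(\lambda)=\{\mu_1<\dots<\mu_k\}$ this means $c=\mu_i$ and $c+2=\mu_{i+1}$ for some $1\le i<k$.

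The first substantive step is to show $\theta=\{c\}$. The $\sim$-classes, which are the blocks, are intervals $\{\mu_a,\dots,\mu_b\}$ of consecutive elements of $S(\lambda)$, because $\sim$ is generated by relations each chaining a consecutive stretch of the $\mu$'s. As $\theta_{\min}=\mu_i$, the block $\theta$ begins at index $i$; since $\theta'\ne\theta$ (their minima differ) and $\mu_{i+1}\in\theta'$, the element $\mu_{i+1}$ does not lie in $\theta$, which forces $\theta=\{\mu_i\}$ and $\theta_{\min}=\theta_{\max}=c$.

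Next I would feed this into the admissibility of the singleton block $\{c\}$. The alternative ``$\theta_{\max}>\theta_{\min}=2$'' cannot hold for a singleton, so either (i) $S^{\dagger}(\lambda)$ contains a block of maximum $c-2$, or (ii) $c=2$. In case (ii) the defining condition for admissibility of the block $\{2\}$ already gives $2\notin S_0(\lambda)$, and we are finished. In case (i), $c-2\in S(\lambda)$, hence $c-2=\mu_{i-1}$ by the parity remark, and together with $c+2=\mu_{i+1}$ this yields $2\le i\le k-1$. Suppose toward a contradiction that $c=\mu_i\in S_0(\lambda)$, i.e.\ $i\in\{\alpha_1,\dots,\alpha_r\}$. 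Reading off the explicit recipe for $\sim$: when $s_G=-1$ every $\mu_{\alpha_\ell}$ lies in a $\sim$-class of cardinality at least $2$, the only possible exception being $\mu_{\alpha_1}$ when $\alpha_1=1$; when $s_G=1$ (in which case $r$ is odd, as $|\lambda|=N_G$ is odd) every $\mu_{\alpha_\ell}$ lies in a $\sim$-class of cardinality at least $2$, the only possible exception being $\mu_{\alpha_r}$ when $\alpha_r=k$. Since $\theta=\{\mu_i\}$ has cardinality $1$, this forces $i=1$ or $i=k$, contradicting $2\le i\le k-1$. Hence $c\notin S_0(\lambda)$; and since $S_0(\lambda)=\supp(\lambda^{mf})$ while $\lambda=\lambda^{mf}\cup\lambda^m\cup\lambda^m$ with $\lambda^{mf}$ multiplicity-free, the multiplicity $m(c,\lambda)=m(c,\lambda^{mf})+2\,m(c,\lambda^m)$ is even.

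The delicate point, which I expect to need the most care, is case (ii), $c=2$: the singleton-block argument above does not on its own exclude $2\in S_0(\lambda)$, so one must use the admissibility condition for $\{2\}$ precisely. Equivalently, one can argue directly that if $\mu_1=2\in S_0(\lambda)$ then $T_{\{1\}}$ deletes one part $2$ and inserts two parts $1$, and a short verification shows the outcome no longer lies in $\spc(\lambda)$, contradicting $1=c-1\in\mathbb{J}(\lambda)$. The remaining things to check carefully are that $S(\lambda)$ is of a single parity (so $c\pm1\notin S(\lambda)$) and that the $\sim$-classes have exactly the interval structure and endpoint behaviour quoted above.
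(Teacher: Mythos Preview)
Your proof takes essentially the same route as the paper's, which in two sentences deduces that the block of $c$ is the singleton $\{c\}$ and then asserts ``by construction of $S^{\dagger}(\lambda)$'' that this is impossible when $c\in S_0(\lambda)$. Your case~(i) is exactly an unpacking of that assertion: once $c-2,\,c,\,c+2\in S(\lambda)$ force $c=\mu_i$ with $2\le i\le k-1$, the explicit shape of the $\sim$-classes shows that every $\mu_{\alpha_\ell}$ sits in a class of size $\ge 2$ except possibly at the extreme indices, which are ruled out.

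One remark on case~(ii): your primary claim that the admissibility clause for the singleton $\{2\}$ ``already gives $2\notin S_0(\lambda)$'' is not what the displayed condition $\theta_{\max}=\theta_{\min}=2$ literally says, as that clause carries no $S_0$ constraint. The paper's proof does not single out this edge case either, so you are not missing something the paper supplies; your alternative route via $T_{\{1\}}$ and $\spc(\lambda)$ (equivalently, via the stated identity $\mathbb{I}(T_J(\lambda))=\mathbb{I}(\lambda)\cup J$) is the sounder way to close it.
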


\begin{proof}
Clearly, the assumption implies $c\in S(\lambda)$ and that $(p(c))_{\min} = (p(c))_{\max} = c$. By construction of $S^{\dagger}(\lambda)$ that cannot happen when $c\in S_0(\lambda)$.
\end{proof}

\begin{lemma}\label{lem:tail}
Suppose that $2\in S(\lambda)$, for a partition $\lambda\in \mathcal{P}^{s_G}(N_G)$. 

Then, either $1\in \mathbb{J}(\lambda)$, or $2\not\in A$, for all $A\in P^{\dagger}(\lambda)_0$.
\end{lemma}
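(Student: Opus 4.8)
\textbf{Proof plan for Lemma \ref{lem:tail}.}

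The statement to prove is: if $2\in S(\lambda)$ for $\lambda\in\mathcal{P}^{s_G}(N_G)$, then either $1\in\mathbb{J}(\lambda)$, or $2\notin A$ for every $A\in P^{\dagger}(\lambda)_0$. The plan is to argue by contraposition on the first alternative: assume $1\notin\mathbb{J}(\lambda)$ and deduce that no set in $P^{\dagger}(\lambda)_0$ contains the part $2$. Recall that $\mathbb{J}(\lambda)=\{\theta_{\min}-1:\theta\in S^{\dagger\dagger}(\lambda)\}$, so $1\notin\mathbb{J}(\lambda)$ exactly means that the block $\theta=p(2)$ (which exists since $2\in S(\lambda)$) is \emph{not} admissible, i.e.\ $\theta\notin S^{\dagger\dagger}(\lambda)$. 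First I would unwind what inadmissibility of $\theta=p(2)$ means via the definition of admissible blocks: since $\theta_{\min}\le 2$, the only possibility for $\theta$ to be admissible through the ``$\theta'_{\max}=\theta_{\min}-2$'' clause would require $\theta_{\min}\ge 3$, which is impossible as $2\in S(\lambda)$ forces $\theta_{\min}\le 2$; hence $\theta_{\min}\in\{1,2\}$ and admissibility could only come from one of the two exceptional clauses ``$\theta_{\max}=\theta_{\min}=2$'' or ``$\theta_{\max}>\theta_{\min}=2\in S_0(\lambda)$''. So $\theta$ being inadmissible tells us: if $\theta_{\min}=2$ then $\theta_{\max}>2$ \emph{and} $2\notin S_0(\lambda)$ is false — wait, more carefully, inadmissibility rules out both exceptional clauses, so either $\theta_{\min}=1$, or ($\theta_{\min}=2$ and $\theta_{\max}=2$ is false, i.e. $\theta_{\max}>2$) together with $2\notin S_0(\lambda)$.

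The second step is to translate the conclusion about $P^{\dagger}(\lambda)_0$. Recall $P^{\dagger}(\lambda)$ sits inside $P(\lambda)$ as the boolean functions on $S(\lambda)$ constant on $\sim$-classes, $P^{\dagger}(\lambda)_0=P^{\dagger}(\lambda)\cap P(\lambda)_0$, and $P(\lambda)_0=\{A: |A\cap S_0(\lambda)|\text{ even}\}$. A set $A\in P^{\dagger}(\lambda)$ "contains the part $2$'' means $2\in A$, equivalently the whole $\sim$-class $\theta=p(2)$ is contained in $A$. So I must show: under $1\notin\mathbb{J}(\lambda)$, any $\sim$-class-constant $A$ containing $p(2)$ has $|A\cap S_0(\lambda)|$ odd. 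I split into the two cases from the first step. Case $\theta_{\min}=1$: then $1\sim 2$, so $1\in S(\lambda)$ and $1\in A$ whenever $2\in A$; one checks from the construction of $\sim$ in Section \ref{sect:quot} (the "$r$ odd'' tail relation $\mu_1\sim\cdots\sim\mu_{\alpha_1}$ in the $s=-1$ case, resp.\ the last block $\mu_{\alpha_r}\sim\cdots\sim\mu_k$ in the $s=1$ case) exactly when $1$ and $2$ lie in a common class, and in that situation the parity count of $A\cap S_0(\lambda)$ forced by class-constancy comes out odd — this is the place where the multiplicity-free support $S_0(\lambda)=\supp(\lambda^{mf})$ and the structure of that tail block must be examined explicitly. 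Case $\theta_{\min}=2$, $\theta_{\max}>2$, $2\notin S_0(\lambda)$: here I would show the class $\theta$ must contain an \emph{odd} number of elements of $S_0(\lambda)$ — this is again read off from how the relation $\sim$ groups consecutive elements of $S(\lambda)$ between successive members of $\lambda^{mf}$, so that a non-admissible block of this shape necessarily bridges an odd-cardinality chunk of $S_0(\lambda)$ — whence $|A\cap S_0(\lambda)|$ is odd for any $\sim$-constant $A\supseteq\theta$.

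The main obstacle I anticipate is the bookkeeping in the second step: correctly reading off, from the somewhat intricate definition of $\sim$ in Section \ref{sect:quot} (which has different recipes for $s=1$ and $s=-1$ and a parity-of-$r$ subtlety), the precise parity of $|\theta\cap S_0(\lambda)|$ for the block $\theta=p(2)$ when $\theta$ fails to be admissible. Once that parity is pinned down as odd in both sub-cases, membership of $2$ in some $A\in P^{\dagger}(\lambda)_0$ would force $|A\cap S_0(\lambda)|$ to be simultaneously even (by definition of $P(\lambda)_0$) and to contain the odd contribution of $\theta$ plus the (class-constant, hence well-defined parity) contributions of the other classes — and I would need to check that this cannot be rescued by the other classes, i.e.\ that there is genuinely no $\sim$-constant completion with even total. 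I expect this to reduce to the observation that the classes other than $\theta$ can each contribute any parity freely \emph{except} that the global constraint is a single linear equation over $\mathbb{F}_2$; the real content is that $\theta$ alone already violates it in a way no single other class can be blamed for — but since we are asserting nonexistence for \emph{all} $A$, I should instead argue directly that $2\in A$ is incompatible with $A\in P(\lambda)_0$, which is cleanest if $\theta\subseteq S_0(\lambda)$-parity is odd \emph{and} every element of $A\setminus\theta$ lying in $S_0(\lambda)$ can be paired off — this last point is exactly where I'd lean on Lemma \ref{lem:even}-type consecutive-multiplicity arguments and the special-piece combinatorics already set up. If that direct incompatibility fails to be literally true, the fallback is to invoke Proposition \ref{prop:quotient} to reinterpret $P^{\dagger}(\lambda)_0$ as (the dual of) Lusztig's canonical quotient and cite the Achar--Sage description, but I would prefer the elementary combinatorial route.
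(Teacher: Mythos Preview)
Your overall strategy is the same as the paper's: assume $1\notin\mathbb{J}(\lambda)$ and show that the block $\theta_0=p(2)$ has odd intersection with $S_0(\lambda)$ while all other blocks have even intersection, so no $\sim$-constant set containing $2$ can lie in $P(\lambda)_0$. However, you are making this much harder than it needs to be because you missed one elementary observation.

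The hypothesis $2\in S(\lambda)=\supp(\lambda^{gp})$ with $\lambda^{gp}\in\mathcal{P}^{s_G}_0$ forces $s_G=-1$: elements of $S(\lambda)$ have the parity dictated by $s_G$, so $2\in S(\lambda)$ cannot occur when $s_G=1$. This immediately kills your ``Case $\theta_{\min}=1$'', since $1$ is odd and hence $1\notin S(\lambda)$. You are left with the single case $\theta_{\min}=2$, $\theta_{\max}>2$, $2\notin S_0(\lambda)$, which you identified correctly.

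Once you are in the $s=-1$ regime, the block structure from Section~\ref{sect:quot} is clean and global: every $\sim$-class is either a ``paired'' block $\{\mu_{\alpha_{r-2j-1}},\ldots,\mu_{\alpha_{r-2j}}\}$ containing exactly two elements of $S_0(\lambda)$, a gap singleton containing none, or (when $r$ is odd) the tail block $\{\mu_1,\ldots,\mu_{\alpha_1}\}$ containing exactly one, namely $(\theta_0)_{\max}=\mu_{\alpha_1}$. The non-admissibility conditions you derived ($\theta_{\max}>2$ and $2\notin S_0(\lambda)$) force $\theta_0$ to be precisely that tail block. This is the paper's one-line observation: $|p^{-1}(\theta_0)\cap S_0(\lambda)|=1$ and $|p^{-1}(\theta)\cap S_0(\lambda)|$ is even for all $\theta\neq\theta_0$. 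Hence for any union $A$ of $\sim$-classes, $|A\cap S_0(\lambda)|$ is odd precisely when $\theta_0\subset A$, and the conclusion is immediate.

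There is no need for your hedging about ``rescue by other classes'', no need for Lemma~\ref{lem:even}-type arguments, and certainly no need for an Achar--Sage fallback. The parity of every block is determined simultaneously by one glance at the definition of $\sim$ in the $s=-1$ case.
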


\begin{proof}
Note, that we are in the $s_G=-1$ situation. We consider the projection $S(\lambda)\to S^{\dagger}(\lambda)$ and write $\theta_0 = p(2)$. 

Suppose that $1\not\in \mathbb{J}(\lambda)$.

By construction of $S^{\dagger}(\lambda)$, we must have $|p^{-1}(\theta_0)\cap S_0(\lambda)|=  |\{(\theta_0)_{\max} \}|  = 1$, while $|p^{-1}(\theta)\cap S_0(\lambda)|=2$, for all $\theta_0\neq \theta\in S^{\dagger}(\lambda)$.

Hence, for any $A\in P^{\dagger}(\lambda)_0$, we must have $A\cap p^{-1}(\theta_0)=\emptyset$.

\end{proof}

\begin{lemma}\label{lem:head}
Suppose that $s_G=1$ and a partition $\lambda\in \mathcal{P}^{s_G}(N_G)$ is given.

Let $a = \max S(\lambda)$. Then, $a\not\in A$, for all $A\in P^{\dagger}(\lambda)_0$.
\end{lemma}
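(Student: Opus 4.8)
The statement concerns the $s_G=1$ (symplectic $G$, so $\mathbf{G^\vee}=\mathbf{SO}_{2n+1}$) situation and asserts that the largest part appearing in the "good part" support $S(\lambda)=\supp(\lambda^{gp})$ is excluded from every element of the canonical-quotient subgroup $P^{\dagger}(\lambda)_0$. The natural approach is to trace through the explicit construction of the equivalence relation $\sim$ on $S(\lambda)$ in the $s=1$ case, localize the $\sim$-class of $a=\max S(\lambda)$, and then check the parity condition defining $P(\lambda)_0$ on that class.

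First I would fix notation exactly as in Section \ref{sect:quot}: write $S(\lambda) = \{\mu_1 < \cdots < \mu_k\}$ with $a = \mu_k$, and $\lambda^{mf} = (\mu_{\alpha_1}\cdots\mu_{\alpha_r})$ with $1 \le \alpha_1 < \cdots < \alpha_r \le k$. In the $s=1$ prescription, the last relation imposed is $\mu_{\alpha_r} \sim \mu_{\alpha_r + 1} \sim \cdots \sim \mu_k$, so the $\sim$-class $\theta_a := p(a)$ is precisely $\{\mu_{\alpha_r}, \mu_{\alpha_r+1}, \ldots, \mu_k\} = \{\mu_j : j \ge \alpha_r\}$. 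The key observation is to count how many elements of this class lie in $S_0(\lambda) = \supp(\lambda^{mf}) = \{\mu_{\alpha_1}, \ldots, \mu_{\alpha_r}\}$: by maximality of $\alpha_r$ among the $\alpha_i$, the only index $j \ge \alpha_r$ with $\mu_j \in S_0(\lambda)$ is $j = \alpha_r$ itself. Hence $|p^{-1}(\theta_a) \cap S_0(\lambda)| = 1$, an odd number.

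Now I invoke the description of $P^{\dagger}(\lambda)_0$. An element of $P^{\dagger}(\lambda)$ corresponds, via $p^\ast$, to a subset $A \subseteq S(\lambda)$ that is a union of $\sim$-classes (i.e. constant on $\sim$-classes), and $A \in P^{\dagger}(\lambda)_0$ additionally requires $|A \cap S_0(\lambda)|$ to be even. If $a \in A$ for such an $A$, then since $A$ is a union of $\sim$-classes it must contain the whole class $p^{-1}(\theta_a)$, and therefore $A \cap S_0(\lambda)$ contains the single point $p^{-1}(\theta_a) \cap S_0(\lambda)$ coming from $\theta_a$. Splitting $A \cap S_0(\lambda)$ into its contributions from the various $\sim$-classes contained in $A$, each class other than $\theta_a$ contributes $|p^{-1}(\theta) \cap S_0(\lambda)|$, which — by the way the $\sim$-classes are built in the $s=1$ case, grouping the $\alpha_i$ in pairs from the top — is exactly $2$ for each of the "interior" classes $\mu_{\alpha_{r-2j}} \sim \cdots \sim \mu_{\alpha_{r-2j+1}}$, hence even, so the total parity of $|A \cap S_0(\lambda)|$ equals the parity of the $\theta_a$-contribution, namely $1$. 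This contradicts $A \in P^{\dagger}(\lambda)_0$, forcing $a \notin A$.

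\textbf{Main obstacle.} The one point requiring genuine care is verifying that \emph{every} $\sim$-class other than $\theta_a$ meets $S_0(\lambda)$ in an even number of points — equivalently, that the pairing-up of the indices $\alpha_1 < \cdots < \alpha_r$ in the $s=1$ construction is such that each equivalence class absorbs exactly two of the $\alpha_i$'s, except possibly a final leftover. Here one must be attentive to the case $r$ even versus $r$ odd: when $s=1$ the top class $\theta_a$ already contains $\mu_{\alpha_r}$ (one $\alpha_i$), and the remaining $\alpha_1, \ldots, \alpha_{r-1}$ are grouped, so parity bookkeeping of $r-1$ is what matters. In fact, for the purposes of this lemma one does not even need the global parity statement — it suffices that the $\theta_a$-class contributes an odd count and that $A$, being a union of classes, either omits $a$ entirely or, if it contains $a$, can only fix the remaining parity by ... but it cannot, since $A \in P^{\dagger}(\lambda)_0$ already constrains the total to be even while the non-$\theta_a$ classes can be chosen to contribute whatever even/odd amounts they wish. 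So the cleanest route is: show $|p^{-1}(\theta_a) \cap S_0(\lambda)|$ is odd and that $\theta_a$ is a single $\sim$-class, then observe that $a\in A$ with $A$ a union of $\sim$-classes would not by itself force a contradiction unless the other classes are even — hence I do need the even-contribution claim for the interior classes, which follows directly from the displayed relations $\mu_{\alpha_{r-2j}}\sim\cdots\sim\mu_{\alpha_{r-2j+1}}$ each containing exactly the two "marked" endpoints $\mu_{\alpha_{r-2j}}$ and $\mu_{\alpha_{r-2j+1}}$ from $\lambda^{mf}$. This is a short combinatorial check, and I expect the whole proof to be under half a page.
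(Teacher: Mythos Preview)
Your approach is correct and coincides with the paper's own proof: identify the $\sim$-class $\theta_a = p(a)$ as $\{\mu_{\alpha_r},\ldots,\mu_k\}$, observe that $|p^{-1}(\theta_a)\cap S_0(\lambda)|=1$ while every other $\sim$-class meets $S_0(\lambda)$ in an even number of points, and conclude that any $A\in P^{\dagger}(\lambda)$ containing $a$ violates the parity condition defining $P(\lambda)_0$.

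Your hesitation about the case distinction ``$r$ even versus $r$ odd'' is unnecessary: since $s_G=1$ forces $N_G$ odd and all parts of $\lambda^{mf}$ are odd, we have $r = \ell(\lambda^{mf}) \equiv |\lambda^{mf}| \equiv N_G \equiv 1 \pmod 2$, so $r$ is always odd and the indices $\alpha_1,\ldots,\alpha_{r-1}$ pair up cleanly into the interior classes. Once you note this, the ``main obstacle'' evaporates and the proof is indeed a few lines, exactly as in the paper.
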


\begin{proof}
A similar argument to that in the previous proof of Lemma \ref{lem:tail} holds. Namely, from the construction of $S^{\dagger}(\lambda)$, we have $|p^{-1}(p(a))\cap S_0(\lambda)|=  |\{a \}|  = 1$, while $|p^{-1}(\theta)\cap S_0(\lambda)|=2$, for all $p(a)\neq \theta\in S^{\dagger}(\lambda)$.

Hence, for any $A\in P^{\dagger}(\lambda)_0$, the parity condition of $P(\lambda)_0$ forces $a\not\in A $.


\end{proof}

\subsubsection{Primitivity}

Suppose that partitions $\lambda\in \mathcal{P}^{s_G}(N_G)$ and $\mu\in \spc(\lambda)$ are given. Then, a subset $I\subset \mathbb{I}(\mu)$ exists for which $\lambda = T^I(\mu)$. 

Dualizing the map $\iota^\ast_{\mu,\lambda}$ from \eqref{eq:iotaast}, we obtain an injective map
\[
\iota_{\mu,\lambda}: P^{\dagger}(\mu)\hookrightarrow  P^{\dagger}(\lambda)\;,
\]
when the groups involved viewed as boolean function spaces on $S^{\dagger}(\mu)$ and $S^{\dagger}(\lambda)$.

It is easy to verify that $\iota_{\mu,\lambda}(P^{\dagger}(\mu)_0)\subset P^{\dagger}(\lambda)_0$ is fulfilled. 

Thus, by invoking the identification of Proposition \ref{prop:quotient} on $\iota_{\mu,\lambda}$, we arrive at a definition of an embedding of character groups
\[
\iota_{\mathcal{O}^\vee_{\mu},\mathcal{O}^\vee_{\lambda}} : A^{\dagger} (\mathcal{O}^\vee_{\mu}) \hookrightarrow A^{\dagger}(\mathcal{O}^\vee_{\lambda})\;.
\]

Now, for a partition $\lambda\in \mathcal{P}^{s_G}(N_G)$ and an integer $c\in \mathbb{J}(\lambda)$, we define a character 
\begin{equation}\label{eq:tc}
\mathfrak{t}_c \in \widehat{P(\lambda)}\,, \quad 
\mathfrak{t}_c(A) = \left\{\begin{array}{ll} (-1)^{|A\cap \{ c-1,c+1\} |} &  c>1 \\ (-1)^{|A\cap \{ 2\}| } & c=1 \end{array}\right.\;,
\end{equation}
where $A\in P(\lambda)$ is viewed as a subset of $S(\lambda)$.


Let us also recall the notion of primitivity that was outlined in Definition \ref{defi:prim}.

\begin{proposition}\label{prop:prim}
Let $\lambda\in \mathcal{P}^{s_G}(N_G)$ be a partition, and $\epsilon\in A^{\dagger}(\mathcal{O}^\vee)$ a character of the corresponding component group. 

Let $B_\epsilon\subset S(\lambda)$ be the subset corresponding to $\epsilon$ under the identification of Proposition \ref{prop:comp-comb}.

For each $\mu\in \spc(\lambda)$, $\epsilon$ is $\mathcal{O}_{\mu}^\vee$-primitive, if and only if, $\mathfrak{t}_c(B_\epsilon)\neq1$, for all $c\in \mathbb{J}(\lambda)\setminus \mathbb{J}(\mu)$.
    
\end{proposition}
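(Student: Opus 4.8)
The plan is to unwind the three layers of definitions involved --- primitivity (Definition \ref{defi:prim}), the embeddings $\iota_{\mathcal{O}^\vee_\mu,\mathcal{O}^\vee_\lambda}$, and the characters $\mathfrak{t}_c$ --- and reduce everything to an explicit combinatorial statement about subsets of $S(\lambda)$ and their behaviour under the block maps $\iota^\ast_{\mu,\lambda}$. Fix $\lambda\in\mathcal{P}^{s_G}(N_G)$, $\epsilon\in A^{\dagger}(\mathcal{O}^\vee_\lambda) = P^{\dagger}(\lambda)_0$, and its associated subset $B_\epsilon\subset S(\lambda)$, which (being in the image of $p^\ast$) is a union of $\sim$-classes. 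First I would recast $\mathcal{O}^\vee_\mu$-primitivity: by the remark following Proposition \ref{prop:spc-comb}, the partial order on $\spc(\mathcal{O}^\vee_\lambda)$ is the hypercube lattice on subsets of $\mathbb{J}(\lambda)$, where $\mu = T_{J_\mu}(\lambda)$ corresponds to $J_\mu\subset\mathbb{J}(\lambda)$ and the order is reverse inclusion. So the classes $\mathcal{O}^\vee_\mu\le\mathcal{O}^\vee_{\mu'}\lneq\mathcal{O}^\vee_\lambda$ are exactly the $\mu' = T_{J'}(\lambda)$ with $\emptyset\ne J'\subset J_\mu$. Hence $\epsilon$ is $\mathcal{O}^\vee_\mu$-primitive if and only if $\epsilon\notin\mathrm{Im}(\iota_{\mathcal{O}^\vee_{T_{\{c\}}(\lambda)},\mathcal{O}^\vee_\lambda})$ for every single $c\in J_\mu = \mathbb{J}(\lambda)\setminus\mathbb{J}(\mu)$ --- it suffices to rule out the atoms of the sublattice, since $\mathrm{Im}(\iota_{\mathcal{O}^\vee_{\mu'},\mathcal{O}^\vee_\lambda})\subset\mathrm{Im}(\iota_{\mathcal{O}^\vee_{\mu''},\mathcal{O}^\vee_\lambda})$ whenever $\mathcal{O}^\vee_{\mu'}\le\mathcal{O}^\vee_{\mu''}$, using the compatibility $\iota_{\mathcal{O}^\vee_2,\mathcal{O}^\vee}\circ\iota_{\mathcal{O}^\vee_1,\mathcal{O}^\vee_2}=\iota_{\mathcal{O}^\vee_1,\mathcal{O}^\vee}$ noted after Theorem \ref{thm:C}.

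Next I would identify, for a fixed $c\in\mathbb{J}(\lambda)$ with $\mu_c := T_{\{c\}}(\lambda)$, the image $\mathrm{Im}(\iota_{\mathcal{O}^\vee_{\mu_c},\mathcal{O}^\vee_\lambda})\subset P^{\dagger}(\lambda)_0$ explicitly as a subset of $S(\lambda)$, and show it equals $\ker\mathfrak{t}_c\cap P^{\dagger}(\lambda)_0$. The operation $T_{\{c\}}$ replaces the pair $(c-1\; c+1)$ in the multiplicity-free part by $(c^2)$ (or, when $c=1$, removes a $(2)$ --- here I would keep track of the degenerate conventions $(02)=(2)$); comparing $S(\mu_c)$, $S^{\dagger}(\mu_c)$ with $S(\lambda)$, $S^{\dagger}(\lambda)$, the map $\iota^\ast_{\mu_c,\lambda}\colon S^{\dagger}(\lambda)\to S^{\dagger}(\mu_c)$ merges precisely the block(s) of $\lambda$ meeting $\{c-1,c+1\}$ (resp. $\{2\}$) into a single block of $\mu_c$. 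Dualizing, $\iota_{\mu_c,\lambda}$ sends a boolean function $g$ on $S^{\dagger}(\mu_c)$ to its pullback, so its image consists of exactly those $\sim_\lambda$-constant functions on $S(\lambda)$ that are additionally constant on the merged block --- equivalently, subsets $A\subset S(\lambda)$ that either contain both $c-1$ and $c+1$ or contain neither (resp. do not contain $2$, when $c=1$, because the merged block acquires an extra parity constraint forcing $2\notin A$, which is where Lemma \ref{lem:tail} enters). That condition is visibly ``$\mathfrak{t}_c(A)=1$''. Intersecting with $P^{\dagger}(\lambda)_0$ and checking the surjectivity of the restriction of $\iota_{\mu_c,\lambda}$ onto this set gives $\mathrm{Im}(\iota_{\mathcal{O}^\vee_{\mu_c},\mathcal{O}^\vee_\lambda}) = \{\epsilon\in P^{\dagger}(\lambda)_0 : \mathfrak{t}_c(B_\epsilon)=1\}$.

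Combining the two reductions yields: $\epsilon$ is $\mathcal{O}^\vee_\mu$-primitive $\iff$ $\epsilon\notin\mathrm{Im}(\iota_{\mathcal{O}^\vee_{\mu_c},\mathcal{O}^\vee_\lambda})$ for all $c\in\mathbb{J}(\lambda)\setminus\mathbb{J}(\mu)$ $\iff$ $\mathfrak{t}_c(B_\epsilon)\ne 1$ for all such $c$, which is the claim. Along the way I would need the auxiliary facts from Lemmas \ref{lem:even}, \ref{lem:tail}, \ref{lem:head} to handle the boundary cases: Lemma \ref{lem:even} guarantees $m(c,\lambda)$ is even when both $c-1,c+1\in\mathbb{J}(\lambda)$ so that the relevant $\sim$-class structure is as expected; Lemma \ref{lem:tail} controls the $c=1$ case (and the possibility $2\in S(\lambda)$); Lemma \ref{lem:head} handles the largest part in the symplectic/odd case where the top block has a forced parity.

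The main obstacle I anticipate is the second step: pinning down exactly how a single operation $T_{\{c\}}$ alters the equivalence relation $\sim$ on $S(\lambda)$ --- hence the block set $S^{\dagger}(\lambda)$ --- and how this interacts with the parity subgroup $P^{\dagger}(\lambda)_0$ versus $P^{\dagger}(\mu_c)_0$. The definition of $\sim$ in Section \ref{sect:quot} is given by an intricate index-shifting recipe that differs between the $s_G=1$ and $s_G=-1$ cases, and a single move $T_{\{c\}}$ can either split one block into two or (at the extremes, or when $c=1$) alter the parity bookkeeping; verifying in every configuration that the image of $\iota_{\mu_c,\lambda}$ restricted to the $0$-subgroups is precisely $\ker\mathfrak{t}_c$ (neither larger nor smaller) is the delicate part, and is where the three preparatory lemmas must be deployed carefully. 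Everything else is bookkeeping with boolean functions and the hypercube order.
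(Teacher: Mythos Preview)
Your proposal is correct and follows essentially the same approach as the paper's proof: first reduce $\mathcal{O}^\vee_\mu$-primitivity to the atomic moves $\lambda_c = T_{\{c\}}(\lambda)$ for $c\in\mathbb{J}(\lambda)\setminus\mathbb{J}(\mu)$ via the compatibility of the embeddings $\iota$, and then identify $\mathrm{Im}(\iota_{\mathcal{O}^\vee_{\lambda_c},\mathcal{O}^\vee_\lambda})\cap P^{\dagger}(\lambda)_0$ with $\ker\mathfrak{t}_c\cap P^{\dagger}(\lambda)_0$ by analysing how $\iota^\ast_{\lambda_c,\lambda}$ merges exactly the two blocks touching $c\pm1$ (or handles the $c=1$ boundary). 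The paper's argument is slightly leaner than you anticipate: only Lemma~\ref{lem:tail} is invoked (for the $c=1$ case), while Lemmas~\ref{lem:even} and~\ref{lem:head} are not needed here, so the ``main obstacle'' you flag is less delicate than expected.
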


\begin{proof}
For $c\in \mathbb{J}(\mu)$, we take note of the partition $\lambda_c = T_{\{c\}}(\lambda)$.

Let us see that $\mathfrak{t}_c(B_\epsilon)=1$ holds, if and only if, $\epsilon\in \mathrm{Im}(\iota_{\mathcal{O}^\vee_{\lambda_c}, \mathcal{O}^\vee_{\lambda}})$.

When $c=1$, one direction of the latter claim follows from Lemma \ref{lem:tail}. Conversely, since $\iota_{\lambda_1,\lambda}$ is a bijection in this case, we only need to observe that any subset $B\subset S^{\dagger}(\lambda_1)\setminus \{p(2)\}$ must satisfy the parity condition $B\in P(\lambda_1)_0$.

When $c>1$, the fibers of the surjection $\iota^\ast_{\lambda_c,\lambda}: S^{\dagger}(\lambda) \to S^{\dagger}(\lambda_c)$ are all singletons, except for the occurrence of $(\iota^\ast_{\lambda_c,\lambda})^{-1}(\theta) =\{\theta', \theta''\}$, where $\theta'_{\max} = c-1$ and $\theta''_{\min} = c+1$.

Thus, as boolean functions on $S(\lambda)$, we have the equality
\[
\mathrm{Im}(\iota_{\mathcal{O}^\vee_{\lambda_c}, \mathcal{O}^\vee_{\lambda}}) = \{\epsilon'\in P^{\dagger}(\lambda)_0\: :\: \epsilon'(c-1) = \epsilon'(c+1)\}\;,
\]
which is equivalent to our claim.

Now, suppose that $\epsilon$ is not $\mathcal{O}_{\mu}^\vee$-primitive. Then, a class $\mathcal{O}^\vee\in \mathcal{U}^\vee$ exists with $\mathcal{O}_{\mu}^\vee \leq \mathcal{O}^\vee\lneq \mathcal{O}_{\lambda}^\vee$ in the topological partial order on $\mathcal{U}^\vee$, so that $\epsilon\in \mathrm{Im}(\iota_{\mathcal{O}^\vee, \mathcal{O}^\vee_{\lambda}})$.

A partition $\mu'\in \spc(\lambda)$ can then be found, so that $\mathcal{O}^\vee= \mathcal{O}^\vee_{\mu'}$. In particular, $\mu' = T_J(\lambda)$, for $\emptyset\neq J\subset \mathbb{J}(\lambda)\setminus \mathbb{J}(\mu)$. Picking $c\in J$, we see that $\lambda_c = T^{J\setminus\{c\}}(\mu')$, $\mathrm{Im}(\iota_{\mathcal{O}_{\mu'}^\vee, \mathcal{O}^\vee_{\lambda}})\subset \mathrm{Im}(\iota_{\mathcal{O}_{\lambda_c}^\vee, \mathcal{O}^\vee_{\lambda}})$, and consequently, $\mathfrak{t}_c(B_\epsilon)=1$.

Similarly, when it is assumed that $\epsilon$ is $\mathcal{O}_{\mu}^\vee$-primitive, we obtain that $\epsilon\not\in \mathrm{Im}(\iota_{\mathcal{O}_{\lambda_c}^\vee, \mathcal{O}^\vee_{\lambda}})$, for any $c\in \mathbb{J}(\lambda)\setminus \mathbb{J}(\mu)$, since $\mathcal{O}_{\mu}^\vee \leq \mathcal{O}_{\lambda_c}^\vee\lneq \mathcal{O}_{\lambda}^\vee$.



\end{proof}

\subsubsection{Barbasch--Vogan--Lusztig--Spaltenstein duality}

The Lie-theoretic duality between types $B$ and $C$ is known to be manifested in the context of unipotent conjugacy classes. For each integer $n\geq1$, explicit maps
\[
\mathcal{P}^{-1}(2n) \to \mathcal{P}^{1}(2n+1),\quad \mathcal{P}^{1}(2n+1) \to \mathcal{P}^{-1}(2n)\;
\]
are defined in \cite[Chapter 3]{spal-book} or \cite[Appendix A]{bv85}, all of which we will simply denote as $d$. 

Here we list some meaningful properties of these duality maps.

\begin{proposition}\label{prop:d-prop}
Let $d: \mathcal{P}^{s_G}(N_G) \to \mathcal{P}^{-s_G}(N_G-s_G)$ be the Barbasch--Vogan--Lusztig--Spaltenstein map.

Then, 
\begin{enumerate}
    \item The image of $d$ is the set of special partitions $\mathcal{P}^{-s_G}_{\spci}(N_G-s_G)$.
\item The decomposition of $\mathcal{P}^{s_G}(N_G)$ into the fibers of the map $d$ amounts precisely to the decomposition
\[
\mathcal{P}^{s_G}(N_G) = \bigsqcup_{\lambda\in \mathcal{P}^{s_G}_{\spci}(N_G)} \spc(\lambda)
\]
of the set of partitions into its special pieces.

In particular, for any $\lambda \in \mathcal{P}^{s_G}(N_G)$, $d$ remains constant on the relative special piece $\spc(\lambda)$.

\item Setting $d': \mathcal{P}^{-s_G}(N_G-s_G) \to \mathcal{P}^{s_G}(N_G)$ to be the duality map in the reverse direction, we have $d'(d(\lambda)) = T^{\mathbb{I}(\lambda)}(\lambda)$, 
for any partition $\lambda\in \mathcal{P}^{s_G}(N_G)$.
    
\end{enumerate}

\end{proposition}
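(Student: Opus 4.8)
\textbf{Proof proposal for Proposition \ref{prop:d-prop}.}

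The plan is to reduce all three assertions to the explicit combinatorial descriptions of the duality map $d$ and of special pieces already assembled in Sections \ref{sec:partitions}--\ref{sect:special-piece}, so that the work is bookkeeping with the operations $T^I$ and $T_J$ rather than any geometry. First I would recall the precise closed formula for $d$ from \cite[Chapter 3]{spal-book} (equivalently \cite[Appendix A]{bv85}): it is the composite of ordinary partition transpose with the Spaltenstein/collapse operation that forces the result into $\mathcal{P}^{-s_G}$, and its key features are that (a) it lands in special partitions of the dual type, and (b) $d$ is order-reversing and $d' \circ d$ is the ``special-closure'' operator $P_{\spci}$ sending $\mu$ to the smallest special class whose closure contains $\mathcal{O}^\vee_\mu$. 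Assertion (1) is then exactly statement (a), which is classical; I would either cite it directly or, to be self-contained, note that the collapse operation by construction produces a partition all of whose ``blocks'' $\lambda(\theta)$ lie in $\mathcal{P}^{\pm1}_0$, i.e. $\mathbb{I}(d(\lambda)) = \emptyset$, which is our definition of special.

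For assertion (3), the cleanest route is to invoke \cite[Proposition 4.2]{kp-aster} (Spaltenstein), already quoted inside the proof of Proposition \ref{prop:spc-comb}: it states that $d'(d(\mu)) = T^{\mathbb{I}(\mu)}(\mu)$ is the unique minimal special class above $\mathcal{O}^\vee_\mu$. So (3) is essentially a restatement of that result in our notation, and I would present it as such, adding only the one-line check that $T^{\mathbb{I}(\lambda)}(\lambda)$ is indeed special — which is immediate from $\mathbb{I}(T^{\mathbb{I}(\lambda)}(\lambda)) = \mathbb{I}(\lambda)\setminus\mathbb{I}(\lambda) = \emptyset$, a fact already recorded in Section \ref{sect:special-piece}.

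Assertion (2) is the heart of the matter and I would derive it from (1) and (3). Fix $\lambda_0 \in \mathcal{P}^{s_G}_{\spci}(N_G)$; I must show the fiber $d^{-1}(d(\lambda_0))$ equals $\spc(\lambda_0)$. For ``$\supset$'': every $\mu \in \spc(\lambda_0)$ has $T^{\mathbb{I}(\mu)}(\mu) = \lambda_0$ (this is exactly how $\spc(\lambda_0)$ was described in the proof of Proposition \ref{prop:spc-comb}, using $\mu = T_J(\lambda_0)$ and $\mathbb{I}(T_J(\lambda_0)) = J$ with $T^J(T_J(\lambda_0)) = \lambda_0$), so by (3) $d'(d(\mu)) = \lambda_0 = d'(d(\lambda_0))$, and since $d'$ is injective on its domain $\mathcal{P}^{-s_G}_{\spci}(N_G-s_G)$ — which follows because $d$ and $d'$ are mutually inverse bijections between special partitions of the two types, again from \cite{spal-book} — we get $d(\mu) = d(\lambda_0)$. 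For ``$\subset$'': if $d(\mu) = d(\lambda_0)$ then $T^{\mathbb{I}(\mu)}(\mu) = d'(d(\mu)) = d'(d(\lambda_0)) = \lambda_0$ by (3), so $\mu = T_{\mathbb{I}(\mu)}(\lambda_0) \in \spc(\lambda_0)$ by the same equivalence $T^{\mathbb{I}(\mu)}(\mu) = \lambda_0 \Leftrightarrow \mu = T_{\mathbb{I}(\mu)}(\lambda_0)$ used in Proposition \ref{prop:spc-comb}. The disjoint-union decomposition then follows since the special pieces partition $\mathcal{P}^{s_G}(N_G)$ (Proposition \ref{prop:spc-comb} gives $\mathcal{U}^\vee = \bigsqcup \spc(\mathcal{O}^\vee)$), and the ``in particular'' clause — $d$ constant on $\spc(\lambda)$ — is immediate. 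The main obstacle I anticipate is not conceptual but one of precise citation: making sure the normalization of $d$ in \cite{spal-book}/\cite{bv85} matches the $B$/$C$ bookkeeping here (which type maps to which, and whether the collapse is a $B$-collapse or $C$-collapse), so that the key input ``$d, d'$ are mutually inverse bijections on special partitions and $d' \circ d = T^{\mathbb{I}(-)}(-)$'' is quoted in exactly the form used above; once that is pinned down, everything else is the formal manipulation of $T^I$ and $T_J$ already developed in Section \ref{sect:special-piece}.
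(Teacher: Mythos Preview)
The paper does not actually supply a proof of this proposition: it is stated as a summary of known properties of the Barbasch--Vogan--Lusztig--Spaltenstein map, with the references \cite{spal-book} and \cite{bv85} given just before the statement, and the text moves on immediately afterward. Your proposal therefore goes beyond what the paper does; the argument you outline is correct, and in particular the derivation of (2) from (1) and (3) via the fact that $d,d'$ restrict to mutually inverse bijections between special partitions is exactly the intended mechanism. The caveat you flag about matching normalizations of the collapse operations across types $B$ and $C$ is the only genuine thing to check, and it is a matter of carefully reading the cited sources rather than a mathematical obstruction.
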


It is evidently read from Proposition \ref{prop:d-prop} that the duality maps restrict to explicit bijections $\mathcal{P}^{-1}_{\spci}(2n) \cong \mathcal{P}^{1}_{\spci}(2n+1)$ between special partitions, for each $n\geq1$. In more accurate terms, the duality sets up a bijection between the sets of special pieces that compose $\mathcal{P}^{-1}(2n)$ and $\mathcal{P}^{1}(2n+1)$, respectively.

\subsection{Representation theory}

We study smooth $G$-representations over the complex field. We write a pair $(\pi, V)$, or more often simply $\pi$, to refer to a complex vector space $V$ on which $G$ acts continuously by $\pi : G \to GL(V)$.

Let $\irr(G)$ denote the collection of isomorphism classes of irreducible such representations.


We take note of the involution $\pi\mapsto \pi^t$ on $\irr(G)$ that is known as the \textit{Aubert duality}. The reader may be invited to the introduction section of \cite{am-aub} for a review of the various definitions and manifestations of that duality.


\subsubsection{Local Langlands Reciprocity (split groups)}

The collection $\irr(G)$, for our groups of interest, was successfully described in arithmetic terms that we now recall. We assume the variant of the Langlands reciprocity that is derived from Arthur's endoscopic treatment, and refer to \cite[Appendix B]{atobe-gan} for a succinct review.

Let $W_F$ be Weil subgroup of the absolute Galois group of the field $F$. We write $|\cdot|$ for the norm function on $W_F$.

Repeating the definition of the previous section, we set $\Phi(G)$ to be the $\mathbf{G^\vee}(\mathbb{C})$-conjugation classes of continuous group homomorphisms
\[
\phi:W_F\times \mathrm{SL}_2(\mathbb C) \to  \mathbf{G^\vee}(\mathbb{C})\;,
\]
whose restriction to $\mathrm{SL}_2(\mathbb C)$ is an algebraic map, while $\phi(W_F)$ consists of semisimple elements.

Elements $\phi\in \Phi(G)$ are called the \textit{$L$-parameters} of the group $G$. 

Langlands reciprocity constructs a canonical finite-to-one surjective map 
\[
\irr(G) \to \Phi(G)\quad \pi \mapsto \phi_\pi\;,
\]
with favourable properties. 

For an $L$-parameter $\phi\in \Phi(G)$, the set of isomorphism classes of representations
\[
\Pi_{\phi} = \{\pi\in \irr(G)\;:\; \phi_\pi = \phi\}
\]
is the \textit{$L$-packet} attached to $\phi$.

We recall that for any $L$-parameter $\phi\in \Phi(G)$, a unipotent conjugacy class $\mathcal{O}_{\phi}^\vee\in \mathcal{U}^\vee$ is attached by taking the class of the element
\[
u_\phi := \phi\left(1,\begin{pmatrix}
    1 & 1 \\ 0 & 1
\end{pmatrix}\right)\in \mathbf{G^\vee}(\mathbb{C})\;.
\]
In particular, a partition $\lambda(\phi)\in \mathcal{P}^{s_G}(N_G)$ is an invariant that we define by the identity $\mathcal{O}^\vee_{\lambda(\phi)}=\mathcal{O}^\vee_{\phi}$.

\subsubsection{Infinitesimal characters}

The reciprocity also gives rise to the infinitesimal character invariant of representations in $\irr(G)$ which is coarser than the $L$-parameter.

Let $\Lambda(G)$ be the collection of $\mathbf{G^\vee}(\mathbb{C})$-conjugation classes of continuous group homomorphisms $\chi:W_F \to  \mathbf{G^\vee}(\mathbb{C})$, with $\chi(W_F)$ consisting of semisimple elements.

We fix the homomorphism
\begin{equation}\label{eq:rF}
r_F: W_F \to W_F\times \mathrm{SL}_2(\mathbb C),\quad r_F(w) = \left(w,\begin{pmatrix}
    |w|^{1/2} & 0 \\ 0 & |w|^{-1/2}
\end{pmatrix}\right)\;.
\end{equation}

For each $L$-parameter $\phi\in \Phi(G)$, its \textit{infinitesimal character} $\chi_{\phi}:= \phi\circ r_F\in \Lambda(G)$ is now defined by precomposition.

A decomposition 
\begin{equation}\label{eq:lpackets}
\irr(G) = \bigsqcup_{\chi\in \Lambda(G)} \irr_{\chi} (G)
\end{equation}
now arises, when setting
\[
\irr_{\chi}(G) := \bigsqcup_{\phi\in \Phi(G)\,:\, \chi_{\phi}=\chi} \Pi_{\phi} \;.
\]
For a representation $\pi\in \irr_{\chi}(G)$, we say that its infinitesimal character is $\chi_{\pi}:=\chi\in \Lambda(G)$.

\subsubsection{Unipotent representations}

We say that an infinitesimal character $\chi\in \Lambda(G)$ is \textit{unramified}, when it is trivial on the inertia subgroup $I_F< W_F$. We denote by $\Lambda_u(G)$ the collection of unramified infinitesimal characters.

We recall that the quotient $W_F/I_F$ is a cyclic group generated by the image of (a choice of) a Frobenius element $\frb\in W_F$. 

Thus, a character $\chi\in \Lambda_u(G)$ is determined by the conjugacy class of the semisimple element $s_{\chi} = \chi(\frb)\in \mathbf{G^\vee}(\mathbb{C})$. In practice, we may identify $\Lambda_u(G)$ with the set of semisimple conjugacy classes in $\mathbf{G^\vee}(\mathbb{C})$.

Indeed, this is the point of view taken by the theory of Satake parameters. For a fixed hyperspecial maximal compact subgroup $K<G$, we say that a representation $\pi\in \irr(G)$ is \textit{spherical}, when its space contains a non-zero $K$-invariant vector.

For each unramified $\chi\in \Lambda_u(G)$, there is a unique spherical representation 
\[
\delta({\chi})\in \irr_{\chi}(G)\;.
\]

We define the sets of \textit{unipotent $L$-parameters}
\[
\Phi_u(G) = \{\phi\in \Phi(G)\;:\; \chi_{\phi}\in \Lambda_u(G)\}\;,
\]
and \textit{unipotent irreducible $G$-representations}
\[
\irr_u(G) = \bigsqcup_{\chi\in \Lambda_u(G)} \irr_{\chi}(G) = \bigsqcup_{\phi\in \Phi_u(G)} \Pi_{\phi} \;.
\]
This latter set happens to coincide with the Lusztig notion of unipotent representations that is defined in terms of parahoric restriction \cite{Lu-unip1}. 

\begin{remark}
Note also that an $L$-parameter $\phi\in\Phi(G)$ is unipotent, if and only if, its restriction $\phi|_{I_F}$ is a trivial homomorphism. In particular, an $L$-parameter $\phi\in \Phi_u(G)$ is determined by the congjugacy class of the pair of commuting elements $(\phi(\frb), u_{\phi})$, which amounts to the Jordan decomposition of the element $g_{\phi}:= \phi(\frb)u_{\phi}\in \mathbf{G^\vee}(\mathbb{C})$ in the reductive algebraic group. 

Thus, the set of unipotent $L$-parameters $\Phi_u(G)$ is in a natural bijection with the conjugacy classes of $\mathbf{G^\vee}(\mathbb{C})$. Yet, this point of view will not be prominent in our discussion.

\end{remark}

\subsubsection{Spinor norm character}\label{sec:spinornorm}

Let us note that the center $Z_{\mathbf{SO}_{2n+1}}$ is trivial, while $Z_{\mathbf{Sp}_{2n}}$ is a group of $2$ elements. In the latter case, we write $-1\in \mathbf{Sp}_{2n}(\mathbb{C})$ for the non-trivial element in the center.

We write $\kappa_0: W_F \to \{\pm1\}$ for the quadratic character corresponding to the unique unramified quadratic extension of the field $F$.

Viewing $\kappa_0$ as a homormophism $W_F\to Z_{\mathbf{Sp}_{2n}}$, we see that the tensor operation $\kappa_0\otimes-$ gives an involution on the set of $L$-parameters $\Phi(\mathrm{SO}_{2n+1}(F))$, for any $n\geq1$, which preserves the set of unramified $L$-parameters $\Phi_u(\mathrm{SO}_{2n+1}(F))$.

Clearly, we have $\kappa_0 \otimes \chi_\phi  = \chi_{\kappa_0\otimes \phi}$, for $\phi\in \Phi(\mathrm{SO}_{2n+1}(F))$.

Indeed, this involution may be explicated on the level of corresponding $G$-representations. 

Special orthogonal groups admit a homomorphism
\[
\mathrm{sp}:\mathrm{SO}_{2n+1}(F)\to F^\times/(F^\times)^2
\]
known as the \emph{spinor norm}. It can be characterized as the homomorphism that takes any reflection along an anisotropic vector $w\in V$ to $B(w,w)(F^\times)^2$ (\cite[Theorem V.1.13]{lam-book}), where $(V,B)$ is the quadratic space defining the $p$-adic group as in Section \ref{sec:maxcpcts}.

Composing the norm $\mathrm{sp}$ with the quadratic character of $F^\times$ obtained from $\kappa_0$ via local class field theory, produces a quadratic $\mathrm{SO}_{2n+1}(F)$-character, which is trivial on the Iwahori subgroup $I_{\mathrm{SO}_{2n+1}(F)}$.

Abusing notation, we denote that character by $\kappa_0$ as well.

In these terms, the Langlands reciprocity map satisfies
\[
\kappa_0 \otimes \phi_{\pi} = \phi_{\kappa_0\otimes \pi}\;,
\]
for any representation $\pi \in \irr(\mathrm{SO}_{2n+1}(F))$.

\subsubsection{Explication of $L$-parameters}\label{sect:expl-l}

We denote by $\Phi(N)$ the set of isomorphism classes of complex $N$-dimensional continuous representations $\phi$ of the group $W_F\times \mathrm{SL}_2(\mathbb{C})$, whose restriction to $\mathrm{SL}_2(\mathbb C)$ is algebraic, while $\phi(W_F)$ consists of semisimple elements. (i.e. $L$-parameters for the group $GL_N(F)$.)

It is convenient to define the set $\Phi= \bigoplus_{N\geq1} \Phi(N)$ and treat it as an additive semigroup with respect to the direct sum operation on representations.

We denote by $\Phi_u\subset \Phi$ the collection of representations that are trivial on the intertia group $I_F< W_F$. 

By assuming the standard embedding of $\mathbf{G^\vee}$ into $\mathbf{GL}_{N_G}$, we obtain an embedding $\Phi(G)\subset \Phi(N_G)$. Clearly, $\Phi_u(G)= \Phi_u \cap \Phi(G)$ holds.

We now recall the explicit structure of $L$-parameters in $\Phi(G)$. Since our main focus is set on unipotent representations, we limit this description to $\Phi_u(G)$.

For $\zeta\in \mathbb{C}^\times$ and an integer $k\geq1$, we write 
\[
\zeta\otimes \nu_k\in \Phi_u
\]
for the $W_F\times \mathrm{SL}_2(\mathbb{C})$-representation given by the $k$-dimensional irreducible $\mathrm{SL}_2(\mathbb{C})$-representation, tensored with the $1$-dimensional character of $W_F/I_F$ that is determined by $\frb \mapsto \zeta$.

\begin{remark}\label{rem:inf-ch-comp}
We note that for such $\phi = \zeta \otimes \nu_k\in \Phi_u$, the precomposition $\phi(r_F(\frb))$ may be viewed as a semisimple matrix whose eigenvalues are specified by the set
\[
\{ \zeta q^{\frac{k-1}2}, \zeta q^{\frac{k-3}2},\ldots, \zeta q^{-\frac{k-1}2}\}\;,
\]
all appearing with multiplicity $1$.
\end{remark}

For a partition $\lambda = (\lambda_1\leq \ldots \leq \lambda_{\ell(\lambda)})\in \mathcal{P}$ and a tuple of numbers $\underline{\zeta}= (\zeta_1,\ldots, \zeta_{\ell(\lambda)})$ in $\mathbb{C}^\times$, we write
\begin{equation}\label{eq:l-expl}
\phi_{\underline{\zeta},\lambda} = \sum_{i=1}^{\ell(\lambda)} \zeta_i \otimes \nu_{\lambda_i}\in \Phi_u \;.
\end{equation}

\begin{proposition}\label{prop:lambdaphi}

Every $L$-parameter $\phi\in \Phi_u(G)$ can be written in the form 
\[
\phi = \phi_{(1,\ldots,1),\lambda} +\phi_{(-1,\ldots,-1),\lambda'} + \phi_{(\zeta_1,\ldots,\zeta_k),\mu} + \phi_{(\zeta_1^{-1},\ldots,\zeta_k^{-1}),\mu}\;,
\]
where $\lambda,\lambda'\in \mathcal{P}^{s_G}$ and $\mu\in \mathcal{P}$ are partitions with $|\lambda| + |\lambda'| + 2|\mu| = N_G$. 

This form is unique for a given $L$-parameter, when assuming $\zeta_i\neq \pm1$, for all $1\leq i \leq k$.

The associated partition to the $L$-parameter $\phi\in \Phi(G)$ is given by
\[
\lambda(\phi) = \lambda \cup \lambda' \cup \mu \cup \mu\in \mathcal{P}^{s_G}(N_G)\;.
\]

\end{proposition}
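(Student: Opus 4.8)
The plan is to directly decompose an arbitrary $\phi \in \Phi_u(G)$ according to the $W_F$-eigencharacters that occur, and then use the self-duality constraint imposed by the fact that $\phi$ lands in $\mathbf{G^\vee}(\mathbb{C}) \subset \mathbf{GL}_{N_G}(\mathbb{C})$. First I would observe that, as an element of $\Phi_u \subset \Phi$, the parameter $\phi$ decomposes uniquely as a direct sum $\bigoplus_j \zeta_j \otimes \nu_{k_j}$ of irreducibles of the form described in Section \ref{sect:expl-l}; this is just the decomposition of a semisimple representation of $W_F/I_F \times \mathrm{SL}_2(\mathbb{C})$ into isotypic pieces, using that $W_F/I_F$ is abelian (generated by $\frb$) so all its irreducible smooth representations are one-dimensional. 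Collecting, for each $\zeta \in \mathbb{C}^\times$, the partition $\mu^{(\zeta)}$ recording the $\mathrm{SL}_2$-dimensions $k_j$ attached to the character $\frb \mapsto \zeta$, we get $\phi = \sum_\zeta \phi_{(\zeta,\ldots,\zeta), \mu^{(\zeta)}}$.

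Next I would impose the condition that $\phi$ factors through $\mathbf{G^\vee}(\mathbb{C})$, i.e. that the underlying $N_G$-dimensional representation carries a nondegenerate bilinear form of the appropriate type (orthogonal when $\mathbf{G^\vee} = \mathbf{SO}_{2n+1}$, symplectic when $\mathbf{G^\vee} = \mathbf{Sp}_{2n}$) preserved by the image. Self-duality of the representation forces $\mu^{(\zeta^{-1})} = \mu^{(\zeta)}$ for every $\zeta$, since $\overline{\zeta \otimes \nu_k}^\vee \cong \zeta^{-1} \otimes \nu_k$ (the $\mathrm{SL}_2$-representation $\nu_k$ is self-dual). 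Hence all characters $\zeta \neq \pm 1$ pair off, contributing $\phi_{(\zeta_1,\ldots,\zeta_k),\mu} + \phi_{(\zeta_1^{-1},\ldots,\zeta_k^{-1}),\mu}$ after grouping, while the fixed points $\zeta = 1$ and $\zeta = -1$ contribute $\phi_{(1,\ldots,1),\lambda}$ and $\phi_{(-1,\ldots,-1),\lambda'}$ with $\lambda := \mu^{(1)}$, $\lambda' := \mu^{(-1)}$. I would then argue that $\lambda, \lambda' \in \mathcal{P}^{s_G}$: on the sub-representation $\phi_{(1,\ldots,1),\lambda}$ (resp. $\phi_{(-1,\ldots,-1),\lambda'}$) the form restricts to a nondegenerate form of the same type as that of $\mathbf{G^\vee}$, since $\frb$ acts by a scalar ($\pm 1$) there and the $W_F$-pairing of this piece with itself is the only one available; the classification of nondegenerate invariant forms on $\bigoplus_i \nu_{\lambda_i}$ (each $\nu_k$ is orthogonally self-dual for $k$ odd and symplectically self-dual for $k$ even, with even-multiplicity summands able to support either type) then gives exactly the combinatorial condition defining $\mathcal{P}^{s_G}$, namely that parts of the "wrong" parity occur with even multiplicity. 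The dimension count $|\lambda| + |\lambda'| + 2|\mu| = N_G$ is immediate, as is uniqueness once we require $\zeta_i \neq \pm 1$, since then the multiset of eigencharacters determines each piece. Finally, $u_\phi$ acts as a regular unipotent in each $\mathrm{GL}_{\lambda_i}$, $\mathrm{GL}_{\lambda'_j}$, $\mathrm{GL}_{\mu_l}$ block, so its Jordan type is $\lambda \cup \lambda' \cup \mu \cup \mu$, giving $\lambda(\phi) = \lambda \cup \lambda' \cup \mu \cup \mu$ by Proposition \ref{prop:part-orbit}.

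The routine parts here are the eigenspace decomposition and the dimension bookkeeping. The main obstacle — really the only substantive point — is verifying that the self-duality of $\phi$ translated through the standard embedding $\mathbf{G^\vee} \hookrightarrow \mathbf{GL}_{N_G}$ forces $\lambda$ and $\lambda'$ (the summands on which $\frb$ acts by $+1$ and $-1$) to lie in $\mathcal{P}^{s_G}$ rather than merely being arbitrary partitions of the correct total size; this requires the parity analysis of invariant forms on isotypic $\mathrm{SL}_2$-representations and the observation that the cross-pairing between the $\zeta$ and $\zeta^{-1}$ isotypic components (for $\zeta \neq \pm 1$) carries no constraint on the individual multiset $\mu$, whereas the self-pairing on the $\zeta = \pm 1$ parts does. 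I expect this to follow cleanly from the standard structure theory (as in \cite[Appendix A]{bv85} or \cite[Chapter 3]{spal-book}), so the proof should be short.
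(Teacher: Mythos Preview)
Your proposal is correct and is essentially the standard argument the paper defers to: the paper's own proof is a one-line citation (``a straightforward translation into our notation of standard descriptions, such as in \cite[Section 3.1]{atobe-jac}''), and what you have written is precisely an unpacking of that description. There is nothing to add.
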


\begin{proof}
This is a straightforward translation into our notation of standard descriptions, such as in \cite[Section 3.1]{atobe-jac}.
\end{proof}

The following property is easily verified.

\begin{lemma}\label{lem:temp-closure}
Let $\lambda\in \mathcal{P}^{s_G}(N_G)$ be a partition, and $z\in \{\pm1\}$.

Suppose that $\phi\in \Phi(G)$ is an $L$-parameter satisfying 
\[
\chi_{\phi} = \chi_{\phi_{(z,\ldots,z),\lambda}}\;.
\]
Then, the class $\mathcal{O}_{\phi}^\vee\in \mathcal{U}^\vee$ must be contained in the Zariski closure of the class $\mathcal{O}^\vee_{\lambda}\in \mathcal{U}^\vee$.
    
\end{lemma}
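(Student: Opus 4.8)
The plan is to reduce everything to a statement about eigenvalue multisets of semisimple elements, via the observation in Remark \ref{rem:inf-ch-comp}. Write the $L$-parameter under consideration in the standard form of Proposition \ref{prop:lambdaphi}, say $\phi = \phi_{(1,\ldots,1),\alpha} + \phi_{(-1,\ldots,-1),\beta} + \phi_{(\underline{\zeta}),\mu} + \phi_{(\underline{\zeta}^{-1}),\mu}$ with $\alpha,\beta\in \mathcal{P}^{s_G}$, $\mu\in\mathcal{P}$, $\zeta_i\neq\pm1$, and $\lambda(\phi) = \alpha\cup\beta\cup\mu\cup\mu$. The infinitesimal character $\chi_\phi$ is recorded by the conjugacy class of $s_\phi := \phi(r_F(\frb))\in \mathbf{G^\vee}(\mathbb{C})$, and by Remark \ref{rem:inf-ch-comp} the eigenvalues of $s_\phi$ (as an element of $\mathbf{GL}_{N_G}$ via the standard embedding) are the multiset $\bigcup_i \{\zeta_i q^{(k_i-1)/2},\ldots,\zeta_i q^{-(k_i-1)/2}\}$ obtained by running over all summands $\zeta_i\otimes\nu_{k_i}$. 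The hypothesis $\chi_\phi = \chi_{\phi_{(z,\ldots,z),\lambda}}$ thus says precisely that this eigenvalue multiset coincides with the multiset $\bigcup_j \{z\,q^{(\lambda_j-1)/2},\ldots,z\,q^{-(\lambda_j-1)/2}\}$ attached to the parts $\lambda_j$ of $\lambda$.

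The first step is a reduction to the ``real'' part: since $p$ is large, the numbers $q^{a/2}$ for half-integers $a$ are all real and positive, so an eigenvalue of $s_\phi$ of the form $\zeta_i q^{m/2}$ equals one of the form $z q^{\ell/2}$ only if $\zeta_i \in \{z\}\cdot\{\text{real powers of }q\}$, i.e. $\zeta_i = \pm q^t$ for some $t\in\frac12\mathbb{Z}$; moreover if $z=1$ the sign must match because $q^{\bullet}>0$. The summands $\zeta_i\otimes\nu_{k_i}$ with $\zeta_i\neq \pm 1$ but $\zeta_i$ a signed power of $q$ come from the $\phi_{(\underline\zeta),\mu}$ piece; these do not affect the argument because they contribute to $\lambda(\phi)$ only through $\mu\cup\mu$, and the classical ``collapse/closure'' combinatorics (see e.g. \cite[Section 3]{atobe-jac} or the discussion around Proposition \ref{prop:lambdaphi}) is unaffected. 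The essential content, then, is: if two partitions $\lambda,\lambda'\in\mathcal{P}^{s_G}(N_G)$ give rise to the same eigenvalue multiset $\bigcup_j\{q^{(\lambda_j-1)/2},\ldots,q^{-(\lambda_j-1)/2}\}$ (working with $z=1$ WLOG, after absorbing signs), then $\mathcal{O}^\vee_{\lambda'}\subset\overline{\mathcal{O}^\vee_\lambda}$, where $\lambda' = \lambda(\phi)$.

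The heart of the matter is the following elementary fact about ``segment'' multisets: a multiset that is simultaneously a disjoint union of the symmetric arithmetic progressions $\{(\lambda_j-1)/2, (\lambda_j-3)/2,\ldots\}$ and of $\{(\lambda'_k-1)/2,\ldots\}$ forces the dominance inequality $\sum_{j\le i}\lambda^*_j \ge \sum_{j\le i}(\lambda')^*_j$ for all $i$ (equivalently $\lambda' \le \lambda$ in the dominance order), where $\lambda^*$ denotes the transpose; this is most transparently seen by counting, for each half-integer $t\ge 0$, the multiplicity of $t$ in the common multiset, which is $\#\{j : \lambda_j \ge 2t+1\}$ on one side and $\#\{k : \lambda'_k\ge 2t+1\}$ on the other — hence these two quantities are equal for every $t$, so in fact $\lambda$ and $\lambda'$ have the same multiset of parts once one also tracks multiplicities arising from the $\mu\cup\mu$ contribution. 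More precisely, the matching of eigenvalue multisets (with multiplicity) pins down $\lambda(\phi)$ up to the collapse operation sending $\lambda(\phi)$ into $\mathcal{P}^{s_G}(N_G)$, and the Gerstenhaber--Hesselink closure criterion (dominance of transposed partitions, which for types $B$/$C$ agrees with the topological order after applying the collapse, by \cite{kp82}) gives $\mathcal{O}^\vee_{\lambda(\phi)}\subset \overline{\mathcal{O}^\vee_\lambda}$. I expect the main obstacle to be bookkeeping the interaction between the sign $z=-1$ case (where eigenvalues $-q^{\bullet}$ must be separated cleanly from the $+q^{\bullet}$ eigenvalues, using that $p$ is odd) and the classical-group collapse combinatorics, rather than any genuinely deep input; once the eigenvalue-multiplicity counting is set up, the closure statement is the standard $B/C$ version of the Gerstenhaber--Hesselink theorem.
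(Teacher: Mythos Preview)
The paper does not supply a proof of this lemma, merely asserting that it is ``easily verified''. Your setup is correct: via Remark~\ref{rem:inf-ch-comp} the hypothesis fixes the eigenvalue multiset of $s_{\chi_\phi}$, and since the closure order on nilpotent orbits in $\mathbf{G^\vee}$ is the dominance order on partitions, the task reduces to showing $\lambda(\phi)\le\lambda$ in dominance.

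The gap is in your argument for that inequality. You claim that the multiplicity of the exponent $t$ on the $\phi$-side equals $\#\{k:\lambda'_k\ge 2t+1\}$ (with $\lambda'=\lambda(\phi)$), and deduce that $\lambda$ and $\lambda'$ have the same parts. That formula is only valid when every summand of $\phi$ has $\zeta_i=z$; in general $\zeta_i=zq^{c_i}$ with $c_i\neq 0$, so the segment of exponents contributed by $\zeta_i\otimes\nu_{k_i}$ is centred at $c_i$ rather than at $0$, and your multiplicity count is simply wrong. In particular the eigenvalue multiset does \emph{not} ``pin down $\lambda(\phi)$ up to collapse'': already the multiset $\{-1,0,0,1\}$ coming from $\lambda=(3,1)$ is equally realised by $q^{1/2}\otimes\nu_2\oplus q^{-1/2}\otimes\nu_2$, which has $\lambda(\phi)=(2,2)$. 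Invoking the collapse operation does not repair this. What is actually needed is the following. Write $m(a)$ for the multiplicity of the exponent $a$ in the common multiset; from the $\lambda$-decomposition the level set $\{a:m(a)\ge \ell\}$ is an interval of length $\lambda_\ell$. For \emph{any} decomposition of the multiset into intervals (such as the one coming from $\phi$) and any $s\ge 1$, the $s$ longest intervals meet each exponent $a$ in at most $\min(s,m(a))$ copies (each interval hits each exponent at most once), so their total length is at most $\sum_a\min(s,m(a))=\lambda_1+\cdots+\lambda_s$. This is exactly the dominance inequality $\lambda(\phi)\le\lambda$, after which Gerstenhaber--Hesselink applies as you indicated. (Equivalently and more geometrically: $\log u_\phi$ lies in the $q$-eigenspace of $\mathrm{Ad}(s_{\chi_\phi})$ on $\Lie(\mathbf{G^\vee})$, a linear and hence irreducible subvariety of the nilpotent cone, which therefore meets a unique maximal nilpotent orbit; the tempered parameter $\phi_{(z,\ldots,z),\lambda}$ exhibits this orbit as $\mathcal{O}^\vee_\lambda$.)
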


\subsubsection{Wavefront sets}

We recall that $\mathcal{N}_F$ was defined to be the set of nilpotent $\Ad(G)$-orbits in the Lie algebra $\Lie(G)$.

Similarly, for the algebraic closure of the field $F<\overline{F}$, we denote $\mathcal{N}_{\overline{F}}$ to be the set of nilpotent $\Ad(\mathbf{G}(\overline{F}))$-orbits in the Lie algebra $\Lie(\mathbf{G}(\overline{F}))$.

Clearly, there is a (toplogical) order-preserving orbit inclusion map $\mathrm{alg}:\mathcal{N}_F\to \mathcal{N}_{\overline{F}}$.

It is possible to identify the orbits in $\mathcal{N}_{\overline{F}}$ with the corresponding set of adjoint orbits in $\Lie(\mathbf{G}(\mathbb{C}))$. Furthermore, the exponential map then allows for an identification of $\mathcal{N}_{\overline{F}}$ with the set of unipotent conjugacy classes in the complex group $\mathbf{G}(\mathbb{C})$. Altogether, we may apply Proposition \ref{prop:part-orbit} to obtain a natural parameterization of $\mathcal{N}_{\overline{F}}$ by the set  of partitions $\mathcal{P}^{-s_G}(N_G-s_G)$. 

In this manner, we write $\mathcal{O}_{\lambda}\in \mathcal{N}_{\overline{F}}$, for a partition $\lambda\in \mathcal{P}^{-s_G}(N_G-s_G)$.

The celebrated theory \cite{hcbook} of the Harish-Chandra--Howe character studies the trace distribution $\Theta_{\pi}$ on $G$ that is attached to each representation $\pi\in \irr(G)$. 

The local character expansion states that when $\Theta_{\pi}$ is pushed onto the Lie algebra through the exponential map and restricted to a small enough neighborhood of $0\in \Lie(G)$, it will equal to a linear combination of the form 
\[
\sum_{\mathcal{O}\in \mathcal{N}_F} c_{\mathcal{O}}(\pi) \widehat{\mu}_{\mathcal{O}}\;.
\]
Here, the distributions $\{\widehat{\mu}_{\mathcal{O}}\}_{\mathcal{O}\in \mathcal{N}_F}$ on $\Lie(G)$ are Fourier transforms of those given by the corresponding nilpotent orbital integrals, while $\{c_{\mathcal{O}}(\pi)\}_{\mathcal{O}\in \mathcal{N}_F}$ are scalars.

We say that an representation $\pi\in \irr(G)$ \textit{admits an algebraic wavefront orbit}, if there exists a nilpotent orbit $\mathcal{O}\in \mathcal{N}_{F}$ with $c_{\mathcal{O}}(\pi)\neq 0$, such that for any orbit $\mathcal{O}'\in \mathcal{N}_{F}$ with $c_{\mathcal{O}'}(\pi)\neq 0$, $\mathrm{alg}(\mathcal{O}')$ is contained in the Zariski closure of $\mathrm{alg}(\mathcal{O})$.

In this case we can write
\[
\WF(\pi):=\mathrm{alg}(\mathcal{O})\in \mathcal{N}_{\overline{F}}\;,
\]
which is clearly well-defined.

The recent work of Tsai \cite{tsai-jams} provides further insight into this concept.

Clearly, when a representation $\pi$ admits an algebraic wavefront orbit, its Gelfand-Kirillov dimension $\gkd(\pi)$, as defined in the introduction section, will equal half the Zariski dimension of the algebraic variety $\WF(\pi)$.

A main result of \cite{cmowavefront} is a formula for the algebraic wavefront orbit of unipotent representations in terms of the Langlands reciprocity.

\begin{theorem}\label{thm:cmbo}
Let $\chi\in \Lambda_u(G)$ be an unramified infinitesimal character, for which $s_{\chi}\in \mathbf{G}^\vee(\CC)$ is a matrix with real positive eigenvalues.

Then, all irreducible representations in $\irr_{\chi}(G)$ admit an algebraic wavefront orbit.

For a representation $\pi\in \irr_{\chi}(G)$, whose Aubert dual $\pi^t\in \Pi_{\phi}$ is contained in an $L$-packet given by a unipotent $L$-parameter $\phi\in \Phi_u(G)$, the formula
\[
\WF(\pi) = \mathcal{O}_{d(\lambda(\phi))}\in \mathcal{N}_{\overline{F}}
\]
holds, where $d$ is the Barbasch--Vogan--Lusztig--Spaltenstein duality map.

\end{theorem}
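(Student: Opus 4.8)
As Theorem \ref{thm:cmbo} is the main result of \cite{cmowavefront}, I only sketch the line of attack one would pursue. The plan is to transport the computation from harmonic analysis on $G$ to the representation theory of Hecke algebras and the geometry of the dual nilpotent cone. Since $\chi$ is unramified, $\irr_\chi(G)$ lies among the unipotent representations in Lusztig's sense, so by Lusztig's geometric classification \cite{Lu-unip1} every $\pi\in\irr_\chi(G)$ corresponds to a simple module $M_\pi$ over an affine Hecke algebra attached to a unipotent cuspidal support in $\mathbf{G}^\vee$; the hypothesis that $s_\chi$ has real positive eigenvalues forces the central character of $M_\pi$ to be real, which is exactly the regime where the Deligne--Langlands--Kazhdan--Lusztig parametrization of simple modules by triples $(s,u,\rho)$ and its geometric underpinnings apply. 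Under this dictionary the Aubert involution matches the Iwahori--Matsumoto involution on modules, and $\pi^t\in\Pi_\phi$ translates into $M_{\pi^t}$ being \emph{tempered} with nilpotent part the orbit $\mathcal{O}^\vee_\phi$, of Jordan type $\lambda(\phi)$.

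The second and main step expresses the character coefficients geometrically. Building on Barbasch--Moy in the Iwahori block and its extension to the remaining unipotent blocks, one identifies the set of orbits $\mathcal{O}$ with $c_{\mathcal{O}}(\pi)\neq 0$ with the associated variety (nilpotent support) of $M_\pi$ inside $\mathcal{N}_{\overline{F}}$; since $M_\pi$ has real central character this support is the closure of a \emph{single} orbit, which already gives that $\pi$ admits an algebraic wavefront orbit. To name the orbit: for the tempered module $M_{\pi^t}$ the associated variety is governed, through the Kazhdan--Lusztig--Ginzburg realization and the Springer correspondence, by the orbit $\mathcal{O}^\vee_\phi$; passing to $M_\pi$ via the Iwahori--Matsumoto involution tensors the relevant Weyl group datum by the sign character, and the composite ``Springer correspondence, twist by sign, dualize'' matches precisely the construction of the Barbasch--Vogan--Lusztig--Spaltenstein map $d\colon\mathcal{P}^{s_G}(N_G)\to\mathcal{P}^{-s_G}(N_G-s_G)$. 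This yields $\WF(\pi)=\mathcal{O}_{d(\lambda(\phi))}$.

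Two consistency checks round this off. The formula is well posed: distinct unipotent $L$-parameters sharing the infinitesimal character $\chi$ have their orbits $\mathcal{O}^\vee_\phi$ in a common relative special piece, on which $d$ is constant by Proposition \ref{prop:d-prop}(2), so the answer does not depend on the choice; and the coarse bound $\WF(\pi)\subseteq\overline{\mathcal{O}_{d(\lambda(\phi))}}$ is compatible with Lemma \ref{lem:temp-closure} and the order-reversing property of $d$. Since only the geometric closure datum enters the definition of $\WF(\pi)$, there is no further rationality issue in matching the orbit computed over $\mathbf{G}(\mathbb{C})$ with an $\mathrm{alg}$-image of a rational orbit.

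The crux --- and the reason this warrants a paper of its own --- is the geometric identification of the previous step: that the \emph{analytic} coefficients $c_{\mathcal{O}}(\pi)$ of the Harish-Chandra--Howe expansion are controlled, with exactly one leading orbit equal to the BVLS dual, by the \emph{geometry} of $M_\pi$. This demands an honest comparison between the local character expansion and homological or $K$-theoretic invariants of the Hecke-algebra module (of Moeglin--Waldspurger / Schneider--Stuhler type, with the Howe finiteness of the expansion in the background), together with irreducibility of associated varieties in the real-central-character regime; it is precisely here that the positivity hypothesis on $s_\chi$ is indispensable.
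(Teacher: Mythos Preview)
The paper does not prove Theorem~\ref{thm:cmbo}; it is quoted as the main result of \cite{cmowavefront} (with the anti-tempered odd-orthogonal case attributed to Waldspurger \cite{waldspurgerwavefront}) and used as a black box. You correctly flag this and offer only a sketch, so there is no proof in the present paper to compare against. Your high-level outline---pass to Hecke-algebra modules via Lusztig's unipotent classification, match Aubert with Iwahori--Matsumoto, and read the leading orbit off the sign-twisted Springer datum to land on the BVLS dual---is broadly the shape of the argument in \cite{cmowavefront}.

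One point in your sketch is wrong, however. Your ``consistency check'' asserts that distinct unipotent $L$-parameters sharing the infinitesimal character $\chi$ have their orbits $\mathcal{O}^\vee_\phi$ in a common relative special piece, so that $d(\lambda(\phi))$ is independent of the choice. This is false: Lemma~\ref{lem:temp-closure} only places such orbits in the \emph{closure} of $\mathcal{O}^\vee_\lambda$, not in its special piece, and indeed representations in $\irr_\chi(G)$ do \emph{not} all share the same wavefront set---the whole content of Proposition~\ref{prop:weakWF} and Theorem~\ref{thm:B} is precisely that only those $\phi$ landing in $\spc(\mathcal{O}^\vee_\lambda)$ give the minimal $\WF$. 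There is in any case no well-definedness issue to check: given $\pi$, the $L$-parameter $\phi$ with $\pi^t\in\Pi_\phi$ is uniquely determined by the Langlands reciprocity, so the formula is already well-posed as stated.
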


This extends an analogous result due to Waldspurger for anti-tempered representations of odd orthogonal groups \cite{waldspurgerwavefront}.

\section{Arthur theory}

Let us recall some properties of the theory of local Arthur packets, that were developed in \cite{Abook} through an intricate analysis of endoscopic transfer. 

For each $A$-parameter $\psi\in \Psi(G)$ (as defined in the introduction section), the associated Arthur packet, or \textit{$A$-packet}, is a finite set of representations $\Pi^A_{\psi}\subset \irr(G)$. 

Irreducible representations in the collection
\[
\bigcup_{\psi\in \Psi(G)}\Pi^A_{\psi} \subset \irr(G)
\]
are called \textit{Arthur-type} representations. In contrast with the $L$-packet decomposition of \eqref{eq:lpackets}, the union in the preceeding equation is not disjoint, i.e. $A$-packets are known to overlap.

A key feature of the theory, though one which will not play a direct role in our discussion, is that all Arthur-type representations are unitarizable.

For an $A$-parameter $\psi\in \Psi(G)$, we set the precomposition $\phi_{\psi}: = \psi\circ r'_F \in \Phi(G)$ to be its associated $L$-parameter, where 
\[
r'_F: W_F \times \mathrm{SL}_2(\mathbb{C}) \to W_F\times \mathrm{SL}_2(\mathbb C) \times \mathrm{SL}_2(\mathbb{C}),\quad r'_F(w,x) = \left(w, x,\begin{pmatrix}
    |w|^{1/2} & 0 \\ 0 & |w|^{-1/2}
\end{pmatrix}\right)\;.
\]
is a homomorphism resembling the form of the one defined in \eqref{eq:rF}.

We also set $\chi_{\psi}: = \chi_{\phi_{\psi}}=\psi\circ r'_F\circ r_F\in \Lambda(G)$ to be the associated infinitesimal character.

Marking the flip $\mathrm{fp}(x,y)= (y,x)$ on $\mathrm{SL}_2(\mathbb{R})\times \mathrm{SL}_2(\mathbb{R})$, we obtain an involution  
\[
\psi^t:= \psi\;\circ\; \mathrm{fp}
\]
on the set of $A$-parameters $\Psi(G)$. 

We note that $\chi_{\psi^t}= \chi_{\psi}$ holds.

\begin{proposition}\label{prop:start-Arth}
\cite[Proposition 7.4.1]{Abook}\cite[Proposition 4.1]{moeg09}\cite[Theorem 1.6]{atobe-crelle1}

For any $A$-parameter $\psi\in \Psi(G)$, we have containments
\[
\Pi_{\phi_{\psi}} \subset \Pi^A_{\psi}\subset \irr_{\chi_{\psi}}(G)\;,
\]
and a compatibility with the Aubert duality in $\irr(G)$ in the sense of
\[
\Pi^A_{\psi^t} = \{\pi^t \;:\; \pi \in \Pi^A_{\psi}\}\;.
\]

\end{proposition}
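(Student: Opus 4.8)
All three assertions are available in the literature, so the plan is to assemble them while reconciling the various normalizations. I would first record the containment $\Pi^A_\psi \subset \irr_{\chi_\psi}(G)$. The element $\chi_\psi = \psi\circ r'_F\circ r_F\in\Lambda(G)$ is well defined directly from the definitions of $r_F$ and $r'_F$, so the only content is that every constituent of the packet shares this infinitesimal character. This follows from Moeglin's explicit construction \cite{moeg09}: each $\pi\in\Pi^A_\psi$ is realized as a distinguished irreducible subquotient of a standard module whose inducing data --- hence whose cuspidal support, hence whose infinitesimal character --- is read off from $\phi_\psi$, and $\chi_{\phi_\psi}=\chi_\psi$. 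Equivalently, infinitesimal characters are preserved along the endoscopic character identities that define $\Pi^A_\psi$.

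Next I would invoke Arthur's comparison of the two packets attached to $\psi$: the inclusion $\Pi_{\phi_\psi}\subset\Pi^A_\psi$ is \cite[Proposition 7.4.1]{Abook}. Both packets are built through characters of the pertinent component groups, and the inclusion of images $\mathrm{Im}\,\phi_\psi\subseteq\mathrm{Im}\,\psi$ induces a natural map of these groups along which the labels of the members of $\Pi_{\phi_\psi}$ are transported into $\Pi^A_\psi$; beyond citing the statement, the only task is to match the explicit conventions of Section~\ref{sect:expl-l} with Arthur's.

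Finally, for the Aubert-duality compatibility $\Pi^A_{\psi^t}=\{\pi^t\;:\;\pi\in\Pi^A_\psi\}$ I would appeal to Moeglin's theorem in the form of \cite[Theorem 1.6]{atobe-crelle1}. The mechanism is Moeglin's inductive description of $A$-packets through partial Jacquet-module and Aubert-type operations, together with the behaviour of the Aubert involution on the general-linear ``building blocks'': on a Speh-type representation it interchanges the two integer parameters, which on the dual side is precisely the flip $\mathrm{fp}$ of the two $\mathrm{SL}_2(\mathbb{C})$-factors; hence the construction of $\Pi^A_\psi$ is carried term-by-term to that of $\Pi^A_{\psi^t}$ by $\pi\mapsto\pi^t$ (and $\chi_{\psi^t}=\chi_\psi$ since $\mathrm{fp}$ fixes the image of $r'_F\circ r_F$ pointwise). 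The only genuine obstacle is bookkeeping: one must check that the conventions for $\psi^t$ (the $\mathrm{SL}_2$-flip), for the Whittaker datum and the resulting normalization of the parameterization, and for the identifications used in Section~\ref{sect:expl-l}, are mutually compatible across \cite{Abook}, \cite{moeg09} and \cite{atobe-crelle1}, and that these results apply verbatim to the split symplectic and special odd orthogonal groups --- with the standing hypothesis on the residue characteristic --- considered here.
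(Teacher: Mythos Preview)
Your proposal is correct and matches the paper's approach: the paper itself offers no proof beyond the three citations in the statement header, treating the proposition as a direct import from \cite{Abook}, \cite{moeg09}, and \cite{atobe-crelle1}. Your plan to assemble these cited results while tracking the normalization conventions is exactly what is needed, and in fact goes into more detail than the paper does.
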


For notation purposes, let us write the domain of $A$-parameters as a group
\[
W_F\times \mathrm{SL}^L_2(\mathbb C) \times \mathrm{SL}^A_2(\mathbb{C})\;,
\]
where each of $\mathrm{SL}^L_2, \mathrm{SL}^A_2$ stands for a copy of the algebraic group $\mathrm{SL}_2$.

A parameter $\psi\in \Psi(G)$ is said to be \textit{tempered}, when its restriction $\psi|_{\mathrm{SL}^A_2(\mathbb{C})}$ is a trivial homomorphism.

For a tempered $A$-parameter $\psi\in \Psi(G)$, an equality $\Pi_{\phi_{\psi}} = \Pi^A_{\psi}$ is known to hold. Moreover, a representation in $\irr(G)$ is tempered in the analytic sense, if and only if, it belongs to an $A$-packet (which is also an $L$-packet) for a tempered parameter.

A parameter $\psi\in \Psi(G)$ is said to be \textit{anti-tempered}, when its restriction $\psi|_{\mathrm{SL}^L_2(\mathbb{C})}$ is a trivial homomorphism.

A representation $\pi\in \irr(G)$ is said to be \textit{anti-tempered}, whenever $\pi^t$ is tempered. 

Thus, by Proposition \ref{prop:start-Arth} anti-tempered irreducible representations clearly coincide with the constituents of $A$-packets that arise from anti-tempered $A$-parameters.

\subsubsection{Arthur's characters}

For $\psi\in \Psi(G)$, we write
\[
\mathcal{S}_{\psi} = Z_{\psi}/Z_{\psi}^{\circ} Z_{\mathbf{G^\vee}}
\]
for the component group of the centralizer subgroup $Z_{\psi} = Z_{\mathbf{G^\vee}(\mathbb{C})}(\mathrm{Im}(\psi))<\mathbf{G^\vee}(\mathbb{C})$, taken modulo the representatives of the center $Z_{\mathbf{G^\vee}} = Z(\mathbf{G^\vee}(\mathbb{C}))$.

We write $\widehat{\mathcal{S}_{\psi}}$ for the dual group of complex characters on $\mathcal{S}_{\psi}$. Both are finite $2$-groups.

Analogous definitions for the group $\mathcal{S}_{\phi}$, and its dual $\widehat{\mathcal{S}_{\phi}}$, are also set in place for each $L$-parameter $\phi\in \Phi(G)$.

For an $A$-parameter $\psi\in \Psi(G)$, the embedding of centralizers $Z_{\psi}< Z_{\phi_{\psi}}$ gives rise to a surjective map $\mathcal{S}_{\psi} \to \mathcal{S}_{\phi_{\psi}}$. Dualizing, we obtain an embedding $\widehat{\mathcal{S}_{\phi_{\psi}}} < \widehat{\mathcal{S}_{\psi}}$ of finite groups.

Arthur has attached a map
\begin{equation}\label{eq:epsilon-arth}
\Pi^A_{\psi} \to \widehat{\mathcal{S}_{\psi}}\qquad  \pi\mapsto \epsilon_{\pi}^{\psi}\;,
\end{equation}
to each $A$-packet $\psi\in \Psi(G)$. Its definition is pinned down by certain endoscopic identities that are required to be satisfied by the constituents of $\Pi^A_{\psi}$.

In particular, the invariant $\epsilon^{\psi}_{\pi}$, for Arthur-type representations $\pi$, provides an approach for an \textit{enhanced} Langlands reciprocity, that is, a meaningful labelling of irreducible representation within a single $L$-packet.

In that regard, the information that is provided by the following proposition will suffice for our needs.

\begin{proposition}\label{prop:arth-book}

\cite[Proposition 7.4.1]{Abook}

For any $A$-parameter $\psi\in \Psi(G)$, the map in \eqref{eq:epsilon-arth} is injective when restricted to the $L$-packet $\Pi_{\phi_{\psi}}\subset \Pi^A_{\psi}$. 

Moreover, an equality
\[
\widehat{\mathcal{S}_{\phi_{\psi}}} = \{\epsilon_{\pi}^{\psi}\: :\: \pi\in \Pi_{\phi_{\psi}}\}
\]
holds.

In particular, for the $L$-parameter $\phi =\phi_{\psi}\in \Phi(G)$, we may parameterize the associated $L$-packet as
\[
\Pi_{\phi}=\{\pi(\phi, \epsilon)\,:\,\epsilon \in\widehat{\mathcal{S}_{\phi}}\}\;,
\]
so that $\epsilon=  \epsilon^{\pi}_{\psi}$, whenever $\pi = \pi(\phi, \epsilon)$.
.
\end{proposition}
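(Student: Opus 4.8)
This statement is \cite[Proposition 7.4.1]{Abook}, and the content of the assertion genuinely belongs to Arthur's endoscopic classification; what follows is the line of argument one would reconstruct. The plan is to reduce everything to Arthur's tempered local theorem by means of the Langlands classification. First I would fix a dual Levi $M^\vee\subset\mathbf{G^\vee}(\mathbb{C})$ minimal among those through which (a representative of) $\phi_\psi$ factors. By minimality the induced $L$-parameter $\phi_M$ for the Levi $M<G$ is \emph{discrete}, and $\phi_\psi$ is obtained from $\phi_M$ by a strictly positive unramified twist; since the centralizer of such a twisting cocharacter is exactly $\hat M$, one gets $Z_{\phi_\psi}=Z_{\phi_M}$ and hence a standard natural identification $\mathcal{S}_{\phi_\psi}\cong\mathcal{S}^M_{\phi_M}$ with a finite group. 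Because the restriction $\psi|_{W_F\times\mathrm{SL}^L_2(\mathbb{C})}$ is already tempered, and the function of the $\mathrm{SL}^A_2(\mathbb{C})$-factor in forming $\phi_\psi=\psi\circ r'_F$ is precisely to produce that positive twist on the Langlands-type constituents, the same Levi $M$ receives $\psi$: one obtains a tempered $A$-parameter $\psi_M$ for $M$ with $\phi_{\psi_M}=\phi_M$.

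Next I would feed in the two standard inputs. On the one hand, the Langlands classification realizes the $L$-packet $\Pi_{\phi_\psi}$ as the set of Langlands quotients $J(\phi_\psi,\sigma)$ of the standard modules parabolically induced (from $P$ with Levi $M$) from the members $\sigma\in\Pi^M_{\phi_M}$, giving a bijection $\Pi_{\phi_\psi}\xrightarrow{\ \sim\ }\Pi^M_{\phi_M}$ compatible with the identification $\widehat{\mathcal{S}_{\phi_\psi}}\cong\widehat{\mathcal{S}^M_{\phi_M}}$. On the other hand, Arthur's main local theorem in the discrete/tempered case supplies, once the Whittaker datum of $G$ is fixed (and transported to $M$), a bijection $\Pi^M_{\phi_M}\xrightarrow{\ \sim\ }\widehat{\mathcal{S}^M_{\phi_M}}$, $\sigma\mapsto\epsilon_\sigma$, realized through the tempered endoscopic character identities. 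Composing these, the containment $\Pi_{\phi_\psi}\subset\Pi^A_\psi$ of Proposition~\ref{prop:start-Arth} yields an injection $\Pi_{\phi_\psi}\hookrightarrow\widehat{\mathcal{S}_{\phi_\psi}}$ and the count $|\Pi_{\phi_\psi}|=|\widehat{\mathcal{S}_{\phi_\psi}}|$; so the entire statement will follow once $\epsilon_\sigma$ is identified with the restriction of Arthur's character $\epsilon^\psi_{J(\phi_\psi,\sigma)}\in\widehat{\mathcal{S}_\psi}$ along the embedding $\widehat{\mathcal{S}_{\phi_\psi}}\hookrightarrow\widehat{\mathcal{S}_\psi}$ dual to $\mathcal{S}_\psi\twoheadrightarrow\mathcal{S}_{\phi_\psi}$.

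The hard part is exactly this last identification, i.e.\ showing that for a Langlands-type constituent $\pi=J(\phi_\psi,\sigma)$ of $\Pi^A_\psi$ the character $\epsilon^\psi_\pi$ already lies in the subgroup $\widehat{\mathcal{S}_{\phi_\psi}}$ and there coincides with $\epsilon_\sigma$. One extracts this by specializing the defining twisted/stable character identities for $\Pi^A_\psi$ along the parabolic $P$: passing to Jacquet modules and reading off the leading (Langlands quotient) term, the $\mathrm{SL}^A_2(\mathbb{C})$-direction degenerates into the positive twist and the $\psi$-identities collapse to the tempered endoscopic identities on $M$ that define $\epsilon_\sigma$, while the non-Langlands-type members of $\Pi^A_\psi$ — being sensitive to the $\mathrm{SL}^A_2(\mathbb{C})$-direction — do not enter this leading term. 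Carrying the component groups $\mathcal{S}_\psi$, $\mathcal{S}^M_{\psi_M}$, $\mathcal{S}_{\phi_\psi}\cong\mathcal{S}^M_{\phi_M}$ faithfully through this reduction, and checking that the Whittaker normalizations on the two sides match, completes the argument; the final parameterization $\Pi_\phi=\{\pi(\phi,\epsilon):\epsilon\in\widehat{\mathcal{S}_\phi}\}$ is then just a restatement with $\pi(\phi,\epsilon)$ defined to be the unique member of $\Pi_{\phi_\psi}$ with $\epsilon^\psi_{\pi}=\epsilon$.
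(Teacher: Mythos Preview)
The paper does not give its own proof of this proposition: it is stated with the citation \cite[Proposition 7.4.1]{Abook} and used as a black-box input from Arthur's endoscopic classification, with the next item being Corollary~\ref{cor:temp-anti}. Your proposal goes further than the paper by sketching how Arthur's argument actually runs --- reduction to a minimal Levi where the parameter is discrete, invocation of the Langlands classification to transport $L$-packets, the tempered local theorem for $M$, and the comparison of endoscopic character identities to pin down $\epsilon^\psi_\pi$ on the Langlands-quotient constituents. That outline is broadly faithful to Arthur's strategy, and you correctly flag the delicate point as the compatibility of $\epsilon^\psi_{J(\phi_\psi,\sigma)}$ with $\epsilon_\sigma$ under the embedding $\widehat{\mathcal S_{\phi_\psi}}\hookrightarrow\widehat{\mathcal S_\psi}$. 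For the purposes of this paper, however, no such reconstruction is required or supplied; the result is simply imported.
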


\begin{corollary}\label{cor:temp-anti}
For any tempered or anti-tempered $A$-parameter $\psi\in \Psi(G)$, the map in \eqref{eq:epsilon-arth} is a bijection, and can be used to parameterize the representations
\[
\Pi^A_{\psi}=\{\pi(\psi, \epsilon)\,:\,\epsilon \in\widehat{\mathcal{S}_{\psi}}\}\;,
\]
of the associated $A$-packet, so that $\epsilon=  \epsilon^{\pi}_{\psi}$, whenever $\pi = \pi(\psi, \epsilon)$.
.
\end{corollary}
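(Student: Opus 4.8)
The plan is to reduce Corollary~\ref{cor:temp-anti} to Proposition~\ref{prop:arth-book}, which already records the injectivity of $\pi\mapsto\epsilon^{\psi}_{\pi}$ on the $L$-packet $\Pi_{\phi_{\psi}}$ together with the fact that this restriction surjects onto $\widehat{\mathcal{S}_{\phi_{\psi}}}$, and then to transport the tempered statement to the anti-tempered one through the Aubert involution.

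\emph{Tempered case.} First I would observe that when $\psi$ is tempered the canonical surjection $\mathcal{S}_{\psi}\to\mathcal{S}_{\phi_{\psi}}$ is an isomorphism, so $\widehat{\mathcal{S}_{\phi_{\psi}}}=\widehat{\mathcal{S}_{\psi}}$. Indeed, $\psi|_{\mathrm{SL}^A_2(\mathbb{C})}$ being trivial and the images of the three factors of $W_F\times\mathrm{SL}^L_2(\mathbb{C})\times\mathrm{SL}^A_2(\mathbb{C})$ commuting, one has $\psi(w,x,y)=\psi(w,x,1)$ for all arguments; since $\phi_{\psi}(w,x)=\psi(w,x,1)$ by construction, this forces $\mathrm{Im}(\phi_{\psi})=\mathrm{Im}(\psi)$, hence $Z_{\phi_{\psi}}=Z_{\psi}$ and the claim. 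Combining this with the known equality $\Pi^A_{\psi}=\Pi_{\phi_{\psi}}$ for tempered parameters, Proposition~\ref{prop:arth-book} says exactly that $\pi\mapsto\epsilon^{\psi}_{\pi}$ is injective on $\Pi^A_{\psi}=\Pi_{\phi_{\psi}}$ with image $\widehat{\mathcal{S}_{\phi_{\psi}}}=\widehat{\mathcal{S}_{\psi}}$, i.e.\ a bijection; I would take $\pi(\psi,\epsilon)$ to be its inverse.

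\emph{Anti-tempered case.} Here the flipped parameter $\psi^t=\psi\circ\mathrm{fp}$ is tempered, so the previous paragraph applies to $\psi^t$. As $\mathrm{fp}$ is an automorphism of $W_F\times\mathrm{SL}^L_2(\mathbb{C})\times\mathrm{SL}^A_2(\mathbb{C})$, we have $\mathrm{Im}(\psi^t)=\mathrm{Im}(\psi)$, which furnishes canonical identifications $\mathcal{S}_{\psi^t}=\mathcal{S}_{\psi}$ and $\widehat{\mathcal{S}_{\psi^t}}=\widehat{\mathcal{S}_{\psi}}$. By Proposition~\ref{prop:start-Arth} the Aubert involution restricts to a bijection $\Pi^A_{\psi}\to\Pi^A_{\psi^t}$, $\pi\mapsto\pi^t$. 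The remaining point is that these are compatible at the level of the character labels: under the identification $\widehat{\mathcal{S}_{\psi^t}}=\widehat{\mathcal{S}_{\psi}}$ the label $\epsilon^{\psi^t}_{\pi^t}$ is obtained from $\epsilon^{\psi}_{\pi}$ by an explicit recipe — it equals $\epsilon^{\psi}_{\pi}$ up to twisting by a fixed character of $\widehat{\mathcal{S}_{\psi}}$ depending only on $\psi$ — as established by Atobe \cite[Theorem~1.6]{atobe-crelle1}. Composing the Aubert bijection with the bijection $\Pi^A_{\psi^t}\to\widehat{\mathcal{S}_{\psi^t}}$ of the tempered case and with translation by that fixed character exhibits $\pi\mapsto\epsilon^{\psi}_{\pi}$ on $\Pi^A_{\psi}$ as a composition of bijections onto $\widehat{\mathcal{S}_{\psi}}$, and $\pi(\psi,\epsilon)$ is again defined as its inverse.

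\emph{Main obstacle.} The one non-formal ingredient is this last compatibility of Arthur's character map with the flip and the Aubert involution: the equality of underlying sets $\Pi^A_{\psi^t}=\{\pi^t:\pi\in\Pi^A_{\psi}\}$ from Proposition~\ref{prop:start-Arth} does not by itself transport the $\widehat{\mathcal{S}_{\psi}}$-labelling, so one must lean on Atobe's finer analysis. Alternatively, one can avoid pinning down the exact identification by invoking Moeglin's multiplicity-one property — injectivity of $\pi\mapsto\epsilon^{\psi}_{\pi}$ for every $\psi\in\Psi(G)$ — and then closing with the cardinality count $|\Pi^A_{\psi}|=|\Pi^A_{\psi^t}|=|\widehat{\mathcal{S}_{\psi^t}}|=|\widehat{\mathcal{S}_{\psi}}|$ provided by the tempered case and the Aubert bijection.
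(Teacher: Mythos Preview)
Your proof is correct and follows the paper's approach: the tempered case via Proposition~\ref{prop:arth-book} together with $\Pi_{\phi_{\psi}}=\Pi^A_{\psi}$ and $\widehat{\mathcal{S}_{\phi_{\psi}}}=\widehat{\mathcal{S}_{\psi}}$, then the anti-tempered case via $\mathcal{S}_{\psi}=\mathcal{S}_{\psi^t}$ and Aubert duality. You are in fact more careful than the paper, which simply writes ``this case follows from the tempered case through Aubert duality'' without commenting on how the $\widehat{\mathcal{S}_{\psi}}$-labels transport; your discussion of the Atobe compatibility (or alternatively the Moeglin multiplicity-one plus cardinality argument) fills that in explicitly.
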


\begin{proof}
The tempered case follows from Proposition \ref{prop:arth-book} when recalling that $\Pi_{\phi_{\psi}} =\Pi_{\psi}^A$ and that $\widehat{\mathcal{S}_{\phi_{\psi}}}= \widehat{\mathcal{S}_{\psi}}$ hold.

For an anti-tempered $A$-parameter $\psi\in \Psi(G)$, we clearly have $\mathcal{S}_{\psi}= \mathcal{S}_{\psi^t}$. Hence, this case follows from the tempered case through Aubert duality. 
\end{proof}

\begin{remark}
The map in \eqref{eq:epsilon-arth} is in general dependant on a fixed choice of a Whittaker datum for the group $G$, which is not canonical in the case that the group is symplectic. 

Yet, the interest of this work in the characters $\epsilon^{\psi}_{\pi}$ will be limited to the case of $A$-parameters that admit a quasi-basic infinitesimal character $\chi_{z,\mathcal{O}^\vee}$, in the sense of Section \ref{sect:basicA}. 

One can consult the established formulas for change of character under a change of Whittaker datum. These formulas are detailed, for instance, in \cite[Section 3.2.1]{jlz22}, revealing that in these cases $\epsilon^{\psi}_{\pi}$ is in fact independent of the chosen datum.
\end{remark}

\subsection{Basic $A$-packets}\label{sect:basicA}

Among anti-tempered $A$-parameters, we say that $\psi\in \Psi(G)$ is \textit{basic}, when its restriction $\psi|_{W_F\times \mathrm{SL}^L_2(\mathbb{C})}$ is a trivial homomorphism.

More generally, we say that an $A$-parameter $\psi\in \Psi(G)$ is \textit{quasi-basic}, when its restriction $\psi|_{I_F\times \mathrm{SL}^L_2(\mathbb{C})}$ is a trivial homomorphism, and $\psi(\frb)\in Z_{\mathbf{G^\vee}}$.

It is evident that basic $A$-parameters are classified by the collection of algebraic homomorphisms $\mathrm{SL}^A_2\to \mathbf{G^\vee}$, up to conjugation. By the Jacobson-Morozov theorem those may be parameterized by the classes of $\mathcal{U}^\vee$. 

Namely, for each conjugacy class $\mathcal{O}^\vee\in \mathcal{U}^\vee$, we denote by $\psi_{\mathcal{O}^\vee}\in \Psi(G)$ the basic $A$-paramater which satisfies
\[
\psi_{\mathcal{O}^\vee}|_{\mathrm{SL}^A_2(\mathbb{C})} \left(\begin{pmatrix}
    1 & 1 \\ 0 & 1
\end{pmatrix}\right) \in \mathcal{O}^\vee\;.
\]

The set of quasi-basic $A$-parameters in $\Psi(G)$ is easily seen to be described as 
\[
\Psi_{qb}(G):=\{\psi_{z,\mathcal{O}^\vee}\}_{z\in Z_{\mathbf{G^\vee}}, \mathcal{O}^\vee\in \mathcal{U}^\vee}\;,
\]
where each $\psi_{z,\mathcal{O}^\vee}\in \Psi(G)$ is determined by 
\[
\psi_{z,\mathcal{O}^\vee}(\frb) = z\, , \quad \psi_{z,\mathcal{O}^\vee}|_{I_F\times \mathrm{SL}^L_2(\mathbb{C})\times \mathrm{SL}^A_2(\mathbb{C})} = \psi_{\mathcal{O}^\vee}|_{I_F\times \mathrm{SL}^L_2(\mathbb{C})\times \mathrm{SL}^A_2(\mathbb{C})}\;.
\]

For a quasi-basic $A$-parameter $\psi = \psi_{z,\mathcal{O}^\vee} \in \Psi(G)$, let us also write
\[
\chi_{z,\mathcal{O}^\vee}:= \chi_{\psi}\in \Lambda_u(G)\,,\quad \phi_{z,\mathcal{O}^\vee}:= \phi_{\psi}\in \Phi_u(G)\;,
\]
for its associated (unipotent) infinitesimal character and $L$-parameter.

Since the center $Z_{\mathbf{SO}_{2n+1}}$ is trivial, all quasi-basic $A$-parameters are basic in the case of $\mathbf{G^\vee}= \mathbf{SO}_{2n+1}$.


In the case of $\mathbf{G^\vee}= \mathbf{Sp}_{2n}$, we treat the central parameter $z\in Z_{\mathbf{Sp}_{2n}}$, for $\psi_{z,\mathcal{O}^\vee}\in \Psi(G)$, merely as a choice of a sign $z\in \{\pm1\}$.

Still is the same case, we clearly have $\kappa_0\otimes \phi_{z,\mathcal{O}^\vee} =\phi_{-z,\mathcal{O}^\vee}$ and $\kappa_0\otimes \chi_{z,\mathcal{O}^\vee} =\chi_{-z,\mathcal{O}^\vee}$.

\begin{definition}
For a unipotent conjugacy class $\mathcal{O}^\vee \in \mathcal{U}^\vee$ and a central element $z\in Z_{\mathbf{G}^\vee}$, we set
\[
\Pi_{z,\mathcal{O}^\vee}:= \Pi^A_{\psi_{z,\mathcal{O}^\vee}}\subset \irr_{\chi_{z,\mathcal{O}^\vee}}(G)
\]
to be the associated \textit{quasi-basic} $A$-packet.

When $z$ is trivial, we write $\Pi_{\mathcal{O}^\vee} = \Pi_{1,\mathcal{O}^\vee}$ and call it a \textit{basic} $A$-packet.
\end{definition}

We take note of the subclass of unipotent representations given as
\[
\irr_0(G):= \bigsqcup_{\psi\in \Psi_{qb}(G)} \irr_{\chi_{\psi}} (G) \subset \irr_u(G)\;,
\]
in which all quasi-basic $A$-packets are contained. We say that representations in $\irr_0(G)$ are \textit{integral}. 

Note, that integral representations may be found as constituents of Arthur packets that are not quasi-basic (or not be of Arthur-type). Therefore, it makes sense to define the set of \textit{integral $A$-parameters} as
\[
\Psi_0(G) = \{\psi\in \Psi(G)\::\: \exists \psi' \in \Psi_{qb}(G),\; \chi_{\psi} = \chi_{\psi'}\}\;,
\]
whose associated $A$-packets give rise to all integral Arthur-type representations.

\subsubsection{Formally weakly spherical representations}\label{sect:formal-wsph}

We note that for each $\mathcal{O}^\vee \in \mathcal{U}^\vee$ and $z\in Z_{\mathbf{G^\vee}}$, an identity 
\begin{equation}\label{eq:311}
\mathcal{S}_{\psi_{z,\mathcal{O}^\vee}} = A(\mathcal{O}^\vee)/ Z_{\mathbf{G^\vee}}
\end{equation}
holds, by definition of both its sides.

Building upon this identification and the labelling of anti-tempered representations given in Corollary \ref{cor:temp-anti}, we may now define the introduction section notation of \eqref{eq-intro}, as
\[
\delta(z,\mathcal{O}^\vee,\epsilon): = \pi(\psi_{z,\mathcal{O}^\vee}, \epsilon)\in \Pi_{z,\mathcal{O}^\vee}\;,
\]
for each $z\in \ZZ$, $\mathcal{O}^\vee\in \mathcal{U}^\vee$ and $\epsilon\in \widehat{A(\mathcal{O}^\vee)}_0$.

Within this labelling, we take note of the following class of irreducible representations.

\begin{definition}\label{defi:formallyws}
For a unipotent conjugacy class $\mathcal{O}^\vee \in \mathcal{U}^\vee$ and a central element $z\in Z_{\mathbf{G}^\vee}$, we denote the set of irreducible anti-tempered representations
\[
\Pi^{f}_{z,\mathcal{O}^\vee} = \{ \delta(z,\mathcal{O}^\vee, \epsilon)\in\Pi_{z,\mathcal{O}^\vee}\;:\; \epsilon\in A^{\dagger}(\mathcal{O}^\vee)\}
\]
parameterized by the group of characters $A^{\dagger}(\mathcal{O}^\vee) < \widehat{A(\mathcal{O}^\vee)}_0$ of Proposition \ref{prop:quotient}.

We say that the representations in $\Pi^{f}_{z,\mathcal{O}^\vee}$ are \textit{formally weakly spherical}.

\end{definition}

A canonical constituent in $\Pi^f_{z,\mathcal{O}^\vee}$, for each $\mathcal{O}^\vee \in \mathcal{U}^\vee$ and $z\in Z_{\mathbf{G}^\vee}$, is the spherical representation
\[
\delta_{z,\mathcal{O}^\vee}:= \delta(\chi_{z,\mathcal{O}^\vee}) = \delta(z,\mathcal{O}^\vee, \mathrm{trv}) \in \irr_{\chi_{z,\mathcal{O}^\vee}}(G)\;.
\]
Here, $\mathrm{trv}\in \widehat{\mathcal{S}_{\psi_{z,\mathcal{O}^\vee}}}$ is the trivial character.

Furthermore, it is also easy to verify that the group $\mathcal{S}_{\phi_{z,\mathcal{O}^\vee}}$, for the associated $L$-parameter $\phi_{z,\mathcal{O}^\vee}$, is a trivial group, i.e. that the centralizer of $\chi_{\mathcal{O}^\vee}(\frb)\in \mathbf{G^\vee}(\mathbb{C})$ is connected modulo $Z_{\mathbf{G}^\vee}$.

Hence, with the character group $\widehat{\mathcal{S}_{\psi_{z,\mathcal{O}^\vee}}}$ being trivial, we see by Proposition \ref{prop:arth-book} that the associated $L$-packet to a quasi-basic $A$-parameter is the singleton set
\[
\{\delta_{z,\mathcal{O}^\vee}\} = \Pi_{\phi_{z,\mathcal{O}^\vee}} \subset \Pi^f_{z,\mathcal{O}^\vee}\subset \Pi_{z,\mathcal{O}^\vee}\subset \irr_{\chi_{z,\mathcal{O}^\vee}}(G)
\]
consisting of the spherical representation that admits the corresponding infinitesimal character.

\subsection{Explication of $A$-parameters}\label{sect:Aexplicit}

Let us recall the existing theory on the explication of the composition of $A$-packets in combinatorial terms, when applied the case of Arthur-type representations in $\irr_0(G)$.

\subsubsection{Tables}

A \textit{table} $\gotM$ will consist of a finite indexing set $I(\gotM)$ and a set of pairs of integers $(a_i,b_i)\in \mathbb{Z}_{>0}\times \mathbb{Z}_{>0}$, for each $i\in I(\gotM)$.

We write $\mathcal{T}$ for the set of tables. 

For an integer $N\geq1$, we set $\mathcal{T}(N)\subset \mathcal{T}$ to be the set of tables $\gotM$ with $|\gotM|:= \sum_{i\in I(\gotM)} a_ib_i = N$.

We fix natural maps
\[
i: \mathcal{P} \hookrightarrow \mathcal{T}\,,\quad p: \mathcal{T} \twoheadrightarrow \mathcal{P}
\]
between the set of tables and the set of partitions $\mathcal{P}$.

Given a partition $\lambda =( \lambda_1\leq \ldots \leq \lambda_{\ell(\lambda})\in \mathcal{P}$, a table $\gotM_{\lambda} = i(\lambda)\in \mathcal{T}$ is constructed by taking the indexing set $I(\gotM_{\lambda}) = \{1,\ldots, \ell(\lambda)\}$ and setting $(a_i,b_i) = (\lambda_i,1)$, for each $i\in I(\gotM_{\lambda})$.

In an adjoint manner, for a table $\gotM\in \mathcal{T}$, we construct
\[
\lambda_{\gotM} =  p(\gotM) = \cup_{i\in I(\gotM)} (a_i^{b_i})\in \mathcal{P}\;.
\]

It is evident that $p(i(\lambda)) = \lambda$ holds, for any partition $\lambda\in \mathcal{P}$.

For $\gotM^1,\gotM^2\in \mathcal{T}$, we write $\gotM^1\cup\gotM^2\in \mathcal{T}$ for the table given by the pairs $\{(a_i,b_i)\}_{i \in I(\gotM^1)\cup I(\gotM^2)}$, with $I(\gotM^1\cup \gotM^2) = I(\gotM^1)\sqcup I(\gotM^2)$.

For a table $\gotM\in\mathcal{T}$ and a set $S \subset \mathbb{Z}_{>0}\times \mathbb{Z}_{>0}$, we may also write a table $\gotM \cup S\in \mathcal{T}$, by implicitly assuming an auxiliary indexing set $I_S$, so that $I(\gotM\cup S) = I(\gotM) \sqcup I_S$ with $S = \{(a_i,b_i)\}_{i\in I_S}$.


We write $\mathcal{T}^1_0$ (respectively, $\mathcal{T}^{-1}_0$) for the set of tables $\gotM\in \mathcal{T}$, for which $a_i+b_i$ is even (respectively, odd), for all $i\in I(\gotM)$.

For each choice of a sign $s\in \{\pm1\}$, we define the sets of tables
\[
\mathcal{T}^{s} = \{ \gotM \in \mathcal{T}\;:\; \exists \gotM_0\in \mathcal{T}^{s}_0,\; \exists \gotN\in \mathcal{T},\; \gotM = \gotM_0\cup \gotN\cup \gotN \} \;,
\]
and write $\mathcal{T}^{s}(N) = \mathcal{T}(N) \cap \mathcal{T}^{s}$, for every integer $N\geq1$.

The maps $i,p$ clearly restrict to well-defined maps
\[
i: \mathcal{P}^s(N) \hookrightarrow \mathcal{T}^s(N)\,,\quad p: \mathcal{T}^s(N) \twoheadrightarrow \mathcal{P}^s(N)\;,
\]
for any $N\geq1$ and a choice of sign $s\in \{\pm1\}$.

\subsubsection{Integral $A$-packets}

We denote by $\Psi(N)$ the set of isomorphism classes of complex $N$-dimensional continuous representations $\psi$ of the group $W_F\times \mathrm{SL}_2(\mathbb{C})\times \mathrm{SL}_2(\mathbb{C})$, whose restriction to $\mathrm{SL}_2(\mathbb C)\times \mathrm{SL}_2(\mathbb{C})$ is algebraic, while $\psi(W_F)$ is bounded and consists of semisimple elements. (i.e. $A$-parameters for the group $GL_N(F)$.)

A natural map (in fact, injection) $\Psi(N) \hookrightarrow \Phi(N)$ is given by $\psi\mapsto \phi_{\psi}: = \psi\circ r'_F$.

For $z\in Z_{\mathbf{G^\vee}}$ and integers $a,b\geq1$, we write 
\[
z \otimes \nu_a \otimes \nu_b\in \Psi(ab)
\]
for the tensored representation, with $\nu_k$ denoting the $k$-dimensional irreducible $\mathrm{SL}_2(\mathbb{C})$-representation, and $z$ denoting the $1$-dimensional character of $W_F/I_F$ that is determined by $\frb \mapsto z$.

An explication shows that for $\psi = z \otimes \nu_a \otimes \nu_b$, we have
\begin{equation}\label{eq:phipsi}
\phi_{\psi} = \sum_{i=0}^{b-1} zq^{i-\frac{b-1}2}\otimes \nu_a \in \Phi(N)\;.
\end{equation}

In similarity with Section \ref{sect:expl-l}, we embed $\Psi(G)\subset \Psi(N_G)$, and treat $\Psi= \bigoplus_{N\geq1} \Psi(N)$ as an additive semigroup with respect to the direct sum operation on representations.

For a table $\gotM\in  \mathcal{T}(N)$ and an element $z\in Z_{\mathbf{G^\vee}}$, we define

\[
\psi_{z, \gotM} = \sum_{i\in I(\gotM)} z\otimes \nu_{a_i}\otimes \nu_{b_i} \in \Psi(N)\;.
\]

\begin{proposition}\label{prop:integralA}
\begin{enumerate}
    \item 
The assignment $(z,\gotM)\mapsto \psi_{z,\gotM}$ is a bijection between the set $Z_{\mathbf{G^\vee}}\times \mathcal{T}^{s_G}(N_G)$ and the set $\Psi_0(G)$ of integral $A$-parameters.
\item 
Restricting the previous assignment to the case of partitions, gives a bijection $(z,\lambda)\mapsto \psi_{z,\gotM_{\lambda}}$ between the set $Z_{\mathbf{G^\vee}}\times \mathcal{P}^{s_G}(N_G)$ and the subset of tempered $A$-parameters in $\Psi_0(G)$.

\item\label{it:intA3}
For any partition $\lambda\in  \mathcal{P}^{s_G}(N_G)$ and 
$z\in Z_{\mathbf{G^\vee}}$, the associated quasi-basic $A$-parameter may be described as
\[
\psi_{z, \mathcal{O}^\vee_{\lambda}} = (\psi_{z,\gotM_{\lambda}})^t\in \Psi_0(G)\;,
\]
while the associated tempered $L$-parameter is given by
\[
\phi_{(z,\ldots,z), \lambda} = \phi_{\psi_{z,\gotM_{\lambda}}}\in \Phi_u(G)\;,
\]
in the sense of the notation in \eqref{eq:l-expl}.

\item\label{it:intA4}
For any $\psi = \psi_{z,\gotM}\in \Psi_0(G)$, we have $\lambda(\phi_{\psi}) = \lambda_{\gotM}$.
\end{enumerate}
\end{proposition}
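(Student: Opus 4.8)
The plan is to derive all four statements from Arthur's classification of $A$-parameters for classical groups, the formula \eqref{eq:phipsi} relating $\psi$ to $\phi_\psi$, and the definitions of $\psi^t$ and $\lambda(\phi)$. I would settle the formal statements \eqref{it:intA3} and \eqref{it:intA4} first and use the combinatorial dictionary they produce to organise the two bijectivity claims. For \eqref{it:intA3}: the table $\gotM_\lambda = i(\lambda)$ carries the pairs $(\lambda_i,1)$, so $\psi_{z,\gotM_\lambda} = \sum_i z\otimes\nu_{\lambda_i}\otimes\nu_1$ with the first $\mathrm{SL}_2$-factor being $\mathrm{SL}^L_2$; applying the flip $\mathrm{fp}$ exchanges the two $\mathrm{SL}_2$-factors and yields $\sum_i z\otimes\nu_1\otimes\nu_{\lambda_i}$, which is trivial on $I_F\times\mathrm{SL}^L_2$, sends $\frb$ to $z$, and carries the regular unipotent of $\mathrm{SL}^A_2$ into $\mathcal{O}^\vee_\lambda$ — that is, it is the quasi-basic parameter $\psi_{z,\mathcal{O}^\vee_\lambda}$; and \eqref{eq:phipsi} applied termwise (each $b_i=1$) gives $\phi_{\psi_{z,\gotM_\lambda}} = \sum_i z\otimes\nu_{\lambda_i} = \phi_{(z,\ldots,z),\lambda}$ in the notation of \eqref{eq:l-expl}. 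For \eqref{it:intA4}, \eqref{eq:phipsi} gives $\phi_{\psi_{z,\gotM}} = \sum_i\sum_{l=0}^{b_i-1} zq^{l-(b_i-1)/2}\otimes\nu_{a_i}$, so $u_{\phi_{\psi_{z,\gotM}}}$ has exactly $b_i$ Jordan blocks of size $a_i$ for each $i\in I(\gotM)$; by Proposition \ref{prop:part-orbit} this means $\lambda(\phi_{\psi_{z,\gotM}}) = \cup_{i\in I(\gotM)}(a_i^{b_i}) = \lambda_\gotM$.

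For the first assertion, injectivity of $(z,\gotM)\mapsto\psi_{z,\gotM}$ is immediate from uniqueness of the decomposition of a $W_F\times\mathrm{SL}_2\times\mathrm{SL}_2$-representation into irreducibles: $\psi_{z,\gotM} = \bigoplus_{i\in I(\gotM)} z\otimes\nu_{a_i}\otimes\nu_{b_i}$ exhibits the constituents, each recording $z$ (the scalar through which $W_F$ acts) and the bidegree $(a_i,b_i)$, and since $N_G\geq 3$ the table is nonempty, so the multiset $\{(a_i,b_i)\}$, hence $\gotM$, and $z$ are recovered. The content is that $\psi_{z,\gotM}\in\Psi_0(G)$ and surjectivity. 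For the former: $z=\pm1$ is a real character of $W_F$ and $\nu_a\otimes\nu_b$ is orthogonal exactly when $a+b$ is even (symplectic when $a+b$ is odd), so $\bigoplus_k z\otimes\nu_{a_k}\otimes\nu_{b_k}$ is self-dual of the type required for $\Psi(G)$ — symplectic if $s_G=-1$, orthogonal if $s_G=1$, the determinant condition being automatic in the orthogonal case since then $z=1$ — if and only if each constituent of the opposite type occurs with even multiplicity, which is precisely the condition $\gotM\in\mathcal{T}^{s_G}$ read off from $\gotM = \gotM_0\cup\gotN\cup\gotN$; and a direct computation of the Frobenius eigenvalues of $\chi_{\psi_{z,\gotM}}$ (expanding each $\nu_{a_i}\otimes\nu_{b_i}$ into $\mathrm{SL}_2$-irreducibles) realises it as the infinitesimal character of a quasi-basic $\psi_{z,\mathcal{O}^\vee_\mu}$ with $\mu\in\mathcal{P}^{s_G}(N_G)$, so $\psi_{z,\gotM}\in\Psi_0(G)$. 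For surjectivity, take $\psi\in\Psi_0(G)$ and fix a quasi-basic $\psi' = \psi_{z_0,\mathcal{O}^\vee_\mu}$ with $\chi_\psi = \chi_{\psi'}$. Since $\psi'$ is trivial on $I_F$, so is $\chi_\psi|_{I_F} = \chi_{\psi'}|_{I_F}$, forcing $\psi|_{I_F}$ trivial; hence every $W_F$-constituent of $\psi = \bigoplus_k \rho_k\otimes\nu_{a_k}\otimes\nu_{b_k}$ is an unramified character $\rho_k = \zeta_k$, and boundedness of $\psi(W_F)$ gives $|\zeta_k|=1$. Now the eigenvalues of $s_{\chi_{\psi'}}$ are the real numbers $z_0 q^{(\mu_j-1)/2-l}$, whereas the $k$-th constituent of $\psi$ contributes $\zeta_k$ times powers of $q^{1/2}$; equality of the multisets forces $\zeta_k\in\{\pm q^{m/2}: m\in\mathbb{Z}\}$, which with $|\zeta_k|=1$ and $q>1$ yields $\zeta_k = z_0 =: z\in Z_{\mathbf{G^\vee}}$. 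Thus $\psi = \psi_{z,\gotM}$ for the table $\gotM$ with pairs $\{(a_k,b_k)\}$, and $\gotM\in\mathcal{T}^{s_G}(N_G)$ by the dictionary above together with $|\gotM| = \dim\psi = N_G$.

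The second assertion follows by restricting the bijection of the first: $\psi_{z,\gotM}$ is tempered iff $\psi_{z,\gotM}|_{\mathrm{SL}^A_2}$ is trivial iff $b_i=1$ for all $i$ iff $\gotM$ lies in the image of $i\colon \mathcal{P}^{s_G}(N_G)\hookrightarrow\mathcal{T}^{s_G}(N_G)$, i.e. $\gotM = \gotM_\lambda$ for the unique $\lambda = p(\gotM)\in\mathcal{P}^{s_G}(N_G)$; combined with the first assertion this gives the asserted bijection onto the tempered members of $\Psi_0(G)$. The step I expect to be the genuine obstacle is surjectivity in the first assertion: it is the only place where integrality is used essentially rather than as bookkeeping, namely to exclude constituents twisted by a ramified character, by an unbounded character, or by an unramified character whose Frobenius value is not central — and the mechanism is the comparison of $\chi_\psi$ with a quasi-basic infinitesimal character together with the inequality $q>1$.
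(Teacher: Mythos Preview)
Your proof is correct and follows essentially the same route as the paper's: both reduce everything to the explicit decomposition of $A$-parameters into irreducibles and the formula \eqref{eq:phipsi}. The paper handles surjectivity in part (1) by citing standard structure results (Atobe) for the claim that any $\psi$ with $\chi_\psi = \chi_{z,\mathcal{O}^\vee}$ must be of the form $\psi_{z,\gotM}$, whereas you spell out the argument directly via the eigenvalue comparison and the constraint $|\zeta_k|=1$; your version is self-contained and correct.

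One point where your argument is in fact sharper than the paper's: to show $\psi_{z,\gotM}\in\Psi_0(G)$ the paper asserts that $\chi_{\psi_{z,\gotM}} = \chi_{\psi_{z,\gotM_{\lambda'}}}$ with $\lambda' = p(\gotM)$, but that particular identity is not generally true (e.g.\ for $\gotM = \{(1,1),(2,2)\}\in\mathcal{T}^1(5)$ the two infinitesimal characters have eigenvalue multisets $\{zq,z,z,z,zq^{-1}\}$ versus $\{z,zq^{1/2},zq^{1/2},zq^{-1/2},zq^{-1/2}\}$). Your use of the Clebsch--Gordan decomposition of $\nu_{a_i}\otimes\nu_{b_i}$ along the diagonal $\mathrm{SL}_2$ produces the correct $\mu$ (here $\mu=(1,1,3)$), and the parity bookkeeping you indicate confirms $\mu\in\mathcal{P}^{s_G}(N_G)$. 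The paper's conclusion is still valid, but your path to it avoids this slip.
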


\begin{proof}
It is a direct consequence of standard properties of $A$-parameters (see, for example, \cite[Section 2.3]{atobe-crelle1}) that any $\psi\in \Psi(G)$ with infinitesimal character $\chi_{\psi}= \chi_{z,\mathcal{O}^\vee}$, for $z\in Z_{\mathbf{G^\vee}}$ and $\mathcal{O}^\vee\in \mathcal{U}^\vee$, must be of the form $\psi = \psi_{z,\gotM}$, for a table $\gotM\in \mathcal{T}^{s_G}(N_G)$.

For $\lambda\in \mathcal{P}^{s_G}(N_G)$, clearly $\psi_{z,\gotM_{\lambda}}$ is a tempered $A$-parameter that may be viewed as an $L$-parameter. By Proposition \ref{prop:lambdaphi} it then follows that $\chi_{\psi_{z,\gotM_{\lambda}}} = \chi_{z,\mathcal{O}^\vee_{\lambda}}$. Hence, $\psi_{z,\gotM_{\lambda}}\in \Psi_0(G)$.

For a given $\psi_{z,\gotM}\in \Psi(G)$, the partition $\lambda'= p(\gotM)\in \mathcal{P}^{s_G}(N_G)$ satisfies $\chi_{\psi_{z,\gotM}} = \chi_{\psi_{z,\gotM_{\lambda'}}}$, which implies $\psi_{z,\gotM}\in \Psi_0(G)$.

The rest of the statement amounts to simple fact collecting, assisted by the formula in \eqref{eq:phipsi} and Proposition \ref{prop:lambdaphi}.

\end{proof}

\subsubsection{Component groups}

Let us write $\mathcal{T}^{mf}\subset \mathcal{T}$ for the set of multiplicity-free tables, that is, $\gotM\in \mathcal{T}$ with $(a_i,b_i)\neq (a_j,b_j)$, for all distinct $i,j\in I(\gotM)$.

For a table $\gotM\in \mathcal{T}$, a unique decomposition $\gotM = \gotM^{mf} \cup \gotM^{m} \cup \gotM^{m}$ with $\gotM^{mf}\in \mathcal{T}^{mf}$ and $\gotM^{m}\in \mathcal{T}$ exists.

Clearly, for a table $\gotM\in \mathcal{T}$, we have $\gotM\in \mathcal{T}^{\pm1}$, if and only if, $\gotM^{mf}\in \mathcal{T}^{\pm1}_0$.

Let us now fix a sign $s\in \{\pm1\}$ and a table $\gotM\in \mathcal{T}^s$.

There are unique tables $\gotM^{gp}\in \mathcal{T}_{0}^{s}$ and $\gotM^{bp}\in \mathcal{T}_{0}^{-s}$, so that $\gotM = \gotM^{gp} \cup \gotM^{bp} \cup \gotM^{bp}$ holds.

In particular, $\gotM^{mf} = (\gotM^{gp})^{mf}$ holds.

Let us denote the set 
\[
S(\gotM)= \{(a,b)\in \mathbb{Z}_{>0}\times \mathbb{Z}_{>0}\;:\; \exists i\in I(\gotM^{gp}),\, (a,b) = (a_i,b_i)\}\;.
\]
A natural surjective map 
\[
\kappa: S(\gotM) \twoheadrightarrow \supp(\lambda_{\gotM})
\]
is given by $\kappa(a,b)= a$.

Note that the partition $\lambda_{\gotM}\in \mathcal{P}^s$ may not belong to $\mathcal{P}^s_0$, even when $\gotM\in \mathcal{T}^s_0$. For example, $\gotM = \gotM^{gp} = \{(3,3), (4,2)\}\in \mathcal{T}^1_0$, while $\lambda_{\gotM} = (3^34^2)\in \mathcal{P}^1$ has $(\lambda_{\gotM})^{gp} = (3^3)$.

Thus, we further refine the construction by writing 
\begin{equation}\label{eq:flat}
S(\gotM) = S(\gotM)^{\sharp} \sqcup S(\gotM)^{\flat}\;,
\end{equation}
where $S(\gotM)^{\sharp} = \kappa^{-1}(S(\lambda))$, and attain a map $\kappa: S(\gotM)^{\sharp} \twoheadrightarrow S(\lambda_{\gotM})$ by restriction.

Now, in further similarity with Section \ref{sec:partitions} we consider the power set 
\[
P(\gotM) = \{A\subset S(\gotM)\}
\]
as a $\mathbb{F}_2$-vector space, a group of cardinality $2^{|S(\gotM)|}$, or as a space of boolean functions on $S(\gotM)$.

The decomposition in \eqref{eq:flat} naturally gives a direct sum decomposition
\[
P(\gotM) = P(\gotM)^{\sharp}\oplus P(\gotM)^{\flat}
\]
of vector spaces.

A pullback through $\kappa$ gives an embedding 
\[
\kappa^\ast: P(\lambda_\gotM) \hookrightarrow P(\gotM)^{\sharp} < P(\gotM)\;.
\]
Explicitly, for a set $A\subset S(\lambda_\gotM)$, we have $\kappa^\ast(A) = \{(a_i,b_i)\,:\, i\in I(\gotM^{gp})\,, a_i\in A \}\;. $

We also take note of the subgroup
\[
P(\gotM)_0 = \{A\subset S(\gotM)\;:\; |A\cap S(\gotM^{mf})|\mbox{ is even}\}< P(\gotM)\;.
\]
It can be verified that the map $\kappa^\ast$ embeds the subgroup $P(\lambda_{\gotM})_0$ into $P(\gotM)_0$.

\begin{proposition}\label{prop:expl-a-l}
(e.g. \cite[Section 2]{xu17})

For any $A$-parameter $\psi = \psi_{z,\gotM}\in \Psi_0(G)$, given by $z\in Z_{\mathbf{G^\vee}}$ and a table $\gotM\in  \mathcal{T}^{s_G}(N_G)$, there are natural identifications
\[
\widehat{\mathcal{S}_{\phi_{\psi}}}\cong P(\lambda_{\gotM})_0\,,\quad \widehat{\mathcal{S}_{\psi}}\cong P(\gotM)_0\;,
\]
under which the embedding given by $\kappa^\ast$ corresponds to the containment $\widehat{\mathcal{S}_{\phi_{\psi}}}< \widehat{\mathcal{S}_{\psi}}$.

\end{proposition}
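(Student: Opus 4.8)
The statement is a translation, into the language of tables, of the classical description of the component groups $\mathcal{S}_\psi$ and $\mathcal{S}_{\phi_\psi}$ for $\mathbf{SO}$ and $\mathbf{Sp}$ (due to Arthur, and in this explicit shape \cite[Section 2]{xu17}). The plan is to compute the centralizers directly, track the quotient by $Z_{\mathbf{G^\vee}}$, and then recognise the resulting $\mathbb{F}_2$-spaces as $P(\gotM)_0$ and $P(\lambda_\gotM)_0$ with the connecting map as $\kappa^\ast$. First I would describe the centralizer: writing $\psi=\psi_{z,\gotM}=\bigoplus_{i\in I(\gotM)}z\otimes\nu_{a_i}\otimes\nu_{b_i}$ as a self-dual $N_G$-dimensional representation of $W_F\times\mathrm{SL}_2\times\mathrm{SL}_2$, its isotypic decomposition groups the summands by the distinct pairs $(a,b)$, and the multiplicity space of each isotypic component carries a nondegenerate form which is symmetric exactly for the pairs occurring in $\gotM^{gp}$ — i.e. those indexed by $S(\gotM)$ — and alternating otherwise. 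Hence the centralizer of $\mathrm{Im}(\psi)$ in the ambient isometry group of $\mathbb{C}^{N_G}$ is $\prod_{(a,b)\in S(\gotM)}\mathrm{O}(m_{(a,b)})\times(\text{connected symplectic factors})$, whose component group is $(\mathbb{F}_2)^{S(\gotM)}=P(\gotM)$, the standard basis vector at $(a,b)$ being the class of a reflection on the $(a,b)$-multiplicity space. The same computation applied to $\phi_\psi=\sum_i\sum_{j=0}^{b_i-1}zq^{j-(b_i-1)/2}\otimes\nu_{a_i}$ — whose self-dual constituents are the $z\otimes\nu_a$ coming from the terms $j=(b_i-1)/2$ with $b_i$ odd, the remaining ones pairing into general linear factors — gives the component group $(\mathbb{F}_2)^{S(\lambda_\gotM)}=P(\lambda_\gotM)$ of the isometry-centralizer of $\mathrm{Im}(\phi_\psi)$.

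Next I would pass to $\mathbf{G^\vee}(\mathbb{C})$ and divide by $Z_{\mathbf{G^\vee}}$. If $\mathbf{G^\vee}=\mathbf{Sp}_{N_G}$, the ambient isometry group already is $\mathbf{Sp}_{N_G}$ and the only change is the quotient by $\langle-I\rangle$; since $-I$ acts as $-\mathrm{id}$ on each multiplicity space, its class in $P(\gotM)$ is the indicator $\mathbf{1}_{S(\gotM^{mf})}$ of the odd-multiplicity components, so $\mathcal{S}_\psi=P(\gotM)/\langle\mathbf{1}_{S(\gotM^{mf})}\rangle$ and, dualizing with the standard pairing, $\widehat{\mathcal{S}_\psi}=\mathbf{1}_{S(\gotM^{mf})}^{\perp}=P(\gotM)_0$. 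If $\mathbf{G^\vee}=\mathbf{SO}_{N_G}$, the center is trivial but the centralizer relevant to $\mathcal{S}_\psi$ is the determinant-one subgroup of the $\mathrm{O}_{N_G}$-centralizer; the determinant character restricts on $P(\gotM)$ to the functional $\sum_{(a,b):\,a,b\ \mathrm{odd}}$, so $\mathcal{S}_\psi=\ker(\textstyle\sum_{J})$ with $J=\{(a,b)\in S(\gotM):a,b\ \mathrm{odd}\}$ and $\widehat{\mathcal{S}_\psi}=P(\gotM)/\langle\mathbf{1}_J\rangle$. Here I would invoke the congruence $N_G=|\gotM|\equiv\sum_{(a,b):\,a,b\ \mathrm{odd}}m_{(a,b)}\ (\mathrm{mod}\ 2)$: as $N_G$ is odd in this case, $|J\cap S(\gotM^{mf})|$ is odd, so $\mathbf{1}_J\notin P(\gotM)_0$, and the composite $P(\gotM)_0\hookrightarrow P(\gotM)\twoheadrightarrow P(\gotM)/\langle\mathbf{1}_J\rangle$ is an isomorphism, giving $\widehat{\mathcal{S}_\psi}\cong P(\gotM)_0$. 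Running the identical argument with $\lambda_\gotM$ in place of $\gotM$ produces $\widehat{\mathcal{S}_{\phi_\psi}}\cong P(\lambda_\gotM)_0$.

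For the compatibility, $\mathrm{Im}(\phi_\psi)\subset\mathrm{Im}(\psi)$ gives $Z_\psi\subset Z_{\phi_\psi}$, hence a surjection $\mathcal{S}_\psi\twoheadrightarrow\mathcal{S}_{\phi_\psi}$ dualizing to the inclusion $\widehat{\mathcal{S}_{\phi_\psi}}<\widehat{\mathcal{S}_\psi}$ of the statement. On generators this surjection carries the class of a reflection on the $(a,b)$-multiplicity space to the class of the corresponding reflection on the $z\otimes\nu_a$-multiplicity space of $\phi_\psi$, which is nontrivial precisely when $a\in S(\lambda_\gotM)$, i.e. when $(a,b)\in S(\gotM)^{\sharp}$, and trivial on the $S(\gotM)^{\flat}$-generators. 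This is exactly the dual of $\kappa\colon S(\gotM)^{\sharp}\twoheadrightarrow S(\lambda_\gotM)$ followed by $P(\gotM)^{\sharp}\hookrightarrow P(\gotM)$, that is, of $\kappa^\ast$; checking this on the standard generators and that the even-cardinality subgroups $P(\lambda_\gotM)_0$ and $P(\gotM)_0$ correspond finishes the proof.

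I expect the genuine obstacle to lie entirely in the bookkeeping of the last two paragraphs: one must pin down the $\mathbb{F}_2$-bases of $\mathcal{S}_\psi$ and $\mathcal{S}_{\phi_\psi}$ so that the central quotient is literally the quotient by $\mathbf{1}_{S(\gotM^{mf})}$ (resp. the determinant functional is literally $\sum_J$) and the connecting map is literally $\kappa^\ast$ — equivalently, so that these bases are the ones compatible with Arthur's normalization of $\mathcal{S}_\psi$ entering \eqref{eq:epsilon-arth} — which is the content recorded in \cite[Section 2]{xu17}. By contrast, the centralizer computations themselves and the parity congruence that rescues the $\mathbf{SO}_{N_G}$ case are routine.
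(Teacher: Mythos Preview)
The paper gives no proof of this proposition; it simply records the statement with a citation to \cite[Section 2]{xu17}. Your argument for $\widehat{\mathcal{S}_\psi}\cong P(\gotM)_0$ is the standard one and is correct in both the symplectic and orthogonal cases, including the parity trick that handles the $\mathbf{SO}$ determinant.

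There is, however, a genuine gap in the sentence ``running the identical argument with $\lambda_\gotM$ in place of $\gotM$ produces $\widehat{\mathcal{S}_{\phi_\psi}}\cong P(\lambda_\gotM)_0$''. The $L$-parameter $\phi_\psi$ is \emph{not} the tempered parameter $\phi_{(z,\dots,z),\lambda_\gotM}$ to which that argument would apply: by \eqref{eq:phipsi} its only self-dual constituents $z\otimes\nu_a$ come from pairs $(a,b)$ with $b$ odd, so the orthogonal factors of $Z_{\phi_\psi}$ are indexed by $\kappa(S(\gotM)^\sharp)$ rather than by $S(\lambda_\gotM)$. These sets differ whenever $\gotM^{bp}$ contains a pair $(a,b)$ with $a$ of good parity. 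For instance, with $s_G=1$ and $\gotM=\{(1,1),(3,2),(3,2)\}\in\mathcal{T}^{1}(13)$ one computes $\mathcal{S}_{\phi_\psi}$ trivial while $|P(\lambda_\gotM)_0|=2$; so the first isomorphism in the proposition, read literally, fails here (and likewise the paper's claim that $\kappa$ is surjective onto $\supp(\lambda_\gotM)$).

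What your compatibility paragraph does establish --- once one notes that $(a,b)\in S(\gotM)^\sharp$ is equivalent to $b$ odd --- is that under $\widehat{\mathcal{S}_\psi}\cong P(\gotM)_0$ the image of $\widehat{\mathcal{S}_{\phi_\psi}}$ equals $\kappa^\ast(P(\lambda_\gotM)_0)$. This is the form in which the proposition is actually used later (Lemma~\ref{lem:lpacket}, Proposition~\ref{prop:unionAL}), and for the near-tempered tables $\gotM_{\lambda,J}$ appearing there the strong form does hold, since every bad-parity pair in $\gotM_{\lambda,J}$ has $a$ of bad parity.
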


We note that for the case of a tempered $A$-parameter $\psi=\psi_{z,\gotM_{\lambda}} \in \Psi_0(G)$ and its associated quasi-basic $A$-parameter $\psi_{z,\mathcal{O}^\vee_{\lambda}} = \psi^t\in \Psi_0(G)$, the map $\kappa^\ast$ is an isomorphism. The identifications of Proposition \ref{prop:comp-comb}, equation \eqref{eq:311} and Proposition \ref{prop:expl-a-l} all coincide in this case, to produce an identification
\begin{equation}\label{eq:tempered-ident}
\widehat{\mathcal{S}_{\phi_{(z,\ldots,z), \lambda}}}=\widehat{\mathcal{S}_{\psi_{z,\gotM_{\lambda}}}}=\widehat{\mathcal{S}_{\psi_{z,\mathcal{O}^\vee_{\lambda}}}}\;\cong\; P(\lambda)_0= P(\gotM_{\lambda})_0
\end{equation}
of groups.

\subsection{Moeglin's parameters and $A$-packet intersections}\label{sect:moeg-param}

Beyond the case of tempered $A$-parameters, where the associated $A$-packets are well-situated with respect to the Langlands reciprocity (in particular, are disjoint), the composition of general $A$-packets and their possible intersections remains a largely difficult question.

A line of study by Moeglin \cite{moeg06,moeg09-2,moeg11}, Xu \cite{xu17,xu21} and Atobe \cite{atobe-crelle1,atobe-crelle2} offered meaningful tools to approach such issues. We will now sketch some of its features, when applied on the case of integral $A$-packets.


For a table $\gotM\in \mathcal{T}$, we set the integers
\[
\alpha_i: = a_i+b_i\,,\; \beta_i = a_i-b_i\;,
\]
for all $i\in I(\gotM)$.

For a sign $s\in \{\pm1\}$ and a table $\gotM\in \mathcal{T}^s$, we associate the \textit{Moeglin parameter set}
\[
\mathcal{W}(\gotM) = \left\{(\underline{l}, \underline{\eta}) \;
: \; \begin{array}{l}\underline{l}: I(\gotM^{gp}) \to \mathbb{Z},\mbox{ s.t. } 0\leq \underline{l}(i)\leq b_i/2 \;,\\
\underline{\eta}: R_{\underline{l}} \to \{\pm1\}
\;,\\
\mbox{where }R_{\underline{l}} := I(\gotM^{gp}) \setminus \{i\;:\; \underline{l}(i)= b_i/2\}
\end{array}\right\}\;.
\]

An \textit{admissible order} $\prec$ on a table $\gotM\in \mathcal{T}^s$ is any linear order on the indexing set 
\[
I(\gotM^{gp})= \{x_1 \prec \ldots \prec x_t\}\;,
\]
for which the condition
\[
\beta_{x_{i}} \leq  \beta_{x_{j}},\quad\mbox{ or }\quad \left\{\begin{array}{l} \beta_{x_{1}} \geq0 \\ \beta_{x_{i}} >  \beta_{x_{j}}\geq0 \\ \alpha_{x_{i}} \leq  \alpha_{x_{j}}\end{array}\right.
\]
is satisfied, for all $1\leq i< j\leq t$.

For ease of presentation, when an admissible order $\prec$ is fixed on $\gotM\in \mathcal{T}^s$, we will often simply assume that $I(\gotM^{gp})= \{1,\ldots,t\}$ is indexed by integers number, so that $i< j \Leftrightarrow i\prec j$.

For each integral $A$-parameter $\psi= \psi_{z,\gotM}\in \Psi_0(G)$, with $\gotM\in \mathcal{T}^{s_G}(N_G)$ and $z\in Z_{\mathbf{G^\vee}}$, and a choice of an admissible order $\prec$ on $\gotM$, Moeglin's construction gives an embedding 
\[
M_{\prec}: \Pi^A_{\psi} \hookrightarrow \mathcal{W}(\gotM) \;.
\]

For a representation $\pi\in \Pi^A_{\psi}$, we will write $\pi = \pi_{\prec}(\psi,\underline{l},\underline{\eta})$, when $(\underline{l},\underline{\eta})= M_{\prec}(\pi)$.

As visible from its definition, the invariant $(\underline{l},\underline{\eta})\in \mathcal{W}(\gotM)$, for a given representation $\pi\in \Pi^A_{\psi}$, is not canonical, in the sense that it may vary along different choices of admissible orders.

Yet, for a representation $\pi\in \Pi^A_{\psi_{z,\gotM}}$, the canonical Arthur character $\epsilon^{\psi_{z,\gotM}}_{\pi}$ can still be read off Moeglin's parameterization in the following sense.

\begin{proposition}\label{prop:moegarth-formula}
(\cite[Theorem 3.6]{atobe-crelle1} combined with erratum in \cite[Appendix A]{atobe-crelle2})

Let $\prec$ be an admissible order on a table $\gotM\in \mathcal{T}^{s_G}(N_G)$ and $\psi = \psi_{z,\gotM}\in \Psi_0(G)$ an associated integral $A$-parameter. 

Then, a function $\gamma^{\psi}_{\prec}: I(\gotM^{gp}) \to \{\pm1\}$ exists (explicit formula in \cite[Definition 5.2]{xu17}), so that
\[
(-1)^{\epsilon^{\psi}_{\pi}(a_i, b_i)} =  \left\{ \begin{array}{ll}
\gamma^{\psi}_{\prec}(i)(-1)^{ \lfloor b_i/2\rfloor + \underline{l}(i) }  \underline{\eta}(i)^{b_i} & \underline{l}(i) < b_i/2 \\
\gamma^{\psi}_{\prec}(i) & \underline{l}(i) = b_i/2 
\end{array} \right.\;,
\]
holds for all $i\in I(\gotM^{gp})$, and any 
\[
\pi=\pi_{\prec}(\psi, \underline{l},\underline{\eta})\in \Pi^A_{\psi}\;.
\]
Here, we identify $\epsilon^{\psi}_{\pi} \in \widehat{\mathcal{S}_{\psi}} \cong P(\gotM)_0$ as a boolean function on $S(\gotM)$. 


\end{proposition}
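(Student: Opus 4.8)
The plan is to treat this as a bookkeeping result layered on top of Moeglin's explicit construction of $\Pi^A_\psi$, and to prove it by tracking the behaviour of Arthur's character $\epsilon^\psi_\pi$ along that construction. First I would reduce to the good-parity situation: writing $\gotM = \gotM^{gp}\cup\gotM^{bp}\cup\gotM^{bp}$, the bad-parity summands of $\psi$ contribute nothing to $\mathcal{S}_\psi$ (indeed $\mathcal{S}_\psi\cong P(\gotM)_0$ only sees $S(\gotM)$, which comes from $\gotM^{gp}$), and the standard Jacquet-module analysis of Moeglin's construction shows that the representations of $\Pi^A_\psi$ are obtained from those of $\Pi^A_{\psi^{gp}}$ by a fixed unitary induction that affects neither $M_\prec$ nor the value of $\epsilon^\psi_\pi$ at the pairs in $S(\gotM)$. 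Thus it suffices to establish the formula when $\gotM=\gotM^{gp}$ and $I(\gotM^{gp})=\{1,\dots,t\}$ is linearly ordered by a fixed admissible order $\prec$.

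The core is then an induction on $\sum_i \underline{l}(i)$ (along which $|\gotM|$ strictly decreases) using Moeglin's recursive description of the packet. For the $\prec$-maximal index $i$ with non-extremal Moeglin data, there is an elementary operation $\psi\rightsquigarrow\psi'$ — replacing $z\otimes\nu_{a_i}\otimes\nu_{b_i}$ by $z\otimes\nu_{a_i}\otimes\nu_{b_i-2}$ (or deleting it, once $\underline l(i)=b_i/2$), together with a $\mathrm{Jac}$-type reduction on the representation side — which is compatible with $M_\prec$, $M_{\prec'}$ and with the surjection $\mathcal{S}_\psi\twoheadrightarrow\mathcal{S}_{\psi'}$. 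One checks, and this is where the explicit combinatorics of Moeglin's construction enters, that under this step $(-1)^{\epsilon^\psi_\pi(a_i,b_i)}$ changes exactly by the sign dictated by the asserted formula (a factor $-1$ from the decrement of $\underline l(i)$, together with the $\underline\eta(i)$-dependence), while the values at the remaining indices are carried along up to the predetermined correction collected in $\gamma^\psi_\prec$. Iterating down to the base of the induction — where $\pi$ lies in a tempered $A$-packet and $\epsilon^\psi_\pi$ is read off directly from the Langlands normalisation — reconstitutes the factors $(-1)^{\lfloor b_i/2\rfloor+\underline l(i)}\underline\eta(i)^{b_i}$, with the $\underline l(i)=b_i/2$ case appearing as a base case; all order-dependent constants accumulated along the way are, by definition, absorbed into $\gamma^\psi_\prec$ (whose closed form is the one recorded in \cite[Definition 5.2]{xu17}). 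Finally, the reformulation of $\epsilon^\psi_\pi$ as a boolean function on $S(\gotM)$ via the identification $\widehat{\mathcal{S}_\psi}\cong P(\gotM)_0$ is supplied by Proposition \ref{prop:expl-a-l}.

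The main obstacle, and the reason this is genuinely delicate rather than routine, is twofold. First, Moeglin's invariant $(\underline l,\underline\eta)$ is \emph{not} canonical: different admissible orders assign different pairs to the same $\pi$, so one must verify that the right-hand side of the formula, though built from order-dependent data, is in fact order-independent — this constrains the precise shape of $\gamma^\psi_\prec$, and it is exactly here that an arithmetic sign was initially miscomputed, necessitating the erratum \cite[Appendix A]{atobe-crelle2}. Second, one needs the compatibility of each elementary step of Moeglin's construction with Arthur's endoscopic characterisation of $\epsilon^\psi_\pi$, which rests on the stability and twisted-endoscopy identities for the $\mathrm{Jac}$-operations involved; this is the technical heart of \cite{atobe-crelle1}, building on \cite{xu17} and Moeglin's earlier papers \cite{moeg06,moeg09-2,moeg11}. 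Since a self-contained verification would essentially reproduce that work, I would content myself with citing \cite[Theorem 3.6]{atobe-crelle1} together with the erratum, having made the structure of the argument explicit above.
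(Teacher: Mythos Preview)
The paper does not supply a proof of this proposition at all: it is stated purely as a citation of \cite[Theorem 3.6]{atobe-crelle1} together with the erratum in \cite[Appendix A]{atobe-crelle2}, with the explicit form of $\gamma^{\psi}_{\prec}$ deferred to \cite[Definition 5.2]{xu17}. Your proposal correctly recognises this in its final paragraph, and your eventual conclusion --- to cite those references rather than reproduce the argument --- matches exactly what the paper does.

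The additional material you provide (reduction to good parity, induction along Moeglin's recursive construction, compatibility of the $\mathrm{Jac}$-type steps with the endoscopic character, absorption of order-dependent signs into $\gamma^{\psi}_{\prec}$) is a reasonable high-level outline of the argument in the cited sources, and is not present in the paper. It is useful expository context but is not required: since the paper treats the proposition as an imported result, your sketch goes beyond what is needed, and any imprecision in it (for instance, the exact mechanism of the inductive step or the precise form of the sign bookkeeping) is ultimately covered by the citation you make at the end.
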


For a table $\gotM\in \mathcal{T}^{s}$, it will be useful to set the convention 
\[
\widetilde{I(\m^{gp})} = \left\{ \begin{array}{ll}
  I(\m^{gp})   & s=1 \\
    I(\m^{gp})\sqcup \{0\}  & s=-1
\end{array}  \right.\;,
\] 
while assuming $(a_0,b_0) = (0,1)$, when $s=-1$.

Similarly, for a parameter $(\underline{l},\underline{\eta})\in \mathcal{W}(\gotM)$ we assume the convention $\underline{l}(0) = 0$ and $\underline{\eta}(0) = 1$.

\subsubsection{Intersections between $A$-packets}
In \cite{atobe-crelle1}, Atobe gave an algorithmic description of all possible pairs of tuples $(\psi_1, \prec_1, \underline{l}_1,\underline{\eta}_1)$, $(\psi_2, \prec_2, \underline{l}_2,\underline{\eta}_2)$ that give isomorphic representations
\[
\pi_{\prec_1}(\psi_1, \underline{l}_1,\underline{\eta}_1)\cong \pi_{\prec_2}(\psi_2, \underline{l}_2,\underline{\eta}_2)\in \Pi^{A}_{\psi_1}\cap \Pi^{A}_{\psi_2}\;.
\]

Let us recall parts of this theory that will be essential to our discussion.

\begin{proposition}\cite[Theorem 5.2 and Corollary 5.3]{atobe-crelle1}\label{prop:ui}
Let $\prec$ be an admissible order on a table $\gotM\in \mathcal{T}^{s_G}(N_G)$, for which we assume $I(\gotM^{gp}) = \{1\prec \ldots \prec t\}$.

Let $\psi = \psi_{z,\gotM}\in \Psi_0(G)$ be the associated integral $A$-parameter, for a choice of $z\in \ZZ$.

Let
\[
\pi = \pi_{\prec}(\psi, \underline{l},\underline{\eta})\in \Pi^{A}_{\psi}
\]
be a given representation.

Suppose that $t>k \in \widetilde{I(\gotM^{gp})}$ is such that $\beta_k\geq -1$ holds, and that one of the following conditions is satisfied:
\begin{enumerate}
    \item\label{case1}
\[
\left\{ \begin{array}{lll}   
a_{k+1} - a_k & = &(b_k-2l_k) -  (b_{k+1}-2l_{k+1}) \\
l_{k+1} - l_k & > &0 \\
l_{k+1} - l_k & > &b_{k+1}-b_k \\
\eta_{k+1}  & =&  (-1)^{b_k+1} \eta_k
\end{array}  \right.\;,
\]
\item\label{case2}
\[
\left\{ \begin{array}{lll}   
a_{k+1} - a_k & = &-(b_k-2l_k) +  (b_{k+1}-2l_{k+1}) \\
l_{k+1} - l_k & < &0 \\
l_{k+1} - l_k & < &b_{k+1}-b_k \\
\eta_{k+1} &=&  (-1)^{b_k+1} \eta_k 
\end{array}  \right.\;,
\]
\item\label{case3}
\[
\left\{ \begin{array}{lll}   
a_{k+1} - a_k & = &(b_k-2l_k) +  (b_{k+1}-2l_{k+1}) \\
l_{k+1} + l_k & < &b_k \\
l_{k+1} + l_k & < &b_{k+1} \\
\eta_{k+1} &=&  (-1)^{b_k} \eta_k 
\end{array}  \right.\;.
\]
\end{enumerate}

Here, we shortcut notation to $l_i = \underline{l}(i)$ and  $\eta_i = \underline{\eta}(i)$.

Then, we have an inclusion
\[
\pi \in \Pi^{A}_{\psi_{z,\gotM'}}\;,
\]
where $\gotM'\in \mathcal{T}^{s_G}(N_G)$ is the table given by
\[
\gotM' = \{(a_i,b_i)\}_{i\in I(\gotM)\setminus \{k,k+1\}} \cup \{ (a_{\ast},b_{\ast}), (a_{\ast\ast},b_{\ast\ast})\}\;,
\]
if added values are non-zero, where
\[
a_{\ast} = \frac{a_k+a_{k+1}}2  + \frac{b_{k+1}-b_k}2, \qquad b_{\ast} = \frac{b_k+b_{k+1}}2  + \frac{a_{k+1}-a_k}2\;,
\]
\[
a_{\ast\ast} = \frac{a_k+a_{k+1}}2  - \frac{b_{k+1}-b_k}2, \qquad b_{\ast\ast} = \frac{b_k+b_{k+1}}2  - \frac{a_{k+1}-a_k}2\;.
\]

\end{proposition}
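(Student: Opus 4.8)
The statement is a reformulation, in the notation set up above, of Atobe's description of $A$-packet intersections \cite[Theorem 5.2, Corollary 5.3]{atobe-crelle1}, and the plan is to recover it from the explicit M\oe glin construction. First I would recall that, for a fixed admissible order $\prec$, the representation $\pi_{\prec}(\psi,\underline{l},\underline{\eta})$ is built row by row as an iterated socle of parabolically induced representations: the datum $(l_i,\eta_i)$ on the row $(a_i,b_i)$ contributes a chain of $l_i$ ``balanced'' (Speh-type) segments together with a sign $\eta_i$ carried by the remaining $b_i-2l_i$ ``unbalanced'' coordinates, and the order $\prec$ only dictates the sequence in which the socles are taken. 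Since passing to a different admissible order alters $(\underline{l},\underline{\eta})$ only by an explicit relabelling and never the isomorphism class of $\pi$, I may move the indices $k$ and $k+1$ so as to be adjacent and outermost in the order, reducing everything to a rank-two computation involving only the rows $(a_k,b_k)$, $(a_{k+1},b_{k+1})$ and the partial representation already constructed from the remaining rows.

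Next, for each of the three cases I would read the hypotheses --- the relation $a_{k+1}-a_k=\pm(b_k-2l_k)\pm(b_{k+1}-2l_{k+1})$, the inequalities controlling $l_{k+1}\pm l_k$, and the parity constraint on $\eta_{k+1}/\eta_k$ --- as exactly the configurations in which the two M\oe glin segments attached to rows $k$ and $k+1$ can be ``re-cut'' along a different diagonal. Here one checks the identities $a_{\ast}+b_{\ast}=\alpha_{k+1}$, $a_{\ast}-b_{\ast}=\beta_k$, $a_{\ast\ast}+b_{\ast\ast}=\alpha_k$, $a_{\ast\ast}-b_{\ast\ast}=\beta_{k+1}$, so that $\{\alpha_{\ast},\alpha_{\ast\ast}\}=\{\alpha_k,\alpha_{k+1}\}$ and $\{\beta_{\ast},\beta_{\ast\ast}\}=\{\beta_k,\beta_{k+1}\}$ as multisets; in particular $|\gotM'|=|\gotM|=N_G$, and $\gotM'\in\mathcal{T}^{s_G}(N_G)$. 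I would then write down the candidate M\oe glin data $(\underline{l}',\underline{\eta}')$ on $\gotM'$, agreeing with $(\underline{l},\underline{\eta})$ away from the two affected rows, and verify by a direct Jacquet-module computation that $\pi_{\prec'}(\psi_{z,\gotM'},\underline{l}',\underline{\eta}')$ has the same cuspidal support, the same cuspidal-string data, and the same sign invariants as $\pi$; by injectivity of $M_{\prec'}$ on $\Pi^{A}_{\psi_{z,\gotM'}}$ (Proposition \ref{prop:moegarth-formula} together with the injectivity of M\oe glin's map) this forces the two representations to coincide, giving $\pi\in\Pi^{A}_{\psi_{z,\gotM'}}$.

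The main obstacle is precisely this last verification: the interplay between the $l$-part, which supplies balanced segments that re-cut cleanly, and the $\eta$-part, whose signs are governed by the stated parity conditions, must be tracked carefully, and one must confirm that the induced representation occurring at the point of re-cutting is irreducible, so that the socle operations entering the two descriptions match on the nose. This case analysis is carried out in full in \cite[\S 5]{atobe-crelle1}; rather than reproduce it we invoke \cite[Theorem 5.2 and Corollary 5.3]{atobe-crelle1}, the present proposition serving only to record the three relevant moves in the normalization used in the sequel.
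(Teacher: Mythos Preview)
Your proposal is correct and aligns with the paper's treatment: the paper does not supply its own proof of this proposition but simply records it as a citation of \cite[Theorem 5.2 and Corollary 5.3]{atobe-crelle1}, exactly as you ultimately do. Your additional exposition (the $\alpha,\beta$ identities and the sketch of the re-cutting mechanism) is accurate and helpful context, but goes beyond what the paper itself provides.
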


\begin{corollary}\label{prop:ui-cor}

Let $\gotM, z, \psi, \prec$ be as in Proposition \ref{prop:ui}, and suppose that $t>k \in \widetilde{I(\gotM^{gp})}$ is such that $\beta_k\geq -1$ and $\alpha_k = \beta_{k+1}$ hold.

Suppose that $(\underline{l},\underline{\eta})\in \mathcal{W}(\gotM)$ is a Moeglin parameter that satisfies
\[
\left\{ \begin{array}{lll}   
l_{k+1} = l_k & = &0 \\
\eta_{k+1} &=&  (-1)^{b_k} \eta_k 
\end{array}  \right.\;.
\]

Let
\[
\gotM' = \{(a_i,b_i)\}_{i\in I(\gotM)\setminus \{k,k+1\}} \cup \{ (a_\ast,b_\ast)\}\in \mathcal{T}^{s_G}(N_G)\;,
\]
where $a_\ast = \frac12(\beta_k + \alpha_{k+1})$, $b_\ast =  b_k+ b_{k+1}$, be a table, constructed out of $\gotM$ and $k$.

Let $(\underline{l}',\underline{\eta}')\in \mathcal{W}(\gotM')$ be the Moeglin parameter constructed out of $(\underline{l},\underline{\eta})$ by setting
\[
\underline{l}'(\ast) = 0,\quad \underline{\eta}'(\ast)= \eta_k\;,
\]
while retaining equalities $\underline{l}'=\underline{l}, \underline{\eta}' = \underline{\eta}$ on $I(\gotM^{gp})\setminus \{k,k+1\}$.

Let $\prec'$ be the admissible order on $\gotM'$ that is constructed out of $\prec$ by replacing the relative position of $\{k,  k+1\}$ with the index $\ast$.

Then, the parameter $(\underline{l},\underline{\eta})$ is in the image of $M_{\prec}$, for the $A$-parameter $\psi_{z,\gotM}$, if and only if, the parameter $(\underline{l}',\underline{\eta}')$ is in the image of $M_{\prec'}$, for the $A$-parameter $\psi_{z,\gotM'}$.

In case the equivalent conditions hold, we have
\[
\pi_{\prec}(\psi_{z,\gotM}, \underline{l}, \underline{\eta}) \cong \pi_{\prec'}(\psi_{z,\gotM'}, \underline{l}', \underline{\eta}')\in \Pi^{A}_{\psi_{z,\gotM}}\cap \Pi^{A}_{\psi_{z,\gotM'}}\;.
\]

\end{corollary}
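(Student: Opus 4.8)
The plan is to recognize Corollary \ref{prop:ui-cor} as a degenerate instance of Proposition \ref{prop:ui}, namely the one in which one of the two output rows of the table collapses to zero width.

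\textbf{Matching with Proposition \ref{prop:ui}, case \ref{case3}.} First I would translate the hypotheses. The identity $\alpha_k=\beta_{k+1}$ reads $a_{k+1}-a_k=b_k+b_{k+1}$, and substituting $l_k=l_{k+1}=0$ one checks that the four conditions of case \ref{case3} hold verbatim: the first becomes $a_{k+1}-a_k=b_k+b_{k+1}$; the two inequalities $l_{k+1}+l_k<b_k$ and $l_{k+1}+l_k<b_{k+1}$ become $0<b_k$ and $0<b_{k+1}$, true for every row (including the convention $b_0=1$ when $k=0$); and the last is exactly $\eta_{k+1}=(-1)^{b_k}\eta_k$. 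Feeding $a_{k+1}-a_k=b_k+b_{k+1}$ into the formulas of Proposition \ref{prop:ui} yields $b_{\ast\ast}=0$, so the row $(a_{\ast\ast},b_{\ast\ast})$ is discarded, while $b_\ast=b_k+b_{k+1}$ and $a_\ast=\frac12(a_k+a_{k+1})+\frac12(b_{k+1}-b_k)=\frac12(\beta_k+\alpha_{k+1})$; hence the table produced by Proposition \ref{prop:ui} is precisely the $\gotM'$ of the present statement. I would also record the two identities $\beta_\ast=\beta_k$ and $\alpha_\ast=\alpha_{k+1}$, which are immediate from the same substitution, and use them to verify that the order $\prec'$, obtained from $\prec$ by installing $\ast$ in the slot vacated by $\{k,k+1\}$, is admissible on $\gotM'$: when $\ast$ is run against an arbitrary index $j$ on either side, every clause of the admissibility condition for $\prec'$ reduces to the corresponding clause for $\prec$ involving $k$ (via $\beta_\ast=\beta_k$) or $k+1$ (via $\alpha_\ast=\alpha_{k+1}$).

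\textbf{From membership to the bijection.} Granting the above, Proposition \ref{prop:ui} gives $\pi_\prec(\psi_{z,\gotM},\underline{l},\underline{\eta})\in\Pi^A_{\psi_{z,\gotM'}}$ whenever the left-hand side is defined, that is, whenever $(\underline{l},\underline{\eta})$ lies in the image of $M_\prec$. To identify the Moeglin parameter of this representation relative to $(\psi_{z,\gotM'},\prec')$ as $(\underline{l}',\underline{\eta}')$, and to obtain the converse, I would invoke the full content of \cite[Theorem 5.2 and Corollary 5.3]{atobe-crelle1}, of which Proposition \ref{prop:ui} records only the membership conclusion. That result exhibits the passage from $\gotM$ to $\gotM'$ as one of Atobe's elementary operations, equipped with an explicit rule transporting Moeglin parameters; in the present degenerate situation the rule sends a parameter of $\mathcal{W}(\gotM)$ with $l_k=l_{k+1}=0$ and $\eta_{k+1}=(-1)^{b_k}\eta_k$ to the parameter of $\mathcal{W}(\gotM')$ with $l'_\ast=0$, $\eta'_\ast=\eta_k$ and all other entries unchanged, and this is a bijection onto the parameters of $\mathcal{W}(\gotM')$ with $l'_\ast=0$ (note $l'_\ast=0<b_\ast/2$, so $\ast\in R_{\underline{l}'}$ and $\eta'_\ast$ is genuinely defined). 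Because Atobe's description is symmetric — a single elementary step linking two tuples witnesses an isomorphism of the associated representations in both directions and simultaneously matches the loci of genuine representations on the two sides — one concludes that $(\underline{l},\underline{\eta})$ lies in the image of $M_\prec$ if and only if $(\underline{l}',\underline{\eta}')$ lies in the image of $M_{\prec'}$, and, when they do, the two tuples name the same representation, which is the asserted isomorphism.

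\textbf{Main obstacle.} The real work is concentrated in this last step: Proposition \ref{prop:ui} by itself supplies only one inclusion and says nothing about the output Moeglin parameter or about reversibility, so the proof hinges on extracting the parameter-tracking rule and the two-sidedness from Atobe's construction and on checking that the degenerate case is covered there — in particular the deletion of the row with $b_{\ast\ast}=0$ and the edge case $k=0$ with the convention $(a_0,b_0)=(0,1)$. The remaining ingredients — the substitutions above and the admissibility verification for $\prec'$ — are routine bookkeeping.
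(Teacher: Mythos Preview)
Your proposal is correct and follows essentially the same approach as the paper: both recognize the statement as the degenerate instance of case \eqref{case3} of Proposition \ref{prop:ui} in which $b_{\ast\ast}=0$, and both defer to \cite[Theorem 5.2]{atobe-crelle1} for the explicit Moeglin parameter $(\underline{l}',\underline{\eta}')$ on the $\gotM'$ side and for the bijectivity. Your write-up is considerably more detailed than the paper's one-sentence proof, spelling out the substitutions, the collapse of the $(a_{\ast\ast},b_{\ast\ast})$ row, and the admissibility of $\prec'$, but the underlying argument is the same.
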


\begin{proof}
Assuming that $\pi_{\prec}(\psi_{z,\gotM}, \underline{l}, \underline{\eta})$ is well-defined, this is a particular case of Proposition \ref{prop:ui}\eqref{case3}, where $\prec', \underline{l}', \eta'$ are stated in \cite[Theorem 5.2]{atobe-crelle1}.

\end{proof}

We say that a table $\gotM\in \mathcal{T}^s$ is \textit{non-negative}, if for all $i\in I(\gotM^{gp})$, we have $\beta_i\geq-1$.

\begin{remark}
    Our definition for non-negativity of $A$-parameters notably differs from that of \cite{atobe-crelle1} and other sources, where the condition $\beta_i\geq0$ is taken.
\end{remark}

\begin{proposition}\label{prop:phantom}
Let $\prec$ be an admissible order on a table $\gotM\in \mathcal{T}^{s_G}(N_G)$. We assume $1\in I(\gotM^{gp})$ to be the minimal index with respect to $\prec$.

Let $\psi = \psi_{z,\gotM}\in \Psi_0(G)$ be the associated integral $A$-parameter. 

Let
\[
\pi = \pi_{\prec}(\psi, \underline{l},\underline{\eta})\in \Pi^{A}_{\psi}
\]
be a given representation.

Suppose that we have $\beta_1 \in \{-1,0,1\}$, $d:=\min \{ a_1, b_1\}>1$ and that the conditions
\[
\left\{  \begin{array}{ll}
 \underline{l}(1) = 0 & \mbox{ if } \beta_1=0  \\
 \underline{l}(1) = 0,\; \underline{\eta}(1) = -1 & \mbox{ if } \beta_1 =1 \\
\underline{l}(1) = 1,\; \underline{\eta}(1) = -1  & \mbox{ if } \beta_1= -1 
\end{array}
\right.
\]
hold.

Then, for any $1\leq c < d$, we have an inclusion $\pi \in \Pi^{A}_{\psi_{z,\gotM_c}}$, where $\gotM_c\in \mathcal{T}^{s_G}(N_G)$ is the (not non-negative) table defined as
\[
\gotM_c = \left\{  
\begin{array}{ll}
\{(a_i,b_i)\}_{i\in I(\gotM)\setminus \{1\}} \cup \{ (c,c), (d-c,d+c)\} & \mbox{ if }\beta_1=0 \\
\{(a_i,b_i)\}_{i\in I(\gotM)\setminus \{1\}} \cup \{ (c+1,c), (d-c-1,d+c)\} & \mbox{ if }\beta_1 = 1 \\
\{(a_i,b_i)\}_{i\in I(\gotM)\setminus \{1\}} \cup \{ (c,c+1), (d-c-1,d+c)\} & \mbox{ if } \beta_1 = -1 
\end{array}
\right.\;.
\]

\end{proposition}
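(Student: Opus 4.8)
The goal is to produce, for each integer $c$ with $1\le c<d$, an admissible order $\prec_c$ on $\gotM_c$ together with a Moeglin parameter $(\underline{l}_c,\underline{\eta}_c)\in\mathcal{W}(\gotM_c)$ for which the representation $\pi_{\prec_c}(\psi_{z,\gotM_c},\underline{l}_c,\underline{\eta}_c)$ is defined and isomorphic to $\pi$; the desired inclusion $\pi\in\Pi^A_{\psi_{z,\gotM_c}}$ then follows immediately. I would start from the parity remark that a block $(a_1,b_1)\in\gotM^{gp}$ forces $\beta_1=a_1-b_1$ to have the parity of $1-s_G$, so that the hypothesis $\beta_1\in\{-1,0,1\}$ separates into two regimes: $\beta_1=0$ when $s_G=1$, and $\beta_1\in\{\pm1\}$ when $s_G=-1$.

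The order $\prec_c$ is built by keeping the $\prec$-order of $I(\gotM^{gp})\setminus\{1\}$ and inserting the two new blocks of $\gotM_c$ at the bottom, the one of more negative $\beta$ first. Its admissibility follows from that of $\prec$: since $1$ is $\prec$-minimal we have $\beta_i\ge\beta_1$ for every $i$, hence $\beta_i\ge0$ for $i\ne1$, while the $\alpha$- and sign-conditions involve only the blocks $i\ne1$, which are untouched. For the parameter I would put $\underline{l}_c=\underline{l}$ and $\underline{\eta}_c=\underline{\eta}$ on $I(\gotM^{gp})\setminus\{1\}$, set $\underline{l}_c\equiv0$ on both new blocks, and assign to the new block(s) carrying an ``extra'' segment the sign forced by $\underline{\eta}(1)$ (thus $-1$ in the cases $\beta_1=\pm1$).

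The core of the proof is a multiset identity between the $GL$-segments that Moeglin's construction, recalled in Section~\ref{sect:moeg-param}, attaches to $(\gotM,\prec,\underline{l},\underline{\eta})$ and to $(\gotM_c,\prec_c,\underline{l}_c,\underline{\eta}_c)$. Unwinding that construction, $\pi_{\prec}(\psi,\underline{l},\underline{\eta})$ is obtained by parabolically inducing, block by block over $i\in I(\gotM^{gp})$, a string of segments determined by $\underline{l}(i)$ and $\underline{\eta}(i)$ — roughly $\underline{l}(i)$ pairs of linked segments together with, when $\underline{l}(i)<b_i/2$, one further segment of length $a_i$ and sign $\underline{\eta}(i)$ — off a fixed representation of a smaller classical group that is unchanged by our modification. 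Under the stated hypotheses the minimal block $(a_1,b_1)$ contributes, for $\beta_1\in\{0,1\}$, exactly one segment of length $d$ (because $\underline{l}(1)=0$), and for $\beta_1=-1$ one pair of linked segments together with one segment (because $\underline{l}(1)=1$); in $\gotM_c$ the two new blocks, under $\underline{l}_c\equiv0$ and the chosen signs, contribute two segments whose supports are precisely the two halves, cut at the $c$-th place, of the original length-$d$ segment (together with the same linked pair when $\beta_1=-1$). As every datum attached to the blocks $i\ne1$, the auxiliary classical-group representation, and the infinitesimal character are literally unchanged, Moeglin's recipe returns the same irreducible representation, so $\pi_{\prec_c}(\psi_{z,\gotM_c},\underline{l}_c,\underline{\eta}_c)\cong\pi$.

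The crux — and the reason this is not a formal consequence of Proposition~\ref{prop:ui} or Corollary~\ref{prop:ui-cor} — is that $\gotM_c$ fails to be non-negative: its second new block has $\beta\le-2$, so the merge relations of those results, whose hypothesis $\beta_k\ge-1$ is violated exactly at the block one would manipulate, are unavailable in the direction we need. Two points must therefore be verified by hand. First, that Moeglin's parameterization $M_{\prec_c}$ and its non-vanishing criterion remain valid for the non-negativity-violating table $\gotM_c$, and that the particular $(\underline{l}_c,\underline{\eta}_c)$ above lies in the image of $M_{\prec_c}$ rather than its vanishing locus; this is precisely where the rigid hypotheses $\underline{l}(1)=0$ (for $\beta_1\in\{0,1\}$) and $\underline{l}(1)=1$, $\underline{\eta}(1)=-1$ (for $\beta_1=-1$) are consumed, since any other choice degenerates the relevant standard module. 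Second, that the Moeglin--Arthur sign formula of Proposition~\ref{prop:moegarth-formula}, read with respect to $\prec_c$ on the enlarged table, identifies the named constituent as $\pi$ itself and not another member of $\Pi^A_{\psi_{z,\gotM_c}}$; this is a bookkeeping check, and can alternatively be arranged as an induction that peels off one $GL$-segment at a time.
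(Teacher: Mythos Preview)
The paper's own proof is a single sentence: it cites the proposition as an explication of Atobe's ``operation (P)'' from \cite[Theorem 3.5]{atobe-crelle2}, with no independent argument given.

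Your proposal is more ambitious in that you try to reconstruct the mechanism directly, but there is a genuine gap. You correctly identify the crux --- $\gotM_c$ is not non-negative, so the usual Moeglin machinery of Section~\ref{sect:moeg-param} does not apply out of the box --- and you then assert that ``Moeglin's parameterization $M_{\prec_c}$ and its non-vanishing criterion remain valid'' for $\gotM_c$ and that the chosen $(\underline{l}_c,\underline{\eta}_c)$ lies in its image. But you do not verify this; you only say it ``must be verified by hand.'' This is not a bookkeeping detail: the very existence and well-definedness of Moeglin's parameterization for parameters with a block of $\beta\le-2$ is exactly the substance of Atobe's operation (P), and it is established there through a careful analysis of derivatives and socles, not by a segment-counting identity. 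Your description of Moeglin's recipe as attaching ``roughly $\underline{l}(i)$ pairs of linked segments together with one further segment of length $a_i$'' is also too coarse to support the claimed multiset identity; the actual construction (via iterated socles of parabolic inductions, tracked through admissible-order deformations) is more delicate, and Section~\ref{sect:moeg-param} of the paper treats it as a black box rather than spelling it out.

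In short, your strategy --- build explicit Moeglin data on $\gotM_c$ and show it returns $\pi$ --- is the right shape, and is indeed what underlies Atobe's result; but the step that carries all the weight is precisely the one you leave as an unproved assertion. Absent an independent argument for it, the proof reduces to the paper's citation.
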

\begin{proof}
    This is an explication of cases of 'operation (P)' in \cite[Theorem 3.5]{atobe-crelle2}.
\end{proof}

The following proposition fully characterizes the possible intersections of an $A$-packet arising from $\psi_{z,\gotM}$ for a non-negative table $\gotM\in \mathcal{T}^{s_G}(N_G)$ with any another $A$-packet.

\begin{proposition}\label{prop:ui-exhaust}
\cite[cases of Theorem 3.5]{atobe-crelle2}
    
Suppose that a representation $\pi \in \Pi_{\psi} \cap \Pi_{\widehat{\psi}}$ belongs to an intersection of $A$-packets, for two distinct $A$-parameters $\psi,\widehat{\psi}\in \Psi_0(G)$.

We write $\psi= \psi_{z,\gotM}$ and $\widehat{\psi}= \psi_{z,\widehat{\gotM}}$ for tables $\gotM, \widehat{\gotM}\in \mathcal{T}^{s_G}(N_G)$, and $z\in \ZZ$.

\begin{enumerate}
\item\label{it:ui-exhaust1}
Suppose that $\gotM$ and $\widehat{\gotM}$ are both non-negative.

Then, a sequence $\gotM = \gotM_1, \gotM_2, \ldots, \gotM_r = \widehat{\gotM}$ of non-negative tables in $\mathcal{T}^{s_G}(N_G)$ exists, so that inclusions
\[
\pi\in \Pi^{A}_{\psi_{z,\gotM_i}}\,,\quad i=1,\ldots,r
\]
hold, and so that for each $1\leq i < r$ we can write $\{\gotM_i,\gotM_{i+1}\} = \{\overline{\gotM}_i,\overline{\gotM}'_i\}$ and find an admissible order $\prec_i$ on $\overline{\gotM}_i$, under which the inclusion $\pi\in \Pi^{A}_{\psi_{z,\overline{\gotM}'_i}}$ is obtained from the parameter $M_{\prec_i}(\pi)$ by one of the moves described in Proposition \ref{prop:ui}.

\item\label{it:ui-exhaust2}
Suppose that $\gotM$ is non-negative, while $\widehat{\gotM}$ is not non-negative.

Then, there exist a non-negative table $\gotM'\in  \mathcal{T}^{s_G}(N_G)$ with $\pi \in \Pi_{\psi_{z,\gotM'}}$, an admissible order $\prec'$ on $\gotM'$, and a (not non-negative) table $\widehat{\gotM}'\in  \mathcal{T}^{s_G}(N_G)$ which is obtained from $M_{\prec'}(\pi)$ by a move described in Proposition \ref{prop:phantom}.

\end{enumerate}
\end{proposition}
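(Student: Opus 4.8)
The proof will be an explication, in the combinatorial language of tables and Moeglin parameters developed in Sections~\ref{sect:Aexplicit}--\ref{sect:moeg-param}, of Atobe's classification of local Arthur packet intersections in \cite[Theorem 3.5]{atobe-crelle2}. That theorem asserts that whenever a representation $\pi$ lies in $\Pi^A_{\psi}\cap\Pi^A_{\widehat\psi}$ for two $A$-parameters of a common infinitesimal character --- hence here necessarily of the form $\psi_{z,\gotM}$ and $\psi_{z,\widehat\gotM}$ for a common central $z$ and tables $\gotM,\widehat\gotM\in\mathcal{T}^{s_G}(N_G)$ --- the two parameters are joined by a finite chain of elementary operations on the extended multisegment recording $\pi$, each step keeping $\pi$ in the corresponding $A$-packet. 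The entire content of the present proposition is to match these elementary operations, in the integral case at hand, with the moves catalogued in Propositions~\ref{prop:ui} and~\ref{prop:phantom}, and then to organise such a chain relative to our non-negativity convention.

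First I would fix the translation dictionary. By Proposition~\ref{prop:integralA}\eqref{it:intA3}, our normalisation $\psi_{z,\gotM}$ is the Aubert dual of the tempered-type normalisation used by Xu and Atobe; under Moeglin's embedding $M_{\prec}$ a representation $\pi\in\Pi^A_{\psi_{z,\gotM}}$ is recorded by a parameter $(\underline l,\underline\eta)\in\mathcal{W}(\gotM)$, which is exactly the good-parity extended-multisegment datum of \cite{xu17,atobe-crelle1}, with the Arthur character read off it by Proposition~\ref{prop:moegarth-formula} agreeing with theirs. Under this dictionary, the operations of \cite[Theorem 3.5]{atobe-crelle2} that move between parameters remaining \emph{non-negative} --- where, as in the statement, non-negativity is the relaxed condition $\beta_i\geq-1$, accommodated in Moeglin's combinatorics by the auxiliary index $0$ with $(a_0,b_0)=(0,1)$ --- are precisely the three moves~\eqref{case1}--\eqref{case3} of Proposition~\ref{prop:ui}, whereas the operation that first produces a block with $\beta\leq-2$ from a non-negative parameter is Atobe's operation~(P), as spelled out in Proposition~\ref{prop:phantom}.

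For part~\eqref{it:ui-exhaust1}, given non-negative $\gotM$ and $\widehat\gotM$ with $\pi$ in both packets, I would invoke \cite[Theorem 3.5]{atobe-crelle2} to obtain a chain $\psi_{z,\gotM}=\psi_{z,\gotM_1},\dots,\psi_{z,\gotM_r}=\psi_{z,\widehat\gotM}$ with $\pi\in\Pi^A_{\psi_{z,\gotM_i}}$ throughout, and then arrange for it to stay in the non-negative locus by discarding excursions: a maximal sub-chain visiting tables with some $\beta\leq-2$ is produced from, and later returns to, non-negative tables through mutually inverse applications of operation~(P), and can therefore be replaced by a single direct segment without affecting membership of $\pi$. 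Each remaining step is then one of the moves of Proposition~\ref{prop:ui}, applied to $M_{\prec_i}(\pi)$ for a suitable admissible order $\prec_i$ and possibly read in the reverse direction --- which is exactly the symmetric formulation $\{\gotM_i,\gotM_{i+1}\}=\{\overline{\gotM}_i,\overline{\gotM}'_i\}$ in the statement.

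For part~\eqref{it:ui-exhaust2}, with $\gotM$ non-negative but $\widehat\gotM$ not, any such chain from $\gotM$ to $\widehat\gotM$ begins inside the non-negative locus and ends outside it, so at some step a non-negative table $\gotM'$ is immediately followed by a not-non-negative table $\widehat\gotM'$; both then contain $\pi$, in particular $\pi\in\Pi^A_{\psi_{z,\gotM'}}$, and by the dictionary of the second paragraph the step $\gotM'\to\widehat\gotM'$ must be an operation~(P), i.e.\ a Proposition~\ref{prop:phantom} move applied to $M_{\prec'}(\pi)$ for a suitable admissible order $\prec'$ on $\gotM'$. I expect the main obstacle to be exactly this bookkeeping across the two non-negativity conventions: one must check that Atobe's operations, his classification, and the Moeglin parameterisation all behave uniformly at blocks with $\beta_i=-1$ --- the orthogonal good-parity blocks $(a,a+1)$ tracked by the index $0$ --- and that the first exit from our non-negative locus is always realised by a single operation~(P) rather than a composite. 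This amounts to careful cross-referencing with \cite{xu17,xu21,atobe-crelle1,atobe-crelle2} and introduces no new ideas.
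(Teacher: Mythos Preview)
The paper does not supply its own proof of this proposition: it is stated with the citation \cite[cases of Theorem 3.5]{atobe-crelle2} and used as a black box imported from Atobe's classification. Your proposal is therefore not competing with an argument in the paper but rather attempting to fill in what the authors deliberately outsourced.

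Your overall strategy --- translate Atobe's extended-multisegment operations into the table language, identify the non-(P) operations with the three cases of Proposition~\ref{prop:ui} and operation~(P) with Proposition~\ref{prop:phantom}, then organise the resulting chain relative to non-negativity --- is the right shape. One step deserves more care: in part~\eqref{it:ui-exhaust1} you assert that a maximal excursion outside the non-negative locus ``can therefore be replaced by a single direct segment without affecting membership of $\pi$.'' This is not obvious as stated. The entry and exit moves of such an excursion are both instances of operation~(P), but they need not be mutually inverse, and the two non-negative tables bounding the excursion need not be related by any single move from Proposition~\ref{prop:ui}. What one actually needs is either (a) that Atobe's algorithm, when run between two non-negative endpoints, can already be arranged never to leave the non-negative locus --- which is closer to how \cite{atobe-crelle2} is structured, with (P) invoked only when one genuinely needs to reach a negative parameter --- or (b) a separate argument that any (P)-excursion can be simulated by a sequence of ui-type moves within the non-negative range. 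Your sketch gestures at (a) but does not establish it; a careful proof would trace through Atobe's case analysis to confirm this, which is indeed ``bookkeeping'' but not entirely content-free given the shifted convention $\beta_i\geq-1$ flagged in the paper's remark.
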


\subsection{Near-tempered $A$-parameters}\label{sect:near-temp}

We say that a table $\gotM\in \mathcal{T}^{s}$ is \textit{near-tempered}, if $b_i\in \{1,2\}$, for all $i\in I(\gotM^{gp})$.

Note, that for a near-tempered table $\gotM$, $b_i$ is determined by $a_i$, for all $i\in I(\gotM^{gp})$, due to the parity condition. Near-tempered tables are non-negative.

We say that an integral $A$-parameter $\psi_{z,\gotM}\in \Psi_0(G)$ is \textit{near-tempered}, when the table $\gotM$ is near-tempered.

\subsubsection{Special pieces in Arthur theory}

Our interest in near-tempered $A$-parameters arises naturally through the proof process of Proposition \ref{prop:intro}.

Given a partition $\lambda\in \mathcal{P}^{s_G}(N_G)$ and a subset $J\subset \mathbb{J}(\lambda)$, we define the near-tempered table
\[
\gotM_{\lambda,J} = i\left(\lambda\setminus  \cup_{c\in J} (c-1\;c+1)\right) \cup \{(c,2)\}_{c\in J}\in \mathcal{T}^{s_G}(N_G)\;.
\]

It is evident that 
\begin{equation}\label{eq:ntpformula}
p(\gotM_{\lambda,J}) = T_J(\lambda)
\end{equation}
holds.

\begin{example}
Taking the partition 
\[
\lambda = (1\,3^2\, 4^2\, 5^3\, 7^4\, 9^2\, 10^2\, 11) = (1\, 3^2\, 5^3\,7^4\,9^2\, 11)\cup (4\, 10) \cup (4\, 10) \in \mathcal{P}^{1}(107)\;,
\]
we see that $\mathbb{J}(\lambda) = \{6,8,10\}$.

Picking $J = \{6,10\}$, would give the $A$-packet
\[
\begin{array}{rcl} \psi_{1,\gotM_{\lambda,J}} & = & 1\otimes \nu_1\otimes \nu_1 + 1\otimes \nu_7\otimes \nu_1 + 1\otimes \nu_9\otimes \nu_1  \\
& &+  (q^{1/2}+ q^{-1/2}) \otimes \nu_6\otimes \nu_2 + (q^{1/2}+ q^{-1/2}) \otimes \nu_{10}\otimes \nu_2 \\

&  &+ 2\left(   1\otimes \nu_3\otimes \nu_1 + 1\otimes \nu_4\otimes \nu_1 + 1\otimes \nu_5\otimes \nu_1 + 1\otimes \nu_{10}\otimes \nu_1\right)
 \end{array}
 \in \Psi_{0}(\mathrm{Sp}_{106})\;.
\]

\end{example}

Now, suppose that partitions $\lambda,\mu\in \mathcal{P}^{s_G}(N_G)$ are given so that $\mu\in \spc (\lambda)$. In other words, there is a unique subset $J\subset \mathbb{J}(\lambda)$ with $\mu = T_J(\lambda)$.

For $z\in Z_{\mathbf{G^\vee}}$, we define the $L$-parameter
\[
\phi_{z,\lambda,\mu}: = \phi_{\psi_{z,\gotM_{\lambda,J}}}\in \Phi_u(G)\;.
\]

The following lemma now gives the existence part of Proposition \ref{prop:intro}.

\begin{lemma}\label{lem:half-intro}
Let $\phi =\phi_{z, \lambda,\mu}\in \Phi_u(G)$ be the $L$-parameter defined by partitions $\lambda,\mu\in \mathcal{P}^{s_G}(N_G)$ with $\mu\in \spc (\lambda)$, and $z\in \ZZ$.

Then, both identities
\[
\mathcal{O}^\vee_{\phi} = \mathcal{O}^\vee_{\mu}\in \mathcal{U}^\vee\,,\quad \chi_{\phi} = \chi_{z, \mathcal{O}^\vee_{\lambda}}\in \Lambda_u(G)
\]
hold.

\end{lemma}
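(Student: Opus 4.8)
The plan is to unwind both identities directly from the definitions, using the explicit formulas for $\phi_{\psi}$ in terms of tables. First I would recall that $\phi_{z,\lambda,\mu} = \phi_{\psi_{z,\gotM_{\lambda,J}}}$ where $J\subset\mathbb{J}(\lambda)$ is the unique subset with $\mu = T_J(\lambda)$, and that by construction $\gotM_{\lambda,J} = i(\lambda\setminus\cup_{c\in J}(c-1\;c+1))\cup\{(c,2)\}_{c\in J}$. For the first identity, I would invoke Proposition \ref{prop:integralA}\eqref{it:intA4}, which states $\lambda(\phi_{\psi}) = \lambda_{\gotM}$ for any $\psi = \psi_{z,\gotM}\in\Psi_0(G)$. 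Thus $\lambda(\phi) = \lambda_{\gotM_{\lambda,J}} = p(\gotM_{\lambda,J})$, and by \eqref{eq:ntpformula} this equals $T_J(\lambda) = \mu$. Since $\mathcal{O}^\vee_{\phi} = \mathcal{O}^\vee_{\lambda(\phi)}$ by definition, we get $\mathcal{O}^\vee_{\phi} = \mathcal{O}^\vee_{\mu}$.

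For the infinitesimal character identity, the key observation is that $\chi_\phi$ depends only on the multiset of eigenvalues of $\phi(r_F(\frb))$, which by Remark \ref{rem:inf-ch-comp} and \eqref{eq:phipsi} can be computed blockwise from the table. Concretely, a summand $z\otimes\nu_a\otimes\nu_b$ of $\psi_{z,\gotM_{\lambda,J}}$ contributes, via $\phi_\psi = \sum_{i=0}^{b-1}zq^{i-(b-1)/2}\otimes\nu_a$, the eigenvalues $\{zq^j : |j|\le (a-1)/2 + (b-1)/2,\ j\equiv (a-1)/2+(b-1)/2 \bmod 1\}$ counted appropriately — equivalently, the eigenvalue multiset of $z\otimes\nu_a\otimes\nu_b$ under $r'_F\circ r_F$ matches that of the ``rectangle'' $a\times b$. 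The whole point is then to check that the rectangle $a\times b$ with $b\in\{1,2\}$, when $b=2$ corresponding to a pair $(c-1\;c+1)\subset\lambda$ being replaced by $(c,2)$, produces exactly the same eigenvalue multiset as the two parts $c-1,c+1$ of $\lambda$ would produce as $(\nu_{c-1}\oplus\nu_{c+1})\otimes z$ under $r_F$ alone: indeed $c\times 2$ gives eigenvalues $zq^{j}$ for $j\in\{\pm(c-2)/2, \pm(c-4)/2,\ldots\}$ of the two columns, i.e. the union of the $\nu_{c-1}$-ladder and the $\nu_{c+1}$-ladder. Hence replacing parts $(c-1,c+1)$ of $\lambda$ by a $(c,2)$-block does not change $\chi$, so $\chi_\phi = \chi_{\phi_{(z,\ldots,z),\lambda}} = \chi_{z,\mathcal{O}^\vee_\lambda}$, the last equality by Proposition \ref{prop:integralA}\eqref{it:intA3}.

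The only mildly delicate point — and the step I would be most careful with — is the eigenvalue bookkeeping in the previous paragraph: verifying that the $c\times 2$ rectangle and the pair of rows of lengths $c-1$ and $c+1$ yield literally the same multiset of $q$-powers (including the role of $z$, and the fact that for $c=1$ the convention $(0\;2)=(2)$ is consistent). This is a finite, elementary check that amounts to the identity of sets $\{\,\tfrac{c-2}{2} - k : 0\le k\le c-1\,\}\sqcup\{\,\tfrac{c}{2}-k : 0\le k\le c-1\,\} = \{\,\tfrac{c-2}{2}-k : 0\le k\le c-2\,\}\sqcup\{\,\tfrac{c}{2}-k : 0\le k\le c\,\}$, which one sees at once by comparing endpoints. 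Everything else is a direct citation of Proposition \ref{prop:integralA} and equation \eqref{eq:ntpformula}, so no genuine obstacle remains once this observation is recorded.
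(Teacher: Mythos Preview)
Your proposal is correct and follows essentially the same approach as the paper: the first identity via Proposition~\ref{prop:integralA}\eqref{it:intA4} and \eqref{eq:ntpformula}, and the second via reducing $\chi_\phi = \chi_{\phi_{(z,\ldots,z),\lambda}}$ to the elementary eigenvalue check that $zq^{1/2}\otimes\nu_c + zq^{-1/2}\otimes\nu_c$ and $z\otimes\nu_{c-1}+z\otimes\nu_{c+1}$ have the same infinitesimal character (the paper phrases the resulting multiset as $\{zq^{\pm c/2}\}$ with multiplicity~$1$ together with $\{zq^{(c-2)/2},\ldots,zq^{-(c-2)/2}\}$ with multiplicity~$2$, which is exactly your set identity).
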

\begin{proof}
The equality in $\mathcal{U}^\vee$ follows from Proposition \ref{prop:integralA}\eqref{it:intA4} and \eqref{eq:ntpformula}.

Setting $\lambda' = \lambda\setminus  \cup_{c\in J} (c-1\;c+1)$, we see from formula \eqref{eq:phipsi} that
\[
\phi = \phi_{(z,\ldots,z),\lambda'} + \sum_{c\in J} zq^{1/2}\otimes \nu_c + zq^{-1/2}\otimes \nu_c\;.
\]
On the other hand, straightforward identities of infinitesimal characters show that
\[
\chi_{z, \mathcal{O}^\vee_{\lambda}}= \chi_{\psi_{z, \mathcal{O}^\vee_{\lambda}}} = \chi_{\psi_{z, \gotM_\lambda}} = \chi_{\phi_{(z,\ldots,z),\lambda}}\;.
\]
Since $\phi_{(z,\ldots,z),\lambda} = \phi_{(z,\ldots,z),\lambda'} + \sum_{c\in J} z\otimes \nu_{c-1} + z\otimes \nu_{c+1}$ (taking $\nu_0$ as an $L$-parameter neutral to addition, if necessary), it suffices to show that $zq^{1/2}\otimes \nu_c + zq^{-1/2}\otimes \nu_c$ and  $z\otimes \nu_{c-1} + z\otimes \nu_{c+1}$ have equal infinitesimal characters, for $c\in J$.

Indeed, a direct computation through Remark \ref{rem:inf-ch-comp} shows that, as semisimple conjugacy classes, both elements of $\Lambda_u(G)$ consist of a matrices whose multisets of eigenvalues are described as 
\[
\{ zq^{\frac{c-2}2}, zq^{\frac{c-4}2},\ldots, zq^{-\frac{c-2}2}\}\;,
\]
all with multiplicity $2$, added with $\{zq^{\frac{c}2}, zq^{-\frac{c}2} \}$ of multiplicity $1$.

\end{proof}

\begin{theorem}\label{thm:almost-intro}
Let $\lambda\in \mathcal{P}^{s_G}(N_G)$ be a partition.

Suppose that $\phi\in \Phi(G)$ is an $L$-parameter satisfying $\chi_{\phi} = \chi_{z,\mathcal{O}^\vee_{\lambda}}$, for $z\in \ZZ$, and $d(\lambda(\phi)) = d(\lambda)$.

Then, a partition $\mu\in \spc(\lambda)$ exists, so that $\phi = \phi_{z,\lambda,\mu}$.
    
\end{theorem}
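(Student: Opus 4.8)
The plan is to first pin down the unipotent invariant $\lambda(\phi)$ as an element of $\spc(\lambda)$, and then to prove the rigidity statement that an $L$-parameter with infinitesimal character $\chi_{z,\mathcal{O}^\vee_\lambda}$ and unipotent invariant in $\spc(\lambda)$ must be of the form $\phi_{z,\lambda,\cdot}$.

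\emph{Step 1: $\lambda(\phi)\in\spc(\lambda)$.} Since $r_F$ carries $I_F$ into $I_F\times\{1\}$, the hypothesis that $\chi_\phi$ is unramified forces $\phi|_{I_F}$ trivial, i.e.\ $\phi\in\Phi_u(G)$. Using the identity $\chi_{z,\mathcal{O}^\vee_\lambda} = \chi_{\phi_{(z,\ldots,z),\lambda}}$ (established in the proof of Lemma~\ref{lem:half-intro}) together with Lemma~\ref{lem:temp-closure}, we get $\mathcal{O}^\vee_{\lambda(\phi)}\subseteq\overline{\mathcal{O}^\vee_\lambda}$. On the other hand $d(\lambda(\phi)) = d(\lambda)$ and Proposition~\ref{prop:d-prop}(2) place $\lambda(\phi)$ in $\spc(\lambda^\circ)$, where $\lambda^\circ:=T^{\mathbb{I}(\lambda)}(\lambda)$ is the special partition in the piece of $\lambda$. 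Intersecting $\{\nu:\nu\in\spc(\lambda^\circ)\}$ with $\{\nu:\mathcal{O}^\vee_\nu\in\overline{\mathcal{O}^\vee_\lambda}\}$ and invoking Proposition~\ref{prop:spc-comb} (and the identity $\spc(\mathcal{O}^\vee_\lambda) = \spc(\mathcal{O}^\vee_{\lambda^\circ})\cap\overline{\mathcal{O}^\vee_\lambda}$ from its proof) yields $\mu := \lambda(\phi)\in\spc(\lambda)$. Write $\mu = T_J(\lambda)$ for the unique $J\subseteq\mathbb{J}(\lambda)$ and put $\lambda_J := \lambda\setminus\bigcup_{c\in J}(c-1\;c+1)$, so that the target parameter is $\phi_{z,\lambda,\mu} = \phi_{(z,\ldots,z),\lambda_J} + \sum_{c\in J}\bigl(zq^{1/2}\otimes\nu_c + zq^{-1/2}\otimes\nu_c\bigr)$.

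\emph{Step 2: reduction and normalisation of the segments of $\phi$.} When $\ZZ$ is nontrivial (so $G=\mathrm{SO}_{2n+1}(F)$), twisting by $\kappa_0$ sends $\chi_{z,\mathcal{O}^\vee_\lambda}\mapsto\chi_{-z,\mathcal{O}^\vee_\lambda}$, preserves $\lambda(\phi)$, and maps $\phi_{z,\lambda,\mu}\mapsto\phi_{-z,\lambda,\mu}$, so we may assume $z=1$. Expanding $\phi$ by Proposition~\ref{prop:lambdaphi} gives
\[
\phi = \phi_{(1,\ldots,1),\alpha} + \phi_{(-1,\ldots,-1),\alpha'} + \phi_{(\zeta_1,\ldots,\zeta_k),\rho} + \phi_{(\zeta_1^{-1},\ldots,\zeta_k^{-1}),\rho},\qquad \zeta_j\ne\pm1,
\]
with $\mu = \alpha\cup\alpha'\cup\rho\cup\rho$. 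By Remark~\ref{rem:inf-ch-comp} every eigenvalue of the Satake element $s_{\chi_\phi}=s_{\chi_{1,\mathcal{O}^\vee_\lambda}}$ is a positive real power of $q^{1/2}$; the $\alpha'$-block would contribute a negative eigenvalue unless $\alpha'=\emptyset$, and each $\zeta_j\otimes\nu_{\rho_j}$ contributes $\zeta_jq^{(\rho_j-1)/2}$, forcing $\zeta_j\in q^{(1/2)\mathbb{Z}}$, hence (after swapping $\zeta_j$ with $\zeta_j^{-1}$) $\zeta_j=q^{k_j/2}$ with $k_j\ge1$.

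\emph{Step 3 (the main obstacle): combinatorial rigidity.} It remains to show that the eigenvalue multiset $\Sigma_\lambda$ of $\chi_{1,\mathcal{O}^\vee_\lambda}$ --- the multiset union over the parts $a$ of $\lambda$ of the segments $\{q^{(a-1)/2},q^{(a-3)/2},\ldots,q^{-(a-1)/2}\}$ --- can be realised as the multiset union of the length-$\alpha_i$ segments based at $1$ and the pairs $\{q^{k_j/2}\otimes\nu_{\rho_j},\,q^{-k_j/2}\otimes\nu_{\rho_j}\}$, subject to $\alpha\cup\rho\cup\rho=\mu=T_J(\lambda)$, only in the expected way: $\alpha=\lambda_J$, $\rho=\bigcup_{c\in J}(c)$, and all $k_j=1$; then $\phi=\phi_{1,\lambda,\mu}$ and we are done. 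I would argue by induction on $|\lambda|$, at each stage peeling off the segment(s) carrying the eigenvalue of $\Sigma_\lambda$ of largest exponent: such a segment has length $a$ and base $q^{k/2}$ with $k+a$ equal to the largest part in play, and the constraints that $a$ occur in $\mu$, that the peeled eigenvalues occur with the correct multiplicity in $\Sigma_\lambda$, and that $J\subseteq\mathbb{J}(\lambda)$ --- together with the parity and extremality facts recorded in Lemmas~\ref{lem:even}, \ref{lem:tail}, \ref{lem:head} and the block description of $T_J$ in Section~\ref{sect:special-piece} --- exclude $k\ge2$ and determine whether the extremal eigenvalue is carried by a single $1\otimes\nu_a$ coming from $\lambda_J$ or by a dual pair coming from some $c\in J$. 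Removing it, with the corresponding part(s) of $\lambda$ and of $\mu$, produces a strictly smaller instance of the same configuration that still satisfies $\mu\in\spc(\lambda)$, so the induction closes. The delicate point of this step is the bookkeeping: tracking the interaction between the ``good'' and ``bad'' parts of $\lambda$, which live on the two distinct cosets $q^{\mathbb{Z}}$ and $q^{\mathbb{Z}+1/2}$, and checking that the inductive hypothesis on $\spc$ is genuinely preserved after each peel.
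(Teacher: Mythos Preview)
Your overall strategy is sound and close to the paper's, but with one genuine improvement and one loose end.

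\textbf{The improvement (Step~1).} You first establish $\mu:=\lambda(\phi)\in\spc(\lambda)$ by combining Lemma~\ref{lem:temp-closure} (closure constraint) with the $d$-fibre description of special pieces (Proposition~\ref{prop:d-prop}(2), Proposition~\ref{prop:spc-comb}). The paper does \emph{not} do this: it writes $\mu=T_J(T^I(\lambda))$ with $I=\mathbb{I}(\lambda)\setminus\mathbb{I}(\mu)$ and $J=\mathbb{I}(\mu)\setminus\mathbb{I}(\lambda)$ a priori unknown, and proves $I=\emptyset$ as a byproduct of the same induction that pins down $\phi$. Your route is cleaner, since once $\mu=T_J(\lambda)$ with $J\subseteq\mathbb{J}(\lambda)$ is known, the remaining task is a pure rigidity statement and the induction no longer needs to carry the set $I$.

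\textbf{The loose end (Step~3).} The peel-off induction you outline is precisely the paper's argument: induct on the largest part $a_{\max}$ of $\lambda$, observe that eigenvalue bounds force every summand $s\otimes\nu_{a_{\max}}$ to have $s=z$, and split into the cases $a_{\max}-1\notin J$ (remove $k$ copies of $z\otimes\nu_{a_{\max}}$, adjusting by one copy of $\nu_{a_{\max}-2}$ when $k$ is odd to stay in $\mathcal{P}^{s_G}$) and $a_{\max}-1\in J$ (then $m(a_{\max}-1,\mu)$ exceeds the available eigenvalue multiplicity by $2$, forcing exactly one pair $zq^{\pm1/2}\otimes\nu_{a_{\max}-1}$; peel these together with the $k-1$ copies of $z\otimes\nu_{a_{\max}}$). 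The paper carries this out explicitly, including the verification that $J$ (and in its setup also $I$) persist inside $\mathbb{J}(\lambda')$ for the peeled partition $\lambda'$. Your sketch correctly identifies this ``bookkeeping'' as the delicate point but does not execute it. Also, Lemmas~\ref{lem:tail} and~\ref{lem:head} are about membership in the character subgroup $P^\dagger(\lambda)_0$ and play no role here; neither they nor Lemma~\ref{lem:even} are used in the paper's proof, which relies solely on direct eigenvalue-multiplicity counts via Remark~\ref{rem:inf-ch-comp}.
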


\begin{proof}

We denote the partition $\mu = \lambda(\phi)\in \mathcal{P}^{s_G}(N_G)$. 
By Proposition \ref{prop:d-prop}, $\lambda,\mu$ must share the same special piece. In other words, both partitions share the same special partition $\nu:= T^{\mathbb{I}(\lambda)}(\lambda) =T^{\mathbb{I}(\mu)}(\mu)$. 

In particular, $\mu = T_{\mathbb{I}(\mu)}( T^{\mathbb{I}(\lambda)}(\lambda))$. Setting 
\[
I = \mathbb{I}(\lambda)\setminus \mathbb{I}(\mu)\,,\quad J = \mathbb{I}(\mu)\setminus \mathbb{I}(\lambda)\subset \mathbb{J}(\lambda)\;,
\]
we may shortcut the equation to the form $\mu = T_{J}( T^{I}(\lambda))$, so that $I$ and $J$ are disjoint sets.

Let us denote the largest integer $a_{\max}\in \supp(\lambda)$, and record the multiplicity $k:= m(a_{\max}, \lambda)>0$.

We will now prove by induction on the parameter $a_{\max}$, that $I$ is empty and that $\phi = \phi_{\psi_{z,\gotM_{\lambda,J}}}$. 

Let us consider the infinitesimal character $S:= \chi_{z,\mathcal{O}^\vee_{\lambda}}= \chi_{\phi_{(z,\ldots,z),\lambda}} =  \chi_{\phi}\in \Lambda_u(G)$ as a semisimple matrix. 

An observation that follows from Remark \ref{rem:inf-ch-comp} is that for all eigenvalues $\alpha$ of $S$, $2 \log_q(z^{-1}\alpha)$ are integers with absolute value $\leq a_{\max}-1$.

Moreover, both $zq^{\pm \frac{a_{\max}-1}2}$ appear as eigenvalues of $S$ with multiplicity $k$.

We note that $a_{\max}\not\in I$ always holds. Indeed, otherwise, we should have $m(a_{\max}+1, \mu)>0$, 
which would force a summand of the form $s\otimes \nu_{a_{\max}+1}$ to appear in the $L$-parameter $\phi$, according to Proposition \ref{prop:lambdaphi}. Thus, both $sq^{\frac{a_{\max}}2}$ and $sq^{-\frac{a_{\max}}2}$ would appear as eigenvalues of $S$, contradicting the previously observed bounds.

We note that same bounds on $S$-eigenvalues force any summand of the form $s\otimes \nu_{a_{\max}}$ in $\phi$ to satisfy $s=z$. 

Thus, an occurrence $a_{\max}-1\in I$ would have implied that $m(a_{\max},\mu)= k+1$, and hence, that $z\otimes \nu_{a_{\max}}$ appears $k+1$ times as a summand of $\phi$. That would have caused a contradiction to the mulitiplicity of $zq^{\pm \frac{a_{\max}-1}2}$ as an eigenvalue of $S$. 

Hence, $a_{\max}-1\not\in I$.

Suppose first that $a_{\max}-1\not\in J$ (and $a_{\max} >2$, since otherwise we are done). 

In this case, $m(a_{\max}, \mu) = k$. Arguing as before, we see that $z\otimes \nu_{a_{\max}}$ must appear $k$ times in $\phi$.

Let us write
\[
\phi = \phi_{(z,\ldots,z),\mu_0} + \phi_{(\zeta_1,\ldots,\zeta_m), \mu_1} + \phi_{(\zeta_1^{-1},\ldots,\zeta_m^{-1}), \mu_1}
\]
in the form of Proposition \ref{prop:lambdaphi}.

We construct the partition
\[
\lambda' =
\left\{ \begin{array}{ll}  \lambda \setminus (a_{\max}^k)  & k\mbox{ is even}  \\  \lambda \setminus (a_{\max}^k) \cup (a_{\max}-2) & k\mbox{ is odd} \end{array}\right.\;,
\]
and similarly, $\mu'$ out of $\mu$, and $\mu'_0$ out of $\mu_0$.

By construction, $\lambda', \mu', \mu'_0\in \mathcal{P}^{s_G}(N_{G'})$, for a smaller rank group $G'$ of same type as $G$.

In particular, 
\[
\phi':= \phi_{(z,\ldots,z),\mu'_0} + \phi_{(\zeta_1,\ldots,\zeta_m), \mu_1} + \phi_{(\zeta_1^{-1},\ldots,\zeta_m^{-1}), \mu_1}\in \Phi_u(G')
\]
is a well-defined $L$-parameter.

It is easy to verify that $I\subset \mathbb{I}(\lambda')$, $J\subset \mathbb{J}(\lambda')$ and that $\lambda(\phi') = \mu' = T_J(T^I(\lambda'))$ holds.

Since Proposition \ref{prop:d-prop} implies $d(\mu') = d(\lambda')$, by the induction hypothesis we know that $I$ is empty and that $\phi' = \phi_{\psi_{z,\gotM_{\lambda',J}}}$. The claim now clearly follows.

We are left with the case of $a_{\max}-1\in J$. 

Let us write $l = m(a_{\max}-1, \lambda)$. Again, we see that both $zq^{\pm \frac{a_{\max}-2}2}$ appear as eigenvalues of $S$ with multiplicity $l$. Yet, from the assumption we have $m(a_{\max}-1,\mu) = l+2$ and $m(a_{\max},\mu)=k-1$. 

For one, it means that $z\otimes \nu_{a_{\max}}$ appears $k-1$ times in $\phi$. Furthermore, we see the existence of two summands in $\phi$ of the form $s_i\otimes \nu_{a_{\max}-1}$, with $s_i\neq z$, for $i=1,2$.

Once again by $S$-eigenvalue bounds, we are forced into the situation of $\{s_1,s_2\}= \{zq^{\frac12},zq^{-\frac12}\}$.

The assumption also shows that $m(a_{\max}-2,\lambda)>0$, or that $a_{\max}=2$ (in which case, $k$ is even).

Assuming without loss of generality that $\zeta_1 = zq^{1/2}$, we define an $L$-parameter
\[
\phi' = \left\{ \begin{array}{rll} \phi_{(z,\ldots,z),\,\mu_0\setminus (a_{\max}^{k-1})}  & + \phi_{(\zeta_2,\ldots,\zeta_m), \, \mu_1 \setminus (a_{\max}-1)} + \phi_{(\zeta_2^{-1},\ldots,\zeta_m^{-1}),\, \mu_1\setminus (a_{\max}-1)} & k\mbox{ is odd}  \\ \phi_{(z,\ldots,z),\,\mu_0\setminus (a_{\max}^{k-1})\cup (a_{\max}-2)} & + \phi_{(\zeta_2,\ldots,\zeta_m),\, \mu_1 \setminus (a_{\max}-1)} + \phi_{(\zeta_2^{-1},\ldots,\zeta_m^{-1}),\, \mu_1\setminus (a_{\max}-1)}  & k\mbox{ is even} \\
\phi_{(z,\ldots,z),\,\mu_0\setminus (2^{k-1})} & + \phi_{(\zeta_2,\ldots,\zeta_m), \,\mu_1 \setminus (1)} + \phi_{(\zeta_2^{-1},\ldots,\zeta_m^{-1}), \, \mu_1\setminus (1)} & a_{\max}=2
\end{array}\right.
\]
in $\Phi_u(G')$, and 
\[
\lambda' = \left\{ \begin{array}{ll} \lambda\setminus (a_{\max}-2\; a_{\max}^k) & k\mbox{ is odd}  \\ \lambda\setminus (a_{\max}^k)  & k\mbox{ is even} \end{array}\right.\in \mathcal{P}^{s_G}(N_{G'})\;.
\]
It now follows that $\chi_{\phi'} = \chi_{z,\mathcal{O}^\vee_{\lambda'}}$, that $I\subset \mathbb{I}(\lambda')$, $J':=J\setminus \{a_{\max}-1\}\subset \mathbb{J}(\lambda')$. In particular, $\lambda(\phi') = T_{J'}(T^I(\lambda'))$.

By the induction hypothesis, $I$ is empty and we have $\phi' = \phi_{\psi_{z,\gotM_{\lambda',J'}}}$. Consequently, $\phi = \phi_{\psi_{z,\gotM_{\lambda,J}}}$.

\end{proof}

A particular consequence is Proposition \ref{prop:intro}, which now follows as a corollary of Theorem \ref{thm:almost-intro}, Lemma \ref{lem:half-intro} and the description of Proposition \ref{prop:spc-comb}.

\subsubsection{$A$-packets in the near-tempered case}

In what follows we exploit the near-tempered condition to reach a higher level of precision on the composition of $A$-packets that are attached to $A$-parameters within that class.

\begin{proposition}\label{prop:admss}

Let $\prec$ be any linear order on the set $I(\gotM^{gp})$, for a near-tempered table $\gotM\in \mathcal{T}^{s}$. 

We assume $I(\gotM^{gp}) = \{1\prec \ldots \prec t\}$.

\begin{enumerate}
    \item 
If the order $\prec$ is admissible, then:
\begin{itemize}
\item
For all $1\leq i < j\leq t$, we have $a_i \leq a_j+1$.
\item 
If $(a_1,b_1) = (2,1)$, then $1 < a_i$, for all $i\in I(\gotM^{gp})$.
\end{itemize}

\item 
If $a_i \leq a_j+1$ holds, for all $1\leq i < j\leq t$, and $(1,2)\not\in S(\gotM)$, then $\prec$ is an admissible order.

\item
If $a_i \leq a_j$ holds, for all $1\leq i < j\leq t$, then $\prec$ is an admissible order.

\end{enumerate}
\end{proposition}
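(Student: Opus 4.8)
The plan is to reduce the whole statement to the behaviour of the two functions $a\mapsto\alpha=a+b$ and $a\mapsto\beta=a-b$ on the first coordinates $a=a_x$ occurring in $\gotM^{gp}$. First I would observe that, because $\gotM$ is near-tempered and $\gotM^{gp}\in\mathcal{T}^s_0$, the entry $b\in\{1,2\}$ is \emph{determined} by $a$ via the parity constraint ($a+b$ even when $s=1$, odd when $s=-1$); consequently $b(a+2)=b(a)$, so both $a\mapsto\alpha$ and $a\mapsto\beta$ are non-decreasing with $\alpha(a+2)=\alpha(a)+2$ and $\beta(a+2)=\beta(a)+2$, and in a single unit step exactly one of the following holds: either $b(a)=1$, in which case $\beta(a+1)=\beta(a)$ and $\alpha(a+1)=\alpha(a)+2$, or $b(a)=2$, in which case $\beta(a+1)=\beta(a)+2$ and $\alpha(a+1)=\alpha(a)$. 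I would also record that $\beta_x\ge-1$ always, with $\beta_x=-1$ exactly for the pair $(a,b)=(1,2)$ (which forces $s=-1$); equivalently, $(1,2)\notin S(\gotM)$ is the same as saying every $\beta_x\ge0$.

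For part (1), write $I(\gotM^{gp})=\{1\prec\cdots\prec t\}$ and assume $\prec$ is admissible. If $a_i\ge a_j+2$ for some $i<j$, the increment rule gives $\beta_i\ge\beta_j+2>\beta_j$ and $\alpha_i\ge\alpha_j+2>\alpha_j$; hence the first clause of the admissibility condition for $(i,j)$ fails (since $\beta_i>\beta_j$) and so does the second (since it requires $\alpha_i\le\alpha_j$), a contradiction. This proves the first bullet, $a_i\le a_j+1$ for all $i<j$. For the second bullet: if $(a_1,b_1)=(2,1)$ then the parity constraint forces $s=-1$ and $\beta_1=1$; were some $a_i=1$ we would have $b_i=2$, hence $\beta_i=-1$, so the admissibility condition for the pair $(1,i)$ would require either $\beta_1\le\beta_i$ (false: $1>-1$) or $\beta_i\ge0$ (false), a contradiction; therefore $a_i>1$ for every $i$.

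For part (2), assume $a_i\le a_j+1$ for all $i<j$ and $(1,2)\notin S(\gotM)$, so that all $\beta_x\ge0$ (in particular $\beta_1\ge0$). Fix $i<j$. If $a_i\le a_j$, monotonicity gives $\beta_i\le\beta_j$ and the first clause holds. If $a_i=a_j+1$, the unit-step dichotomy gives two cases: if $b(a_j)=1$ then $\beta_i=\beta_j$ and again the first clause holds; if $b(a_j)=2$ then $\beta_i=\beta_j+2>\beta_j\ge0$ and simultaneously $\alpha_i=\alpha_j$, so the second clause ($\beta_1\ge0$, $\beta_i>\beta_j\ge0$, $\alpha_i\le\alpha_j$) is satisfied. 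Hence $\prec$ is admissible. Part (3) is the special case of this argument in which every pair $i<j$ satisfies $a_i\le a_j$, so the first clause $\beta_i\le\beta_j$ always holds and no hypothesis on $S(\gotM)$ is needed.

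The one point that needs care is the bookkeeping in part (2): when $a_i=a_j+1$ and $b(a_j)=2$ one must know that the relevant $\beta$'s are genuinely $\ge0$ — which is exactly the role played by excluding the pair $(1,2)$, as isolated in the preliminary step — and that $\alpha_i=\alpha_j$ rather than merely $\alpha_i\le\alpha_j$; both are immediate consequences of the unit-step dichotomy. The contradiction argument of part (1), together with the fact that $(1,2)$ is precisely the obstruction appearing in part (2), shows that the three parts dovetail into a complete description of the admissible orders on near-tempered tables.
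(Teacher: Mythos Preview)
Your proof is correct and follows essentially the same elementary approach as the paper's: both reduce the admissibility conditions to simple inequalities between $\alpha_i,\beta_i$ coming from the constraint $b_i\in\{1,2\}$. The only cosmetic difference is that you package the constraints as monotone functions $a\mapsto\alpha(a),\beta(a)$ and analyze their unit-step behavior, whereas the paper argues directly via the parity of $\alpha_i,\beta_i$ (so that any positive difference $\alpha_i-\alpha_j$ or $\beta_i-\beta_j$ is automatically $\ge 2$) and the bound $|b_i-b_j|\le 1$; the resulting inequalities and case splits are the same.
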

\begin{proof}
\begin{enumerate}
    \item 
    For $1\leq i<j\leq t$, the near-tempered property implies $|b_j-b_i|\leq1$. Hence, inequalities
    \[
    \beta_j - \beta_i, \: \alpha_j - \alpha_i \leq  a_j - a_i +1 
    \]
    hold. Yet, one of $\beta_j - \beta_i,  \alpha_j - \alpha_i$ must be non-negative.

    Assuming $(a_1,b_1) = (2,1)$ and $(a_i,b_i)= (1,2)$ would result in $\beta_1 > \beta_i$ and $\beta_i <0$ which contradicts admissibility.
\item 
By assumption $\beta_i\geq 0$ holds, for all $1\leq i \leq t$. Hence, it suffices to verify that both $\beta_i > \beta_j$ and $\alpha_i > \alpha_j$ cannot be valid simultaneously, for indices $1\leq i < j\leq t$.

Indeed, the parity condition would have implied both $\alpha_i - \alpha_j$ and $\beta_i-\beta_j$ are no smaller than $2$. Summing the two inequalities would have resulted in $2a_i \geq 2a_j +4 $, which is a contradiction.

\item
An occurrence of $\beta_i>\beta_j$ for $1\leq i< j\leq t$ is impossible, since the parity condition would have implied $\beta_i-\beta_j\geq2$ and $a_i-a_j\geq1$.

\end{enumerate}

\end{proof}

For a near-tempered table $\gotM\in \mathcal{T}^{s}$, we say that a linear order $\prec$ on $I(\gotM^{gp})$ (or, on $\gotM$) is \textit{standard admissible}, when $a_i\leq a_j$ holds, for all $i\prec j$ in $I(\gotM^{gp})$.

Clearly, standard admissible orders always exist. By Proposition \ref{prop:admss}, a standard admissible order is admissible.

\begin{proposition}\label{prop:z-formula}

Let $\prec$ be an admissible order on the set $I(\gotM^{gp})$, for a near-tempered table $\gotM\in \mathcal{T}^{s_G}(N_G)$, for which we assume $I(\gotM^{gp}) = \{1\prec \ldots \prec t\}$.

Then, the function $\gamma^{\psi}_{\prec}$, in case of the $A$-packet $\psi= \psi_{z,\gotM}\in \Psi_0(G)$, that was considered in Proposition \ref{prop:moegarth-formula}, is given by the formula
\[
\gamma^{\psi}_{\prec}(i) = (-1)^{|Z_{i}|}\;,
\]
for all $i \in I(\gotM^{gp})$, where
\[
Z_{i} = \{ 1\leq j <i \;:\; a_j = a_i +1\} \cup \{ i<  j \leq t_{\rho} \;:\; a_j = a_i -1\}\;.
\]

In particular, when $\prec$ is standard admissible, the function $\gamma^{\psi}_{\prec}\equiv 1$ is constant.

\end{proposition}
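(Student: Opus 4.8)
The plan is to reduce the statement to the explicit combinatorial formula for $\gamma^{\psi}_{\prec}$ found in \cite[Definition 5.2]{xu17}, and to exploit the rigidity of the near-tempered condition (namely $b_i\in\{1,2\}$ with $b_i$ determined by $a_i$ via the parity of $a_i+b_i$) to collapse that formula to the stated sign count. First I would recall Xu's definition of $\gamma^{\psi}_{\prec}(i)$: it is a product of signs indexed by those $j$ that precede $i$ in $\prec$ (or follow it, depending on the sign conventions of $\beta_j-\beta_i$ and $\alpha_j-\alpha_i$), with a factor $(-1)$ contributed precisely when the pairs $(a_j,b_j)$ and $(a_i,b_i)$ are "linked" in the sense governing the Moeglin construction — i.e. when $[\,\beta_j,\alpha_j\,]$ and $[\,\beta_i,\alpha_i\,]$ are consecutive or overlapping segments of the appropriate shape. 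For a near-tempered table this linking can only occur when $|a_j-a_i|\le 1$, and the parity condition forces $b_j\neq b_i$ whenever $|a_j-a_i|=1$ while $b_j=b_i$ whenever $a_j=a_i$; tracking the resulting segment endpoints shows that a genuine sign contribution arises exactly from the pairs counted by $Z_i$, namely $j<i$ with $a_j=a_i+1$, or $j>i$ with $a_j=a_i-1$. (The case $a_j=a_i$ contributes no sign, and $|a_j-a_i|\ge 2$ gives unlinked segments, hence no contribution.) This would give $\gamma^{\psi}_{\prec}(i)=(-1)^{|Z_i|}$.

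The main technical step, and the one I expect to be the principal obstacle, is the bookkeeping matching Xu's definition — which is phrased via an inductive/recursive procedure on the admissibly-ordered table and involves the signs $\eta$ and the auxiliary set $R_{\underline l}$ — against the clean closed form above. In particular one must be careful about the two shapes of admissible order permitted by Proposition \ref{prop:admss}: the "standard" case where $a_i\le a_j$ for $i\prec j$, and the more general case where only $a_i\le a_j+1$ is guaranteed (so that an index with $a_j=a_i+1$ may precede an index with value $a_i$). The definition of $Z_i$ is tailored to absorb exactly this: predecessors $j$ with $a_j=a_i+1$ and successors $j$ with $a_j=a_i-1$. I would verify the formula first for a standard admissible order, where every $j$ preceding $i$ has $a_j\le a_i$, so $\{\,j<i : a_j=a_i+1\,\}=\emptyset$, and symmetrically $\{\,j>i: a_j=a_i-1\,\}=\emptyset$ (since $a_j\ge a_i$ for $j>i$); hence $Z_i=\emptyset$ and $\gamma^{\psi}_{\prec}(i)=1$, giving the "in particular" clause. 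Then I would handle the general admissible order by an interchange argument: transposing two $\prec$-adjacent indices $j,j'$ with $|a_j-a_{j'}|\le 1$ is an allowed move between admissible orders, and one checks Xu's $\gamma$ changes precisely by the flip in membership of $j,j'$ in each other's $Z$-sets, so the formula is stable under such transpositions and therefore holds for all admissible orders once it holds for the standard one.

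A cleaner alternative, which I would pursue in parallel, avoids re-deriving Xu's formula from scratch: combine Proposition \ref{prop:moegarth-formula} with the known Arthur character of a \emph{specific} representation in $\Pi^A_{\psi_{z,\gotM}}$ whose Moeglin parameter is transparent — for instance the representation with $\underline l\equiv 0$ and a prescribed $\underline\eta$ — and read off $\gamma^{\psi}_{\prec}(i)$ by comparing $(-1)^{\epsilon^\psi_\pi(a_i,b_i)}$, which is pinned down independently via the identification $\widehat{\mathcal S_\psi}\cong P(\gotM)_0$ of Proposition \ref{prop:expl-a-l} and its compatibility \eqref{eq:tempered-ident} with the tempered picture. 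For the spherical / trivial-character vertex this comparison forces $\gamma^{\psi}_{\prec}\equiv 1$ on a standard admissible order, and the transposition argument above propagates it. Either route finishes the proof; the obstacle in both is the careful segment-endpoint combinatorics underlying Xu's linking rule, which is routine but must be done exactly.
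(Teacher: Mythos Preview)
Your approach is essentially the same as the paper's: both reduce to Xu's explicit formula \cite[Definition 5.2]{xu17} (reproduced in \cite[Definition A.1]{atobe-crelle2}) and specialize it under the near-tempered constraints $b_i\in\{1,2\}$. The paper's proof is a one-line citation to that formula, whereas you spell out a roadmap for the simplification and an alternative transposition/comparison argument; the extra machinery is not needed, but the core reduction is identical.
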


\begin{proof}
    Follow from the formula in \cite[Definition A.1]{atobe-crelle2}, which reproduces \cite[Definition 5.2]{xu17}.
\end{proof}

\begin{lemma}\label{lem:lpacket}
Let $\gotM \in\mathcal{T}^{s_G}(N_G) $ be a near-tempered $A$-parameter, $\psi = \psi_{z,\gotM}\in \Psi_0(G)$ the associated $A$-packet for a choice of $z\in \ZZ$, and $\pi\in \Pi^{A}_{\psi}$ a representation.

Suppose that the character $\epsilon= \epsilon^{\psi}_{\pi}\in \widehat{\mathcal{S}_{\psi}}$ belongs to the subgroup $\widehat{\mathcal{S}_{\phi_{\psi}}}$.

Then, $\pi$ must be included in the associated $L$-packet $\Pi_{\phi_{\psi}}\subset \Pi^{A}_{\psi}$. More precisely, we have
\[
\pi = \pi(\phi_{\psi},\epsilon)\in \Pi_{\phi_{\psi}}\;.
\]  
\end{lemma}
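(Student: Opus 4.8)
The plan is to run the argument through Moeglin's parameterization, exploiting the fact that near-temperedness rigidifies it. First I would fix a \emph{standard admissible} order $\prec$ on $\gotM$ (these exist, and are admissible, by Proposition~\ref{prop:admss}), so that by Proposition~\ref{prop:z-formula} the sign function $\gamma^{\psi}_{\prec}$ is identically $1$. Writing $\pi = \pi_{\prec}(\psi,\underline{l},\underline{\eta})$ with $(\underline{l},\underline{\eta})=M_{\prec}(\pi)\in\mathcal{W}(\gotM)$ and using that $b_i\in\{1,2\}$ for all $i\in I(\gotM^{gp})$, Proposition~\ref{prop:moegarth-formula} collapses to an explicit dictionary between $(\underline{l},\underline{\eta})$ and $\epsilon:=\epsilon^{\psi}_{\pi}$: when $b_i=1$ one necessarily has $\underline{l}(i)=0$, $i\in R_{\underline{l}}$, and $(-1)^{\epsilon(a_i,1)}=\underline{\eta}(i)$; when $b_i=2$ one has $\underline{l}(i)\in\{0,1\}$, with $\epsilon(a_i,2)=1$ if $\underline{l}(i)=0$ and $\epsilon(a_i,2)=0$ if $\underline{l}(i)=1=b_i/2$ (in which case $i\notin R_{\underline{l}}$).

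Next I would read off the shape of $S(\gotM)$ in the near-tempered case. Since $\gotM^{gp}\in\mathcal{T}^{s_G}_0$, the parity of $b_i$ is forced by that of $a_i$; consequently every pair with $b_i=2$ has $a_i$ of the parity opposite to that of $\supp((\lambda_{\gotM})^{gp})$, while every pair with $b_i=1$ contributes a good-parity part of $\lambda_{\gotM}$. Hence $S(\gotM)^{\sharp}=\{(a_i,1):i\in I(\gotM^{gp})\}$ and $S(\gotM)^{\flat}=\{(a_i,2):i\in I(\gotM^{gp})\}$. By Proposition~\ref{prop:expl-a-l} the subgroup $\widehat{\mathcal{S}_{\phi_{\psi}}}<\widehat{\mathcal{S}_{\psi}}\cong P(\gotM)_0$ is the image of $\kappa^{\ast}$, which lies inside $P(\gotM)^{\sharp}$; in other words, a character in $\widehat{\mathcal{S}_{\phi_{\psi}}}$, viewed as a boolean function on $S(\gotM)$, vanishes on all of $S(\gotM)^{\flat}$.

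Now I combine the two observations. The hypothesis $\epsilon\in\widehat{\mathcal{S}_{\phi_{\psi}}}$ forces $\epsilon(a_i,2)=0$ for every $i$ with $b_i=2$, hence $\underline{l}(i)=1$ there, while $\underline{l}(i)=0$ for every $i$ with $b_i=1$. Thus $\underline{l}$ is completely determined, $R_{\underline{l}}=\{i:b_i=1\}$, and on $R_{\underline{l}}$ we have $\underline{\eta}(i)=(-1)^{\epsilon(a_i,1)}$, again determined by $\epsilon$. So $M_{\prec}(\pi)$ depends only on $\epsilon$, and since $M_{\prec}$ is injective the assignment $\pi'\mapsto\epsilon^{\psi}_{\pi'}$ is injective on the set of representations in $\Pi^{A}_{\psi}$ whose Arthur character lands in $\widehat{\mathcal{S}_{\phi_{\psi}}}$. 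On the other hand $\pi(\phi_{\psi},\epsilon)\in\Pi_{\phi_{\psi}}\subset\Pi^{A}_{\psi}$ (Propositions~\ref{prop:start-Arth} and~\ref{prop:arth-book}) is a representation of $\Pi^{A}_{\psi}$ with Arthur character exactly $\epsilon\in\widehat{\mathcal{S}_{\phi_{\psi}}}$; by the injectivity just established, $\pi=\pi(\phi_{\psi},\epsilon)$, which is precisely the claim.

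The only step that requires real care is the second paragraph: the identification $S(\gotM)^{\flat}=\{(a_i,2):i\in I(\gotM^{gp})\}$ and the resulting containment $\widehat{\mathcal{S}_{\phi_{\psi}}}\subset P(\gotM)^{\sharp}$, since this is where near-temperedness genuinely enters and where one must reconcile the various bookkeeping conventions (Propositions~\ref{prop:comp-comb}, \ref{prop:expl-a-l}, and the boolean-function normalization of $\epsilon^{\psi}_{\pi}$ used in Proposition~\ref{prop:moegarth-formula}). Once that is settled, the remainder is the soft rigidity argument above, and no appeal to the finer intersection machinery of Propositions~\ref{prop:ui}--\ref{prop:ui-exhaust} is needed.
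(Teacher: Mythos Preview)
Your proof is correct and follows essentially the same approach as the paper: fix a standard admissible order so that $\gamma^{\psi}_{\prec}\equiv 1$, use Proposition~\ref{prop:moegarth-formula} together with the containment $\widehat{\mathcal{S}_{\phi_{\psi}}}\subset P(\gotM)^{\sharp}$ to pin down $(\underline{l},\underline{\eta})$ from $\epsilon$, and conclude by injectivity of $M_{\prec}$. The paper phrases the last step by directly comparing the Moeglin parameters of $\pi$ and $\pi(\phi_{\psi},\epsilon)$ rather than invoking injectivity abstractly, but this is the same argument; your description of $S(\gotM)^{\sharp}$ and $S(\gotM)^{\flat}$ should read $\{(a_i,1):b_i=1\}$ and $\{(a_i,2):b_i=2\}$ respectively, a harmless imprecision.
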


\begin{proof}
We denote the representation $\pi' = \pi( \phi_{\psi},\epsilon)\in \Pi_{\phi_{\psi}}$.

We recall that according to the setup of Proposition \ref{prop:arth-book}, we have $\epsilon^{\psi}_{\pi'} = \epsilon$ in $\widehat{\mathcal{S}_{\psi}}$.

Let us fix a standard admissible order $\prec$ on $\gotM$ and write $\pi = \pi_{\prec}(\psi, \underline{l},\underline{\eta})$ and $\pi' = \pi_{\prec}(\psi, \underline{l'},\underline{\eta'})$ in terms of Moeglin's parameters. 

Using Proposition \ref{prop:expl-a-l}, we identify the inclusion $\widehat{\mathcal{S}_{\phi_{\psi}}}<\widehat{\mathcal{S}_{\psi}}$ with $P(\lambda_{\gotM})_0< P(\gotM)_0$. 

Let $i\in I(\gotM^{gp})$ be an index.

Suppose first that $b_i=2$. Since $\epsilon\in P(\lambda_{\gotM})< P(\gotM)^{\sharp}$ is assumed and $b_i$ is even, we must have $\epsilon(a_i,b_i)=0$. Now, since $\gamma^{\psi}_{\prec}$ is trivial (Proposition \ref{prop:z-formula}), the formula in Proposition \ref{prop:moegarth-formula} must imply $\underline{l}(i) = \underline{l'}(i) =1$.

Otherwise, we have $b_i=1$. We are clearly restricted to $\underline{l}(i) = \underline{l'}(i) =0$, while the formula of Proposition \ref{prop:moegarth-formula} forces 
$\underline{\eta}(i) = \underline{\eta'}(i)$.

Put together, we see that $\underline{l} = \underline{l'}$ and $\underline{\eta} = \underline{\eta'}$ hold. Thus, $\pi$ and $\pi'$ are isomorphic representations.

\end{proof}

Let us now explicate Proposition \ref{prop:ui} and Corollary \ref{prop:ui-cor} on cases of $A$-packet intersections, for the case of near-tempered $A$-parameters.

\begin{proposition}\label{prop:ui-nt}
Let $\prec$ be an admissible order on a near-tempered table $\gotM\in \mathcal{T}^{s_G}(N_G)$, for which we assume $I(\gotM^{gp}) = \{1\prec \ldots \prec t\}$.

Let $\psi = \psi_{z,\gotM}\in \Psi_0(G)$ be an associated integral $A$-parameter, for a choice of $z\in \ZZ$.

Let
\[
\pi = \pi_{\prec}(\psi, \underline{l},\underline{\eta})\in \Pi^{A}_{\psi}
\]
be a given representation.

Suppose that $t> k \in \widetilde{I(\gotM^{gp})}$ satisfies one of the following conditions:

\begin{enumerate}
    \item\label{it-nt}
\[
\left\{ \begin{array}{lll}   
a_{k+1} - a_k & = &2 \\
b_k = b_{k+1} & = &1 \\
\eta_{k+1} &=&  -\eta_k 
\end{array}  \right.\;,
\]

    \item \label{it-nt2}

\[
\left\{ \begin{array}{lll}   
a_{k+1} - a_k & = &2 \\
b_k= b_{k+1} & = &2 \\
l_{k+1} + l_{k} &=& 1 
\end{array}  \right.\;,
\]
    
    \item\label{it-nt3}

\[
\left\{ \begin{array}{lll}   
a_{k+1} - a_k & = &3 \\
l_{k+1} =l_{k} &=& 0 \\
\eta_{k+1} &=&  (-1)^{b_k}\eta_k 
\end{array}  \right.\;,
\]

    \item\label{it-nt4}

\[
\left\{ \begin{array}{lll}   
a_{k+1} - a_k & = &4 \\
b_{k} = b_{k+1}& = & 2\\
l_{k+1} =l_{k} &=& 0 \\
\eta_{k+1} &=&  \eta_k 
\end{array}  \right.\;.
\]

\end{enumerate}

Here, we shortcut notation to $l_i = \underline{l}(i)$ and  $\eta_i = \underline{\eta}(i)$.

Each such case gives rise to a table $\gotM'\in \mathcal{T}^{s_G}(N_G)$, for which $\pi\in \Pi^{A}_{\psi_{z,\gotM'}}$.

In case \eqref{it-nt}, we have
\[
\gotM' = \{(a_i,b_i)\}_{i\in I(\gotM)\setminus \{k,k+1\}} \cup \{ (a_\ast,b_\ast)\}\;,
\]
so that $(a_\ast,b_\ast)= (a_k +1, 2)$, and we may write
\[
\pi \cong \pi_{\prec'}(\psi_{z,\gotM'}, \underline{l}', \underline{\eta}')\;,
\]
where the order $\prec'$ on $\gotM'$ is constructed out of $\prec$ by replacing the relative position of $k \prec k+1$ with the index $\ast$, and setting
\[
\underline{l}'(\ast) = 0,\quad \underline{\eta}'(\ast)= \eta_k\;,
\]
while retaining equalities $\underline{l}'=\underline{l}, \underline{\eta}' = \underline{\eta}$ on $I(\gotM^{gp})\setminus \{k,k+1\}$.

\end{proposition}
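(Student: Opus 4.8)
The plan is to obtain each of the four cases as a direct specialization of Atobe's results recalled in Proposition \ref{prop:ui} and Corollary \ref{prop:ui-cor}, exploiting the rigidity forced by the near-tempered hypothesis. Recall that for a near-tempered table one has $b_i\in\{1,2\}$, so $\beta_i = a_i - b_i \geq -1$ for every $i\in I(\gotM^{gp})$ (and $\beta_0 = -1$ under the $s_G=-1$ convention of Section \ref{sect:moeg-param}); thus the standing hypothesis $\beta_k\geq-1$ of Proposition \ref{prop:ui} costs nothing. Moreover $l_i$ is confined to $\{0\}$ when $b_i=1$ and to $\{0,1\}$ when $b_i=2$, and membership in $\mathcal{T}^{s_G}$ ties $b_i$ to the parity of $a_i$; both facts will be used to rewrite the general numerical conditions of Proposition \ref{prop:ui} in the simplified shape stated here.

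First I would settle case \eqref{it-nt}. Here $b_k = b_{k+1}=1$ forces $l_k = l_{k+1}=0$, and a one-line check gives $\alpha_k = a_k+1 = a_{k+1}-b_{k+1} = \beta_{k+1}$, together with $\eta_{k+1} = -\eta_k = (-1)^{b_k}\eta_k$. This is precisely the hypothesis of Corollary \ref{prop:ui-cor}, whose conclusion produces $\gotM' = \{(a_i,b_i)\}_{i\neq k,k+1}\cup\{(a_\ast,b_\ast)\}$ with $a_\ast = \tfrac12(\beta_k + \alpha_{k+1}) = a_k+1$ and $b_\ast = b_k+b_{k+1} = 2$, the modified order $\prec'$, and the modified Moeglin datum $(\underline l',\underline\eta')$, all exactly as in the statement; the fact that $(\underline l',\underline\eta')$ lies in the image of $M_{\prec'}$ and that $\pi\cong\pi_{\prec'}(\psi_{z,\gotM'},\underline l',\underline\eta')$ is part of that corollary's assertion, given that $\pi = \pi_{\prec}(\psi,\underline l,\underline\eta)$ is assumed to lie in $\Pi^A_\psi$.

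Next I would treat cases \eqref{it-nt2}, \eqref{it-nt3}, \eqref{it-nt4} by matching each to one of the conditions \eqref{case1}, \eqref{case2}, \eqref{case3} of Proposition \ref{prop:ui}. Since the formulas for $a_\ast,b_\ast,a_{\ast\ast},b_{\ast\ast}$ there are the same regardless of which of the three conditions is invoked, it suffices to verify that \emph{some} condition applies. Substituting $b_k,b_{k+1}\in\{1,2\}$ and the prescribed values of the $l_i$: in \eqref{it-nt3} the parity constraint first forces $\{b_k,b_{k+1}\}=\{1,2\}$, whence $(b_k-2l_k)+(b_{k+1}-2l_{k+1}) = b_k+b_{k+1} = 3 = a_{k+1}-a_k$ and $\eta_{k+1} = (-1)^{b_k}\eta_k$, so condition \eqref{case3} holds; in \eqref{it-nt4} one has $(b_k-2l_k)+(b_{k+1}-2l_{k+1}) = 2+2 = 4 = a_{k+1}-a_k$ with $l_k+l_{k+1}=0 < b_k = b_{k+1}$ and $\eta_{k+1}=\eta_k = (-1)^{b_k}\eta_k$, again condition \eqref{case3}; in \eqref{it-nt2}, with (say) $l_k=0,\ l_{k+1}=1$, one computes $(b_k-2l_k)-(b_{k+1}-2l_{k+1}) = 2-0 = 2 = a_{k+1}-a_k$ and $l_{k+1}-l_k = 1 > 0 = b_{k+1}-b_k$, placing us in condition \eqref{case1} (and symmetrically in \eqref{case2} when $l_k=1$). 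In each instance Proposition \ref{prop:ui} then yields a table $\gotM'\in\mathcal{T}^{s_G}(N_G)$ with $\pi\in\Pi^A_{\psi_{z,\gotM'}}$, and a short computation of $b_{\ast\ast}$ shows it vanishes, so a single new block is created; since the statement does not demand an explicit form of $\gotM'$ outside case \eqref{it-nt}, this closes those cases.

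The one genuine subtlety — the main obstacle — is the treatment of $\eta_i$ when $l_i$ attains the boundary value $b_i/2$, so that $i\notin R_{\underline l}$ and $\eta_i$ is undefined. This is exactly what occurs in case \eqref{it-nt2}, where $l_k+l_{k+1}=1$ forces one of $l_k,l_{k+1}$ to equal $b_i/2 = 1$. I would resolve it by appealing to the precise formulation in \cite[Theorem 5.2]{atobe-crelle1}, in which the $\eta$-constraints of conditions \eqref{case1}--\eqref{case2} are imposed only on indices lying in $R_{\underline l}$, so that the relation involving the absent $\eta$ is vacuous; the two sub-cases $l_{k+1}=1$ and $l_k=1$ are then interchanged by the obvious symmetry between \eqref{case1} and \eqref{case2}. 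With this convention pinned down, the remainder of the argument is a routine, if slightly tedious, verification of the displayed equalities and inequalities.
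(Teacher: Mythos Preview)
Your proof is correct and follows the same approach as the paper's: specialize the general moves of Proposition~\ref{prop:ui} (and its Corollary~\ref{prop:ui-cor}) under the near-tempered constraint $b_i\in\{1,2\}$. One small slip: your claim that $b_{\ast\ast}$ vanishes ``in each instance'' is false in case~\eqref{it-nt2}, where $b_{\ast\ast} = \tfrac{b_k+b_{k+1}}2 - \tfrac{a_{k+1}-a_k}2 = 2-1 = 1$ (the resulting $\gotM'$ carries the pairs $(a_k+1,3)$ and $(a_k+1,1)$); this is harmless for the argument, as you yourself note that the statement does not ask for an explicit $\gotM'$ outside case~\eqref{it-nt}.
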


\begin{proof}
It is straightforward to verify that the near-tempered restriction of 
\[
(b_i,l_i)\in \{(1,0),(2,0),(2,1)\}\;, \; \forall 0\leq i <t\;,
\]
forces all instances of cases \eqref{case1} and \eqref{case2} of Proposition \ref{prop:ui} to incarnate as case \eqref{it-nt2} of our current statement.

Case \eqref{case3} of Proposition \ref{prop:ui} is now explicated into the three remaining cases. In particular, case \eqref{it-nt} falls under the scope of Corollary \ref{prop:ui-cor}, which is explicated by $\gotM$.

\end{proof}

\begin{corollary}
Suppose that $\gotM'$ is a table obtained out of a table $\gotM\in \mathcal{T}^{s_G}(N_G)$ using one of the procedures described in Proposition \ref{prop:ui}.

If $\gotM$ is non-negative and $\gotM'$ is near-tempered, then $\gotM$ must be near-tempered and $\gotM'$ is obtained through an application of the case \eqref{it-nt} of Proposition \ref{prop:ui-nt} on $\gotM$.

\end{corollary}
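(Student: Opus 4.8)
The plan is to trace the three moves of Proposition~\ref{prop:ui} through to the $b$-coordinates of the two modified entries and then let the near-tempered hypothesis on $\gotM'$ collapse the possibilities. Recall that such a move selects consecutive indices $k\prec k+1$ of $\widetilde{I(\gotM^{gp})}$ with $\beta_k\ge-1$ (allowing the virtual index $k=0$, with $(a_0,b_0)=(0,1)$, when $s_G=-1$) and replaces $(a_k,b_k),(a_{k+1},b_{k+1})$ by $(a_\ast,b_\ast),(a_{\ast\ast},b_{\ast\ast})$, deleting any null entry. First I would check the parity identities $a_\ast+b_\ast=a_{k+1}+b_{k+1}$ and $a_{\ast\ast}+b_{\ast\ast}=a_k+b_k$; since the bad-parity entries of $\gotM$, together with their (even) multiplicities, are untouched by the move, this shows that the two new entries are good-parity and that
\[
(\gotM')^{gp}=\bigl(\gotM^{gp}\setminus\{(a_k,b_k),(a_{k+1},b_{k+1})\}\bigr)\cup\{(a_\ast,b_\ast),(a_{\ast\ast},b_{\ast\ast})\},
\]
null entries deleted. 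In particular every entry of $\gotM^{gp}$ other than the two modified ones already survives into $(\gotM')^{gp}$, and hence has $b$-value in $\{1,2\}$ because $\gotM'$ is near-tempered.

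Next I would write out $b_\ast$ and $b_{\ast\ast}$ in each case, with $l_i:=\underline{l}(i)$: case~\eqref{case1} gives $b_\ast=b_k+(l_{k+1}-l_k)$, case~\eqref{case2} gives $b_\ast=b_{k+1}+(l_k-l_{k+1})$, and case~\eqref{case3} gives $b_\ast=b_k+b_{k+1}-l_k-l_{k+1}$. In each case the strict inequalities built into the hypotheses of that case force $b_\ast>\max(b_k,b_{k+1})\ge1$, hence $b_\ast\ge2$. Since $(a_\ast,b_\ast)\in(\gotM')^{gp}$ and $\gotM'$ is near-tempered, this yields $b_\ast=2$ and therefore $b_k=b_{k+1}=1$; then $0\le l_k\le b_k/2$ and $0\le l_{k+1}\le b_{k+1}/2$ force $l_k=l_{k+1}=0$, which contradicts the requirement $l_{k+1}\ne l_k$ of cases~\eqref{case1} and~\eqref{case2}. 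Hence only case~\eqref{case3} is possible, and with $b_k=b_{k+1}=1$, $l_k=l_{k+1}=0$ its hypotheses become $a_{k+1}-a_k=2$ and $\eta_{k+1}=-\eta_k$ --- precisely the hypotheses of case~\eqref{it-nt} of Proposition~\ref{prop:ui-nt} --- while $b_{\ast\ast}=l_k+l_{k+1}=0$ is deleted and $a_\ast=a_k+1$, matching the recipe there.

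Finally, having obtained $b_k=b_{k+1}=1$ and knowing from the first step that all remaining entries of $\gotM^{gp}$ have $b$-value in $\{1,2\}$, I would conclude that $\gotM$ is near-tempered and that $\gotM'$ arises by the move~\eqref{it-nt} of Proposition~\ref{prop:ui-nt}. I expect the only delicate point to be the first step --- verifying that the good-part/bad-part decomposition transforms as claimed under the move, and in particular handling the virtual index $k=0$ in the $s_G=-1$ case --- but this reduces to the parity computation above; the case analysis in the second step is then a short finite check.
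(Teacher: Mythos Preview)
Your proof is correct and rests on the same key observation as the paper's: the move of Proposition~\ref{prop:ui} always produces an entry $(a_\ast,b_\ast)$ in $(\gotM')^{gp}$ with $b_\ast$ dominating the $b$-values of the two consumed entries. The paper states this as ``for each $i\in I(\gotM^{gp})$ there is $i'\in I((\gotM')^{gp})$ with $b_i\le b_{i'}$'', deduces that $\gotM$ is near-tempered, and then appeals to the already-established case enumeration of Proposition~\ref{prop:ui-nt} to rule out cases \eqref{it-nt2}--\eqref{it-nt4}. You instead sharpen the observation to the strict inequality $b_\ast>\max(b_k,b_{k+1})$ directly from the three cases of Proposition~\ref{prop:ui}, which immediately forces $b_k=b_{k+1}=1$ and $l_k=l_{k+1}=0$, collapsing everything to case~\eqref{case3} with $a_{k+1}-a_k=2$ without passing through Proposition~\ref{prop:ui-nt}. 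This is a slightly more self-contained route to the same conclusion; the underlying mechanism is identical.
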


\begin{proof}
It is evident from the description in Proposition \ref{prop:ui}, that for each $i\in I(\gotM^{gp})$, there must be $i'\in  I(\gotM'^{gp})$ with $b_i\leq b_{i'}$. Hence, $\gotM$ must be near-tempered.

Similarly, we that each of the cases \eqref{it-nt2}, \eqref{it-nt3}, \eqref{it-nt4} of Proposition \ref{prop:ui-nt} produces $i'\in I(\gotM'^{gp})$ with $2<b_{i'}$.

\end{proof}


We are set to explore the intersections of $A$-packets that arise from a tempered parameter with $A$-packets that arise from near-tempered $A$-parameters.

In the following lemma, characters $\epsilon^{\psi}_{\pi}\in \widehat{\mathcal{S}_{\psi_{z,\gotM_{\lambda}}}}$, for tempered representations $\pi\in \Pi^{A}_{\psi_{z,\gotM_{\lambda}}}$ defined by a partition $\lambda\in \mathcal{P}^{s_G}(N_G)$ and $z\in \ZZ$, are treated as boolean functions on $S(\lambda)$, according to the identification $\widehat{\mathcal{S}_{\psi_{z,\gotM}}}\cong P(\lambda)_0$ of \eqref{eq:tempered-ident}.

\begin{lemma}\label{lem:Js}

Let $\lambda\in \mathcal{P}^{s_G}(N_G)$ be a partition, and $z\in \ZZ$. Let $\psi = \psi_{z,\gotM_{\lambda}}\in \Psi_0(G)$ be the associated tempered integral $A$-parameter.

Let $J\subset \mathbb{J}(\lambda)$ be a subset, and $\psi'=\psi_{z,\gotM_{\lambda,J}}\in \Psi_0(G)$ be the associated near-tempered $A$-parameter.

The following properties hold.

\begin{enumerate}
    \item\label{it:Js1}
There is an equality
\[
\Pi^{A}_{\psi} \cap \Pi^{A}_{\psi'}  = \{\pi\in \Pi^{A}_{\psi}\;:\;  \mathfrak{t}_c(\epsilon^{\psi}_{\pi})\neq1,\,\forall c\in J\}\;,
\]
with $\mathfrak{t}_c$ defined as in \eqref{eq:tc}.

\item\label{it:Js2}
Let $\prec_0$ be a standard admissible order on $\gotM_{\lambda,J}$.

For any representation $\pi = \pi_{\prec_0}(\psi', \underline{l},\underline{\eta}) \in \Pi^{A}_{\psi} \cap \Pi^{A}_{\psi'}$, and any $i\in I(\gotM_{\lambda,J}^{gp})$, we have $\underline{l}(i)=0$, and
\[
\left\{\begin{array}{ll}(-1)^{\epsilon^{\psi}_{\pi}(a_i,1)}= (-1)^{\epsilon^{\psi'}_{\pi}(a_i,1)} = \underline{\eta}(i) & \mbox{ if }b_i=1 \\
 \epsilon^{\psi'}_{\pi}(a_i,2) = 1
& \mbox{ if }b_i=2
\end{array} \right.\;.
\]
\end{enumerate}
\end{lemma}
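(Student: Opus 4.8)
The plan is to derive both parts of Lemma~\ref{lem:Js} from the explicit machinery of Propositions~\ref{prop:ui-exhaust}, \ref{prop:ui-nt}, \ref{prop:moegarth-formula}, and \ref{prop:z-formula}, using the near-tempered constraint to sharply limit the possible Moeglin parameters. The starting observation is that $\gotM_\lambda$ is a tempered table (all $b_i=1$) while $\gotM_{\lambda,J}$ is near-tempered, and $\gotM_{\lambda,J}$ is obtained from $\gotM_\lambda$ by replacing, for each $c\in J$, the two parts $(c-1,1),(c+1,1)$ by a single $(c,2)$. One direction of \eqref{it:Js1} is essentially the content of Proposition~\ref{prop:ui-nt}\eqref{it-nt}: starting from $\psi = \psi_{z,\gotM_\lambda}$ with the tempered labelling, whenever $\mathfrak{t}_c(\epsilon^\psi_\pi)\neq 1$ for all $c\in J$, one reads off (via Proposition~\ref{prop:moegarth-formula}, with $\gamma$ trivial by Proposition~\ref{prop:z-formula} in the standard admissible order) that the pair of indices carrying $a=c-1,c+1$ has $\eta_{k+1}=-\eta_k$, so the move \eqref{it-nt} applies and fuses them into $(c,2)$; iterating over $c\in J$ lands $\pi$ in $\Pi^A_{\psi_{z,\gotM_{\lambda,J}}}$. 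For the reverse inclusion one argues that \emph{any} representation in the intersection must have this property: if some $\mathfrak{t}_c(\epsilon^\psi_\pi)=1$, then the two parts $(c-1,1),(c+1,1)$ of $\lambda$ carry equal $\eta$'s, and no sequence of moves from Proposition~\ref{prop:ui} can ever combine them (the relevant move \eqref{it-nt} requires opposite signs), so $\pi$ cannot lie in any $A$-packet whose table has a $(c,2)$ block.

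For the rigorous reverse direction I would invoke Proposition~\ref{prop:ui-exhaust}\eqref{it:ui-exhaust1}: since both $\gotM_\lambda$ and $\gotM_{\lambda,J}$ are non-negative, membership of $\pi$ in $\Pi^A_\psi\cap\Pi^A_{\psi'}$ forces a chain $\gotM_\lambda=\gotM_1,\dots,\gotM_r=\gotM_{\lambda,J}$ of non-negative tables, each step a move from Proposition~\ref{prop:ui}, with $\pi$ in every intermediate $A$-packet. By the Corollary immediately following Proposition~\ref{prop:ui-nt}, since the endpoint $\gotM_{\lambda,J}$ is near-tempered, \emph{all} tables in the chain are near-tempered, and each move is an instance of case~\eqref{it-nt}. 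Each such move merges two parts $(a_k,1),(a_k+2,1)$ with opposite $\eta$ into $(a_k+1,2)$; comparing the initial table (all $b=1$) with the final one, the set of pairs merged must be exactly $\{(c-1,c+1):c\in J\}$, and the opposite-sign condition at each merge, translated back through Proposition~\ref{prop:moegarth-formula} into the tempered labelling $\epsilon^\psi_\pi$ on $S(\lambda)$, is precisely $\mathfrak{t}_c(\epsilon^\psi_\pi)\neq 1$. This proves \eqref{it:Js1}.

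Part~\eqref{it:Js2} is then a bookkeeping consequence of the same chain. Tracking the Moeglin parameter along each application of case~\eqref{it-nt} of Proposition~\ref{prop:ui-nt}: the new fused index $\ast$ receives $\underline{l}'(\ast)=0$, and unchanged indices keep their $(\underline l,\underline\eta)$; since we start from a tempered parameter where necessarily $\underline l\equiv 0$, every index of $\gotM_{\lambda,J}$ has $\underline l=0$ in the standard admissible order. For an index $i$ with $b_i=1$ (an ``untouched'' part of $\lambda$), $\epsilon^{\psi'}_\pi(a_i,1)$ and $\epsilon^{\psi}_\pi(a_i,1)$ both equal $(-1)$-to-the-power read off from $\underline\eta(i)$ via Proposition~\ref{prop:moegarth-formula} with trivial $\gamma$, giving $(-1)^{\epsilon}=\underline\eta(i)$. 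For an index $i$ with $b_i=2$, the formula in Proposition~\ref{prop:moegarth-formula} with $\underline l(i)=0<b_i/2=1$ and trivial $\gamma$ gives $(-1)^{\epsilon^{\psi'}_\pi(a_i,2)}=(-1)^{1+0}\underline\eta(i)^2=(-1)\cdot(-1)$, hence $\epsilon^{\psi'}_\pi(a_i,2)=0$, i.e.\ $=1$ as a boolean value --- wait, one must be careful about the exponent convention, $\lfloor b_i/2\rfloor + \underline l(i) = 1$, so $(-1)^{\epsilon^{\psi'}_\pi(a_i,2)} = \gamma\cdot(-1)^1\cdot\underline\eta(i)^{2} = -1$, which under the convention that $\epsilon$ is a boolean function gives $\epsilon^{\psi'}_\pi(a_i,2)=1$; but the \emph{asserted} value is $\epsilon^{\psi'}_\pi(a_i,2)=1$ only in the sense that, since $\epsilon^{\psi'}_\pi$ lies in $P(\lambda_{\gotM_{\lambda,J}})_0\subset P(\gotM_{\lambda,J})^\sharp$ and $b_i$ is even, necessarily $\epsilon^{\psi'}_\pi(a_i,b_i)=0$ --- so in fact the cleaner route is: $\epsilon^{\psi'}_\pi$ is a character in the image of $\kappa^\ast$, hence vanishes on all $(a,b)$ with $b$ even, forcing $\underline l(i)=1$ by Proposition~\ref{prop:moegarth-formula}. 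Here lies the one genuine subtlety, and the main obstacle: reconciling which normalization (boolean exponent vs.\ sign) Proposition~\ref{prop:moegarth-formula} uses and checking that the fusion move of Proposition~\ref{prop:ui-nt}\eqref{it-nt} indeed outputs $\underline l'(\ast)=0$ rather than $1$ --- I would resolve it by noting that the intersection $\Pi^A_\psi\cap\Pi^A_{\psi'}$ is built from the tempered side, where $\underline l\equiv 0$ is forced, and that $\epsilon^{\psi'}_\pi$ restricted to lie in $\widehat{\mathcal S_{\phi_{\psi'}}}$ (it comes from a tempered representation under Aubert-type compatibility, or directly: the fused blocks contribute trivially) pins the remaining ambiguity; this is exactly the kind of statement Lemma~\ref{lem:lpacket} was set up to handle, so I would cite it to conclude that $\underline l(i)=1$ for $b_i=2$ and finish \eqref{it:Js2}.
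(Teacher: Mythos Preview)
Your forward direction of part~\eqref{it:Js1} is correct and matches the paper: from the tempered side, $\mathfrak t_c(\epsilon^\psi_\pi)\neq 1$ translates (via Propositions~\ref{prop:moegarth-formula} and~\ref{prop:z-formula} with the standard order) into $\underline\eta(i)=-\underline\eta(j)$ for the two indices with $a_i=c-1,\ a_j=c+1$, and iterated application of case~\eqref{it-nt} of Proposition~\ref{prop:ui-nt} lands $\pi$ in $\Pi^A_{\psi'}$.

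The reverse direction is where your proposal diverges from the paper and develops a genuine gap. You invoke Proposition~\ref{prop:ui-exhaust}\eqref{it:ui-exhaust1} to obtain a chain $\gotM_\lambda=\gotM_1,\dots,\gotM_r=\gotM_{\lambda,J}$ and then appeal to the Corollary after Proposition~\ref{prop:ui-nt} to force every intermediate table to be near-tempered and every move to be case~\eqref{it-nt}. But that Corollary only says: if the \emph{target} of a move is near-tempered then so is the source. In the chain, each step has $\{\gotM_i,\gotM_{i+1}\}=\{\overline\gotM_i,\overline\gotM_i'\}$ with the move going $\overline\gotM_i\to\overline\gotM_i'$; either of $\gotM_i,\gotM_{i+1}$ can be the source. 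So working backward from $\gotM_r$ you cannot conclude $\gotM_{r-1}$ is near-tempered when the move runs $\gotM_r\to\gotM_{r-1}$. Even granting near-temperedness, the tracked Moeglin parameter you end up with on $\gotM_{\lambda,J}$ is relative to the admissible order built along the chain, not the standard admissible order $\prec_0$ demanded in part~\eqref{it:Js2}.

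Your treatment of part~\eqref{it:Js2} then breaks down. You compute correctly that, in $\prec_0$ with trivial $\gamma$, $\underline l(i)=0$ for $b_i=2$ gives $\epsilon^{\psi'}_\pi(a_i,2)=1$; but then you assert that $\epsilon^{\psi'}_\pi$ should lie in $\widehat{\mathcal S_{\phi_{\psi'}}}$ (vanishing on even-$b$ coordinates), and via Lemma~\ref{lem:lpacket} conclude $\underline l(i)=1$. That premise is unjustified---nothing forces the Arthur character on the $\psi'$-side to descend to the $L$-parameter component group---and the conclusion $\underline l(i)=1$ contradicts the statement you are trying to prove.

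The paper avoids all of this by arguing entirely from the $\psi'$-side in the standard order $\prec_0$. Writing $\pi=\pi_{\prec_0}(\psi',\underline l',\underline\eta')$, the Moeglin--Arthur formula gives $\epsilon^{\psi'}_\pi(a_i,2)=1-\underline l'(i)$ for each $b_i=2$ index. Setting $J'=\{c\in J:\underline l'(i)=0\text{ for }a_i=c\}$, one applies Corollary~\ref{prop:ui-cor} (the reverse of case~\eqref{it-nt}) at each $c\in J'$ to exhibit $\pi$ in the packet of $\psi''=\psi_{z,\gotM_{\lambda,J\setminus J'}}$, with $\epsilon^{\psi''}_\pi(c,2)=0$ for all remaining $c\in J\setminus J'$. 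Now Lemma~\ref{lem:lpacket} applies to $\psi''$ and puts $\pi$ in the $L$-packet $\Pi_{\phi_{\psi''}}$; since $\pi$ already lies in the tempered $L$-packet $\Pi_{\phi_\psi}=\Pi^A_\psi$, disjointness of $L$-packets forces $\psi''=\psi$, i.e.\ $J=J'$, i.e.\ $\underline l'\equiv 0$. Part~\eqref{it:Js2} is then immediate from the formula, and the sign condition recorded during the splitting (equation~\eqref{eq:proof-intersect} in the paper) yields $\mathfrak t_c(\epsilon^\psi_\pi)\neq 1$ for every $c\in J$, completing~\eqref{it:Js1}. The key move you are missing is this use of $L$-packet disjointness to pin down $\underline l'$; Lemma~\ref{lem:lpacket} is applied to the \emph{intermediate} parameter $\psi''$, not to $\psi'$ itself.
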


\begin{proof}

We first give a proof of the first part, out of which the second readily follows.

Suppose that $\pi\in \Pi^{A}_{\psi}$ is such that $\mathfrak{t}_c(\epsilon^{\psi}_{\pi})\neq1$ holds, for all $c\in J$.

Let us write $\pi= \pi_{\prec_0}(\psi, \underline{l^0},\underline{\eta})$, with $\underline{l^0}\equiv 0$, and $\prec_0$ a standard admissible order on $\gotM_{\lambda}$. Propositions \ref{prop:moegarth-formula} and \ref{prop:z-formula} show that $(-1)^{\epsilon^{\psi}_{\pi}(a_i,1)} = \underline{\eta}(i)$ holds, for all $i\in I(\gotM_{\lambda}^{gp})$, in this case.

Hence, $\underline{\eta}(i) = -\underline{\eta}(j)$ holds, for all $i,j\in I(\gotM_{\lambda}^{gp})$, with $a_{i}+1 = a_{j}-1 \in J$. 

By successive applications, for each $c\in J$, of case \eqref{it-nt} of Proposition \ref{prop:ui-nt}, we can now arrive at an inclusion $\pi\in \Pi^{A}_{\psi'}$.

Conversely, suppose that we have $\pi= \pi_{\prec_0}(\psi', \underline{l}',\underline{\eta}')\in \Pi^{A}_{\psi}\cap \Pi^{A}_{\psi'}$. 

By Proposition \ref{prop:moegarth-formula}, we see an equality
\begin{equation}\label{eq:proof-int1}
\epsilon^{\psi_J}_{\pi}(a_i,2) = 1 - \underline{l}'(i)\;,
\end{equation}
for any $i \in I(\gotM_{\lambda}^{gp})$ with $a_i\in J$.

Let us denote the set 
\[
J' = \{c\in J\,:\, \underline{l}'(i) = 0,\,\mbox{when }a_i=c\}\;.
\] 
Hence, $\underline{l}'(i) =1$, for all  $i \in I(\gotM_{\lambda,J}^{gp})$ with $a_i\in J\setminus J'$.

Successively applying Corollary \ref{prop:ui-cor} (that is, case \eqref{it-nt} of Proposition \ref{prop:ui-nt} in reverse), we see that 
\[
\pi \cong \pi_{\prec_0} ( \psi'', \underline{l}'',\underline{\eta}'')\in \Pi^{A}_{\psi''}\;,
\]
for $\psi'' = \psi_{z,\gotM_{\lambda,J\setminus J'}}$, 
with $\underline{l}''(i) = \underline{l}'(i)$, for all $i \in I(\gotM_{\lambda,J\setminus J'}^{gp})$, such that $a_i\in J\setminus J'$, and 
\begin{equation}\label{eq:proof-intersect}
    \underline{\eta}''(i) = -\underline{\eta}''(j)\;,
\end{equation}
for all $i,j\in I(\gotM_{\lambda,J\setminus J'}^{gp})$, with $a_{i}+1 = a_{j}-1 \in J'$. 

From equation \eqref{eq:proof-int1} we now see that $\epsilon^{\psi''}_{ \pi} (c,2) = 0$, for all $c\in J\setminus J'$. Thus, $\epsilon^{\psi''}_{ \pi}$ lies in the subgroup $P(T_{J\setminus J'}(\lambda))$ of $P(\gotM_{\lambda, J\setminus J'})$, and by Lemma \ref{lem:lpacket} we must have $\pi\in  \Pi_{\phi_{\psi''}}$.

Since $L$-packets are disjoint and $\Pi^{A}_{\psi} = \Pi_{\phi_{\psi}}$ is tempered, we must have $\psi = \psi''$ and $J=J'$. 

Consequently, when identifying $(-1)^{\epsilon^{\psi}_{\pi}}$ with $\underline{\eta}''$ as before, we deduce $\mathfrak{t}_c(\epsilon^{\psi}_{\pi})\neq1$, for all $c\in J$, from the condition in equation \eqref{eq:proof-intersect}.
\end{proof}

\subsubsection{Further refinement}

In order to advance towards the proofs of our main theorems, we need to gather some refined information regarding Moeglin parameters and Arthur characters of representations found in intersections of $A$-packets.

The following lemma sums up phenomena that were detected in \cite{xu21}.

\begin{lemma}\label{lem:switch}
Let $\prec$ be an admissible order on a near-tempered table $\gotM\in \mathcal{T}^{s_G}(N_G)$, for which we assume $I(\gotM^{gp}) = \{1\prec \ldots \prec t\}$.

Let $\psi = \psi_{z,\gotM}\in \Psi_0(G)$ be the associated $A$-packet, for a choice of $z\in \ZZ$.

Suppose that $1\leq k\leq t$ is such that $(a_{k}, b_k) = (a,2)$, for $a>1$.

Let
\[
\pi=\pi_{\prec}(\psi, \underline{l},\underline{\eta}) \in \Pi^{A}_{\psi}
\]
be any representation with $\underline{l}(k) =0$.

If $a_{k-1} \leq a-1   = a_{k+1} $ holds, then $(-1)^{\epsilon^{\psi}_{\pi}(a-1,1)} = \underline{\eta}(k)$.

If $a_{k-1} = a+1  \leq a_{k+1} $ holds, then $(-1)^{\epsilon^{\psi}_{\pi}(a+1,1)} = -\underline{\eta}(k)$.

\end{lemma}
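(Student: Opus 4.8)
The strategy is to reduce this lemma to a local computation involving only the pair of indices $\{k-1,k\}$ or $\{k,k+1\}$ inside the Moeglin parameterization, by isolating these indices via the $A$-packet intersection machinery of Propositions~\ref{prop:ui} and~\ref{prop:ui-nt}, and then reading off the Arthur character using Proposition~\ref{prop:moegarth-formula} together with the explicit $\gamma^{\psi}_{\prec}$ formula of Proposition~\ref{prop:z-formula}. The point is that the claimed equalities $(-1)^{\epsilon^{\psi}_{\pi}(a-1,1)} = \underline{\eta}(k)$ (resp. $(-1)^{\epsilon^{\psi}_{\pi}(a+1,1)} = -\underline{\eta}(k)$) involve the value of $\epsilon^{\psi}_{\pi}$ on a part $(a\mp1,1)$ that is \emph{not} among the $(a_i,b_i)$ indexed by $I(\gotM^{gp})$ — it lives in the $L$-parameter $\phi_\psi$ rather than in $\gotM$ itself. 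So the mechanism must be: the hypothesis $a_{k\mp1} = a\mp1 = a_{k\pm1}$ (i.e.\ a neighbour with first coordinate $a\mp1$ flanking the block $(a,2)$) is exactly the configuration where $(a,2)$ together with an adjacent $(a',1)$ can be recombined, and the recombination forces a relation between $\underline\eta$ on the $(a',1)$ index and $\underline\eta(k)$.

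Concretely, I would proceed as follows. First, observe that since $\underline{l}(k)=0$ and $(a_k,b_k)=(a,2)$, Proposition~\ref{prop:moegarth-formula} gives $(-1)^{\epsilon^\psi_\pi(a,2)} = \gamma^\psi_\prec(k)(-1)^{\lfloor 1\rfloor + 0}\underline\eta(k)^2 = -\gamma^\psi_\prec(k)$, so $\epsilon^\psi_\pi(a,2)$ is determined; but this is a side remark. The substantive step is to relabel: pick an admissible order (possibly after reordering, using Proposition~\ref{prop:admss} — near-temperedness guarantees reordering adjacent blocks with $|a_i-a_j|\le 2$ preserves admissibility when $(1,2)\notin S(\gotM)$, and otherwise treat the low-$a$ tail separately) so that the flanking index with $a_j = a\mp1$ is adjacent to $k$. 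Then in the first case ($a_{k-1}\le a-1 = a_{k+1}$): there is an index, call it $m$, with $(a_m,b_m)=(a-1,1)$ adjacent to $k$; I would apply case~\eqref{it-nt} of Proposition~\ref{prop:ui-nt} — which merges a pair $\{(a',1),(a'+1,1)\}$ with $\eta$'s of opposite sign into a single $(a'+1,2)$ — \emph{in reverse} together with the recombination in Corollary~\ref{prop:ui-cor} / Proposition~\ref{prop:ui}\eqref{case3}, noting that $\alpha_m = a$ and $\beta_k = a-2 = \alpha_m - 2$... more precisely, the relevant move is the one in Proposition~\ref{prop:ui-nt}\eqref{it-nt3} or its analogue, which relates the $(a,2)$ block with $l=0$ to a pair including $(a-1,1)$, and pins $\underline\eta$ on the $(a-1,1)$ index equal to $\underline\eta(k)$ via the condition $\eta_{k+1}=(-1)^{b_k}\eta_k = \eta_k$. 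Reading the resulting parameter back through Proposition~\ref{prop:moegarth-formula} (with the $b=1$ branch, where $(-1)^{\epsilon^\psi_\pi(a_i,1)}=\gamma^\psi_\prec(i)\underline\eta(i)$, and $\gamma\equiv 1$ for the standard admissible order) yields $(-1)^{\epsilon^\psi_\pi(a-1,1)}=\underline\eta(k)$. The second case is entirely parallel, with $(a+1,1)$ in place of $(a-1,1)$; here the parity factor $(-1)^{b_k}$ in the merge condition combined with the position of $a+1$ \emph{above} $a$ (rather than below) flips the sign, giving $-\underline\eta(k)$. Throughout, I would verify that $\epsilon^\psi_\pi$, being the canonical Arthur character, is independent of the admissible order chosen (Proposition~\ref{prop:moegarth-formula} asserts the formula for any admissible order), so the reordering is harmless.

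The main obstacle I anticipate is bookkeeping the sign conventions correctly across the two cases — in particular getting the $(-1)^{b_k}$ versus $(-1)^{b_k+1}$ distinction in the merge conditions of Proposition~\ref{prop:ui} right, and tracking how the position of the flanking index (below $k$ in the first case, above $k$ in the second) interacts with the admissible order and hence with $\gamma^\psi_\prec$. A secondary subtlety is the case where the flanking part has small first coordinate and $(1,2)\in S(\gotM)$, which may obstruct freely reordering into standard admissible form; there one must either invoke Proposition~\ref{prop:admss}(1) (which still gives $a_i\le a_j+1$ and controls the $(2,1)$ tail) or argue directly with the given admissible order and the nontrivial $\gamma^\psi_\prec$ from Proposition~\ref{prop:z-formula}, checking that the $|Z_i|$ count contributes no extra sign because the relevant neighbours differ in $a$ by exactly the amount recorded in $Z_i$. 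I expect this last point to be the place where the proof in the paper does real (if short) work, citing \cite{xu21} for the precise propagation of $\underline\eta$-signs through these near-tempered moves.
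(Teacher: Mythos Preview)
Your proposal rests on a misreading of the setup. You assert that $(a\mp1,1)$ ``is not among the $(a_i,b_i)$ indexed by $I(\gotM^{gp})$'' and ``lives in the $L$-parameter $\phi_\psi$ rather than in $\gotM$ itself''. This is false: in the first case the hypothesis is literally $a_{k+1}=a-1$, and the near-tempered parity constraint (since $(a_k,b_k)=(a,2)$ lies in $\mathcal{T}^{s_G}_0$) forces $b_{k+1}=1$. Hence $(a-1,1)=(a_{k+1},b_{k+1})$ is precisely the entry of $\gotM^{gp}$ at index $k+1$. Likewise, in the second case $(a+1,1)=(a_{k-1},b_{k-1})$. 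No reordering is needed; the relevant index is already adjacent to $k$ in the given admissible order.

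Once this is seen, the intersection and recombination moves you propose are an unnecessary detour. The paper applies Proposition~\ref{prop:moegarth-formula} directly at the index $k+1$ (resp.\ $k-1$), where $b=1$ and $l=0$, giving
\[
(-1)^{\epsilon^\psi_\pi(a\mp1,1)}=\gamma^\psi_\prec(k\pm1)\,\underline\eta(k\pm1).
\]
Two short inputs then finish. First, \cite[Lemma~5.7]{xu21} (resp.\ \cite[Lemma~5.6]{xu21}) gives the relation $\underline\eta(k+1)=-\underline\eta(k)$ (resp.\ $\underline\eta(k-1)=\underline\eta(k)$) directly in this configuration --- this is the Xu citation you anticipated, but as a standalone constraint on the Moeglin parameter rather than a statement about propagation through moves. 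Second, the hypotheses on $a_{k-1}$ and $a_{k+1}$, combined with Proposition~\ref{prop:admss}, pin down the set $Z_{k\pm1}$ of Proposition~\ref{prop:z-formula} to be exactly $\{k\}$, so $\gamma^\psi_\prec(k\pm1)=-1$. Multiplying yields the claim.

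Your instinct that the sign bookkeeping is the content was right, as was the guess that \cite{xu21} is invoked; but the mechanism is a direct evaluation at an index already present in the table, not a manoeuvre through $A$-packet intersections. That machinery is used instead in the subsequent Lemma~\ref{it:Js3}, which takes the present lemma as one of its inputs.
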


\begin{proof}
In the former case, since the parity condinition forces $b_{k+1}=1$, it follows from \cite[Lemma 5.7]{xu21} that $\underline{\eta}(k+1) = -\underline{\eta}(k)$. 

By Proposition \ref{prop:admss}, we know that $a-1 \leq a_j$ holds, for all $k+1 < j$, and that $a_j \leq a_{k-1}+1 \leq a_{k+1}$, for all $j< k-1$.

Thus, we have an equality $Z_{k+1} = \{k\}$, for the set that was defined in Proposition \ref{prop:z-formula}, and $\gamma^{\psi}_{\prec}(k+1)=-1$, by the same proposition.

The claim now follows from the formula of Proposition \ref{prop:moegarth-formula}.

In the latter case, \cite[Lemma 5.6]{xu21} shows $\underline{\eta}(k-1) = \underline{\eta}(k)$. The claim follows similarly from $Z_{k-1} = \{k\}$.

\end{proof}

\begin{lemma}\label{it:Js3}

Let $\lambda, z, J, \psi, \psi'$ be as in Lemma \ref{lem:Js}.

Let $\prec$ be any admissible order on the near-tempered table $\gotM_{\lambda,J}$, and 
\[
\pi = \pi_{\prec}(\psi', \underline{l},\underline{\eta}) \in \Pi^{A}_{\psi} \cap \Pi^{A}_{\psi'}
\]
a representation. 

If $k\in I(\gotM_{\lambda,J}^{gp})$ is such that $b_k=2$ and $\underline{l}(k) = 0$, then
\[
(-1)^{\epsilon^{\psi}_{\pi}(a_k-1,1)} = \underline{\eta}(k)=(-1)^{\epsilon^{\psi}_{\pi}(a_k+1,1)+1}\;.
\]
Here, we may take $\epsilon^{\psi}_{\pi}(0,1)=0$ if necessary.

\end{lemma}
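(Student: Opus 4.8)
The statement is a refinement of Lemma \ref{lem:Js}\eqref{it:Js2}, which handled the case of a \emph{standard} admissible order $\prec_0$; here we must allow an \emph{arbitrary} admissible order $\prec$ on $\gotM_{\lambda,J}$ and still extract the value of the tempered character $\epsilon^{\psi}_{\pi}$ at the two parts $a_k - 1$ and $a_k + 1$ adjacent to the pair $(a_k,2)$ with $\underline{l}(k)=0$. The plan is to combine Lemma \ref{lem:Js}\eqref{it:Js1}--\eqref{it:Js2} with Lemma \ref{lem:switch}.

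First I would record what Lemma \ref{lem:Js} already gives: since $\pi\in\Pi^{A}_{\psi}\cap\Pi^{A}_{\psi'}$, part \eqref{it:Js1} says $\mathfrak{t}_c(\epsilon^{\psi}_{\pi})\neq 1$ for all $c\in J$, and part \eqref{it:Js2} (with a standard admissible order) says that \emph{every} $\underline{l}$-value vanishes. Because the index $k$ with $b_k=2$ corresponds to a part $a_k\in J$, and $\gotM_{\lambda,J}$ has $(a_k,2)$ as a table entry while $\lambda$ has parts $a_k\pm1$, the integers $a_k-1$ and $a_k+1$ are exactly the neighbours of $a_k$ in $S(\lambda)$. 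Next, I would exploit the near-tempered structure: for the table $\gotM_{\lambda,J}$, every entry has $b_i\in\{1,2\}$ with $b_i$ determined by the parity of $a_i$, and the entries with $b_i=2$ are precisely $\{(c,2):c\in J\}$. In particular the entries adjacent to $(a_k,2)$ with values $a_k-1$ and $a_k+1$ (if present in $\gotM_{\lambda,J}$) have $b=1$, so they satisfy the hypotheses of Lemma \ref{lem:switch}.

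The core step is then a reduction to a standard admissible order. Given the arbitrary admissible order $\prec$, I would use Proposition \ref{prop:admss}(1) to note that in any admissible order the $a$-values are ``almost sorted'' ($a_i\le a_j+1$ for $i\prec j$), and then pass — via successive transpositions of consecutive indices, each governed by the explicit formulas of Corollary \ref{prop:ui-cor}/Proposition \ref{prop:ui-nt}\eqref{it-nt} (which is the only $A$-packet move that preserves near-temperedness, by the Corollary following Proposition \ref{prop:ui-nt}) — to a standard admissible order $\prec_0$, tracking how $\underline{\eta}(k)$ and the neighbouring $\underline{\eta}$-values transform. Actually, the cleaner route is to invoke Lemma \ref{lem:switch} directly on $\prec$: the hypothesis $\underline{l}(k)=0$ is given, and the two cases of Lemma \ref{lem:switch} cover exactly the two relative positions of the neighbour with $a$-value $a_k-1$ (resp.\ $a_k+1$) with respect to $k$ in the order $\prec$; since these neighbours have $b=1$, Lemma \ref{lem:switch} yields $(-1)^{\epsilon^{\psi}_{\pi}(a_k-1,1)} = \underline{\eta}(k)$ and $(-1)^{\epsilon^{\psi}_{\pi}(a_k+1,1)} = -\underline{\eta}(k)$. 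One must also handle the boundary case where the part $a_k-1$ is $0$, i.e.\ $a_k=1$ — but wait, $b_k=2$ forces $a_k$ even in the $s_G=-1$ case and odd in the $s_G=1$ case, so $a_k=1$ can occur only for symplectic dual groups; in that degenerate situation the convention $\epsilon^{\psi}_{\pi}(0,1)=0$ is set precisely so that the identity reads $\underline{\eta}(k)=1$, which has to be checked separately (it follows because no neighbour $a_k-1=0$ exists and Lemma \ref{lem:tail}/the parity of $P(\lambda)_0$ pins down $\underline{\eta}(k)$).

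The main obstacle I anticipate is the bookkeeping in relating $\epsilon^{\psi}_{\pi}$ (the \emph{tempered} character, a boolean function on $S(\lambda)$) with $\epsilon^{\psi'}_{\pi}$ and the Moeglin data $(\underline{l},\underline{\eta})$ of $\pi$ \emph{as a member of $\Pi^{A}_{\psi'}$ with respect to the arbitrary order $\prec$}: Moeglin parameters are order-dependent, so Lemma \ref{lem:switch}'s hypotheses on the relative positions $a_{k-1},a_{k+1}$ must be matched with whichever neighbour actually has $a$-value $a_k\mp1$, and one needs Proposition \ref{prop:admss} to guarantee that such a neighbour, if it exists in the table at all, sits adjacent to $k$ in $\prec$ (or can be made so without changing the relevant $\underline\eta$). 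Getting the signs $\gamma^{\psi}_{\prec}$ right via Proposition \ref{prop:z-formula} — in particular verifying $Z_{k\pm1}=\{k\}$ as in the proof of Lemma \ref{lem:switch} — is where the care is needed, but it is a finite check once the order is normalised.
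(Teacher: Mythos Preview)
Your plan has a genuine gap in the case analysis.  You claim that ``the two cases of Lemma~\ref{lem:switch} cover exactly the two relative positions of the neighbour with $a$-value $a_k\mp1$'', but Lemma~\ref{lem:switch} requires that the $\prec$-neighbour of $k$ actually has $a$-value equal to $a_k-1$ (resp.\ $a_k+1$).  Proposition~\ref{prop:admss}(1) only gives $a_{k-1}\le a_k+1$ and $a_{k+1}\ge a_k-1$, so the case $a_{k-1}<a_k<a_{k+1}$ is left untreated.  This case genuinely occurs: when $a_k\in J$ and the parts $a_k\pm1$ of $\lambda$ have been entirely absorbed into the entry $(a_k,2)$ (e.g.\ $\lambda=(1,3,5)$, $J=\{4\}$, standard order), there is no index in $I(\gotM_{\lambda,J}^{gp})$ with $a$-value $a_k\pm1$ at all, and Lemma~\ref{lem:switch} has nothing to act on.  Your fallback suggestion of ``normalising the order'' does not help here, and your aside that the neighbour ``can be made [adjacent] without changing the relevant $\underline{\eta}$'' is unjustified --- Moeglin parameters are order-dependent.

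The paper handles this missing case by a different mechanism: it applies Corollary~\ref{prop:ui-cor} in reverse to split the entry $(a_k,2)$ back into $(a_k-1,1)\prec_\ast(a_k+1,1)$, thereby passing to the $A$-parameter $\psi''=\psi_{z,\gotM_{\lambda,J\setminus\{a_k\}}}$ with induced Moeglin datum satisfying $\underline{\eta}_\ast(\ast)=\underline{\eta}(k)$.  In this split parameter one checks $\gamma^{\psi''}_{\prec_\ast}(\ast)=1$ via Proposition~\ref{prop:z-formula}, so Proposition~\ref{prop:moegarth-formula} gives $(-1)^{\epsilon^{\psi''}_{\pi}(a_k-1,1)}=\underline{\eta}(k)$; Lemma~\ref{lem:Js}\eqref{it:Js2} (applied with $J$ replaced by $J\setminus\{a_k\}$) then identifies $\epsilon^{\psi''}_{\pi}(a_k-1,1)$ with $\epsilon^{\psi}_{\pi}(a_k-1,1)$.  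The boundary case $a_k=1$ is dealt with by the same splitting idea rather than by an appeal to Lemma~\ref{lem:tail}: one passes to $J\setminus\{1\}$, uses $\mathfrak{t}_1(\epsilon^{\psi}_{\pi})\neq1$ from Lemma~\ref{lem:Js}\eqref{it:Js1} to pin down $\underline{\eta}_\ast(1)=-1$, and then recovers $\underline{\eta}(1)=1$ via Proposition~\ref{prop:ui-nt}\eqref{it-nt} and injectivity of $M_{\prec}$.
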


\begin{proof}

Let us assume $I(\gotM_{\lambda,J}^{gp}) = \{1\prec \ldots \prec t\}$, and write $a=a_k$.

We first treat the case of $a>1$.

It follows from Lemma \ref{lem:Js} that $\epsilon^{\psi}_{\pi}(a-1,1) = 1-\epsilon^{\psi}_{\pi}(a+1,1)$. 

If either $a_{k+1} = a-1$ or $a_{k-1}= a+1$ hold, the claim follows from Lemma \ref{lem:switch}.




Otherwise, by Proposition \ref{prop:admss}, we are left with the case of $a_{k-1} < a < a_{k+1}$. 

Arguing by Corollary \ref{prop:ui-cor}, we see an inclusion 
\[
\pi \cong \pi_{\prec_\ast}(\psi'', \underline{l}_\ast, \underline{\eta}_\ast)\in \Pi^{A}_{\psi''}\;,
\]
for $\psi'' = \psi_{z, \gotM_{\lambda, J\setminus \{a\}}}$.

Here, $\prec_\ast$ is an admissible order on the table $\gotM_{\lambda, J\setminus \{a\}}$, that is constructed out of $\prec$, by replacing the relative position of $k$ with $(a_\ast, b_\ast) = (a-1,1) \prec_\ast (a+1,1)= (a_{\ast\ast}, b_{\ast\ast})$. The parameter $(\underline{l}_\ast, \underline{\eta}_\ast)\in \mathcal{W}(\gotM_{\lambda, J\setminus \{a\}})$ is constructed out of $(\underline{l}, \underline{\eta})$ by setting $\underline{\eta}_\ast(\ast) = \underline{\eta}(k)$ and $\underline{ \eta}_\ast(\ast\ast) = -\underline{\eta}(k)$.

Using the formula of Proposition \ref{prop:z-formula}, it is evident that $\gamma^{\psi''}_{\prec_\ast} (\ast)=1$. From Proposition \ref{prop:moegarth-formula} we then have
\[
(-1)^{\epsilon^{\psi''}_{\pi}(a-1,1)} = \underline{\eta}_\ast(\ast) = \underline{\eta}(k)\;.
\]
Yet, from part \eqref{it:Js2} of Lemma \ref{lem:Js} we know that $\epsilon^{\psi}_{\pi}(a-1,1) = \epsilon^{\psi''}_{\pi}(a-1,1)$, and the statement follows.

The case of $a=1$ needs to be treated with a slightly different argument.  

First, we know that $k=1$ (Proposition \ref{prop:admss}). We take $\prec_\ast$ to be the admissible order on $ \gotM_{\lambda, J\setminus \{1\}}$ constructed out of $\prec$ by replacing $(a_1,b_1) = (1,2)$ with $(2,1)$.

We know that $\pi \in \Pi^{A}_{\psi_{\gotM_{z,\lambda,J\setminus\{1\}}}}$ from the first part Lemma \ref{lem:Js}. Hence, we may write $\pi =  \pi_{\prec_\ast}(\psi_{\gotM_{z,\lambda,J\setminus\{1\}}}, \underline{l}_\ast, \underline{\eta}_\ast)$. 

Also, by same lemma, we have $\mathfrak{t}_1(\epsilon^{\psi}_{\pi})\neq1$, which implies $\epsilon^{\psi''}_{\pi}(2,1) = \epsilon^{\psi}_{\pi}(2,1)=1$. 

By Proposition \ref{prop:z-formula}, $\gamma^{\psi''}_{\prec_\ast}(1) = 1$. Hence, it follows from Proposition \ref{prop:moegarth-formula} that $\underline{\eta}_\ast(1) =-1$. 

Thus, the inclusion $\pi \in \Pi^{A}_{\psi'}$ may be obtained out of $\psi''$ as a case of Proposition \ref{prop:ui-nt}. In particular, from injectivity of $M_{\prec}$ we conclude that $\underline{\eta}(1) = - \underline{\eta}_\ast(1) =1$.

\end{proof}

\subsection{Weak Arthur packets}\label{sect:wArthur}

Let us recall Definition \ref{defi:intro} for a weak Arthur packet $\Pi^{w}_{z,\mathcal{O}^\vee}$ that is attached to a unipotent conjugacy class $\mathcal{O}^\vee\in \mathcal{U}^\vee$ and a possible sign $z=\pm1$. 

We also recall that according to Theorem \ref{thm:cmbo}, all representations $\pi\in \irr_{\chi_{z, \mathcal{O}^\vee}}(G)$, for $z\in\ZZ$, admit an algebraic wavefront orbit $\WF(\pi)\in \mathcal{N}_{\overline{F}}$. 

A following consequence is now evident.

\begin{proposition}[following \cite{cmbo-arthur}]\label{prop:weakWF}
For a partition $\lambda \in \mathcal{P}^{s_G}(N_G)$ and $z\in \ZZ$, we have
\[
\begin{array}{rl}
\Pi^{w}_{z,\mathcal{O}_{\lambda}^\vee}
& = \{\pi \in \irr_{\chi_{z,\mathcal{O}_{\lambda}^\vee}}(G)\;:\; \WF(\pi) = \mathcal{O}_{d(\lambda)}\} \\
&  
=  \{\pi \in \irr_{\chi_{z,\mathcal{O}_{\lambda}^\vee}}(G)\;:\;  d ( \lambda(\phi_{(\pi^t)}))  = d(\lambda)\}
\end{array}
\;,
\]
and an inclusion $\Pi_{z,\mathcal{O}_{\lambda}^\vee} 
 \subset \Pi^{w}_{z,\mathcal{O}_{\lambda}^\vee} $ holds.

\end{proposition}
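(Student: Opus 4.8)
\textbf{Proof plan for Proposition \ref{prop:weakWF}.}

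The plan is to unwind Definition \ref{defi:intro} via Theorem \ref{thm:cmbo}, which applies here since $\chi_{z,\mathcal{O}^\vee_\lambda}$ is unramified and the element $s_{\chi_{z,\mathcal{O}^\vee_\lambda}}\in \mathbf{G}^\vee(\C)$ has real positive eigenvalues (this follows from Remark \ref{rem:inf-ch-comp} applied to the explicit form of $\phi_{(z,\ldots,z),\lambda}$; note for $z=-1$ in the symplectic case one twists by $\kappa_0$, which does not change positivity of eigenvalues after the appropriate identification). First I would invoke Theorem \ref{thm:cmbo} to record that every $\pi\in \irr_{\chi_{z,\mathcal{O}^\vee_\lambda}}(G)$ admits an algebraic wavefront orbit, with $\WF(\pi) = \mathcal{O}_{d(\lambda(\phi_{\pi^t}))}$. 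Consequently $\gkd(\pi) = \tfrac12\dim \mathcal{O}_{d(\lambda(\phi_{\pi^t}))}$ for all such $\pi$.

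The second step is to identify which orbit in the fixed closure $\overline{\mathcal{O}^\vee_\lambda}$ maximizes dimension among those of the form $\mathcal{O}_{d(\lambda(\phi_{\pi^t}))}$. By Lemma \ref{lem:temp-closure} (applied to $\phi = \phi_{\pi^t}$, whose infinitesimal character equals $\chi_{\phi_{(z,\ldots,z),\lambda}}$), the class $\mathcal{O}^\vee_{\lambda(\phi_{\pi^t})}$ lies in $\overline{\mathcal{O}^\vee_\lambda}$, i.e. $\lambda(\phi_{\pi^t})\leq \lambda$ in the dominance order. Since the Barbasch--Vogan--Lusztig--Spaltenstein duality $d$ is order-reversing, $d(\lambda(\phi_{\pi^t})) \geq d(\lambda)$, hence $\dim \mathcal{O}_{d(\lambda(\phi_{\pi^t}))} \geq \dim \mathcal{O}_{d(\lambda)}$, with equality exactly when $d(\lambda(\phi_{\pi^t})) = d(\lambda)$. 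It remains to exhibit at least one representation in $\irr_{\chi_{z,\mathcal{O}^\vee_\lambda}}(G)$ attaining the minimal GK-dimension $\tfrac12\dim\mathcal{O}_{d(\lambda)}$: the spherical representation $\delta_{z,\mathcal{O}^\vee_\lambda}$ works, since its Aubert dual is the tempered member of $\Pi_{\phi_{(z,\ldots,z),\lambda}}$, so $\lambda(\phi_{(\delta_{z,\mathcal{O}^\vee_\lambda})^t}) = \lambda$ by Proposition \ref{prop:integralA}\eqref{it:intA3} and Proposition \ref{prop:lambdaphi}, giving $\WF(\delta_{z,\mathcal{O}^\vee_\lambda}) = \mathcal{O}_{d(\lambda)}$.

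Putting these together: $\pi\in \Pi^w_{z,\mathcal{O}^\vee_\lambda}$ iff $\gkd(\pi)$ is minimal iff $\dim\mathcal{O}_{d(\lambda(\phi_{\pi^t}))} = \dim\mathcal{O}_{d(\lambda)}$ iff (by the equality case above, using that $d(\lambda)$ is the unique minimal-dimension — equivalently maximal in the order — value, and that distinct special partitions of the same size have distinct dimensions) $d(\lambda(\phi_{\pi^t})) = d(\lambda)$, which is also equivalent to $\WF(\pi) = \mathcal{O}_{d(\lambda)}$. This yields both displayed descriptions. Finally, the inclusion $\Pi_{z,\mathcal{O}^\vee_\lambda}\subset \Pi^w_{z,\mathcal{O}^\vee_\lambda}$ follows because every $\pi\in \Pi_{z,\mathcal{O}^\vee_\lambda}$ is anti-tempered with $\pi^t$ in the tempered $L$-packet $\Pi_{\phi_{(z,\ldots,z),\lambda}}$, so $\lambda(\phi_{\pi^t}) = \lambda$ and hence $d(\lambda(\phi_{\pi^t})) = d(\lambda)$. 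The main obstacle I anticipate is the careful verification that $d(\lambda)$ is genuinely the dimension-maximizing value over the whole closure — i.e. that for $\mu\lneq\lambda$ one has $\dim\mathcal{O}_{d(\mu)} > \dim\mathcal{O}_{d(\lambda)}$ strictly, which requires knowing $d$ is strictly order-reversing on dominance (true since $d\circ d' $ fixes special pieces and $d$ restricts to a bijection on special partitions preserving the order structure, cf. Proposition \ref{prop:d-prop}); the positivity hypothesis check for $z=-1$ is a minor but necessary bookkeeping point.
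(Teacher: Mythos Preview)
Your approach is essentially the same as the paper's: invoke Theorem~\ref{thm:cmbo}, use Lemma~\ref{lem:temp-closure} plus the order-reversing property of $d$ to get the lower bound $\dim\WF(\pi)\ge\dim\mathcal{O}_{d(\lambda)}$, and exhibit the tempered $L$-packet $\Pi_{\phi_{(z,\ldots,z),\lambda}}$ (equivalently the anti-tempered packet $\Pi_{z,\mathcal{O}^\vee_\lambda}$) as attaining equality.

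Two small corrections. First, your anticipated obstacle is a red herring: you do not need that $d$ is \emph{strictly} order-reversing, nor that distinct special partitions have distinct orbit dimensions (which is false in general). You already have the closure relation $\mathcal{O}_{d(\lambda)}\subset\overline{\mathcal{O}_{d(\lambda(\phi_{\pi^t}))}}$, and for nilpotent orbits, an orbit contained in the closure of another with the same dimension must coincide with it. That is all the equality case needs. Second, the case $z=-1$ arises when $G$ is \emph{orthogonal} (i.e.\ $\mathbf{G}^\vee=\mathbf{Sp}_{2n}$), not symplectic; and the eigenvalues of $s_{\chi_{-1,\mathcal{O}^\vee_\lambda}}$ are genuinely negative, so Theorem~\ref{thm:cmbo} does not apply directly. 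The clean reduction is that the $\kappa_0$-twist gives a bijection $\irr_{\chi_{1,\mathcal{O}^\vee_\lambda}}(G)\to\irr_{\chi_{-1,\mathcal{O}^\vee_\lambda}}(G)$ which commutes with Aubert duality and preserves wavefront sets (a one-dimensional twist does not alter the local character expansion), so the $z=-1$ statement follows from the $z=1$ one. The paper's own proof glosses over this point as well.
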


\begin{proof}
The duality $d$, viewed as a map $\mathcal{U}^\vee\to \mathcal{N}_{\overline{F}}$ between sets of orbits, is known to be order-reversing, with respect to the Zariski topological order.

Hence, it follows from Lemma \ref{lem:temp-closure}, that for any $L$-parameter $\phi\in \Phi(G)$ with infinitesimal character $\chi_{\phi} = \chi_{z,\mathcal{O}^\vee}$, the Zariski closure of the orbit $\mathcal{O}_{d(\lambda(\phi))}$ contains  $\mathcal{O}_{d(\lambda)}$. 

In particular, by Theorem \ref{thm:cmbo}, for any $\pi \in \irr_{\chi_{z,\mathcal{O}_{\lambda}^\vee}}(G)$, we have 
\[
\WF(\pi) = \mathcal{O}_{d ( \lambda(\phi_{(\pi^t)}))}\quad \Rightarrow\quad 
\dim(\WF(\pi)) \geq \dim (\mathcal{O}_{d(\lambda)})\;.
\]
The last implication employs the fact that $\irr_{\chi_{z,\mathcal{O}_{\lambda}^\vee}}(G)$ is closed under the Aubert involution.

Yet, we also know that for $\phi = \phi_{ \psi_{z,\gotM_{\lambda}} } = \phi_{(z,\ldots,z),\lambda}$, we have $\chi_{\phi} = \chi_{z,\mathcal{O}^\vee_{\lambda}}$ and $\lambda(\phi) = \lambda$. Hence, by same theorem, $\WF(\pi^t) =\mathcal{O}_{d(\lambda)} $, for all (tempered) $\pi\in \Pi_{\phi}= \Pi^{A}_{\psi_{z,\gotM_{\lambda}}}$.

By Proposition \ref{prop:integralA}\eqref{it:intA3} and Proposition \ref{prop:start-Arth}, that can also be stated as an equality $\WF(\pi) =\mathcal{O}_{d(\lambda)}$, for all representations $\pi \in \Pi_{z,\mathcal{O}_{\lambda}^\vee}$ in the qausi-basic $A$-packet.

\end{proof}

We now explicate certain unions of $A$-packets, arising from near-tempered parameters, in terms of the Langlands reciprocity (i.e. as unions of $L$-packets).

\begin{proposition}\label{prop:unionAL}

Let $\lambda\in \mathcal{P}^{s_G}(N_G)$ be a given partition.

Then, for any choice of $J\subset \mathbb{J}(\lambda)$ and $z\in \ZZ$, an equality 
\[
\bigcup_{J'\subset J}\Pi^{A}_{\psi_{z,\gotM_{\lambda,J'}}} =  \bigsqcup_{J'\subset J} \Pi_{\phi_{z, \lambda, T_J(\lambda)}}
\]
of sets of integral representations in $\irr_0(G)$ holds.

\end{proposition}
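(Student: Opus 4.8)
The plan is to prove the equality by establishing both inclusions, with the reverse inclusion being essentially immediate and the forward inclusion being the substantive direction. For the inclusion $\bigsqcup_{J'\subset J}\Pi_{\phi_{z,\lambda,T_{J'}(\lambda)}}\subset \bigcup_{J'\subset J}\Pi^A_{\psi_{z,\gotM_{\lambda,J'}}}$, recall that $\phi_{z,\lambda,T_{J'}(\lambda)} = \phi_{\psi_{z,\gotM_{\lambda,J'}}}$ by definition (Section \ref{sect:near-temp}), and that $\Pi_{\phi_\psi}\subset \Pi^A_\psi$ always holds by Proposition \ref{prop:start-Arth}; the union on the left is disjoint because $L$-packets are disjoint. (Note: the displayed formula in the statement has a typographical slip — the right-hand union should range over $J'$ with packet $\Pi_{\phi_{z,\lambda,T_{J'}(\lambda)}}$ — and I would state it in the corrected form.) So the real content is the forward inclusion: every representation $\pi\in \Pi^A_{\psi_{z,\gotM_{\lambda,J'}}}$, for $J'\subset J$, lies in some $L$-packet $\Pi_{\phi_{z,\lambda,T_{J''}(\lambda)}}$ with $J''\subset J$.

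First I would fix $J'\subset J$ and $\pi\in \Pi^A_{\psi'}$ where $\psi' = \psi_{z,\gotM_{\lambda,J'}}$, choose a standard admissible order $\prec_0$ on the near-tempered table $\gotM_{\lambda,J'}$, and write $\pi = \pi_{\prec_0}(\psi',\underline{l},\underline{\eta})$ via Moeglin's parameterization. The key combinatorial observation is that the near-tempered constraint forces $(b_i,\underline{l}(i))\in\{(1,0),(2,0),(2,1)\}$ for each index $i$. Define $J'' := \{c\in J' : \underline{l}(i)=0 \text{ for the index } i \text{ with } a_i=c\}$, i.e. the blocks of size $2$ on which the Moeglin $\ell$-value vanishes; the complementary blocks, where $\underline{l}(i)=1$, contribute characters lying in the $L$-packet direction. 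Then I would run the argument in the proof of Lemma \ref{lem:Js}\eqref{it:Js1}: successively applying Corollary \ref{prop:ui-cor} (equivalently, case \eqref{it-nt} of Proposition \ref{prop:ui-nt} read in reverse) on each $c\in J''$, collapse the pair of size-$1$ blocks $(c-1,1)$, $(c+1,1)$ back into a single size-$2$ block, landing $\pi$ in $\Pi^A_{\psi_{z,\gotM_{\lambda, J'\setminus J''}}}$ — wait, more precisely the surviving $\ell=1$ blocks stay as size-$2$ blocks and the $\ell=0$ size-$2$ blocks become split; after these moves $\pi = \pi_{\prec_0}(\psi'', \underline{l}'',\underline{\eta}'')$ with $\psi'' = \psi_{z,\gotM_{\lambda, K}}$ for the appropriate $K\subset J'$ consisting of the $\ell=1$ blocks, and now $\epsilon^{\psi''}_\pi$ evaluates to $0$ on every size-$2$ part, so $\epsilon^{\psi''}_\pi$ lies in the subgroup $P(\lambda_{\gotM_{\lambda,K}})_0 = P(T_K(\lambda))_0$, i.e. in $\widehat{\mathcal{S}_{\phi_{\psi''}}}$. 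By Lemma \ref{lem:lpacket} this forces $\pi\in \Pi_{\phi_{\psi''}} = \Pi_{\phi_{z,\lambda,T_K(\lambda)}}$ with $K\subset J'\subset J$, which is exactly what is needed.

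The main obstacle, and the step requiring the most care, is verifying that the hypotheses of Corollary \ref{prop:ui-cor} (in its reverse reading) are genuinely met at each stage of the iterative collapse: one must track that the relevant indices $k, k+1$ satisfy $\alpha_k = \beta_{k+1}$ and the parity/sign conditions $l_{k+1}=l_k=0$, $\eta_{k+1}=(-1)^{b_k}\eta_k$, and that the admissible order $\prec_0$ restricts correctly after each move. Since $\lambda_{\gotM_{\lambda,K}}$ may fail to be a special partition and the blocks $c-1,c+1$ for $c\in J'\setminus K$ need not be adjacent in the support, I would need Lemma \ref{lem:Js}\eqref{it:Js2} and the sign bookkeeping from Proposition \ref{prop:z-formula} ($\gamma^\psi_{\prec_0}\equiv 1$ for standard admissible orders) to guarantee the $\eta$-conditions propagate. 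A secondary subtlety is confirming disjointness of the union on the right: since distinct $J''\subset J$ yield distinct partitions $T_{J''}(\lambda)$ (because $\mathbb{I}(T_{J''}(\lambda)) = \mathbb{I}(\lambda)\cup J''$ determines $J''$), the associated $L$-parameters $\phi_{z,\lambda,T_{J''}(\lambda)}$ have distinct $\mathcal{O}^\vee_\phi = \mathcal{O}^\vee_{T_{J''}(\lambda)}$, hence are distinct, and $L$-packets with distinct parameters are disjoint. With these ingredients the two inclusions combine to give the asserted equality.
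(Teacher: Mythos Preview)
Your proposal is correct and takes essentially the same approach as the paper: split off the $(c,2)$-blocks with $\underline{l}=0$ via Corollary~\ref{prop:ui-cor}, then invoke Lemma~\ref{lem:lpacket} once the Arthur character lands in $\widehat{\mathcal{S}_{\phi}}$. The paper organises this as an induction on $|J|$---at each stage either $\epsilon^\psi_\pi\in\widehat{\mathcal{S}_{\phi_\psi}}$ and Lemma~\ref{lem:lpacket} applies, or one peels off a single block and recurses---which sidesteps the order-bookkeeping you flag (since one re-parameterises in a fresh standard admissible order before each step), but the mechanism is identical.
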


\begin{proof}
We write $\psi =\psi_{z,\gotM_{\lambda,J}}$. Arguing by induction on the cardinality of $J$, it suffices to prove the containment
\[
\Pi^{A}_{\psi}  \subseteq  \Pi_{\phi_{\psi}} \cup\bigcup_{J'\subsetneq J}\Pi^{A}_{\psi_{z,\gotM_{\lambda,J'}}}\;.
\]

Let $\pi \in \Pi^{A}_{\psi}$ be a given representation. 

Suppose first that $\epsilon^{\psi}_{\pi} \in \widehat{\mathcal{S}_{\phi_{\psi}}}$. By Lemma \ref{lem:lpacket}, we have $\pi\in\Pi_{\phi_{\psi}} $.

Otherwise, using the identification of Proposition \ref{prop:expl-a-l}, we treat $\epsilon^{\psi}_{\pi}$ as a boolean function on the set 
\[
S(\gotM) = \{(c,1)\}_{ c\in S(T_J(\lambda))} \cup \{(c,2)\}_{c\in J} \;,
\]
with the condition of not being contained in $\widehat{\mathcal{S}_{\phi_{\psi}}}$ amounting to the support of $\epsilon^{\psi}_{\pi}$ not being contained in $\{(c,1)\}_{ c\in S(T_J(\lambda))}$.

In other words, there must exist $a\in J$, for which $\epsilon^{\psi}_{\pi}(a,2)=1$.

Let us fix a standard admissible order $\prec'$ on the near-tempered table $\gotM_{\lambda,J}$, and write $\pi = \pi_{\prec'}(\psi, \underline{l}',\underline{\eta}')$.

Since the function $\gamma^{\psi}_{\prec'}$ is trivial by Proposition \ref{prop:z-formula}, the formula in Proposition \ref{prop:moegarth-formula} implies an equality $\epsilon^{\psi}_{\pi}(a,2) = 1+\underline{l}'(i)$, for the index $i\in I(\gotM_{\lambda,J}^{gp})$, such that $(a_i,b_i) = (a,2)$. Thus, $\underline{l}'(i)=0$.

Appropriately constructing $(\underline{l},\underline{\eta})\in \mathcal{W}(\gotM_{\lambda,J\setminus \{a\}})$ out of $(\underline{l}',\underline{\eta}')$ and an admissible order $\prec$ on the table $\gotM_{\lambda, J\setminus\{a\}}$ out of $\prec'$, Corollary \ref{prop:ui-cor} now implies that
\[
\pi\cong \pi_{\prec}(\psi_{\gotM_{z, J\setminus \{a\}}},\underline{l},\underline{\eta})  \in \Pi^{A}_{\psi_{\gotM_{z, J\setminus \{a\}}}} \;.
\]

\end{proof}

\subsubsection{Proof of Theorem \ref{thm:B}}


Given a partition $\lambda\in  \mathcal{P}^{s_G}(N_G)$ and $z\in \ZZ$, we see from Proposition \ref{prop:weakWF}, Theorem \ref{thm:almost-intro} and Lemma \ref{lem:half-intro}, that an equality
\begin{equation}\label{eq:thmB1}
\bigsqcup_{\mu\in \spc(\lambda)} \Pi_{\phi_{z,\lambda,\mu}} = \{\pi^t\::\: \pi\in \Pi^{w}_{\mathcal{O}_{\lambda}^\vee}\}
\end{equation}
holds.

Adding in Proposition \ref{prop:unionAL} and recalling compatibility (Proposition \ref{prop:start-Arth}) with Aubert involution, we may write
\begin{equation}\label{eq:thmB2}
\Pi^{w}_{z,\mathcal{O}_{\lambda}^\vee}  = \bigcup_{J\subset \mathbb{J}(\lambda)}\Pi^{A}_{(\psi_{z,\gotM_{\lambda,J}})^t}\;.
\end{equation}

Finally, from a geometric perspective, given any classes $\mathcal{O}^\vee, \mathcal{O}_1^\vee\in \mathcal{U}^\vee$ with $\mathcal{O}_1^\vee\in \spc(\mathcal{O}^\vee)$, by Proposition \ref{prop:spc-comb} there are partitions $\mu\in \spc(\lambda)$ so that $\mathcal{O}^\vee =\mathcal{O}_{\lambda}^\vee$ and $\mathcal{O}_1^\vee=  \mathcal{O}_{\mu}^\vee$.

We then define an $L$-parameter
\[
\phi_{z,\mathcal{O}^\vee,\mathcal{O}_1^\vee}: = \phi_{z,\lambda,\mu} = \phi_{\psi_{z,\gotM_{\lambda,J}}}\in \Phi_u(G)\;,
\]
where $J\subset \mathbb{J}(\lambda)$ is such that $\mu = T_J(\lambda)$, and an $A$-packet
\begin{equation}\label{eq:notation1}
   \Pi_{z,\mathcal{O}^\vee,\mathcal{O}^{\vee}_1} := \Pi^{A}_{(\psi_{z,\gotM_{\lambda,J}})^t}\;, 
\end{equation}

coinciding with the definition of \eqref{eq:introApack}.

In these terms, the equalities \eqref{eq:thmB1} and \eqref{eq:thmB2} now amount to the statement of Theorem \ref{thm:B}.

\subsection{Weakly spherical $A$-packets}\label{sect:weak-sph}

In order to establish Theorem \ref{thm:D}, we first examine the structure of $A$-packets that contain those tempered representations that are Aubert-dual to the formally weakly spherical representations.

Namely, for a partition $\lambda\in \mathcal{P}^{s_G}(N_G)$ and $z\in \ZZ$, we set the tempered $A$-parameter $\psi = \psi_{z,\gotM_{\lambda}}$, and define the set
\[
\Pi^{f}_{z,\lambda} = \{\pi \in \Pi^{A}_{\psi}\: :\: \epsilon^{\psi}_{\pi}\in A^{\dagger}(\mathcal{O}_{\lambda}^\vee)\}
\]
of tempered representations.

Here, we identified $ A^{\dagger}(\mathcal{O}_{\lambda}^\vee)$ as a subgroup of the character group $\widehat{\mathcal{S}_{\psi}}$ using \eqref{eq:311}.

\begin{lemma}\label{lem:key-tech}

Let $\lambda\in \mathcal{P}^{s_G}(N_G)$ be a partition, and $z\in \ZZ$.

Let $J\subset \mathbb{J}(\lambda)$ be a subset, and $\psi=\psi_{z,\gotM_{\lambda}},\, \psi'=\psi_{z,\gotM_{\lambda,J}}\in \Psi_0(G)$ be the associated near-tempered $A$-parameters. 

Let $\prec$ be an admissible order on the table $\gotM_{\lambda,J}$, for which we assume $I(\gotM_{\lambda,J}^{gp}) = \{1\prec \ldots \prec t\}$. Let 
\[
\pi = \pi_{\prec}(\psi', \underline{l},\underline{\eta}) \in \Pi^{A}_{\psi'} \cap \Pi^{f}_{z,\lambda}
\]
be a given representation.

Suppose that $t> k \in \widetilde{I(\gotM_{\lambda,J}^{gp})}$ is an index with $a_{k+1} - a_k =2$.


Then,
\begin{enumerate}
    \item\label{it:key1}
    If $b_k= b_{k+1} = 1$ and $\underline{\eta}(k+1) =  -\underline{\eta}(k)$ hold, then necessarily $a_k+1 \in \mathbb{J}(\lambda) \setminus J$.

    \item\label{it:key2}
    If $b_k= b_{k+1} = 2$ holds, then $\underline{l}(k+1) +  \underline{l}(k) \neq 1$.
\end{enumerate}

\end{lemma}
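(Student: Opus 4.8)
The plan is to reduce both statements to assertions about the Arthur character $\epsilon^{\psi}_{\pi}$, viewed via \eqref{eq:tempered-ident} as a boolean function on $S(\lambda)$ (equivalently a subset $B\subset S(\lambda)$), and then to exploit the hypothesis $\pi\in\Pi^{f}_{z,\lambda}$: by Definition~\ref{defi:formallyws} and Proposition~\ref{prop:quotient} this puts $\epsilon^{\psi}_{\pi}\in A^{\dagger}(\mathcal{O}^{\vee}_{\lambda})=P^{\dagger}(\lambda)_0$, so that $\epsilon^{\psi}_{\pi}$ is constant on each $\sim$-class of $S(\lambda)$ and $|B\cap S_0(\lambda)|$ is even. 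Since also $\pi\in\Pi^{A}_{\psi}\cap\Pi^{A}_{\psi'}$, Lemma~\ref{lem:Js} is available, giving $\mathfrak{t}_c(\epsilon^{\psi}_{\pi})\neq1$ for all $c\in J$ together with the refined dictionaries of Lemma~\ref{lem:Js}\eqref{it:Js2} and of Lemmas~\ref{lem:switch} and \ref{it:Js3} between $\underline{l},\underline{\eta}$ and $\epsilon^{\psi}_{\pi}$. I would begin with a parity count: as $(a_k,b_k)$ and $(a_{k+1},b_{k+1})=(a_k+2,b_k)$ both lie in $I(\gotM_{\lambda,J}^{gp})$, the integers $a_k,a_k+2$ have the parity placing pairs of second coordinate $b_k$ into the good part while $a_k+1$ has the opposite parity. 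Hence if $b_k=1$ then $a_k,a_k+2$ are consecutive elements of $S(\lambda)$ with $a_k+1\notin S(\lambda)$; if $b_k=2$ then $a_k,a_k+2\in J\subset\mathbb{J}(\lambda)$, so by Lemma~\ref{lem:even} (with $c=a_k+1$) $a_k+1\notin S_0(\lambda)$, and unwinding $\mathbb{J}(\lambda)$ puts $a_k-1,a_k+1,a_k+3$ into three distinct $\sim$-classes with $\{a_k+1\}$ a singleton.

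For part~\eqref{it:key1}, I would note that $\underline{\eta}(k+1)=-\underline{\eta}(k)$ (with $b_k=b_{k+1}=1$, $a_{k+1}-a_k=2$) is exactly the hypothesis of the near-tempered merge of Proposition~\ref{prop:ui-nt}\eqref{it-nt}, so $\pi\in\Pi^{A}_{\psi_{z,\gotM'}}$, where $\gotM'$ is $\gotM_{\lambda,J}$ with the two $b=1$ entries at $a_k,a_k+2$ traded for one $b=2$ entry at $a_k+1$ (the Moeglin parameter having $\underline{l}$-value $0$ there); in particular $\gotM'$ is near-tempered and non-negative. Since $\pi$ is tempered it lies in $\Pi^{A}_{\psi_{z,\gotM_{\lambda}}}=\Pi_{\phi_{(z,\ldots,z),\lambda}}$; applying Proposition~\ref{prop:ui-exhaust}\eqref{it:ui-exhaust1} to $\pi\in\Pi^{A}_{\psi_{z,\gotM_{\lambda}}}\cap\Pi^{A}_{\psi_{z,\gotM'}}$ and the corollary following Proposition~\ref{prop:ui-nt}, the connecting chain stays near-tempered, so $\phi_{\psi_{z,\gotM'}}$ is a unipotent $L$-parameter with $\chi_{\phi_{\psi_{z,\gotM'}}}=\chi_{z,\mathcal{O}^{\vee}_{\lambda}}$. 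Temperedness of $\pi$ together with the wavefront identity of Theorem~\ref{thm:cmbo}, through Proposition~\ref{prop:weakWF}, forces $d(\lambda(\phi_{\psi_{z,\gotM'}}))=d(\lambda)$, whence by Theorem~\ref{thm:almost-intro} $p(\gotM')\in\spc(\lambda)$ and $\gotM'=\gotM_{\lambda,J'}$ for some $J'\subset\mathbb{J}(\lambda)$ (using injectivity of $\gotM\mapsto\phi_{\psi_{z,\gotM}}$ on near-tempered tables). Comparing $\gotM'$ with $\gotM_{\lambda,J}$ shows $(a_k+1,2)$ has multiplicity one in $\gotM'$, so $a_k+1\notin J$, and $J'=J\cup\{a_k+1\}$; thus $a_k+1\in\mathbb{J}(\lambda)\setminus J$. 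I expect the genuinely delicate point to be the identification $\gotM'=\gotM_{\lambda,J'}$ — i.e.\ verifying that a tempered member of a near-tempered $A$-packet pins the underlying table to relative-special-piece shape, which is where the non-negativity/near-temperedness bookkeeping must be combined with the $d$-invariance.

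For part~\eqref{it:key2}, I would argue by contradiction: suppose $\underline{l}(k)+\underline{l}(k+1)=1$. Feeding the identity $\epsilon^{\psi'}_{\pi}(a_i,2)=1$ (valid for every $b_i=2$ index by Lemma~\ref{lem:Js}\eqref{it:Js2}) into the formula of Proposition~\ref{prop:moegarth-formula} shows that for such an index $\underline{l}(i)=0$ if and only if $\gamma^{\psi'}_{\prec}(i)=1$; hence $\underline{l}(k)+\underline{l}(k+1)=1$ amounts to $\gamma^{\psi'}_{\prec}(k)\gamma^{\psi'}_{\prec}(k+1)=-1$, i.e.\ $|Z_k|+|Z_{k+1}|$ odd in the notation of Proposition~\ref{prop:z-formula}. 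I would then bound the contributions to $Z_k,Z_{k+1}$: admissibility of $\prec$ (Proposition~\ref{prop:admss}) excludes any index $j\prec k$ with $a_j=a_k+3$, each index with $a_j=a_k+1$ contributes to exactly one of $Z_k,Z_{k+1}$, and the number of $b=1$ entries of $\gotM_{\lambda,J}$ at $a_k+1$ equals $m(a_k+1,\lambda)-2$, which is even by Lemma~\ref{lem:even}; this reduces the parity in question to that of the number of $b=1$ entries at $a_k-1$ lying after $k$ in $\prec$. Finally, reading $(-1)^{\epsilon^{\psi}_{\pi}(a_k-1,1)}$ off $\underline{\eta}(k)$ through Lemmas~\ref{lem:switch} and \ref{it:Js3} (after relabelling so $\underline{l}(k)=0$), comparing with the value of $\epsilon^{\psi}_{\pi}$ at $a_k-1$ dictated by a standard admissible order (Propositions~\ref{prop:moegarth-formula} and \ref{prop:z-formula}), and using the constancy of $\epsilon^{\psi}_{\pi}$ on the $\sim$-class of $a_k-1$ together with the even-parity condition on $B\cap S_0(\lambda)$, this count is forced to be even — the desired contradiction. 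The main obstacle in part~\eqref{it:key2} is order-dependence: the clean formulas (notably Lemma~\ref{lem:Js}\eqref{it:Js2}) are phrased for a standard admissible order, so the heart of the matter is controlling how $(\underline{l},\underline{\eta})$ and the functions $\gamma^{\psi'}_{\prec}$ transform under a general admissible order, which is precisely the purpose of Lemmas~\ref{lem:switch} and \ref{it:Js3}.
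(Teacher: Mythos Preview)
Your approach to part~\eqref{it:key1} has a genuine gap that makes it circular. You correctly observe that the hypotheses match Proposition~\ref{prop:ui-nt}\eqref{it-nt}, giving $\pi\in\Pi^A_{\psi_{z,\gotM'}}$ with $\gotM'$ the near-tempered table obtained by replacing $(a_k,1),(a_k+2,1)$ with $(a_k+1,2)$. The trouble is the next sentence: Theorem~\ref{thm:cmbo} and Proposition~\ref{prop:weakWF} compute $\WF(\pi^t)$ from the $L$-parameter $\phi_\pi=\phi_{(z,\ldots,z),\lambda}$ of $\pi$ itself, not from the $L$-parameter $\phi_{\psi_{z,\gotM'}}$ attached to some $A$-packet that merely contains $\pi$. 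There is no mechanism linking $\lambda(\phi_{\psi_{z,\gotM'}})=p(\gotM')$ to the wavefront set of $\pi$. Worse, by Proposition~\ref{prop:d-prop} the condition $d(p(\gotM'))=d(\lambda)$ is equivalent to $p(\gotM')\in\spc(\lambda)$, and a direct computation of $p(\gotM')=T_J(\lambda)\setminus(a_k,\,a_k+2)\cup((a_k+1)^2)$ shows this holds exactly when $a_k+1\in\mathbb{J}(\lambda)\setminus J$ --- which is what you are trying to prove. So the appeal to Theorem~\ref{thm:almost-intro} presupposes the conclusion. The paper instead argues directly with characters: it reads $(-1)^{\epsilon^\psi_\pi(a_k,1)}$ and $(-1)^{\epsilon^\psi_\pi(a_{k+1},1)}$ off $\underline\eta$ and $\gamma^{\psi'}_\prec$ via Propositions~\ref{prop:moegarth-formula} and~\ref{prop:z-formula} (using that the Arthur character $\epsilon^{\psi'}_\pi$ is order-independent, so the identities of Lemma~\ref{lem:Js}\eqref{it:Js2} apply), then eliminates $a\notin\mathbb{J}(\lambda)$ by block-constancy of $\epsilon^\psi_\pi\in P^\dagger(\lambda)_0$ and eliminates $a\in J$ by $\mathfrak{t}_a(\epsilon^\psi_\pi)\neq1$ from Lemma~\ref{lem:Js}\eqref{it:Js1}.

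For part~\eqref{it:key2} you begin as the paper does and correctly arrive at $\gamma^{\psi'}_\prec(k)\gamma^{\psi'}_\prec(k+1)=-1$, but then over-complicate the bookkeeping on $Z_k,Z_{k+1}$. You carry along the term $\{j>k:a_j=a_k-1\}$ and propose to bound its parity through Lemmas~\ref{lem:switch} and~\ref{it:Js3}. In fact that set is \emph{empty}: any such $j$ satisfies $j>k+1$ (since $a_{k+1}=a_k+2$), and admissibility applied to $k+1\prec j$ forces $a_j\geq a_{k+1}-1=a_k+1$, ruling out $a_j=a_k-1$. This is precisely what yields the paper's formula~\eqref{eq:dagger2}, after which $|Z_k|+|Z_{k+1}|$ being odd reduces to $\#\{i:a_i=a\}$ odd, hence $m(a,T_J(\lambda))$ odd, and since $a_k,a_{k+1}\in J\subset\mathbb{J}(\lambda)$ give $m(a,\lambda)=m(a,T_J(\lambda))+2$, this contradicts Lemma~\ref{lem:even}. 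Your proposed route through $\epsilon^\psi_\pi(a_k-1,1)$ and order-change lemmas is therefore unnecessary, and as sketched it is not clear it would close.
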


\begin{proof}

Let us write $a= a_k +1$. 

The formula of Proposition \ref{prop:z-formula} gives
\begin{equation}\label{eq:dagger2}
\gamma^{\psi'}_{\prec}(k) = (-1)^{\#\{ 1\leq i < k\;:\; a_i = a\}},\quad 
\gamma^{\psi'}_{\prec}(k+1) = (-1)^{\#\{ k+1 < i < \leq t\;:\; a_i = a\}}\;.
\end{equation}

We prove the first part: Suppose that we have $b_k  = b_{k+1} = 1$ and $\underline{\eta}(k+1) =  -\underline{\eta}(k)$. 

Proposition \ref{prop:moegarth-formula} shows the equalities
\[
(-1)^{\epsilon^{\psi'}_{\pi}(a_k,1)} = \gamma^{\psi'}_{\prec}(k) \underline{\eta}(k),\quad 
(-1)^{\epsilon^{\psi'}_{\pi}(a_{k+1},1)} = \gamma^{\psi'}_{\prec}(k+1) \underline{\eta}(k+1)\;.
\]

Here, we formally set $\epsilon^{\psi}_{\pi}(0,1) = \epsilon^{\psi'}_{\pi}(0,1)=0$ and $\gamma^{\psi'}_{\prec}(0)=1$, if necessary. 

In combination with Lemma \ref{lem:Js}\eqref{it:Js2}, we obtain
\begin{equation}\label{eq:dagger1}
(-1)^{\epsilon^{\psi}_{\pi}(a_k,1)}\gamma^{\psi'}_{\prec}(k) = (-1)^{1+\epsilon^{\psi}_{\pi}(a_{k+1},1)} \gamma^{\psi'}_{\prec}(k+1)\;.
\end{equation}

Suppose first that $a\not\in \mathbb{J}(\lambda)$. 

In this case, when $a\neq1$, we must have $p(a_k)=p(a_{k+1})$, where $p:S(\lambda)\to S^{\dagger}(\lambda)$ is the projection map (Section \ref{sect:quot}).

Thus, when identifying $A^{\dagger}(\mathcal{O}_{\lambda}^\vee)$ with $P^{\dagger}(\lambda)_0$ (Proposition \ref{prop:expl-a-l}), we see that $\epsilon^{\psi}_{\pi}(a_k,1)=\epsilon^{\psi}_{\pi}(a_{k+1},1)$. The same conclusion follows in the $a=1$ case from Lemma \ref{lem:tail}.

Since $a\not\in J$, we have $a_i\neq a$ for all $1\leq i\leq t$, implying that $\gamma^{\psi'}_{\prec}(k)=\gamma^{\psi'}_{\prec}(k+1)=1$ by \eqref{eq:dagger2}. We reach a contradiction to \eqref{eq:dagger1}.

It follows that $a\in \mathbb{J}(\lambda)$.

Suppose now that $a\in J$. That means there is a unique $1\leq i_0 \leq t$ with $a = a_{i_0}$. Hence, $\gamma^{\psi'}_{\prec}(k)= - \gamma^{\psi'}_{\prec}(k+1)$ must follow from \eqref{eq:dagger2}. 

Yet, from Lemma \ref{lem:Js} we know that $\mathfrak{t}_a(\epsilon^{\psi}_{\pi})\neq1$. This fact implies $\epsilon^{\psi}_{\pi}(a_k,1)= 1-\epsilon^{\psi}_{\pi}(a_{k+1},1)$ and contradicts \eqref{eq:dagger1} again.
\\ \\
We prove the second part: Let assume the contrary, that is, $b_k  = b_{k+1} = 2$ and $\underline{l}(k+1) +  \underline{l}(k) =1$.

In this case $a_k =a -1 , a_{k+1}=a +1\in J$. By Lemma \ref{lem:even}, the multiplicity $m(a,\lambda)$ must be even.

By Lemma \ref{lem:Js}\eqref{it:Js2}, we have 
\[
\epsilon^{\psi'}_{\pi}(a_k,2) = \epsilon^{\psi'}_{\pi}(a_{k+1},2)=1\;.
\]
Adding in the equation of Proposition \ref{prop:moegarth-formula}, we see equalities
\[
\gamma^{\psi'}_{\prec}(k) = (-1)^{\underline{l}(k)}\;,\quad \gamma^{\psi'}_{\prec}(k+1) = (-1)^{\underline{l}(k+1)}\;,
\]
which together with the assumption imply $\gamma^{\psi'}_{\prec}(k)\gamma^{\psi'}_{\prec}(k+1)=-1$.

From \eqref{eq:dagger2} we see that $\#\{1\leq i\leq t\: : \: a_i=a\}$ is odd. That in particular implies, using \eqref{eq:ntpformula}, that the multiplicity $m(a, T_J(\lambda))$ is odd. 

Consequently, $m(a, \lambda) = m(a, T_J(\lambda))+2$ is odd as well, providing a contradiction.

\end{proof}

\begin{theorem}\label{thm:lastin3}
Let $\lambda\in \mathcal{P}^{s_G}(N_G)$ be a partition, and $z\in \ZZ$. Let $\psi = \psi_{z,\gotM_{\lambda}}\in \Psi_0(G)$ be the associated tempered $A$-parameter.

Suppose that $\psi'\in \Psi(G)$ is an $A$-parameter, so that the intersection $\Pi^{A}_{\psi'}\cap \Pi^{f}_{z,\lambda}$ is non-empty.

Then, a subset $J\subset \mathbb{J}(\lambda)$ must exist, so that $\psi' = \psi_{z,\gotM_{\lambda,J}}$.

Moreover, the set of representations $\Pi^{A}_{\psi'}\cap \Pi^{f}_{z,\lambda}$ consists precisely of those tempered representations $\pi\in \Pi^{A}_{\psi_{z,\gotM_{\lambda}}}$, for which the character $\epsilon^{\psi}_{\pi}\in A^{\dagger}(\mathcal{O}^\vee_{\lambda})$ is $\mathcal{O}^\vee_{T_J(\lambda)}$-primitive.

\end{theorem}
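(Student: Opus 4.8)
The argument has three stages: reduce $\psi'$ to an integral parameter by tracking infinitesimal characters; use the Atobe--Xu theory of $A$-packet intersections (Propositions~\ref{prop:ui-exhaust},~\ref{prop:ui-nt},~\ref{prop:phantom}) together with Lemmas~\ref{lem:key-tech},~\ref{lem:Js},~\ref{it:Js3},~\ref{lem:switch} to force the table of $\psi'$ into the shape $\gotM_{\lambda,J}$; and finally translate the outcome through Proposition~\ref{prop:prim}. For the first stage, put $\psi=\psi_{z,\gotM_\lambda}$, so $\Pi^f_{z,\lambda}\subset\Pi^A_\psi\subset\irr_{\chi_{z,\mathcal{O}^\vee_\lambda}}(G)$. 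Any $\pi$ in the (non-empty) intersection $\Pi^A_{\psi'}\cap\Pi^f_{z,\lambda}$ has infinitesimal character $\chi_{z,\mathcal{O}^\vee_\lambda}$, hence $\chi_{\psi'}=\chi_{z,\mathcal{O}^\vee_\lambda}$ and $\psi'\in\Psi_0(G)$; Proposition~\ref{prop:integralA} lets us write $\psi'=\psi_{z',\gotM}$, and since every $\frb$-eigenvalue of $\chi_{z,\mathcal{O}^\vee_\lambda}$ is $z$ times a power of $q^{1/2}$ while those of $\chi_{-z,\mathcal{O}^\vee_\lambda}$ are $-z$ times such powers (Remark~\ref{rem:inf-ch-comp}), the sign is pinned: $z'=z$. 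Fix such a $\pi\in\Pi^A_{\psi_{z,\gotM_\lambda}}\cap\Pi^A_{\psi_{z,\gotM}}$.

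\textbf{Stage 2: the table of $\psi'$.} I claim every table $\gotM\in\mathcal{T}^{s_G}(N_G)$ with $\pi\in\Pi^A_{\psi_{z,\gotM}}$ equals $\gotM_{\lambda,J}$ for some $J\subset\mathbb{J}(\lambda)$. Suppose first $\gotM$ is non-negative. As $\gotM_\lambda=\gotM_{\lambda,\emptyset}$ is non-negative, Proposition~\ref{prop:ui-exhaust}\eqref{it:ui-exhaust1} gives a chain $\gotM_\lambda=\gotM_1,\dots,\gotM_r=\gotM$ of non-negative tables, each carrying $\pi$ in its $A$-packet, consecutive ones joined by a move of Proposition~\ref{prop:ui}. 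Induct on $r$; the base $r=1$ is trivial. For $r>1$ the truncation $\gotM_1,\dots,\gotM_{r-1}$ gives, by induction, $\gotM_{r-1}=\gotM_{\lambda,J'}$, which is near-tempered. Examine the move linking $\gotM_{r-1}$ and $\gotM_r$. If it points \emph{into} $\gotM_{r-1}$, then by the corollary following Proposition~\ref{prop:ui-nt} its source $\gotM_r$ is itself near-tempered and the move is of type~\eqref{it-nt}; reading the combinatorics backwards, $\gotM_r$ is obtained from $\gotM_{\lambda,J'}$ by splitting one $(c,2)$-block, so $\gotM_r=\gotM_{\lambda,J'\setminus\{c\}}$. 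If it \emph{issues from} $\gotM_{r-1}$, then (the source being near-tempered) Proposition~\ref{prop:ui-nt} leaves the four cases~\eqref{it-nt}--\eqref{it-nt4}: case~\eqref{it-nt} gives, via Lemma~\ref{lem:key-tech}\eqref{it:key1}, that $c:=a_k+1\in\mathbb{J}(\lambda)\setminus J'$ and $\gotM_r=\gotM_{\lambda,J'\cup\{c\}}$; case~\eqref{it-nt2} is impossible by Lemma~\ref{lem:key-tech}\eqref{it:key2}; and cases~\eqref{it-nt3},~\eqref{it-nt4}, which would create a block with second coordinate $\ge 3$, are to be excluded for $\pi\in\Pi^f_{z,\lambda}$ — applying Lemma~\ref{it:Js3} (and Lemma~\ref{lem:switch}) at the two indices expresses the required $\underline\eta$- and $\underline l$-values through the quantities $\epsilon^\psi_\pi(c\mp1,1)$, which contradicts the block-constancy of $\epsilon^\psi_\pi$ (membership in $A^\dagger(\mathcal{O}^\vee_\lambda)=P^\dagger(\lambda)_0$) once one uses that the relevant $c$ lie in $\mathbb{J}(\lambda)$ and hence are boundary data of the admissible blocks $S^{\dagger\dagger}(\lambda)$ (Lemmas~\ref{lem:even},~\ref{lem:tail},~\ref{lem:head}). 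Finally, if $\gotM$ is not non-negative, Proposition~\ref{prop:ui-exhaust}\eqref{it:ui-exhaust2} yields a non-negative $\gotM'$ with $\pi\in\Pi^A_{\psi_{z,\gotM'}}$ and $\gotM$ obtained from it by a move of Proposition~\ref{prop:phantom}; by the case just treated $\gotM'=\gotM_{\lambda,J}$, and the minimal-index conditions of Proposition~\ref{prop:phantom} ($\beta_1\in\{-1,0,1\}$, $\min\{a_1,b_1\}>1$, plus the prescribed Moeglin data there) are seen — again through Lemmas~\ref{lem:Js}\eqref{it:Js2},~\ref{it:Js3} and~\ref{lem:tail} — never to hold for a representation in $\Pi^f_{z,\lambda}$. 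This proves the claim; in particular $\psi'=\psi_{z,\gotM_{\lambda,J}}$ for some $J\subset\mathbb{J}(\lambda)$.

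\textbf{Stage 3: the intersection.} With $\psi'=\psi_{z,\gotM_{\lambda,J}}$, Lemma~\ref{lem:Js}\eqref{it:Js1} gives $\Pi^A_\psi\cap\Pi^A_{\psi'}=\{\pi\in\Pi^A_\psi:\mathfrak{t}_c(\epsilon^\psi_\pi)\ne1\text{ for all }c\in J\}$, so, since $\Pi^f_{z,\lambda}\subset\Pi^A_\psi$,
\[
\Pi^A_{\psi'}\cap\Pi^f_{z,\lambda}=\{\pi\in\Pi^f_{z,\lambda}:\mathfrak{t}_c(\epsilon^\psi_\pi)\ne1\text{ for all }c\in J\}.
\]
Because $T_J(\lambda)\in\spc(\lambda)$ and $\mathbb{J}(\lambda)\setminus\mathbb{J}(T_J(\lambda))=J$, Proposition~\ref{prop:prim} identifies the condition "$\mathfrak{t}_c(\epsilon^\psi_\pi)\ne1$ for all $c\in J$" with "$\epsilon^\psi_\pi$ is $\mathcal{O}^\vee_{T_J(\lambda)}$-primitive", which is exactly the asserted description. (For $J=\emptyset$, i.e.\ $\psi'=\psi$, both sides reduce to all of $\Pi^f_{z,\lambda}$, consistently with $\mathcal{O}^\vee_\lambda$-primitivity being vacuous.)

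\textbf{Main obstacle.} The real difficulty lies entirely in Stage~2, in eliminating the "second coordinate $\ge 3$" moves~\eqref{it-nt3},~\eqref{it-nt4} of Proposition~\ref{prop:ui-nt} and the phantom moves of Proposition~\ref{prop:phantom} for representations in $\Pi^f_{z,\lambda}$ — the configurations \emph{not} covered by Lemma~\ref{lem:key-tech}. This forces one to play off two rather different combinatorial layers against each other: the $\underline\eta$/$\underline l$ data of Moeglin parameters (governed by Lemmas~\ref{lem:switch},~\ref{it:Js3}) versus the block-constancy and parity conditions that cut out $A^\dagger(\mathcal{O}^\vee_\lambda)=P^\dagger(\lambda)_0$ inside $\widehat{A(\mathcal{O}^\vee_\lambda)}$, together with the exact combinatorics of $\mathbb{J}(\lambda)$ and of the admissible blocks $S^{\dagger\dagger}(\lambda)$. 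In effect one must verify that a non-near-tempered $A$-parameter can never contain a formally weakly spherical tempered representation, and it is the interaction of these two descriptions that demands care.
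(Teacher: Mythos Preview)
Your overall architecture is exactly that of the paper: reduce to an integral parameter, walk the chain of Proposition~\ref{prop:ui-exhaust}, and use Lemma~\ref{lem:key-tech} to force each step to stay among the tables $\gotM_{\lambda,J}$, with Stage~3 then being the direct combination of Lemma~\ref{lem:Js}\eqref{it:Js1} and Proposition~\ref{prop:prim}. The handling of the ``into'' direction via the corollary after Proposition~\ref{prop:ui-nt} is also right, and matches what the paper uses implicitly.

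Where the proposal falls short is precisely the part you flag as the main obstacle: the elimination of moves~\eqref{it-nt3},~\eqref{it-nt4} and of the phantom moves. Your one-sentence sketch (``block-constancy plus the $c\in\mathbb{J}(\lambda)$ boundary data'') does not capture the actual mechanism and would not go through as written. In the paper the argument for \eqref{it-nt3}/\eqref{it-nt4} is a genuine case split on whether $a_{k+1}-2\in\mathbb{J}(\lambda)$, and in each branch one needs the $\gamma^{\psi'}_{\prec}$ formulas of Propositions~\ref{prop:moegarth-formula} and~\ref{prop:z-formula} (which you never invoke) to convert parity information about $m(a_{k+1}-1,T_J(\lambda))$ into sign constraints, before Lemma~\ref{it:Js3} and the $A^\dagger$ condition can be played off against each other. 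In the branch $a_{k+1}-2\notin\mathbb{J}(\lambda)$ it is precisely the \emph{failure} of your proposed condition ``$c\in\mathbb{J}(\lambda)$'' that forces $a_{k+1}-1$ and $a_{k+1}-3$ into the same block of $S^\dagger(\lambda)$, so the logic runs opposite to what you wrote. Similarly, the phantom case is not dispatched by Lemmas~\ref{lem:Js},~\ref{it:Js3},~\ref{lem:tail} alone: the paper reduces to $(a_1,b_1)\in\{(2,2),(3,2)\}$, then in each case uses a multiplicity-parity count together with Proposition~\ref{prop:z-formula} to show $\gamma^{\widetilde\psi}_\prec(1)=-1$, and only then does Lemma~\ref{lem:Js}\eqref{it:Js2} contradict Proposition~\ref{prop:moegarth-formula}. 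Lemma~\ref{lem:head}, which you cite, plays no role anywhere in the argument.
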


\begin{proof}

The last statement is a consequence of Lemma \ref{lem:Js}\eqref{it:Js1} and Proposition \ref{prop:prim}.

Assume the contrary, that is, that $\psi'$ is not of the desired form. 

For the intersection to be non-empty $\chi_{\psi'} = \chi_{\psi}$ must be satisfied. In particular, a table $\gotM'\in \mathcal{P}^{s_G}(N_G)$ exists, for which $\psi'= \psi_{z,\gotM'}$ (Proposition \ref{prop:integralA}).

Suppose first that $\gotM'$ is non-negative. 

By Proposition \ref{prop:ui-exhaust}\eqref{it:ui-exhaust1} we deduce that there is a set $J\subset \mathbb{J}(\lambda)$, an admissible order $\prec$ on $\gotM_{\lambda, J}$ and an $A$-parameter $\psi''= \psi_{z,\gotM''}\in \Psi(G)$ \textit{not} of the form $\psi_{z,\gotM_{\lambda,J'}}$, for any $J'\subset \mathbb{J}(\lambda)$, so that
\[
\pi= \pi_{\prec}(\psi_{z,\gotM_{\lambda,J}}, \underline{l},\underline{\eta})\in \Pi^{A}_{\psi_{z,\gotM_{\lambda,J}}}\cap \Pi^{A}_{\psi''}
\]
holds, and the inclusion $\pi\in \Pi^{A}_{\psi''}$ is obtained out of $M_{\prec}(\pi)$ through an application of one of the moves described in Proposition \ref{prop:ui-nt}.

Let us assume $I(\gotM_{\lambda,J}^{gp}) = \{1\prec \ldots \prec t\}$, and that $t> k \in \widetilde{I(\gotM_{\lambda,J}^{gp})}$ is the index for which the move to produce the inclusion $\pi\in \Pi^{A}_{\psi''}$ is performed. 

If $a_{k+1}-a_k=2$, confronting cases \eqref{it-nt} and \eqref{it-nt2} of Proposition \ref{prop:ui-nt} with Lemma \ref{lem:key-tech}, implies that $a\in  \mathbb{J}(\lambda)\setminus J$ and that $\gotM''= \gotM_{\lambda, J\cup\{a\}}$. 
This is a contradiction to our assumption.

Hence, we are left with cases \eqref{it-nt3} and \eqref{it-nt4} of Proposition \ref{prop:ui-nt}. We assume that 
\[
\left\{\begin{array}{l} 
a_{k+1} - a_k  \in \{3,4\} \\
\underline{l}(k+1) = \underline{l}(k) =0 \\
\underline{\eta}(k+1)=  (-1)^{b_k}\underline{\eta}(k)
\end{array}\right.\;.
\]

We may also assume, without loss of generality, that $b_{k+1}=2$, since analogous arguments are valid with roles of $k, k+1$ switched. 

In particular, $a_{k+1}\in J$. By Lemma \ref{lem:Js}\eqref{it:Js2} we have $\epsilon^{\psi'}_{\pi}(a_{k+1},2)=1$. Then, from Proposition \ref{prop:moegarth-formula}, we also have $\gamma^{\psi'}_{\prec}(k+1) = 1$. 

Therefore, by Proposition \ref{prop:z-formula} the number of indices $1\leq i\leq t$ with $a_i = a_{k+1}-1$ is even. This implies that the multiplicity $m:=m(a_{k+1}-1,T_J(\lambda))$ is even.

Let us take note of the identity
\begin{equation}\label{eq:ident-lem}
(-1)^{\epsilon^{\psi}_{\pi}(a_{k+1}-1,1)} = \underline{\eta}(k+1) 
\end{equation}
that follows from Lemma \ref{it:Js3}.

We assume first that $a_{k+1}-2\in \mathbb{J}(\lambda)$. 

By Lemma \ref{lem:even}, this assumption forces $m(a_{k+1}-1,\lambda)$ to be even. That can be the case, only when $a_{k+1}-2\in J$ and $m(a_{k+1}-1,\lambda) = m+2$.

Since $\prec$ is admissible and $(a_{k+1}-2, 2)\in S(\gotM_{\lambda,J})$, we see that $a_k = a_{k+1}-3$, and $b_k=1$. Moreover, $\gamma^{\psi'}_{\prec}(k) =-1$ follows from Proposition \ref{prop:z-formula}.

Hence, applying Proposition \ref{prop:moegarth-formula} again, together with the assumption and \eqref{eq:ident-lem}, we see that
\[
(-1)^{\epsilon^{\psi}_{\pi}(a_k,1)} = -\underline{\eta}(k) = \underline{\eta}(k+1)=(-1)^{\epsilon^{\psi}_{\pi}(a_{k+1}-1,1)}\;.
\]
By Lemma \ref{lem:Js}\eqref{it:Js1}, we also have $\mathfrak{t}_{a_{k+1}-2}(\epsilon^{\psi}_{\pi})\neq1$. In other words, $\epsilon^{\psi}_{\pi}(a_{k},1) = 1-\epsilon^{\psi}_{\pi}(a_{k+1}-1,1)$, which is a contradiction.

Now we assume that $a_{k+1}-2\not\in \mathbb{J}(\lambda)$. 

Since $a_{k+1}-1, a_{k+1}-3\in S(\lambda)$ are valid in all cases, $a_{k+1}-1, a_{k+1}-3$ must share the same block in $S^{\dagger}(\lambda)$ in this case.

In particular, since $\pi\in \Pi^{f}_{z,\lambda}$ is assumed, we must have 
\begin{equation}\label{eq:ident-lem2}
\epsilon^{\psi}_{\pi}(a_{k+1}-3,1) = \epsilon^{\psi}_{\pi}(a_{k+1}-1,1)\;.
\end{equation}

If $a_k = a_{k+1}-4$, this is a contradiction to Lemma \ref{it:Js3}.

Otherwise, $a_k = a_{k+1}-3$, and reasoning as before, we have, $\gamma^{\psi'}_{\prec}(k) = 1$ because of $(a_k+1,2)\not\in S(\gotM_{\lambda,J})$. Similarly, that implies $(-1)^{\epsilon^{\psi}_{\pi}(a_k,1)} = - \underline{\eta}(k+1)$, which contradicts \eqref{eq:ident-lem} and \eqref{eq:ident-lem2}.
\\ \\
Finally, suppose that $\gotM'$ is not non-negative. In this case, we apply Proposition \ref{prop:ui-exhaust}\eqref{it:ui-exhaust2} to produce a constellation of the form
\[
\pi\cong \pi_{\prec}(\widetilde{\psi}, \underline{l},\underline{\eta})\in \Pi^{A}_{\widetilde{\psi}} \cap \Pi^{A}_{\psi''}\;,
\]
where $\widetilde{\psi} = \psi_{z,\widetilde{\gotM}} \in \Psi_0(G)$, for a non-negative table $\widetilde{\gotM}$, $\prec$ an admissible order on $\widetilde{\gotM}$, while $\psi'' = \psi_{z,\gotM''}\in \Psi(G)$, with a not non-negative table $\gotM''$, is produced out of $(\underline{l},\underline{\eta})$ using a move described in Proposition \ref{prop:phantom}.

The first part of the proof implies that $\widetilde{\gotM} = \gotM_{\lambda,J}$, for a subset $J\subset \mathbb{J}(\lambda)$.

We retain the notation of $I(\gotM_{\lambda,J}^{gp}) = \{1\prec \ldots \prec t\}$.

The possible moves of Proposition \ref{prop:phantom} become limited to only two cases.

One case is that of $(a_1,b_1) =(2,2) $ and $\underline{l}(1)=0$.  The other case is that of $(a_1,b_1) = (3,2)$, $\underline{l}(1)=0$ and $\underline{\eta}(1)=-1$.

In the former case, we have $2\in J$, which means that the multiplicity $m(1,\lambda) = m(1,T_J(\lambda))+1$ is even. 

In the latter case, from Lemma \ref{it:Js3} we see that $\epsilon^{\psi}_{\pi}(2,1) = 1$. By Lemma \ref{lem:tail},  and the inclusion $\pi\in \Pi^{f}_{z,\lambda}$, that implies $1\in \mathbb{J}(\lambda)$. Since $3\in \mathbb{J}(\lambda)$ as well, Lemma \ref{lem:even} forces $m(2,\lambda) = m(2,T_J(\lambda))+1$ to be even.

We see that in both cases $\#\{i\,:\, a_i = a_1-1\}$ is an odd number. By Proposition \ref{prop:z-formula}, that means $\gamma^{\widetilde{\psi}}_{\prec}(a_1,b_1) = -1$.

Yet, by Lemma \ref{lem:Js}\eqref{it:Js2} $\epsilon^{\widetilde{\psi}}_{\pi}(a_1,b_1) = 1$ and we see a contradiction to the formula of Proposition \ref{prop:moegarth-formula}.

\end{proof}

\subsubsection{Proof of Theorem \ref{thm:D}}

Let $\mathcal{O}^\vee\in \mathcal{U}^\vee$ a unipotent conjugacy class, $z\in \{\pm1\}$ a sign, $\lambda\in \mathcal{P}^{s_G}(N_G)$ a partition, and $\epsilon\in  A^{\dagger}(\mathcal{O}_{\lambda}^\vee)$ a character.

Let
\[
\delta = \delta(z,\mathcal{O}_{\lambda}^\vee,\epsilon)\in \Pi_{z,\mathcal{O}_{\lambda}^\vee}
\]
be the resulting (formally weakly spherical) anti-tempered representation.

By construction of Corollary \ref{cor:temp-anti}, we see that $\delta^t\in \Pi^{f}_{z,\lambda}$.

Let $\psi\in \Psi(G)$ be an $A$-parameter.

Since the $A$-packet $\Pi^{A}_{\psi^t}$ consists of the irreducible representations that are Aubert-dual to the constituents of $\Pi^{A}_{\psi}$ (Proposition \ref{prop:start-Arth}), we see by Theorem \ref{thm:lastin3}, that $\delta\in \Pi^{A}_{\psi}$ holds, if and only if, $\psi^t = \psi_{z,\gotM_{\lambda,J}}$, for a subset $J\subset \mathbb{J}(\lambda)$, such that $\epsilon$ is $\mathcal{O}^\vee_{T_J(\lambda)}$-primitive.

Indeed, the latter condition is equivalent to $\delta\in \Pi_{z,\mathcal{O}^\vee_{\lambda}, \mathcal{O}_{T_J(\lambda)}^\vee}$ (as defined in \eqref{eq:notation1}).

Theorem \ref{thm:D} now follows, pending the parameterization of $\Pi^{\mathrm{sph}}_{z,\mathcal{O}^\vee}$ given by Theorem \ref{thm:C}, which is proved independently in the following sections.


\section{Reduction to Springer theory}\label{sect:redtospringer}

The remainder of this work is devoted to a proof of Theorem \ref{thm:C}.

We translate the problem of detecting weak sphericity in the anti-tempered $A$-packet $\Pi_{\mathcal{O}^\vee}$, for $\mathcal{O}^\vee\in \mathcal{U}^\vee$, into a property of corresponding $W_G$-representations that arise in the cohomology of Springer fibres.


\subsection{Springer representations}\label{sect:springer}

For a conjugacy class $\mathcal{O}^\vee\in \mathcal{U}^\vee$, we set the \textit{Springer fibre} $\mathcal{B}_{\mathcal{O}^\vee}$ to be the projective complex variety of Borel subgroups of $\mathbf{G}^\vee(\mathbb{C})$ that contain a fixed representative $u\in \mathcal{O}^\vee$.

The \textit{Springer representation} \cite{Springer1978} is a linear action of the finite Weyl group $W_G$ on the finite-dimensional complex cohomology space
\[
    H^\ast(\mathcal{B}_{\mathcal{O}^\vee}) = \bigoplus_{i=0}^{d_{\mathcal{O}^\vee}} H^i(\mathcal{B}_{\mathcal{O}^\vee})\;.
\]

Here we mark $H^{d_{\mathcal{O}^\vee}}(\mathcal{B}_{\mathcal{O}^\vee})$ as the top degree, for which the space is non-zero.

The action of the centralizer group of $u$  on the variety $\mathcal{B}_{\mathcal{O}^\vee}$, naturally produces  a linear action of the component group $A(\mathcal{O}^\vee)$ on same cohomology space $H^\ast(\mathcal{B}_{\mathcal{O}^\vee})$. In fact, the actions of $W_G$ and $A(\mathcal{O}^\vee)$ commute.


Thus, for any irreducible local system $\epsilon\in \widehat{A(\mathcal{O}^\vee)}$ on $\mathcal{O}^\vee$, the space
\[
\Sigma(\mathcal{O}^\vee,\epsilon) := \mathrm{Hom}_{A(\mathcal{O}^\vee)}( \epsilon, H^\ast(\mathcal{B}_{\mathcal{O}^\vee}))\;,
\]
when non-zero, is a $W_G$-representation.

Special attention of Springer theory is typically drawn to the sub-representation given by the top degree cohomology space
\[
\sigma(\mathcal{O}^\vee,\epsilon) := \mathrm{Hom}_{A(\mathcal{O}^\vee)}( \epsilon, H^{d_{\mathcal{O}^\vee}}(\mathcal{B}_{\mathcal{O}^\vee}))< \Sigma(\mathcal{O}^\vee,\epsilon),\;
\]

When non-zero, $\sigma(\mathcal{O}^\vee,\epsilon)$ is an irreducible $W_G$-representation. The local system $\epsilon$ is said to be \textit{of Springer-type} in this case.

Moreover, each irreducible complex $W_G$-representation $\sigma$ is isomorphic to a Springer representation $\sigma(\mathcal{O}^\vee,\epsilon)$, for a unique pair $(\mathcal{O}_{\sigma}^\vee,\epsilon_{\sigma}) : = (\mathcal{O}^\vee,\epsilon)$.

    %
    %


For any $\epsilon\in \widehat{ A(\mathcal{O}^\vee) }$, we also write $\overline{\Sigma(\mathcal{O}^\vee,\epsilon)}$ for the representation of $\widetilde{W_G}$ obtained from inflating $\Sigma(\mathcal{O}^\vee,\epsilon)$ through the projection $p:\widetilde{W_G}\to W_G$. 

\subsection{Iwahori--Matsumoto theory}\label{sect:IM}

In order to compare the $p$-adic group situation with the Springer construction, we now revisit the setup of Section \ref{sect:iwa-decomp}, while recalling further details of the Iwahori--Matsumoto theory (\cite{IM}, see also \cite{aubert2021nonabelian}).

\subsubsection{Lie theory}
The choice of maximal torus $T$ for the $F$-split simple group $G$, produces a root datum $(X,Y,R,R^\vee)$, where $R$ denotes the set of roots in the algebraic character lattice $X$, and $R^\vee$ the set of corresponding co-roots in the the algebraic co-character lattice $Y$.

Our choice of the Iwahori subgroup $I_G<G$ also pins down a basis of positive roots $\Pi\subset R$. We denote by $s_{\alpha}\in W_G$ the reflection given by a root $\alpha\in R$ and view $(W_G,S)$ as a (finite) Coxeter system, with $S = \{s_{\alpha} \mid \alpha \in \Pi\}$.

An identification of the Weyl group $W_G$ with the quotient $ N(G_{\mathfrak{O}_F}, T_{\mathfrak{O}_F})/T_{\mathfrak{O}_F}<\widetilde{W_G}$ of the normalizer subgroup, and of the group  $T/T_{\mathfrak{O}_F}$ with $Y$, sets up a splitting for the projection
\[
p: \widetilde{W_G} \cong W_G \ltimes Y\;\to\; W_G\;.
\]
Through this identification, the subgroup
\[
W_a := W_G\ltimes \mathbb ZR^\vee < \widetilde{W_G}
\]
is visible, to which we refer as the \textit{affine Weyl group}.

Taking the highest, relative to $\Pi$, root $\alpha_0\in R$, we write $s_0: = (s_{\alpha_0}, -\alpha_0^{\vee})\in W_a$.

Then, $(W_a, S^a)$ becomes an (affine) Coxeter system, where $S^a = S\cup \{s_0\}$.

When $G$ is a symplectic (simply-connected) group, an equality $W_a =\widetilde{W_G}$ is valid. In general, we take note of the subgroup
\[
\Omega: = (N(G, T_{\mathfrak{O}_F}) \cap N(G,I_G))/ T_{\mathfrak{O}_F} < \widetilde{W_G}\;,
\]
which is isomorphic to $ Y/ \mathbb{Z}R^\vee$. 

A decomposition
\[
\widetilde{W_G} = W_a\rtimes \Omega
\]
holds. Moreover, the group $\Omega$ acts by conjugation on the set of Coxeter generators $S^a$ for $W_a$.

\subsubsection{Compact subgroups}

It follows from the Iwahori decomposition of \eqref{eq:iwah-decomp} that any compact subgroup $I_G < K < G$ gives rise to a finite subgroup
\[
W_K = (K\cap N(G, T_{\mathfrak{O}_F}))/ T_{\mathfrak{O}_F} < \widetilde{W_G}\;,
\]
so that a decomposition
\begin{equation}\label{eq:finiteK}
 K=\bigsqcup_{w\in W_K} I_G  w I_G
\end{equation}
holds.

\begin{theorem}\cite[Theorem 2.27]{IM}\label{thm:subgroups}

\begin{enumerate}
\item 
For any compact subgroup $I_G < K < G$, let us denote the group $\Omega_K = W_K\cap \Omega $ and the set $J_K = W_K\cap S^a$. Let $W'_K < W_a$ be the group generated by $J_K$ (so that $(W'_K, J_K)$ is a finite Coxeter system).

Then, the conjugation action of $\Omega_K$ normalizes $W'_K$, and an equality
\[
W_K = W'_K \rtimes \Omega_K < \widetilde{W_G}
\]
holds.
    
\item 
For any subgroup $\Omega' < \Omega$ and a proper subset $J \subsetneq S^a$, stable under the $\Omega'$-action, there is a compact subgroup $I_G < K < G$ with $(\Omega_K, J_K) = (\Omega', J)$.

\item\label{it:maxcomp}\cite[Section 6.1]{aubert2021nonabelian} A compact subgroup $I_G < K < G$ is maximal, if and only if, $S^a-J_K$ is a single non-empty $\Omega_K$ orbit, and $\Omega_K = \mathrm{Stab}_{\Omega}(J_K)$.
    \end{enumerate}
\end{theorem}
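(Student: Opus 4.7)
The proof splits along the three parts; the common thread is that the Bruhat decomposition turns multiplicativity in $G$ into a combinatorial closure condition on $W_K\subset \widetilde{W_G}$, via the BN-pair relation
\[
I_G s I_G \cdot I_G w I_G \;\subseteq\; I_G sw I_G \,\cup\, I_G w I_G \qquad (s\in S^a,\ w\in \widetilde{W_G}).
\]
For part (1), I would first observe that $\Omega_K$ normalizes $W'_K$: since $\Omega$ acts on $S^a$ by conjugation via its action on the affine Dynkin diagram, the conjugation of any element of $S^a$ by an element of $\Omega_K\subset W_K$ stays in $W_K\cap S^a = J_K$, and hence preserves the group $W'_K$ they generate. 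The nontrivial point is the equality $W_K=W'_K\cdot \Omega_K$. Taking $w\in W_K$ and decomposing $w = w_a\omega$ along $\widetilde{W_G}=W_a\rtimes \Omega$, I would fix a reduced expression $w_a=s_{i_1}\cdots s_{i_r}$ in $S^a$ and show each $s_{i_k}\in J_K$, by repeated application of the Bruhat relation above (this is essentially the strong exchange / Matsumoto-style argument for BN-pairs): closure of $K$ under multiplication, combined with the uniqueness in the Iwahori decomposition \eqref{eq:iwah-decomp}, forces each letter of the reduced word to lie in $W_K$. Once $w_a\in W'_K$, the element $\omega = w_a^{-1}w$ must also belong to $W_K$ and hence to $\Omega_K$, proving the semidirect product decomposition.

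For part (2), given $\Omega'<\Omega$ stabilizing $J\subsetneq S^a$, the natural candidate is
\[
K \;=\; \bigsqcup_{w\in W'_J\rtimes \Omega'} I_G\, w\, I_G,
\]
where $W'_J = \langle J\rangle\subset W_a$. I would verify this is a subgroup using the Bruhat relation: since $J$ is $\Omega'$-stable, conjugation by $\Omega'$ preserves $W'_J$, so the product $(W'_J\rtimes \Omega')\cdot (W'_J\rtimes \Omega')$ already lies in $W'_J\rtimes \Omega'$, and the BN-pair relations keep all multiplications of double cosets inside this union. Compactness follows from the finiteness of $W'_J\rtimes \Omega'$ (a Coxeter group with finite generating set $J$ whose corresponding parabolic in $W_a$ is finite because $J\subsetneq S^a$) and the compactness of $I_G$. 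By construction $(J_K,\Omega_K)=(J,\Omega')$.

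For part (3), the setup in (1)–(2) shows that the assignment $K\mapsto (J_K,\Omega_K)$ is an inclusion-preserving bijection between compact subgroups containing $I_G$ and pairs $(J,\Omega')$ with $\Omega'<\Omega$ stabilizing $J\subsetneq S^a$. Thus $K$ is maximal iff the pair $(J_K,\Omega_K)$ admits no strict enlargement within admissible pairs. Enlarging $\Omega_K$ while keeping $J_K$ fixed is possible precisely when $\Omega_K\subsetneq \mathrm{Stab}_\Omega(J_K)$; and enlarging $J_K$ (while keeping some $\Omega'\supseteq \Omega_K$ acting on it) is possible precisely when $S^a-J_K$ contains a proper nonempty $\Omega_K$-orbit, since one could then adjoin that orbit to $J_K$ and still have an admissible pair. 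Ruling out both possibilities gives exactly the two stated conditions. The main obstacle throughout is the argument in part (1) that each letter of a reduced expression for $w_a$ must lie in $J_K$; this is the content of the Iwahori–Matsumoto presentation and the place where the BN-pair axioms, not just set-theoretic multiplication, are essential.
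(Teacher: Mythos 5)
The paper does not prove this result; it simply cites \cite{IM} (and \cite{aubert2021nonabelian} for part~(3)). Your proposal is a correct reconstruction of the standard Iwahori--Matsumoto argument: part~(1) is the Tits-system characterization of subgroups containing a Borel, part~(2) is the converse construction, and part~(3) is the maximality analysis on admissible pairs.

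One step in part~(1) deserves more care than your phrasing suggests. The displayed multiplication law
\[
I_G s I_G \cdot I_G w I_G \subseteq I_G sw I_G \cup I_G w I_G
\]
only goes \emph{forward}: it lets you verify that a candidate union of double cosets is closed under products (the engine of part~(2)), but by itself it does not let you extract from a given $w\in W_K$ that its reduced-word letters lie in $J_K$. What you actually need is the reverse Tits-system inclusion: if $\ell(sw)<\ell(w)$ then
\[
I_G s I_G \;\subseteq\; I_G w I_G \cdot I_G w^{-1} I_G,
\]
which is proved by writing $I_G w I_G = I_G s I_G \cdot I_G w' I_G$ with $w'=sw$ reduced, noting $I_G w' I_G \cdot I_G w'^{-1} I_G \supseteq I_G$, and using $I_G s I_G \cdot I_G s I_G = I_G s I_G \cup I_G$. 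Combined with $K=K^{-1}$, this gives $s\in W_K$, and then $I_G w' I_G \subseteq I_G s I_G\cdot I_G w I_G\subseteq K$ gives $w'\in W_K$, so induction on length finishes. This is the content of what you call the ``strong exchange / Matsumoto-style'' step; it uses closure under inverses in an essential way, not just the forward Bruhat law you displayed. With that made explicit, your proof is complete and matches the proof in \cite{IM}.
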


Note, that since $W_K$ is a finite group, while the kernel $Y$ of the projection $p: \widetilde{W_G} \to W_G$ is torsion-free, we may naturally identify
\begin{equation}\label{eq:WK}
W_K \cong \widehat{W}_K:= p(W_K) < W_G\;,
\end{equation}
with a subgroup of the Weyl group.

Let us also take note of the subgroup $W'_K\cong \widehat{W}'_K:=p(W'_K)< \widehat{W}_K$, so that $\widehat{W}_K/ \widehat{W}'_K \cong \Omega\cap K$.

\subsubsection{Unramified characters}

Suppose that $\zeta: G \to \mathbb{C}^\times$ is a group character, for which the restriction $\zeta|_{I_G}$ is trivial. 

With the previous setting in place, $\zeta$ factors through a character of $W_K$ and consequently produces a character of the subgroup $\widehat{W}_K < W_G$. 

We denote that character as $\widehat{\zeta}_K: \widehat{W}_K \to \CC^\times$.

\begin{lemma}\label{lem:unram-ch}
For any compact subgroup $I_G<K<G$ and a group character $\zeta: G \to \mathbb{C}^\times$ for which $\zeta|_{I_G}$ is trivial, the restriction $(\widehat{\zeta}_K)|_{\widehat{W}'_K}$ is trivial.
    
If $\zeta|_K$ is non-trivial, then $\widehat{\zeta}_K$ is non-trivial.
\end{lemma}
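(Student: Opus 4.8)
The plan is to handle the two assertions separately; the second is essentially formal.

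For the second assertion, note first that since $T_{\mathfrak O_F}=T\cap I_G\subset I_G$, the hypothesis $\zeta|_{I_G}=1$ already yields $\zeta|_{T_{\mathfrak O_F}}=1$, so that for $w\in W_K$ the scalar $\zeta(\dot w)$ is independent of the chosen lift $\dot w\in K\cap N(G,T_{\mathfrak O_F})$ — this is exactly the rule defining $\widehat{\zeta}_K$ under the identification \eqref{eq:WK}. If $\widehat{\zeta}_K$ were trivial, then $\zeta(\dot w)=1$ for every $w\in W_K$, and together with $\zeta|_{I_G}=1$, multiplicativity of $\zeta$ and the double-coset decomposition \eqref{eq:finiteK} of $K$ would force $\zeta|_K=1$. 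I would conclude by taking the contrapositive.

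For the first assertion, I would reduce as follows. By Theorem \ref{thm:subgroups} the group $W'_K$ is generated by $J_K=W_K\cap S^a\subset S^a$, and since $p$ is injective on the finite group $W_K$ (its kernel $Y$ being torsion-free), it suffices to prove $\zeta(\dot s)=1$ for every lift of an $s\in J_K$; as $J_K\subset S^a$ and $W_a=\langle S^a\rangle$, it is in fact enough to show that $\zeta$ is trivial on (any, hence every) lift of each element of the affine Weyl group $W_a$. When $G$ is symplectic there is nothing to prove, since $G$ is simply connected and $\mathrm{Sp}_{2n}(F)$ is a perfect group, so $\zeta$ is trivial. Assume then that $G$ is orthogonal. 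Every character of $G$ factors through the spinor norm $\mathrm{sp}:G\to F^\times/(F^\times)^2$ of Section \ref{sec:spinornorm}, so $\zeta=\chi_F\circ\mathrm{sp}$ with $\chi_F$ a quadratic character of $F^\times/(F^\times)^2$. The crucial local fact will be that $\mathrm{sp}(I_G)$ equals the image $\overline{\mathfrak O_F^\times}$ of the units in $F^\times/(F^\times)^2$; granting this, $\zeta|_{I_G}=1$ forces $\chi_F$ to be unramified, i.e. $\overline{\mathfrak O_F^\times}\subset\ker\chi_F$. Finally, for $w\in W_a$ I would use the splitting $\widetilde{W_G}\cong W_G\ltimes Y$ to write $w=(\bar w,y)$ with $\bar w\in W_G$ and $y\in\mathbb Z R^\vee$, and take the lift $\dot w=\dot{\bar w}\,y(\varpi)$, where $\dot{\bar w}\in N(G_{\mathfrak O_F},T_{\mathfrak O_F})$ and $\varpi$ is a uniformizer; then $\mathrm{sp}(\dot{\bar w})\in\mathrm{sp}(G_{\mathfrak O_F})=\overline{\mathfrak O_F^\times}$, while $\mathrm{sp}(y(\varpi))=1$ because $\mathrm{sp}$ annihilates the cocharacters coming from the coroot lattice $\mathbb Z R^\vee$. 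Hence $\mathrm{sp}(\dot w)\in\overline{\mathfrak O_F^\times}\subset\ker\chi_F$ and $\zeta(\dot w)=1$, which completes the argument (a different lift changes $\dot w$ only by an element of $T_{\mathfrak O_F}\subset\ker\zeta$).

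The part I expect to be the genuine obstacle — the only non-formal ingredient — is the control of the spinor norm: that $\mathrm{sp}$ carries $T_{\mathfrak O_F}$, and hence $I_G$, onto the whole group of units modulo squares, so that $\zeta|_{I_G}=1$ really pins $\chi_F$ down to an unramified character instead of leaving it unconstrained, and that $\mathrm{sp}$ kills coroot-lattice cocharacters, so that the uniformizer occurring in a lift of an affine reflection contributes nothing. I would obtain both from the standard description of $\mathrm{sp}$ on the maximal torus as the natural surjection $X_*(T)\otimes F^\times\twoheadrightarrow(X_*(T)/\mathbb Z R^\vee)\otimes F^\times\cong F^\times/(F^\times)^2$ — valid because the residue characteristic is odd — together with the fact that unipotent elements lie in $\ker\mathrm{sp}$; alternatively these can be checked directly in the Witt-basis model of Section \ref{sec:maxcpcts} from the formula $\mathrm{sp}(r_w)=B(w,w)(F^\times)^2$ for reflections.
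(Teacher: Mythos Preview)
Your proof is correct. For the second assertion both you and the paper argue via the Iwahori decomposition of $K$ in exactly the same way.

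For the first assertion, however, the paper takes a more uniform and much shorter route: it invokes \cite[Proposition 2.20]{IM}, which says that the derived subgroup $G'<G$ satisfies $G'I_G=\bigsqcup_{w\in W_a} I_G w I_G$. Since any character $\zeta$ is trivial on $G'$, and by hypothesis trivial on $I_G$, the induced function on $\widetilde{W_G}$ vanishes on all of $W_a$ and hence factors through $\widetilde{W_G}/W_a\cong\Omega$; triviality on $\widehat{W}'_K$ then follows immediately from $W'_K\subset W_a$, with no case analysis. Your argument, by contrast, is essentially a hands-on reproof of this structural fact for the specific groups in question: in the symplectic case you note that $G$ is perfect (so $G'=G$ and the claim is vacuous); in the orthogonal case you identify $G/G'$ with $F^\times/(F^\times)^2$ via the spinor norm and then verify directly that lifts of elements of $W_a$ have spinor norm in $\overline{\mathfrak{O}_F^\times}\subset\ker\chi_F$. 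Your route is self-contained and makes explicit which character (namely $\kappa_0$) can survive, at the price of the spinor-norm computations you flag as the genuine obstacle; the paper's route is type-independent and one-line, but packages the substance into the citation to Iwahori--Matsumoto.
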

\begin{proof}
By the Iwahori decomposition \eqref{eq:finiteK}, a non-trivial $\zeta|_K$ must factor through a non-trivial character of $W_K$.

Yet, by \cite[Proposition 2.20]{IM} the commutator subgroup $G'<G$ satisfies
\[
G' I_G =\bigsqcup_{w\in W_a} I_G  w I_G\;.
\]
Thus, $\zeta$ must factor through the quotient $\widetilde{W_G}/ W_a\cong \Omega$.
\end{proof}

\subsection{Classical groups specifics}

\subsubsection{Maximal compacts subgroups of classical groups}

It will be useful to explicate further the Iwahori--Matsumoto theory for the specific groups $G$ of our interest.

Viewed as algebraic characters of the torus $T$ of diagonal matrices, the simple roots $\Pi = \{\alpha_1,\ldots,\alpha_{n_G}\}$ are realized as
\[
\alpha_i\left( \mathrm{diag}(a_1,\ldots,a_{n_G})\right) = a_ia_{i+1}^{-1}\,,\; i=1,\ldots,n_G-1\;,
\]
and $\alpha_{n_G}\left( \mathrm{diag}(a_1,\ldots,a_{n_G})\right) = a_{n_G}$ (for orthogonal $G$), or $=2a_{n_G}$ (for symplectic $G$).

We then write $S = \{s_1,\ldots, s_{n_G}\}$, with $s_i = s_{\alpha_i}$, for the generators of $W_G$.

The resulting affine Coxeter system $(W_a, S^a)$ corresonds to the affine Dynkin diagram
        \begin{equation}
            \dynkin[extended,labels={s_0,s_1,s_2,s_{n_G-2},s_{n_G-1},s_{n_G}},edge length =  1 cm, root radius = .075cm] C{}.
        \end{equation}
in the symplectic case, or to the diagram
    \begin{equation}\label{eq:diagram-aff}
        \dynkin[extended,labels={s_0,s_1,s_2,s_3,s_{n_G-2},s_{n_G-1},s_{n_G}},edge length = 1 cm, root radius = .075cm] B{} 
    \end{equation}
in the odd orthogonal case.

Recall again that the group $\Omega$ is trivial in the symplectic case. 

For odd orthogonal $G$, $\Omega$ is a two-element group, whose generator we denote as $\omega\in \widehat{W_G}$. The action of $\omega$ on $W_a$ is given by the non-trivial automorphism of the diagram \eqref{eq:diagram-aff}.

%
Thus, the specification of Theorem \ref{thm:subgroups}\eqref{it:maxcomp} onto our case provides the following.

\begin{proposition}\label{prop:maximalcompacts}
    \begin{enumerate}
        \item 
        In the case of $G = \mathrm{Sp}_{2n_G}(F)$, the set of maximal compact subgroups of $G$ that contain $I_G$ is described as $K_0,\dots, K_{n_G}$, where
\[
(\Omega_{K_i},J_{K_i}) = (\{1\},S^a - \{s_i\}),\quad i=0,1\ldots, n_G\;,
\]
is their Iwahori--Matsumoto parameterization as in Theorem \ref{thm:subgroups}.

\item 

        In the case of $G = \mathrm{SO}_{2n_G+1}(F)$, the set of maximal compact subgroups of $G$ that contain $I_G$ is described as $K_0,\dots, K_{n_G}$, where
\[
(\Omega_{K_i},J_{K_i}) = (\Omega,S^a - \{s_i\}),\quad i=2,\ldots, n_G\;,
\]
and,
\[
(\Omega_{K_0},J_{K_0}) = (\{1\},S^a - \{s_0\}),\quad (\Omega_{K_1},J_{K_1}) = (\Omega,S^a - \{s_0,s_1\})\;,
\]
is their Iwahori--Matsumoto parameterization as in Theorem \ref{thm:subgroups}.
    \end{enumerate}
\end{proposition}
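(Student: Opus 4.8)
The statement to prove is Proposition~\ref{prop:maximalcompacts}, which classifies the maximal compact subgroups of the split symplectic and odd orthogonal groups in terms of the Iwahori--Matsumoto parameterization. Let me think about how I would prove it.

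The plan is to apply the characterization of maximality from Theorem~\ref{thm:subgroups}\eqref{it:maxcomp} directly to the two affine Dynkin diagrams listed in the excerpt. Recall that a compact subgroup $I_G < K < G$ is maximal if and only if $S^a - J_K$ is a single non-empty $\Omega_K$-orbit and $\Omega_K = \mathrm{Stab}_{\Omega}(J_K)$. So the entire problem reduces to enumerating, for each of the two root systems, the pairs $(\Omega', J)$ with $\Omega' < \Omega$, $J \subsetneq S^a$ stable under $\Omega'$, satisfying these two conditions. First I would recall the structure: in the symplectic case $\Omega = \{1\}$ is trivial, so the second condition is vacuous and the first condition forces $S^a - J_K$ to be a single element; thus $J_{K_i} = S^a - \{s_i\}$ for $i = 0, \ldots, n_G$, and each such $J$ is automatically $\Omega$-stable. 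This gives exactly $n_G + 1$ maximal compact subgroups $K_0, \ldots, K_{n_G}$, matching the explicit matrix description in Section~\ref{sect:iwa-decomp}.

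For the odd orthogonal case, $\Omega = \{1, \omega\}$ has order two, with $\omega$ acting on the affine diagram \eqref{eq:diagram-aff} of type $\widetilde B_{n_G}$ by the unique nontrivial diagram automorphism, which swaps $s_0 \leftrightarrow s_1$ and fixes $s_2, \ldots, s_{n_G}$. I would split into cases according to whether $\Omega_K = \{1\}$ or $\Omega_K = \Omega$. If $\Omega_K = \Omega$, the condition that $S^a - J_K$ be a single $\Omega$-orbit means $S^a - J_K$ is either $\{s_i\}$ for some $i \geq 2$ (a fixed point of $\omega$) or $\{s_0, s_1\}$ (the unique two-element orbit); and one must check $\Omega = \mathrm{Stab}_\Omega(J_K)$, i.e. that $\omega$ stabilizes $J_K$, which holds in both subcases since the complement is $\omega$-stable. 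This produces $(\Omega, S^a - \{s_i\})$ for $i = 2, \ldots, n_G$ and $(\Omega, S^a - \{s_0, s_1\})$. If $\Omega_K = \{1\}$, then $S^a - J_K$ must be a single element $\{s_i\}$, but the second condition $\Omega_K = \mathrm{Stab}_\Omega(J_K)$ forces $\omega$ to \emph{not} stabilize $J_K = S^a - \{s_i\}$; since $\omega$ fixes $s_j$ for $j \geq 2$, this can only happen when $i \in \{0, 1\}$, and by the $\omega$-symmetry the two choices $i = 0$ and $i = 1$ give conjugate subgroups, so we record the single representative $(\{1\}, S^a - \{s_0\})$, which is $K_0$. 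Finally I would identify $K_0$ with the hyperspecial subgroup $G_{\mathfrak O_F}$ and $K_1, \ldots, K_{n_G}$ with the explicit matrix groups from Section~\ref{sect:iwa-decomp} by comparing the index-$2$ parahoric structure noted there; in particular the pair $(\Omega, S^a - \{s_0, s_1\})$ corresponds to $K_1$.

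The main obstacle, such as it is, is not conceptual but bookkeeping: one must correctly identify the action of $\omega$ on $S^a$ (equivalently, pin down the nontrivial automorphism of the $\widetilde B_{n_G}$ diagram) and then carefully match the abstract pairs $(\Omega_K, J_K)$ with the concrete matrix realizations $K_i$ given earlier, using the remark that in the orthogonal case each $K_i$ with $0 < i \le n_G$ contains a maximal parahoric of index $2$ (which pins down $\Omega_{K_i} = \Omega$ for those indices) while $K_0$ is hyperspecial (so $\Omega_{K_0} = \{1\}$). Since the excerpt already supplies the affine diagrams, the triviality of $\Omega$ in the symplectic case, and the order-two structure of $\Omega$ with its diagram-automorphism action in the orthogonal case, all of these verifications are routine applications of Theorem~\ref{thm:subgroups}, and no genuine difficulty remains.
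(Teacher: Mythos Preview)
Your proposal is correct and follows precisely the approach of the paper, which simply states that the proposition is the specification of Theorem~\ref{thm:subgroups}\eqref{it:maxcomp} to the two affine diagrams and gives no further detail. Your case analysis of the $\Omega$-action and the observation that the choices $i=0$ and $i=1$ with $\Omega_K=\{1\}$ in the orthogonal case yield conjugate subgroups is exactly the routine verification the paper leaves implicit.
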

The enumaration of maximal compacts subgroups in Proposition \ref{prop:maximalcompacts} now agrees with the matrix description of Section \ref{sect:iwa-decomp}.

\subsubsection{Signed permutations}\label{sect:sgn-perm}

As preparation for the combinatorial analysis of Springer representations in Section \ref{sect:walds-la}, we would like to explicate the structure of $W_G$ and of its subgroups of the form $\widehat{W}_{K_i}$, that arise from maximal compact subgroups of $G$.

\begin{definition}
    For an integer $n\geq1$, let $W_n$ be the group of \textit{signed permutations} on $n$ letters. This is the group of permutations $\sigma$ of the set of indices $\{\pm 1,\dots,\pm n\}$, which satisfy $\sigma(-i) = -\sigma(i)$, for all $1\leq i\leq n$.
\end{definition}

Let $\sgn^\pm: W_n\to \{\pm1\}$ be the quadractic group character given by
\[
\sgn^\pm(\sigma)  = \prod_{i=1}^n \sgn^\pm(\sigma(i))\;,
\]
where $\sgn^\pm(i) = 1$ if $i> 0$ and $=-1$ if $i<0$.

Let $W'_n< W_n$ be the kernel subgroup of $\sgn^\pm$.


A natural isomorphism 
\begin{equation}\label{eq:sgnperm}
W_{G} \cong W_{n_G}
\end{equation}
appears, when identifying $s_i\in S$ with $(i,\, i+1)(-i,\; -(i+1))\in W_n$ (in cycle notation), for each $1\leq i\leq n_G-1$, and $s_{n_G}\in S$ with $(n,\,-n)\in W_{n_G}$.

Let us denote the element $\hat{s}_0:= p(s_0)\in W_{n_G}$, viewed as a signed permutation, as well as $\hat{\omega}:= p(\omega)\in W_{n_G}$ in the case of orthogonal $G$.

Since the highest the root $\alpha_0\in R$ is given as
\[
\alpha_0\left( \mathrm{diag}(a_1,\ldots,a_{n_G})\right) = \left\{ \begin{array}{ll} a_1^2 &  \mbox{symplectic } G  \\ a_1a_2 & \mbox{orthogonal } G  \end{array}\right.\;,
\]
we see the description 
\[
\hat{s}_0 = \left\{ \begin{array}{ll} (1,\,-1) &  \mbox{symplectic } G  \\ (1,\,-2)(-1,\,2) & \mbox{orthogonal } G  \end{array}\right.\;.
\]
Observing the action of $\omega\in \Omega$ on the affine Dynkin diagram in the orthogonal case, it is easy to verify that $\hat{\omega}= (1,\,-1)$.

\begin{proposition}\label{prop:explctW_n}
For each $1\leq i \leq n_G$, the identification \eqref{eq:sgnperm} sends the subgroup $\widehat{W}_{K_i}< W_{G}$ to the subgroup
\[
W_{n_G,i} := \{\sigma\in W_{n_G}: \sigma(\{\pm1, \dots \pm i\})\subseteq \{\pm1, \dots,\pm i\}\} < W_{n_G}\;.
\]
A factorization $W_{n,i} \cong W_i \times W_{n-i}$ holds, through which we denote the subgroup
\[
W'_i \times W_{n_G-i}\cong W'_{n_G,i} < W_{n_G,i}\;.
\]
In the case of orthogonal $G$, the identification \eqref{eq:sgnperm} sends the subgroup $\widehat{W}'_{K_i}< W_{G}$ to the subgroup $W'_{n_G,i}<W_{n_G}$.

\end{proposition}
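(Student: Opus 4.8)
\textbf{Proof plan for Proposition \ref{prop:explctW_n}.}

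The plan is to trace through the Iwahori--Matsumoto data for each $K_i$ and match it, under the identification \eqref{eq:sgnperm}, with the concrete subgroup $W_{n_G,i}$ of signed permutations. First I would recall from Proposition \ref{prop:maximalcompacts} that $J_{K_i} = S^a \setminus \{s_i\}$ for $1 \le i \le n_G$ (in the symplectic case, and also for $2 \le i \le n_G$ in the orthogonal case), while $J_{K_1} = S^a \setminus \{s_0, s_1\}$ in the orthogonal case. By Theorem \ref{thm:subgroups}, $\widehat{W}'_{K_i} = p(W'_{K_i})$ is the Coxeter subgroup of $W_G \cong W_{n_G}$ generated by the image of $J_{K_i}$, and $\widehat{W}_{K_i} = \widehat{W}'_{K_i} \rtimes \Omega_{K_i}$, where $\Omega_{K_i}$ is trivial in the symplectic case and equals $\Omega = \langle \omega \rangle$ for orthogonal $G$ (with the one exceptional value $i=1$ treated separately but analogously). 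So the combinatorial task splits into: (a) identifying the parabolic subgroup of $W_{n_G}$ generated by $\{s_j : j \ne i\} \cup \{\hat s_0\}$ with $W'_{n_G,i}$; and (b) checking that adjoining $\hat\omega = (1,\,-1)$ in the orthogonal case enlarges this to $W_{n_G,i}$.

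For step (a): removing $s_i$ from the affine Dynkin diagram of type $C_{n_G}$ (or $B_{n_G}$) disconnects it into two pieces — the ``left'' piece containing $s_0$ (or in orthogonal type, $s_0, s_1$) together with $s_1, \dots, s_{i-1}$, and the ``right'' piece containing $s_{i+1}, \dots, s_{n_G}$. Using the explicit formulas $s_j = (j,\,j+1)(-j,\,-j-1)$ for $j < n_G$, $s_{n_G} = (n_G,\,-n_G)$, and $\hat s_0 = (1,\,-1)$ (symplectic) or $\hat s_0 = (1,\,-2)(-1,\,2)$ (orthogonal), one checks directly that the right piece generates the full signed-permutation group $W_{n_G - i}$ on the letters $\{\pm(i+1), \dots, \pm n_G\}$ (it is a Coxeter system of type $C_{n_G-i}$, resp. $B_{n_G-i}$, acting naturally), while the left piece generates: in the symplectic case, the type-$C_i$ group $W_i$ on $\{\pm 1, \dots, \pm i\}$ — wait, here one must be careful, since the left piece of affine $C_{n_G}$ with $s_0 = $ the type-$C$ end node gives $W_i$, but in the orthogonal case the left piece of affine $B_{n_G}$ is of type $D_i$, generating exactly $W'_i$. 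This is precisely why the statement asserts $\widehat{W}'_{K_i} \mapsto W'_{n_G,i} = W'_i \times W_{n_G-i}$ and not $W_i \times W_{n_G-i}$ in the orthogonal case. I would verify this type identification carefully from the diagram automorphism data, since it is the crux of the parity bookkeeping.

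For step (b): in the orthogonal case $\Omega_{K_i} = \langle \omega \rangle$ and $\hat\omega = (1,\,-1)$, which is a sign change on the first letter. Since $W'_i$ consists of the signed permutations of $\{\pm1,\dots,\pm i\}$ with an even number of sign changes, and $W'_i$ is normal of index $2$ in $W_i$, adjoining any single sign change (e.g. $(1,\,-1)$) yields all of $W_i$; hence $\widehat{W}'_{K_i} \rtimes \langle\hat\omega\rangle \cong W_i \times W_{n_G-i} = W_{n_G,i}$, giving the claim for $\widehat{W}_{K_i}$. One also checks $\hat\omega$ centralizes the right factor $W_{n_G-i}$, so the semidirect product is indeed the direct product $W_i \times W_{n_G-i}$. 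The main obstacle I anticipate is the type-$D$ versus type-$C$ distinction in step (a): one must correctly read off, from the shape of the affine $B$ diagram and the location of $s_0$, that deleting $s_i$ leaves a type-$D_i$ component on the left (hence $W'_i$, with its built-in parity constraint), and confirm consistency with the exceptional case $i=1$ where $J_{K_1} = S^a\setminus\{s_0,s_1\}$ and the ``left'' component is empty so that $W'_{n_G,1} = \{1\} \times W_{n_G - 1}$ and $\Omega$ supplies the extra sign change. The remaining verifications — that the natural action of each Coxeter piece on its block of letters is the standard one, and that the blocks $\{\pm1,\dots,\pm i\}$ and $\{\pm(i+1),\dots,\pm n_G\}$ are preserved — are routine from the cycle formulas.
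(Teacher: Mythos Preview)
Your approach is correct and is essentially the same as the paper's: identify the generators of $\widehat{W}_{K_i}$ (resp.\ $\widehat{W}'_{K_i}$) via Proposition~\ref{prop:maximalcompacts}, then read off from the explicit signed-permutation formulas for $\hat{s}_0$, $s_1,\dots,s_{n_G}$, and $\hat{\omega}$ which subgroup of $W_{n_G}$ they generate. Your detailed verification of the type-$C_i$ versus type-$D_i$ distinction on the left component (and the separate treatment of $i=1$ in the orthogonal case) fills in exactly what the paper leaves implicit when it writes ``the statement now follows from our description of generators in terms of signed permutations.''
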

\begin{proof}
In the symplectic case, by Proposition \ref{prop:maximalcompacts}, the group $W_{K_i}$, for $1\leq i\leq n_G$, is generated by $\{\hat{s}_0\}\cup \Pi - \{s_i\}$.

In the orthogonal case, by Proposition \ref{prop:maximalcompacts}, the group $W_{K_i}$, for $2\leq i \leq n_G$, is generated by $\{\hat{\omega},\hat{s}_0\}\cup \Pi - \{s_i\}$, while $W_{K_1}$ is generated by $\{\hat{\omega}\}\cup \Pi - \{s_1\}$. The subgroups $\widehat{W}'_{K_i}$ are obtained similarly when omitting the generator $\hat{\omega}$. 

The statement now follows from our description of generators in terms of signed permutations.
\end{proof}

We also write $W_{n_G,0} = W_{n_G}$, which is then identified with $\widehat{W}_{K_0} = W_{n_G}$.

\subsection{Anti-tempered weakly-spherical spectrum}

The effective description of compact subgroups of $G$ of previous sections allows for a reduction of the study of invariants of anti-tempered $G$-representations into properties of associated Springer representations.

The following theorem will be proved in Section \ref{sec:hecke} through methods of categorical equivalences arising from affine Hecke algebras.



\begin{theorem}\label{thm:mainSpringer}
Let $I_G<K<G$ be a compact subgroup, and $\widehat{W}_K < W_G$ its corresponding subgroup from \eqref{eq:WK}. 

Let $\zeta$ be a complex group character of $G$, which is trivial on $I_G$. Let $\widehat{\zeta}_K$ be resulting group character of $\widehat{W}_K$.

Let $\mathcal{O}^\vee\in \mathcal{U}^\vee$ and $\epsilon\in \widehat{A(\mathcal{O}^\vee)}_0$ be choices of a unipotent conjugacy class and a character, for which the anti-tempered representation
\[
\delta(1,\mathcal{O}^\vee,\epsilon)\in \Pi_{1,\mathcal{O}^\vee}
\]
possesses non-zero $I_G$-invariant vectors.

Then, the equality of dimensions 
\[
\dim \mathrm{Hom}_{\widehat{W}_K} \left( \widehat{\zeta}_K , \Sigma(\mathcal{O}^\vee,\epsilon)\right) = \dim \mathrm{Hom}_{K} (\zeta, \delta(1,\mathcal{O}^\vee,\epsilon))
\]
holds.
\end{theorem}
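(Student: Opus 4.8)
The plan is to pass through the Borel--Casselman equivalence between the category of Iwahori-spherical smooth $G$-representations and the category of finite-dimensional modules over the Iwahori--Hecke algebra $\mathcal{H}_G = \mathcal{H}(G,I_G)$. Since $\delta := \delta(1,\mathcal{O}^\vee,\epsilon)$ has non-zero $I_G$-fixed vectors by hypothesis, its space $V_\delta^{I_G}$ is an irreducible $\mathcal{H}_G$-module, and for any compact $I_G < K < G$ the decomposition \eqref{eq:finiteK} shows that $V_\delta^K = e_K \cdot V_\delta^{I_G}$, where $e_K = \frac{1}{[K:I_G]}\sum_{w\in W_K} \mathbf{1}_{I_G w I_G}$ is the idempotent in $\mathcal{H}_G$ attached to $K$ (using that $\zeta$ is trivial on $I_G$, and more generally that twisting by $\zeta$ corresponds to tensoring the $\mathcal{H}_G$-module by the relevant character of $\widetilde{W_G}/W_a \cong \Omega$, cf. Lemma \ref{lem:unram-ch}). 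Thus $\dim\operatorname{Hom}_K(\zeta,\delta)$ is computed as the multiplicity of the appropriate one-dimensional $W_K$-type inside $V_\delta^{I_G}$ regarded as a module over the finite subalgebra $\mathcal{H}_K = e_K \mathcal{H}_G e_K$, which is (an extended) finite Hecke algebra attached to the Coxeter system $(W'_K, J_K)$ together with the $\Omega_K$-action, by Theorem \ref{thm:subgroups}.

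The second step is to identify the $\mathcal{H}_G$-module $V_\delta^{I_G}$ geometrically. Here I would invoke the Kazhdan--Lusztig classification: the anti-tempered (= Aubert-dual-to-tempered) representations with unramified real infinitesimal character in the principal block correspond, under Borel--Casselman composed with Kazhdan--Lusztig, to the top cohomology of Springer fibres; more precisely the full module $V_\delta^{I_G}$, upon restriction along $\mathbb{C}[W_G]\hookrightarrow \mathcal{H}_G$ (the specialization/deformation of parameters), becomes the Springer representation $\Sigma(\mathcal{O}^\vee,\epsilon) = \operatorname{Hom}_{A(\mathcal{O}^\vee)}(\epsilon, H^\ast(\mathcal{B}_{\mathcal{O}^\vee}))$. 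This is where Lusztig's asymptotic (``$J$-ring'') algebra enters: the asymptotic Hecke algebra provides a canonical matching of simple modules at the Hecke-algebra parameter $q$ with simple modules at $q=1$, i.e. with $W_G$-representations, and this matching is compatible with restriction to the parabolic-type subalgebras $\mathcal{H}_K$ and with the corresponding parabolic subgroups $\widehat{W}_K \subset W_G$. The extended-Hecke-algebra subtlety flagged in Section \ref{sec:finiteheckealgebras} is exactly the statement that this compatibility survives passage to the $\Omega_K$-extended algebras $\mathcal{H}_K$, so that $\dim\operatorname{Hom}_{\mathcal{H}_K}(\widehat\zeta_K, V_\delta^{I_G}) = \dim\operatorname{Hom}_{\widehat{W}_K}(\widehat\zeta_K, \Sigma(\mathcal{O}^\vee,\epsilon))$.

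Concatenating the two steps gives
\[
\dim\operatorname{Hom}_K(\zeta,\delta) = \dim\operatorname{Hom}_{\mathcal{H}_K}(\widehat\zeta_K, V_\delta^{I_G}) = \dim\operatorname{Hom}_{\widehat{W}_K}(\widehat\zeta_K, \Sigma(\mathcal{O}^\vee,\epsilon)),
\]
which is the asserted equality. I expect the main obstacle to be the middle equality: one must show that the Lusztig asymptotic-algebra isomorphism intertwines the inclusion $\mathcal{H}_K \hookrightarrow \mathcal{H}_G$ with the inclusion $\mathbb{C}[\widehat{W}_K] \hookrightarrow \mathbb{C}[W_G]$ at the level of module categories \emph{and} that it respects the $\Omega_K$-equivariant (extended) structure, since in the odd orthogonal case $K_i$ contains a maximal parahoric with index $2$ and the convolution algebra genuinely lies in the extended class; handling the one-dimensional $\widehat\zeta_K$-isotypic piece correctly under the two idempotent-and-twist operations is the delicate bookkeeping. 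A secondary point to be careful about is that the parameters of $\mathcal{H}_G$ for these classical $p$-adic groups are not all equal, so the deformation to $q=1$ must be set up uniformly (equal-parameter deformation of the \emph{finite} pieces suffices because $\widehat{W}_K$ and $\widehat{W}'_K$ are of classical type), and that $\delta$ being anti-tempered places $V_\delta^{I_G}$ in precisely the range where the Springer-theoretic description of $\mathcal{H}_G$-modules is available.
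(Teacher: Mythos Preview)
Your strategy is the paper's: Borel--Casselman (Theorem~\ref{thm:borel}) passes to $\mathscr{H}_K$-modules, the Kazhdan--Lusztig construction (Proposition~\ref{prop:kl-antitemp}, together with Assumption~\ref{assumpt:param}) furnishes an $A$-free $\mathbf{H}^{\widetilde{W_G}}$-module $\mathbf{M}(\delta)$ specializing to $\delta^{I_G}$ at $v=\sqrt q$ and to $\overline{\Sigma(\mathcal{O}^\vee,\epsilon)}$ at $v=1$, and the $J$-algebra deformation argument for the \emph{finite} extended Hecke algebra $\mathbf{H}^{W_K}$ (Proposition~\ref{prop:deformfinite}) then equates the two multiplicities after restricting $\mathbf{M}(\delta)$ to $\mathbf{H}^{W_K}$.

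Two technical points to clean up. First, $\mathscr{H}_K$ is the \emph{subalgebra} of $\mathscr{H}_G$ consisting of functions supported on $K$, not the corner $e_K\mathscr{H}_G e_K$ (the latter is the spherical algebra $\mathscr{H}(G/\!/K)$, a quite different object); the categorical equivalence of Theorem~\ref{thm:borel} for $H=K$ is what identifies $\Hom_K(\zeta,\delta)$ with $\Hom_{\mathscr{H}_K}(\zeta^{I_G},\delta^{I_G})$. Second, there is no literal embedding $\mathbb{C}[W_G]\hookrightarrow\mathscr{H}_G$; the comparison between $v=\sqrt q$ and $v=1$ is made \emph{only after} restricting $\mathbf{M}(\delta)$ to the finite generic subalgebra $\mathbf{H}^{W_K}$, where Proposition~\ref{prop:deformfinite} supplies the isomorphism $\eta_{\sqrt q}:\mathbb{C}[W_K]\xrightarrow{\sim}\mathbf{H}^{W_K}_{\sqrt q}$ --- so no compatibility of an affine $J$-algebra isomorphism with parabolic restriction is required, and the generic one-dimensional module $\boldsymbol{\zeta}:T_w\mapsto\zeta(w)v^{2\ell(w)}$ handles the character. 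Finally, for the split groups treated here the Iwahori--Hecke algebra has equal parameters (Proposition~\ref{prop:generic-affine} uses the single-parameter presentation of Definition~\ref{def:genHecke}), so your caution about unequal parameters is not in play.
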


\begin{corollary}\label{cor:ofsection5}
Let $\mathcal{O}^\vee\in \mathcal{U}^\vee$ and $\epsilon\in \widehat{A(\mathcal{O}^\vee)}_0$ be choices of a unipotent conjugacy class and a character, for which the anti-tempered representation
\[
\delta(1,\mathcal{O}^\vee,\epsilon)\in \Pi_{1,\mathcal{O}^\vee}
\]
possesses non-zero $I_G$-invariant vectors.

Then, for all $i=0,\ldots,n_G$, the equality of dimensions of invariant spaces
\[
\dim \Sigma(\mathcal{O}^\vee,\epsilon)^{W_{n_G,\,i}} = \dim \delta(1,\mathcal{O}^\vee,\epsilon)^{K_i}
\]
holds, when the identification $W_{G} \cong W_{n_G}$ is assumed.

In the case of orthogonal $G$, when the quasi-basic $A$-packet $\Pi_{-1,\mathcal{O}^\vee}$ is defined, the equality
\[
\dim \Hom_{W_{n_G,\,i}} (\sgn^{\pm} \boxtimes \mathrm{triv},  
 \Sigma(\mathcal{O}^\vee,\epsilon)) = \dim \delta(-1,\mathcal{O}^\vee,\epsilon)^{K_i}
\]
holds, for all $i=0,\ldots,n_G$, when the quadratic character $\sgn^{\pm} \boxtimes \mathrm{triv}$ is taken under the decomposition $W_{n_G,i} = W_i \times W_{n_G-i}$.

\end{corollary}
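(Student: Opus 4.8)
The plan is to derive Corollary \ref{cor:ofsection5} as a direct specialization of Theorem \ref{thm:mainSpringer} to the maximal compact subgroups $K_0,\ldots,K_{n_G}$ and to the specific group characters appearing in the classification of the quasi-basic packets.

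First I would treat the symplectic case and the $\zeta = \triv$ case of orthogonal groups simultaneously. Here one applies Theorem \ref{thm:mainSpringer} with $K = K_i$ and $\zeta$ the trivial character of $G$. Then $\widehat{\zeta}_{K_i}$ is the trivial character of $\widehat{W}_{K_i}$, so $\dim\Hom_{\widehat{W}_{K_i}}(\widehat{\zeta}_{K_i},\Sigma(\mathcal{O}^\vee,\epsilon)) = \dim \Sigma(\mathcal{O}^\vee,\epsilon)^{\widehat{W}_{K_i}}$, while the right-hand side is $\dim\delta(1,\mathcal{O}^\vee,\epsilon)^{K_i}$. Invoking Proposition \ref{prop:explctW_n}, the identification $W_G \cong W_{n_G}$ of \eqref{eq:sgnperm} carries $\widehat{W}_{K_i}$ to $W_{n_G,i}$, which yields exactly the first displayed equality. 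The only point needing a remark is the hypothesis of Theorem \ref{thm:mainSpringer}: it requires $\delta(1,\mathcal{O}^\vee,\epsilon)$ to have nonzero $I_G$-invariants, which is precisely the standing assumption of the corollary, and it requires $\zeta|_{I_G}$ trivial, which holds for $\zeta = \triv$ automatically and, in the orthogonal case for the spinor-norm character, was recorded in Section \ref{sec:spinornorm}.

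Next I would handle the orthogonal case with $\zeta = \kappa_0$ the spinor norm character. By definition $\delta(-1,\mathcal{O}^\vee,\epsilon) = \kappa_0 \otimes \delta(1,\mathcal{O}^\vee,\epsilon)$ (Section \ref{sect:formal-wsph} together with the relation $\kappa_0\otimes\phi_{z,\mathcal{O}^\vee} = \phi_{-z,\mathcal{O}^\vee}$), so taking $\zeta = \kappa_0$ gives $\dim\delta(-1,\mathcal{O}^\vee,\epsilon)^{K_i} = \dim\Hom_{K_i}(\kappa_0,\delta(1,\mathcal{O}^\vee,\epsilon))$, and Theorem \ref{thm:mainSpringer} equates this with $\dim\Hom_{\widehat{W}_{K_i}}(\widehat{(\kappa_0)}_{K_i},\Sigma(\mathcal{O}^\vee,\epsilon))$. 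It then remains to identify the character $\widehat{(\kappa_0)}_{K_i}$ under $\widehat{W}_{K_i}\cong W_{n_G,i}\cong W_i \times W_{n_G-i}$. Since $\kappa_0$ is trivial on $I_G$ but non-trivial on $K_i$ for $i\geq1$ (it factors through $\Omega \cong \widetilde{W_G}/W_a$, cf. Lemma \ref{lem:unram-ch}), $\widehat{(\kappa_0)}_{K_i}$ is the non-trivial character of $\widehat{W}_{K_i}/\widehat{W}'_{K_i} \cong \Omega\cap K_i$; by Proposition \ref{prop:explctW_n} this quotient corresponds to $W_{n_G,i}/W'_{n_G,i}$, and under $W_{n_G,i}\cong W_i\times W_{n_G-i}$ the kernel $W'_{n_G,i} = W'_i\times W_{n_G-i}$ is exactly the kernel of $\sgn^\pm\boxtimes\triv$. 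Hence $\widehat{(\kappa_0)}_{K_i} = \sgn^\pm\boxtimes\triv$, which gives the second displayed equality. For $i = 0$ one has $K_0 = G_{\mathfrak{O}_F}$ hyperspecial and $\kappa_0|_{K_0}$ trivial, so $\widehat{(\kappa_0)}_{K_0}$ is trivial, consistent with $\sgn^\pm\boxtimes\triv$ being trivial on all of $W_{n_G,0}=W_{n_G}$ in the degenerate factorization.

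The substantive content is entirely contained in Theorem \ref{thm:mainSpringer}, whose proof is deferred; the corollary itself is bookkeeping. The one genuine point requiring care is the precise identification of $\widehat{(\kappa_0)}_{K_i}$ as $\sgn^\pm\boxtimes\triv$ rather than, say, $\triv\boxtimes\sgn^\pm$ or some mixed character — this rests on tracking through Proposition \ref{prop:explctW_n} that the non-trivial coset of $W'_{n_G,i}$ in $W_{n_G,i}$ is represented by an element acting on the first $i$ coordinates (namely a reflection coming from $\hat{\omega}$), so the non-trivial character is the one detecting the sign on the $W_i$-factor. I would state this matching explicitly as the key lemma of the argument and note that everything else follows formally.
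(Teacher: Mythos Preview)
Your proposal is correct and follows essentially the same approach as the paper's proof: apply Theorem \ref{thm:mainSpringer} with $K=K_i$ and $\zeta$ trivial (respectively $\zeta=\kappa_0$), invoke Proposition \ref{prop:explctW_n} to identify $\widehat{W}_{K_i}$ with $W_{n_G,i}$, and use Lemma \ref{lem:unram-ch} together with the explicit description of $W'_{n_G,i}$ to pin down $\widehat{(\kappa_0)}_{K_i}$ as $\sgn^\pm\boxtimes\triv$, treating $i=0$ separately. Your emphasis on verifying the orientation of the character (that it is $\sgn^\pm\boxtimes\triv$ rather than $\triv\boxtimes\sgn^\pm$) matches exactly the care the paper takes at this step.
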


\begin{proof}
The first statement follows from Proposition \ref{prop:explctW_n}, when taking trivial $\zeta$ in Theorem \ref{thm:mainSpringer}.

In the orthogonal case, $\kappa_0(\omega)=-1$, since $\kappa_0$ is non-trivial.

For $0<i\leq n_G$, we have $\omega\in W_{K_i}$. Thus, Lemma \ref{lem:unram-ch} claims that $\widehat{\kappa_0}_{K_i}$ must produce a non-trivial character of $\widehat{W}_{K_i}/\widehat{W}'_{K_i}$. According to Proposition \ref{prop:explctW_n}, that claim amounts to $\hat{\chi}_{K_i} = \sgn^{\pm} \boxtimes \mathrm{triv}$ under the identification of $\widehat{W}_{K_i}$ with $W_{n_G,i}$. 

For $i=0$, $\widehat{W}'_{K_0} = \widehat{W}_{K_0}$, while both $\widehat{\kappa_0}_{K_0}$ and $\sgn^{\pm} \boxtimes \mathrm{triv}$ stand for the trivial character.

The second statement now follows similarly when taking $\zeta = \kappa_0$ in Theorem \ref{thm:mainSpringer}, and recalling that $\delta(-1,\mathcal{O}^\vee,\epsilon) \cong \kappa_0\otimes \delta(1,\mathcal{O}^\vee,\epsilon)$.
    
\end{proof}

The Green theory analysis of Springer representations in Section \ref{sect:walds-la} produces the following result.

\begin{theorem}\label{prop:combinatoricsprop}
Let $\mathcal{O}^\vee\in \mathcal{U}^\vee$ and $\epsilon\in \widehat{A(\mathcal{O}^\vee)}_0$ be choices of a unipotent conjugacy class and a character, for which the Springer representation $\sigma(\mathcal{O}^\vee,\epsilon)$ is non-zero.

In the case of symplectic $G$, an inclusion $\epsilon\in A^{\dagger}(\mathcal{O}^\vee)$ holds, if and only if, there exists an index $0\leq i\leq n_G$, for which 
\[
\Sigma(\mathcal{O}^\vee,\epsilon)^{W_{n_G,\,i}}\neq\{0\}\;.
\]

In the case of orthogonal $G$, an inclusion $\epsilon\in A^{\dagger}(\mathcal{O}^\vee)$ holds, if and only if, there exists an index $0\leq i\leq n_G$, for which 
\[
\Hom_{W_{n_G,\,i}}(\sgn^{\pm} \boxtimes \mathrm{triv}, \Sigma(\mathcal{O}^\vee,\epsilon))\neq\{0\}\;.
\]

\end{theorem}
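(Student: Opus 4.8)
The plan is to prove Theorem~\ref{prop:combinatoricsprop} by reducing the question about $\widetilde{W_G}$-invariants of the \emph{full} Springer representation $\Sigma(\mathcal{O}^\vee,\epsilon) = H^\ast(\mathcal{B}_{\mathcal{O}^\vee})$-multiplicity space to a purely combinatorial statement about decomposing full Springer fibre representations into irreducibles, which is exactly the Green-theory content promised in Section~\ref{sect:walds-la}. Concretely, a vector in $\Sigma(\mathcal{O}^\vee,\epsilon)^{W_{n_G,i}}$ (resp.\ in $\Hom_{W_{n_G,i}}(\sgn^\pm\boxtimes\mathrm{triv},\Sigma(\mathcal{O}^\vee,\epsilon))$) exists for \emph{some} $i$ if and only if some irreducible constituent $\sigma$ of $\Sigma(\mathcal{O}^\vee,\epsilon)$ admits a nonzero vector fixed by $W_{n_G,i}$ (resp.\ on which $W_{n_G,i}$ acts by $\sgn^\pm\boxtimes\mathrm{triv}$) for some $i$. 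So the theorem will follow once we establish two things: (a) a ``Weyl group analogue of weak sphericity'' — namely, identifying which irreducible $W_{n_G}$-representations $\sigma$ admit such an invariant vector with respect to one of the parabolic-type subgroups $W_{n_G,0},\dots,W_{n_G,n_G}$ — and (b) a Green-theoretic description of which Springer parameters $(\mathcal{O}_\sigma^\vee,\epsilon_\sigma)$ occur among the constituents of $H^\ast(\mathcal{B}_{\mathcal{O}^\vee})$ for a fixed $\mathcal{O}^\vee$.

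For step (a), I would use the classification of irreducible $W_n$-representations by bipartitions (ordered pairs of partitions $(\alpha;\beta)$ with $|\alpha|+|\beta|=n$) together with branching rules. The subgroup $W_{n,i}\cong W_i\times W_{n-i}$ is a ``pseudo-Levi'' type subgroup, and $\dim\sigma^{W_{n,i}} = \langle \mathrm{Res}_{W_i\times W_{n-i}}\sigma,\ \mathrm{triv}\boxtimes\mathrm{triv}\rangle$. Since $\mathrm{triv}_{W_k}$ corresponds to the bipartition $((k);\emptyset)$, this is a Littlewood–Richardson / Pieri-type count, and one can read off exactly which bipartitions $(\alpha;\beta)$ admit such a branch for some $i$; the condition turns out to be governed by the shape of $(\alpha;\beta)$ in a way that matches the image of $p^\ast$ / Lusztig's canonical quotient under the Springer correspondence. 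The orthogonal twist by $\sgn^\pm\boxtimes\mathrm{triv}$ amounts to replacing $\mathrm{triv}_{W_i}$ by $\sgn^\pm_{W_i}$, i.e.\ the bipartition $(\emptyset;(1^i))$, which flips the two components of the bipartition on the first $i$ letters — this is the source of the asymmetry between the $s_G=1$ and $s_G=-1$ cases and should be cross-checked against Lemmas~\ref{lem:tail},~\ref{lem:head}.

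For step (b) — the Green-theory input — I would invoke the recent algorithms of Waldspurger \cite{waldspurger} and La \cite{la} as stated in the excerpt's plan, which compute the multiplicities $[\Sigma(\mathcal{O}^\vee,\epsilon) : \sigma(\mathcal{O}_1^\vee,\epsil_1)]$ combinatorially. The key structural fact to extract is Proposition~\ref{prop:dagger} (referenced in Section~\ref{sect:walds-la}): the set of Springer pairs $(\mathcal{O}_1^\vee,\epsilon_1)$ appearing in $H^\ast(\mathcal{B}_{\mathcal{O}^\vee})$, intersected with the ``weakly spherical'' irreducibles identified in step (a), should be controlled precisely by whether $\epsilon\in A^\dagger(\mathcal{O}^\vee)$, using the combinatorial description of $A^\dagger(\mathcal{O}^\vee) = P^\dagger(\lambda)_0$ from Proposition~\ref{prop:quotient} and the block structure from Section~\ref{sect:special-piece}. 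The ``only if'' direction (if $\epsilon\notin A^\dagger(\mathcal{O}^\vee)$ then no constituent is weakly spherical) will use that $\epsilon$ not being constant on $\sim$-classes forces every Springer-occurring pair to have a bipartition outside the list from step (a); the ``if'' direction will exhibit, for $\epsilon\in A^\dagger(\mathcal{O}^\vee)$, an explicit constituent — likely the top-degree $\sigma(\mathcal{O}^\vee,\epsilon)$ itself when it is nonzero, or a lower-degree one dictated by the Waldspurger–La recursion — that is weakly spherical.

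The main obstacle I anticipate is step (b): controlling the \emph{full} cohomology $H^\ast(\mathcal{B}_{\mathcal{O}^\vee})$ rather than just its top degree. Green functions / full Springer fibre decompositions are genuinely more delicate than the top-degree Springer correspondence, and matching the output of the Waldspurger–La algorithm with the canonical-quotient subgroup $P^\dagger(\lambda)_0$ uniformly across all $\mathcal{O}^\vee$ — special and non-special, symplectic and orthogonal — will require careful bookkeeping of the block/$\sim$-class combinatorics. Step (a) by contrast is a finite-group representation-theory computation that, while detailed, follows standard Pieri-rule technology; the real work is the bridge in step (b), which is why the excerpt isolates it as Theorem~\ref{prop:combinatoricsprop}'s engine in the final section.
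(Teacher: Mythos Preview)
Your high-level two-step plan---(a) identify which irreducible $W_n$-representations are ``weakly spherical'' via Pieri-type branching, then (b) use Waldspurger--La Green theory to test which of those occur in $\Sigma(\mathcal{O}^\vee,\epsilon)$---is exactly the structure of the paper's proof (Proposition~\ref{prop:firstreduction} followed by Proposition~\ref{prop:dagger}). But your sketch of step~(b) contains a real gap, and there is one missing reduction.

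For step~(b), you propose to analyse \emph{all} constituents of $\Sigma(\mathcal{O}^\vee,\epsilon)$ and argue that none (resp.\ some) are weakly spherical depending on whether $\epsilon\notin A^\dagger$ (resp.\ $\epsilon\in A^\dagger$). That is not what Waldspurger--La gives you access to: their algorithm does not enumerate all constituents, and your suggested witness $\sigma(\mathcal{O}^\vee,\epsilon)$ for the ``if'' direction is almost never of the required shape $E^{s_G}_i$. The paper's key technical insight is instead to exploit the \emph{maximality} characterisation (Theorem~\ref{thm:maximal}): the algorithmic set $P(\lambda,\epsilon,\Delta,\tau)$ is precisely the set of $\leq_{\Delta,\tau}$-maximal constituents. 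By choosing a specific $(\Delta_G,\tau_G)$ the paper arranges that any $E^{s_G}_i$ occurring in $\Sigma$ is automatically maximal in this order (Lemma~\ref{lem:algoreduction}), so the question collapses to whether some $E^{s_G}_i$ appears among the outputs of \emph{one} run of the algorithm. That in turn is settled by analysing the first row of the Shoji--Waldspurger--La tableau (Proposition~\ref{prop:first-row}), which is controlled exactly by the set $X_{\lambda,\epsilon}$ encoding where $\epsilon$ changes value---and the condition $X_{\lambda,\epsilon}\subset X_\lambda^{s_G}$ is precisely $\epsilon\in A^\dagger$ (Lemma~\ref{lem:walds-dagger}). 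Without this maximality trick, your ``only if'' direction has no mechanism.

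You are also missing the reduction to good parity: for general $\lambda\in\mathcal{P}^{s_G}(N_G)$ one first strips off the bad-parity pairs via parabolic induction of Springer representations (\cite[Proposition~3.3.3]{reeder_2001}) and checks that weak $s$-sphericity is preserved under $\mathrm{ind}_{W_n\times S_m}^{W_{n+m}}(-\boxtimes\mathrm{triv})$ (Lemmas~\ref{lem:weaksspherical}, \ref{lem:goodparity}). Finally, a small correction in step~(a): $\sgn^\pm$ on $W_i$ is the bipartition $(\emptyset,(i))$, not $(\emptyset,(1^i))$; this is why the induced representation is exactly $E^{-1}_i=((n-i),(i))$ rather than a sum.
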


A direct consequence of the combination of Theorem \ref{prop:combinatoricsprop} with Corollary \ref{cor:ofsection5} is that a representation in the anti-tempered $A$-packet $\Pi_{s_G, \mathcal{O}^\vee}$, for any $\mathcal{O}^\vee\in \mathcal{U}^\vee$, is weakly spherical (Definition \ref{defi:intro}), if and only if, it is formally weakly spherical (Definition \ref{defi:formallyws}). The last statement is equivalent to Theorem \ref{thm:C}.

\section{Hecke algebras}\label{sec:hecke}

Our aim now is to prove Theorem \ref{thm:mainSpringer} by means of the representation theory of Hecke algebras. The key advantage of this approach is that a move into an algebraic setting allows for a rigorous limiting process $q\to1$, where $q$ is the cardinality of residue field of $F$. In the limit we recover the Springer theory setup.

In greater detail, for a maximal compact subgroup $I_G < K<G$, we describe a $\mathbb{C}[v,v^{-1}]$-algebra $\mathbf{H}$, which interpolates between $G$-representations in basic $A$-packets and Springer representations of $W_G$, in the following sense. For an anti-tempered representation $\delta\in \Pi_{\mathcal{O}^\vee}$, a $\mathbf{H}$-module $\mathbf{M}(\delta)$ is constructed, whose specialization at $v= \sqrt{q}$ describes the decomposition of $\delta|_K$, while its specialization at $v=1$ describes the decomposition of $\Sigma|_{\widehat{W_K}}$ for the corresponding Springer $W_G$-representation $\Sigma$.

However, it is not a priori clear that $\mathbf{M}(\delta)$ is unique and that its decomposition as a generic module governs both of the multiplicities which we would like to compare.

Indeed, we prove that claim in Section \ref{sec:finiteheckealgebras}, by showing that in the class of \textit{extended} Hecke algebras, to which $\mathbf{H}$ belongs, there is in fact a distinguished isomorphism between the specializations $\mathbf{H}_1,\, \mathbf{H}_{\sqrt{q}}$ (a construction which goes back to \cite{lusztigfinitehecke}).


\subsection{Algebraic reduction}

\begin{definition}
For a subgroup $I_G < H < G$, we denote the \textit{Iwahori-Hecke algebra} $\mathscr{H}_{H}$ to be the space of compactly supported functions $f:H\to \mathbb C$, for which 
    \[
    f(i_1hi_2) = f(h),\quad \forall i_1,i_2\in I_G,h\in H
    \]
holds, endowed with the convolution associative product  
    \[
    (f_1\star f_2)(h) = \int_Hf_1(x)f_2(x^{-1}h)\,dx\;.
    \]
Here, $dx$ stands for the Haar measure on $H$, normalized so that $I_G$ has volume $1$.
\end{definition}


\subsubsection{Categorical equivalences}
Let $(\pi,V)$ be a smooth representation of either $G=H$ or of a compact subgroup $I_G < H < G$. 

There is a natural action of the algebra $\mathscr{H}_H$ on the finite-dimensional space $V^{I_G}$ of $I_G$-invariant vectors, given by
\[
\phi\star v = \int_G\phi(x)\pi(x)v\, dx, \quad \phi\in \mathscr{ H}_H,\; v\in V^I\;.
\]

We denote by $\pi^{I_G}$ the resulting $\mathscr{H}_H$-module.

Let $\Rep_0(H)$ to be the full subcategory of smooth complex $H$-representations that are generated by their $I_G$-invariant vectors.

We also write $\mathrm{Mod}( \mathscr{H}_H)$ for the category of $\mathscr{H}_H$-modules.

    %
    %
    %

The following foundational result, for its case of $G=H$, is often taken as the impetus for the study of Iwahori-Hecke algebras.

\begin{theorem}\label{thm:borel}
For either $G=H$ or a compact subgroup $I_G < H <G$, the functor 
\[
M^H:\Rep_0(H) \to \mathrm{Mod}(\mathscr{H}_H)
\]
given by $\pi \mapsto \pi^{I_G}$ is an equivalence of abelian categories.

In particular, for each irreducible representation $\pi$ in $\Rep_0(H)$, $\pi^{I_G}$ is an irreducible $\mathscr{H}_H$-module.

Moreover, for a compact subgroup $I_G<K<G$, when naturally embedding $\mathscr{H}_K$ as a subalgebra of $\mathscr{H}_G$, the diagram 
    \begin{equation}
        \begin{tikzcd}
            \Rep_0(G) \arrow[r,"M^G"] \arrow[d,"\res_K^G"] & \mathrm{Mod}(\mathscr{H}_G) \arrow[d,"\res_{\mathscr{H}_K}^{\mathscr{H}_G}"] \\
            \Rep_0(K) \arrow[r,"M^K"] & \mathrm{Mod}(\mathscr{H}_K)
        \end{tikzcd}
    \end{equation}
    commutes.

\end{theorem}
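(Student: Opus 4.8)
\textbf{Proof proposal for Theorem \ref{thm:borel}.}

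The plan is to assemble the statement from three essentially independent ingredients: the Borel--Casselman equivalence in the case $G=H$, a parahoric/compact variant of the same statement, and a compatibility diagram that reduces to an elementary comparison of convolution actions.

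First I would recall the classical Borel--Casselman theorem for $G=H$: the functor $\pi \mapsto \pi^{I_G}$ from $\Rep_0(G)$ to $\mathrm{Mod}(\mathscr{H}_G)$ is an equivalence of categories, with quasi-inverse $M \mapsto \mathscr{H}_G \otimes_{\mathscr{H}_G} M$ realized concretely via the idempotent $e_{I_G}$ (the normalized characteristic function of $I_G$); exactness of $\pi\mapsto \pi^{I_G} = e_{I_G}\cdot\pi$ on $\Rep_0(G)$ follows since $I_G$ is a compact open subgroup (so taking $I_G$-invariants is exact on all smooth representations), and the restriction to $\Rep_0(G)$, together with the adjunction, yields that unit and counit are isomorphisms. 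The irreducibility statement follows formally: an equivalence of abelian categories carries simple objects to simple objects, and every irreducible $\pi\in\Rep_0(G)$ is a simple object of $\Rep_0(G)$ (any nonzero subrepresentation is again generated by its $I_G$-fixed vectors since it inherits a nonzero $I_G$-fixed vector from $\pi$, by exactness of $(-)^{I_G}$ and the generation hypothesis --- here one uses that a nonzero subobject of an object generated by $I_G$-fixed vectors has nonzero $I_G$-fixed vectors). The case of a compact subgroup $I_G<H<G$ is handled identically: $H$ is itself a profinite-by-compact group containing $I_G$ as a compact open subgroup with finite index set $W_H$ by \eqref{eq:finiteK}, so $\mathscr{H}_H$ is a finite-dimensional algebra, $(-)^{I_G}$ is exact on smooth $H$-representations, and the same idempotent argument gives the equivalence $\Rep_0(H)\simeq \mathrm{Mod}(\mathscr{H}_H)$. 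In fact for compact $H$ the category $\Rep_0(H)$ consists of representations whose every irreducible constituent has nonzero $I_G$-fixed vectors, and the equivalence is the standard Morita-type correspondence attached to the idempotent $e_{I_G}$ in the group algebra.

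Next I would establish the commuting square. The embedding $\mathscr{H}_K \hookrightarrow \mathscr{H}_G$ is the extension-by-zero of functions on $K$ to functions on $G$; one checks this is an algebra homomorphism because $K$ is open in $G$ so convolution of functions supported in $K$ is computed inside $K$ with the compatible Haar measures (both normalized to give $I_G$ volume $1$), and it is unital since $e_{I_G}\in\mathscr{H}_K$ maps to $e_{I_G}\in\mathscr{H}_G$. Now for $\pi\in\Rep_0(G)$ the two routes around the square both produce the same underlying vector space $\pi^{I_G}$; the only thing to verify is that the $\mathscr{H}_K$-action obtained by first restricting $\pi$ to $K$ and then taking $I_G$-invariants agrees with the $\mathscr{H}_K$-action obtained by taking $\pi^{I_G}$ as an $\mathscr{H}_G$-module and restricting scalars along $\mathscr{H}_K\hookrightarrow\mathscr{H}_G$. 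This is immediate from the integral formula $\phi\star v = \int \phi(x)\pi(x)v\,dx$: for $\phi\in\mathscr{H}_K$ the integral over $G$ against a function supported on $K$ equals the integral over $K$, and $\pi(x)$ for $x\in K$ is by definition $(\res^G_K\pi)(x)$. Hence the actions coincide on the nose, giving a strict commutativity of the diagram (not merely up to natural isomorphism), though stating it up to natural isomorphism suffices.

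I do not anticipate a serious obstacle here; the statement is a packaging of well-known facts. The one point requiring a little care is the precise normalization of Haar measures ensuring that $\mathscr{H}_K\hookrightarrow\mathscr{H}_G$ is a unital algebra map and that the two $\mathscr{H}_K$-module structures literally coincide rather than merely being isomorphic --- this is why the Haar measure on each of $K$ and $G$ is normalized to give $I_G$ volume $1$. A second minor point worth spelling out is the identification, for compact $H$, of $\Rep_0(H)$ with the representations all of whose constituents are $I_G$-spherical, so that the equivalence lands in finite-dimensional $\mathscr{H}_H$-modules and the quasi-inverse is visibly well-defined; this follows from semisimplicity considerations for the compact group $H$ (every smooth $H$-representation is a direct sum of irreducibles) together with the fact that $e_{I_G}$ is an idempotent in the Hecke algebra of $H$.
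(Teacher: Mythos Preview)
Your proposal is correct and follows the same approach as the paper's proof, which simply cites Borel's result for $H=G$, invokes semisimplicity of both categories for compact $H$, and declares the commutation of the diagram ``immediate from definitions.'' You have filled in the details the paper omits---the idempotent argument, the Haar measure normalization, and the explicit verification that the two $\mathscr{H}_K$-actions on $\pi^{I_G}$ coincide---but nothing in your route diverges from theirs.
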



\begin{proof}
    When $H=G$ this is due to \cite[Corollary 4.11]{Bo}.
    When $H$ is compact this follows from the semisimplicity of both categories. The commutation of the diagram is immediate from definitions.
\end{proof}

\subsubsection{Generic Hecke algebras}

Let $A = \CC[v,v^{-1}]$ be the ring of Laurent polynomials.

\begin{definition}\label{def:genHecke}
Let $(W_1,S_1)$ be a Coxeter system, and $\Omega$ be a finite group equipped with an action on $W_1$ by group automorphisms that stabilize $S_1$.

Let $W = W_1\rtimes \Omega$ be the resulting group extension.

Let $\ell:W\to \mathbb N_0$ be the length function, that extends the Coxeter length on $W_1$ through $\ell|_{\Omega}=0$.

 The \textit{extended generic Hecke algebra} for $W$ is defined to be the $A$-algebra $\mathbf{H}^W$, presented with the basis $\{T_w:w\in W\}\subset \mathbf{H}^W$, as a free $A$-module, that is subject to the multiplicative relations
    \begin{equation}\label{eq:relations2}
        \begin{aligned}
        &T_w\cdot T_{w'}=T_{ww'}, \qquad \text{if }\ell(ww')=\ell(w)+\ell(w'),\\
        &T_s^2=(v^2-1) T_s+v^2,\qquad s\in S_1.
        \end{aligned}
    \end{equation}
\end{definition}
For $a\in \mathbb C^\times$, we let $\theta_a:A\to \mathbb C$ denote the ring homomorphism determined by $v\mapsto a$ and write $\mathbb C_a$ for the $1$-dimensional $A$-module determined by $\theta_a$.

In the context of Definition \ref{def:genHecke}, we write $\mathbf H^W_a= \mathbf H^W \otimes_A \mathbb C_a$ for the specialized complex algebra.

Given a $\mathbf H^W$-module $\mathbf M$ which is free over $A$, we also write $\mathbf M_a=\mathbf M\otimes_A \mathbb C_a$ for the specialized $\mathbf{H}^W_a$-module.

Considering the group algebra $\mathbb{C}\left[ W\right]$ with its natural basis $\{\delta_w\}_{w\in W}$, an isomorphism of complex associative algebras
\[
\mathbf{H}^W_1 \; \cong\; \mathbb{C}\left[ W\right]
\]
sends  $T_w\otimes 1$ to $\delta_w$, for all $w\in W$.


Now, let us note that the Iwahori decomposition of \eqref{eq:iwah-decomp} gives a basis 
\[
\{f_{w}\}_{w\in \widetilde{W_G}}\subset \mathscr{H}_G\;,
\]
for the Iwahori-Hecke algebra, which is parameterized by elements of the Iwahori-Weyl group.

Here, $f_w$ is the characteristic function of the double coset $I_G w I_G \subset G$.

\begin{proposition}\label{prop:generic-affine}\cite[Theorem 3.3, Corollary 3.6]{IM}

Decomposing the Iwahori-Weyl group $\widetilde{W_G} = W_a\rtimes \Omega$ as an extended Coxeter group, produces an isomorphism 
\[
\begin{array}{ccc} \mathbf{H}^{\widetilde{W_G}}_{\sqrt{q}} &\; \cong\;& \mathscr{H}_G  \\  T_w\otimes 1 &  \leftrightarrow & f_w \end{array}\;,
\]
of complex associative algebras.

For any compact subgroup $I_G<K<G$, the decomposition 
\[
W_K = W'_K \rtimes \Omega_K < \widetilde{W_G}\;,
\]
as in Theorem \ref{thm:subgroups} corresponds to an algebra embedding of $\mathbf{H}^{W_K}$ into $\mathbf{H}^{\widetilde{W_G}}$.

The specialization isomorphism sends the resulting sub-algebra $\mathbf{H}^{W_K}_{\sqrt{q}} < \mathbf{H}^{\widetilde{W_G}}_{\sqrt{q}}$ to the sub-algebra $\mathscr{H}_K < \mathscr{H}_G$.

\end{proposition}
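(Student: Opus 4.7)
The plan is to reduce the entire statement to the standard Iwahori--Matsumoto computation of the convolution product on $\mathscr{H}_G$ relative to the basis $\{f_w\}_{w\in \widetilde{W_G}}$, and then restrict the computation to the compact subgroup $K$. By the Iwahori decomposition \eqref{eq:iwah-decomp} the family $\{f_w\}_{w\in \widetilde{W_G}}$ is a $\mathbb C$-basis of $\mathscr{H}_G$, so the assignment $T_w\otimes 1 \mapsto f_w$ is automatically a linear isomorphism; only the multiplicativity needs to be verified.

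First I would verify the two defining relations \eqref{eq:relations2} directly at $v=\sqrt{q}$. For a simple affine reflection $s\in S^a$, the support of $f_s\star f_s$ is contained in $I_G s I_G\cdot I_G sI_G = I_G \sqcup I_GsI_G$, and a coset count shows $|I_GsI_G/I_G|=q$; evaluating the convolution at $1$ and at $s$ yields $f_s\star f_s = (q-1)f_s + q f_1$, matching $T_s^2 = (v^2-1)T_s + v^2$ at $v=\sqrt{q}$. For the braid-type relation $f_w\star f_{w'}=f_{ww'}$ whenever $\ell(ww')=\ell(w)+\ell(w')$, I would proceed by induction on $\ell(w')$, using the classical fact that in this length-additive situation $I_GwI_G\cdot I_Gw'I_G\subseteq I_G ww'I_G$ and that the two sides have the same volume. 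For the $\Omega$-component there is nothing to check beyond observing that $\ell|_\Omega\equiv 0$, so that multiplication by $f_\omega$ ($\omega\in\Omega$) simply permutes basis elements according to the group law of $\Omega$, matching the extended Hecke relation. Together this realises $T_w\otimes 1\mapsto f_w$ as an algebra isomorphism $\mathbf{H}^{\widetilde{W_G}}_{\sqrt{q}}\cong \mathscr{H}_G$.

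For the second assertion, I would combine Theorem \ref{thm:subgroups}(1), which gives the finite Iwahori decomposition $K=\bigsqcup_{w\in W_K} I_GwI_G$, with the fact that $W_K=W'_K\rtimes \Omega_K$ is itself an extended Coxeter system whose Coxeter generators $J_K$ sit inside $S^a$. The subspace spanned by $\{f_w\}_{w\in W_K}$ is then exactly $\mathscr{H}_K$, and closure under convolution follows because, for $w,w'\in W_K$, the support of $f_w\star f_{w'}$ lies in $KwK\cdot Kw'K\subseteq K$. The multiplication rules computed in the previous paragraph are internal to $W_K$ (they only involve $J_K$ and $\Omega_K$), so restricting the isomorphism gives the claimed identification of $\mathbf{H}^{W_K}$ with a sub-algebra of $\mathbf{H}^{\widetilde{W_G}}$ and of $\mathbf{H}^{W_K}_{\sqrt{q}}$ with $\mathscr{H}_K\subset \mathscr{H}_G$.

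The main obstacle is bookkeeping rather than structural: one must check that the action of $\Omega_K$ on $W'_K$ appearing in the extended Hecke algebra $\mathbf H^{W_K}$ matches the conjugation action of $\Omega_K$ on $J_K$ inherited from $\widetilde{W_G}$, and that normalisations of Haar measures are consistent so that the $q$-factor in the quadratic relation comes out with the correct sign. Both issues are handled by the explicit length-function computation $\ell(\omega w\omega^{-1})=\ell(w)$ for $\omega\in\Omega$ and the normalisation $\mathrm{vol}(I_G)=1$. With these in place the proposition is exactly \cite[Theorem 3.3, Corollary 3.6]{IM}.
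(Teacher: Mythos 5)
Your argument is correct and is essentially the classical Iwahori--Matsumoto proof, which is exactly what the paper cites for this proposition (the paper gives no proof of its own, pointing instead to [IM, Theorem~3.3, Corollary~3.6]). The ingredients you identify --- the quadratic relation $f_s \star f_s = (q-1)f_s + qf_1$ via the coset count $|I_G s I_G / I_G| = q$, the length-additive product rule $f_w \star f_{w'} = f_{ww'}$ by induction, the observation that $\Omega$ normalizes $I_G$ so $f_\omega$ acts by permutation, and the internal closure of the span of $\{f_w\}_{w\in W_K}$ under convolution because $K$ is a group --- are exactly what is needed. One point you note as a "bookkeeping" concern deserves to be made explicit: the embedding $\mathbf{H}^{W_K}\hookrightarrow \mathbf{H}^{\widetilde{W_G}}$ is well-defined precisely because the restriction of the affine length function $\ell$ on $\widetilde{W_G}$ to $W_K$ coincides with the intrinsic length function of the Coxeter system $(W'_K, J_K)$ extended by $\Omega_K$; this is the standard fact that $W'_K$ is a standard parabolic subgroup of $W_a$ generated by $J_K \subset S^a$, combined with $\ell|_{\Omega} \equiv 0$ and the $\ell$-invariance under $\Omega$-conjugation which you already invoke.
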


\subsubsection{Kazhdan--Lusztig construction}

Springer representations of the Weyl group $W_G$ become relevant for our discussion of weak Arthur packets, when recalling the Kazhdan--Lusztig construction of the local Langlands correspondence for irreducible representations in the principal Bernstein block $\Rep_0(G)$.

Their approach gives a Springer-type geometric realization of affine Hecke algebra modules. A particular convenience is that the realization uses anti-tempered representation as its building blocks in terms of parabolic induction. 

We convey the parts of this theory that are needed for our discussion in the following proposition.

\begin{proposition}\label{prop:kl-antitemp}
Let $\mathcal{O}^\vee\in \mathcal{U}^\vee$ be a unipotent conjugacy class in $\mathbf{G}^\vee(\CC)$.

Let $\pi\in \Pi_{1,\mathcal{O}^\vee}$ be an anti-tempered representation, for which $\pi^{I_G}\neq0$.

Then, there exist a character $\epsilon_{\pi}\in \widehat{A(\mathcal{O}^\vee)}$ and an $A$-free $\mathbf{H}^{\widetilde{W_G}}$-module $\mathbf{M}(\pi)$, for which both isomorphisms
\[
\mathbf{M}(\pi)_{\sqrt{q}} \cong \pi^{I_G}\,,\qquad \mathbf{M}(\pi)_{1} \cong \overline{\Sigma(\mathcal{O}^\vee,\epsilon_{\pi})}
\]
hold, as $\mathscr{H}_G$-modules, and, respectively, $\widetilde{W_G}$-representations, under the identifications of Proposition \ref{prop:generic-affine}.
\end{proposition}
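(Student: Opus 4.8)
The statement asserts the existence of a single $A$-free module $\mathbf{M}(\pi)$ over the extended generic Hecke algebra $\mathbf{H}^{\widetilde{W_G}}$ that specializes at $v=\sqrt q$ to $\pi^{I_G}$ and at $v=1$ to a (inflated) Springer representation. The natural route is to invoke the Kazhdan--Lusztig geometric classification of simple modules over the affine Hecke algebra in its graded/Deligne--Langlands incarnation \cite{KL}, as adapted to the present anti-tempered (i.e. Aubert dual to tempered) situation. First I would recall that an anti-tempered $\pi\in\Pi_{1,\mathcal{O}^\vee}$ with $\pi^{I_G}\neq 0$ corresponds, under the Borel equivalence (Theorem \ref{thm:borel}) and the specialization isomorphism $\mathbf{H}^{\widetilde{W_G}}_{\sqrt q}\cong \mathscr{H}_G$ (Proposition \ref{prop:generic-affine}), to a simple $\mathscr{H}_G$-module. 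By the Kazhdan--Lusztig parameterization, such a module is the top (or the full) cohomology of a Springer-type variety attached to the pair $(s,u)$ consisting of the Satake-type semisimple part (here central/unipotent-infinitesimal, so $s$ may be taken trivial up to the $q\to 1$ degeneration) and the nilpotent $u\in\mathcal{O}^\vee$, twisted by an equivariant local system $\epsilon_\pi\in\widehat{A(\mathcal{O}^\vee)}$. The module $\mathbf{M}(\pi)$ is then built as the equivariant (Borel--Moore) homology of the relevant fibre, realized over the base ring $A=\mathbb{C}[v,v^{\pm1}]$ using the standard deformation of the affine Hecke algebra action on $K$-theory/homology of the Steinberg-type variety, where the parameter $v$ records the $\mathbb{G}_m$-equivariant grading.

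The key steps, in order, would be: (i) identify $\pi^{I_G}$ as a simple $\mathscr{H}_G$-module and extract its Kazhdan--Lusztig parameter, noting that anti-temperedness forces the parameter to be ``geometric'' with full unipotent support given by $\mathcal{O}^\vee$ (this is where the hypothesis $\pi\in\Pi_{1,\mathcal{O}^\vee}$ enters, and where one reads off $\epsilon_\pi$ from the $A(\mathcal{O}^\vee)$-isotypic decomposition of the cohomology of the Springer fibre $\mathcal{B}_{\mathcal{O}^\vee}$); (ii) write down the homology module $\mathbf{M}(\pi)$ over $A$ as a deformation of $H^\ast(\mathcal{B}_{\mathcal{O}^\vee})$-type data, using that for this class of parameters the standard module and its unique irreducible quotient coincide (anti-tempered $\Leftrightarrow$ the local system is such that the full cohomology is already irreducible as a module, or at least that the relevant standard module is $A$-free of the correct rank); (iii) specialize: at $v=\sqrt q$ the $\mathscr{H}_G$-module is $\pi^{I_G}$ by the Kazhdan--Lusztig theorem, and at $v=1$ the algebra degenerates to $\mathbb{C}[\widetilde{W_G}]$ (via $\mathbf{H}^{\widetilde{W_G}}_1\cong\mathbb{C}[\widetilde{W_G}]$) and the homology module degenerates, by the classical Springer construction, to $\overline{\Sigma(\mathcal{O}^\vee,\epsilon_\pi)}$, the inflation through $p:\widetilde{W_G}\to W_G$ of $\mathrm{Hom}_{A(\mathcal{O}^\vee)}(\epsilon_\pi, H^\ast(\mathcal{B}_{\mathcal{O}^\vee}))$; (iv) check $A$-freeness and that the ranks match on both sides, which forces the two specializations to be the asserted modules rather than merely subquotients.

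The main obstacle I anticipate is step (ii)--(iv): ensuring that there is a \emph{single} $A$-free module whose \emph{two} specializations are simultaneously the correct objects. The subtlety is that generic simple modules can fail to stay simple (or even stay of constant dimension) under specialization, and a priori the $v=1$ specialization of a family of simple affine-Hecke modules is some virtual $W_G$-representation, not obviously a genuine Springer module. Resolving this requires either (a) appealing to the fact that for parameters attached to a \emph{distinguished}/anti-tempered datum with trivial (or central) semisimple part the standard module equals the costandard module and is rigid under this particular deformation --- this is precisely the geometric statement underlying the classical Springer correspondence being a specialization of the Kazhdan--Lusztig construction --- or (b) using the explicit $W_G$-module structure and a dimension count (the dimension of $\pi^{I_G}$ equals $\dim\Sigma(\mathcal{O}^\vee,\epsilon_\pi)$, both computable as the multiplicity of $\epsilon_\pi$ in $H^\ast(\mathcal{B}_{\mathcal{O}^\vee})$) to pin down the module uniquely. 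A related technical point, postponed to Section \ref{sec:finiteheckealgebras}, is that $\mathbf{H}^{\widetilde{W_G}}$ is an \emph{extended} Hecke algebra (the $\Omega$-part for odd orthogonal $G$), so one must make sure the Kazhdan--Lusztig geometry and its $q\to 1$ limit are compatible with the $\Omega$-action; this is exactly why the target is $\overline{\Sigma(\mathcal{O}^\vee,\epsilon_\pi)}$ over $\widetilde{W_G}$ rather than merely $\Sigma(\mathcal{O}^\vee,\epsilon_\pi)$ over $W_G$, and the compatibility will follow from functoriality of the Springer action under the component-group symmetries.
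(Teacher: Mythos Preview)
Your plan is conceptually on the right track---Kazhdan--Lusztig geometry is indeed the engine---but you are proposing to rebuild machinery that already exists in the literature, and you miss the one-line reduction that makes the proof short.

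The paper's argument is simply: cite Reeder \cite[Theorem 8.1]{reederhecke}, who establishes exactly this statement (existence of an $A$-free generic module interpolating between the Hecke module at $\sqrt q$ and the inflated Springer representation at $1$) for \emph{tempered} representations of split adjoint groups, using the Kazhdan--Lusztig construction. Then twist by the Aubert--Zelevinsky involution to pass from tempered to anti-tempered. Finally, the passage from adjoint groups to arbitrary isogeny (relevant for $\mathrm{SO}_{2n+1}$, where $\Omega$ is nontrivial) is handled by a separate result of Reeder \cite[Lemma 5.3.1]{reederisogenies}. That is the entire proof.

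By contrast, you are effectively re-deriving Reeder's theorem from scratch, and in doing so you introduce a subtle slip: in step (ii) you write that ``for this class of parameters the standard module and its unique irreducible quotient coincide (anti-tempered $\Leftrightarrow$ \ldots)''. In the KL picture this coincidence characterizes \emph{tempered} modules, not anti-tempered ones; the anti-tempered side is reached precisely by applying the Aubert involution on the Hecke-algebra level, which is the step you omit. Without it, your direct attempt to realize $\pi^{I_G}$ as full cohomology of a Springer-type fibre does not go through as stated. Your concerns about $A$-freeness, rank matching, and the extended $\Omega$-action are legitimate but are already absorbed into Reeder's results; you do not need to re-verify them.
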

\begin{proof}
    The analogous result was established in generality for tempered representations of split adjoint groups by Reeder in \cite[Theorem 8.1]{reederhecke} using Kazhdan and Lusztig's \cite{KL} construction of tempered representations.
    Twisting by the Aubert-Zelevinsky involution gives the required result for anti-tempered representations. 

    For groups of arbitrary isogeny we combine the result for adjoint groups with \cite[Lemma 5.3.1]{reederisogenies}.
\end{proof}

\begin{assumption}\label{assumpt:param}
For any $\mathcal{O}^\vee\in \mathcal{U}^\vee$ and $\epsilon\in \widehat{A(\mathcal{O}^\vee)}_0$, the anti-tempered representation
\[
\delta = \delta(1,\mathcal{O}^\vee,\epsilon)\in \Pi_{1,\mathcal{O}^\vee}\;,
\]
as parameterized in Section \ref{sect:formal-wsph} satisfies $\epsilon_{\delta}= \epsilon$, where $\epsilon_{\delta}$ is the character provided by Proposition \ref{prop:kl-antitemp}.

In other words, the irreducible local systems, that are used in the Kazhdan--Lusztig parameterization of anti-tempered irreducible representations with non-zero $I_G$-invariant vectors, agree with the characters visible in Arthur's endoscopic parameterization. 
\end{assumption}

Assumption \ref{assumpt:param} is a consequence of \cite{waldspurgerendoscopy} for the case of odd orthogonal $G$. Indeed, it was shown that the Kazhdan--Lusztig parameterization satisfies the endoscopic identities that pin Arthur's characters. We expect a similar correspondence to hold in the symplectic case.

\subsection{Deformations for extended finite Hecke algebras}\label{sec:finiteheckealgebras}
This current section is self contained and is dedicated to general deformation procedures for extended finite Hecke algebras, that culminate in proposition \ref{prop:deformfinite}.

We now take a group $W = W_1\rtimes \Omega$ with the length function $\ell:W\to \mathbb N_0$ as in Definition \ref{def:genHecke}, with $W$ assumed to be \textit{finite}.

For brevity we write $\mathbf{H} = \mathbf{H}^W$.

    %
    %

The key ingredient to prove the well-definedness of deformations of modules is the existence of an algebraic family of isomorphisms $\eta_a:\mathbb C[W]\to \mathbf H_a$ indexed by $a$ in some Zariski open subset of $\mathbb C$ containing $\sqrt q$ and $1$, with $\eta_1 = \id$.
The explicit form of these maps is in general quite complicated, and their existence is highly non-trivial.
We turn to Lusztig's $J$-algebra which provides a convenient formalism for producing such maps.
This formalism contains enough information to extract the abstract properties we need, but it should be noted that these rely on deep properties of the J-algebra established by Lusztig in \cite{cells1,cells2}.

\begin{definition}
    Let $\mathbf J$ denote Lusztig's asymptotic Hecke algebra for $W$ over $A$. 
    This is a free $A$-module with basis $\{t_w: w\in W\}$.
    We refer to \cite[\S 2.2]{geck} for the definition of the multiplicative structure of $\mathbf J$.
    
    Let $\phi:\mathbf H\to\mathbf J$ denote the injective homomorphism from the Hecke algebra to the asymptotic Hecke algebra as defined in \cite[\S2.2]{geck} due to Lusztig.

\end{definition}

    We write $\mathbf H_a,\mathbf J_a,\phi_a$ for $\mathbf H\otimes_A \mathbb C_a, \mathbf J\otimes_A \mathbb C_a, \phi\otimes_A\mathbb C_a$.

\begin{proposition}
    The map $\phi_1:\mathbb C[W] \to \mathbf J_1$ is an isomorphism and $\mathbf J_1\otimes_{\mathbb C}A \cong \mathbf J$ as algebras.
\end{proposition}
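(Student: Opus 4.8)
The plan is to deduce both assertions from known structural facts about Lusztig's asymptotic algebra $\mathbf{J}$, established in \cite{cells1,cells2} and summarized in \cite{geck}. Recall that $\mathbf{J}$ is a free $A$-module with $A$-basis $\{t_w : w\in W\}$, but crucially its structure constants lie in $\mathbb{Z}$ rather than merely in $A$: the product $t_x t_y = \sum_z \gamma_{x,y,z}\, t_z$ has $\gamma_{x,y,z}\in\mathbb{Z}$ independent of $v$. This is precisely the defining feature of the asymptotic algebra. Consequently $\mathbf{J} = \mathbf{J}_0 \otimes_{\mathbb{Z}} A$ where $\mathbf{J}_0 = \bigoplus_w \mathbb{Z}t_w$ is the integral form; base-changing to $\mathbb{C}$ via any $\theta_a$ gives $\mathbf{J}_a \cong \mathbf{J}_0\otimes_{\mathbb{Z}}\mathbb{C}$ as a $\mathbb{C}$-algebra, independent of $a$. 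In particular $\mathbf{J}_1\otimes_{\mathbb{C}}A \cong \mathbf{J}_0\otimes_{\mathbb{Z}}A = \mathbf{J}$ as $A$-algebras, which is the second claim. I would spell out this base-change identification carefully, noting that the basis $\{t_w\}$ and the structure constants match on both sides.

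For the first claim, that $\phi_1:\mathbb{C}[W]\to\mathbf{J}_1$ is an isomorphism, I would proceed as follows. The homomorphism $\phi:\mathbf{H}\to\mathbf{J}$ is the one of \cite[\S2.2]{geck}; on specialization at $v=1$ it yields an algebra map $\phi_1:\mathbf{H}_1\cong\mathbb{C}[W]\to\mathbf{J}_1$. Both $\mathbb{C}[W]$ and $\mathbf{J}_1$ are finite-dimensional $\mathbb{C}$-algebras of the same dimension $|W|$ (the former obviously, the latter because $\{t_w\}_{w\in W}$ is a $\mathbb{C}$-basis of $\mathbf{J}_1$). Hence it suffices to show $\phi_1$ is injective, or equivalently surjective. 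Injectivity: $\phi$ is known to be injective with $\mathbf{J}$ free over $A$ and $\mathbf{H}$ free over $A$ of the same rank, and the cokernel is a torsion $A$-module; one then checks that $v=1$ (equivalently $v-1$) is not among the finitely many "bad" primes at which the specialization of $\phi$ fails to be injective — indeed the relevant fact from Lusztig's theory is that $\phi\otimes_A \mathrm{Frac}(A)$ is an isomorphism and that the specialization at $v=1$ recovers the classical decomposition of $\mathbb{C}[W]$ into cells. Alternatively and more cleanly: both algebras are semisimple (the group algebra $\mathbb{C}[W]$ trivially, $\mathbf{J}_1$ because $\mathbf{J}$ is known to be semisimple over $A$ after inverting suitable elements and the integral structure forces semisimplicity of every complex fibre), and $\phi_1$ induces a bijection on the sets of irreducible representations — this is exactly the content of the statement in \cite[\S2.3--2.4]{geck} that $\phi$ matches up $\mathrm{Irr}(\mathbf{H})$ with $\mathrm{Irr}(\mathbf{J})$ compatibly with specialization. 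A bijection on irreducibles between two semisimple algebras of equal dimension forces the map to be an isomorphism.

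The step I expect to be the main obstacle is pinning down, in the \emph{extended} setting $W = W_1\rtimes\Omega$ (rather than a plain Coxeter group), that the constructions of \cite{cells1,cells2,geck} — the basis $\{t_w\}$, the homomorphism $\phi$, the $v$-independence of structure constants, and the semisimplicity/cell-decomposition statements — all go through verbatim. The cell theory and the asymptotic algebra were originally developed for Coxeter groups with a weight function, and here $\Omega$ acts on $W_1$ preserving $S_1$ and $\ell$; the standard device is that $\mathbf{H}^W = \mathbf{H}^{W_1}\rtimes \Omega$ as an algebra (the $\Omega$-action on $\mathbf{H}^{W_1}$ permuting the $T_s$), and Lusztig's constructions are $\Omega$-equivariant, so $\mathbf{J}^W = \mathbf{J}^{W_1}\rtimes\Omega$ with integral structure constants, and $\phi$ extends $\Omega$-equivariantly. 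I would therefore open by reducing the extended case to the Coxeter case via this semidirect-product description, citing the $\Omega$-equivariance of cell theory, and then apply the known results fibrewise. Once that reduction is in place, both assertions follow as above with only routine bookkeeping.
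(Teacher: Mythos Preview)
Your proposal is correct. The second claim---that $\mathbf{J}_1\otimes_{\mathbb{C}}A\cong\mathbf{J}$ because the structure constants $\gamma_{x,y,z}$ lie in $\mathbb{Z}$---is exactly the paper's argument.

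For the first claim the paper takes a much shorter route: it simply cites \cite[Example~2.6]{geck}, where the specialization $\phi_1$ is computed directly and seen to be an isomorphism (the formula for $\phi(T_w)$ in terms of the $t_z$ and the Kazhdan--Lusztig polynomials degenerates at $v=1$ to something visibly invertible). Your argument via equal dimension, semisimplicity of both sides, and the bijection on irreducibles induced by $\phi$ is a legitimate alternative and has the virtue of being self-contained, but it imports more of the cell machinery (semisimplicity of $\mathbf{J}_1$, the fact that $\phi^*$ sends irreducibles to irreducibles bijectively) than is strictly needed. One small point worth tightening in your write-up: ``bijection on irreducibles between semisimple algebras of equal dimension forces isomorphism'' needs the observation that $\ker\phi_1$ then acts trivially on every simple $\mathbb{C}[W]$-module, hence vanishes by semisimplicity; you have the pieces but should state this explicitly.

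Your concern about the extended case $W=W_1\rtimes\Omega$ is well-placed. The paper does not address it in the proof of this proposition, relying on the cited reference; elsewhere in the section (e.g.\ the proof that $\mathbf{H}_a$ is semisimple when the Poincar\'e polynomial does not vanish) the paper remarks that the arguments for trivial $\Omega$ go through identically. Your proposed reduction via $\mathbf{H}^W=\mathbf{H}^{W_1}\rtimes\Omega$ and $\Omega$-equivariance of the cell constructions is the standard and correct way to make this rigorous.
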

\begin{proof}
    The first part follows from \cite[Example 2.6]{geck}.
    The second part follows from the fact that the structure constants of $J$ lie in $\mathbb C$ (in fact in $\mathbb Z$).
\end{proof}
The upshot of the proposition is that an isomorphism $\psi = \phi_1\otimes_{\mathbb C}A$ between $A[W]$ and $\mathbf J$ is obtained. We see the diagram
\begin{equation}
    \begin{tikzcd}
        A[W] \arrow[r,"\psi"] & \mathbf J \\
        & \mathbf H \arrow[u,"\phi"]
    \end{tikzcd}\;.
\end{equation}
We would like to invert the vertical arrow to get a map from $A[W]\to \mathbf H$. 
However the vertical map is not in general an isomorphism.
If we think of $\phi$ as an algebraic family of homomophisms indexed over $\mathbb C^\times$, this corresponds to the fact that there are $a\in \mathbb C^\times$ for which $\phi_a$ is not an isomorphism. 
The next proposition identifies a Zariski basic open subset which avoids such points.
\begin{lemma}\label{lem:poincare}
    Let 
    $$g(q) = \sum_{w\in W}q^{l(w)}$$
    be the Poincar\'e polynomial for $W$ and $f(v) = g(v^2)$.
    Then $\phi_a$ is an isomorphism whenever $f(a)\ne 0$.
\end{lemma}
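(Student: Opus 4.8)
\textbf{Proof plan for Lemma \ref{lem:poincare}.}

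The plan is to exploit the fact that $\mathbf{H}$ is a symmetric algebra (with respect to the standard trace form $\tau(T_w) = \delta_{w,1}$ extended $A$-linearly), that $A[W]$ is likewise symmetric, and that the composite map $\psi^{-1}\circ\phi : \mathbf{H}\to A[W]$ becomes an isomorphism precisely when it is an isomorphism after inverting a single element of $A$. Concretely, $\phi$ is an injective map of free $A$-modules of the same finite rank $|W|$, so it is represented by a square matrix over $A$; its determinant $D(v)\in A$ is a Laurent polynomial, and $\phi_a$ fails to be an isomorphism exactly at the finitely many $a\in\mathbb{C}^\times$ with $D(a)=0$. The content of the lemma is to identify these bad points with (a subset of) the zeros of $f(v) = g(v^2)$.

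First I would recall that for a symmetric algebra the "generic" semisimplicity locus is governed by the Schur elements: $\mathbf{H}_a$ is split semisimple if and only if none of the Schur elements $c_E(a)$ (for $E$ running over the irreducible $W$-characters) vanishes, and the product of the Schur elements equals, up to a unit in $A$, the Poincaré polynomial $f(v)=\sum_{w\in W}v^{2\ell(w)}$. Since Lusztig's homomorphism $\phi$ is known to induce, after extension of scalars to the field of fractions, an isomorphism of the split semisimple algebras $\mathbf{H}\otimes_A \mathrm{Frac}(A)$ and $\mathbf{J}\otimes_A\mathrm{Frac}(A)\cong A[W]\otimes_A\mathrm{Frac}(A)$, the map $\phi_a$ is automatically an isomorphism at every $a$ where \emph{both} $\mathbf{H}_a$ and $\mathbf{J}_1\otimes_{\mathbb C}\mathbb{C}_a\cong \mathbb{C}[W]$ are semisimple. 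The algebra $\mathbb{C}[W]$ is always semisimple (characteristic zero), so the only constraint is semisimplicity of $\mathbf{H}_a$, i.e. $f(a)\neq 0$. That is exactly the claimed conclusion. I would then just need to verify that $\phi$ restricts to an $A$-module isomorphism — not merely a $\mathrm{Frac}(A)$-isomorphism — on the localization $A[1/f]$; this follows because on that localization both $\mathbf{H}[1/f]$ and $\mathbf{J}[1/f]$ are Azumaya/separable over $A[1/f]$ and $\phi$ is a unital algebra map between them that is generically an isomorphism, hence an isomorphism (a surjective endomorphism-type argument, or: the cokernel is a finitely generated torsion module supported on the vanishing locus of $f$, which is empty in $A[1/f]$).

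The main obstacle I anticipate is pinning down precisely which deep inputs from \cite{cells1,cells2,geck} are legitimately available: the statement that $\phi\otimes_A\mathrm{Frac}(A)$ is an isomorphism, and the identification (up to units) of $\prod_E c_E$ with the Poincaré polynomial, both rest on Lusztig's structure theory for the $J$-algebra and on the theory of Schur elements for Hecke algebras of (possibly extended) Coxeter groups. In the extended case $W=W_1\rtimes\Omega$ one must check these references cover the twisted situation — in practice $\Omega$ acts by diagram automorphisms and the $J$-algebra machinery extends, but this is where I would have to be careful rather than invoke a black box. Once those inputs are granted, the argument reduces to the elementary localization/determinant reasoning sketched above, and the displayed identity $f(v)=g(v^2)$ with $g(q)=\sum_{w\in W}q^{\ell(w)}$ is immediate from the definition of the length function on $W$.
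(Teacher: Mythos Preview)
Your approach is essentially the same as the paper's: reduce the question to showing that $\mathbf{H}_a$ is semisimple whenever $f(a)\neq 0$, and invoke Lusztig's result (recorded in \cite[Corollary 2.5]{geck}) that $\phi_a$ is an isomorphism as soon as $\mathbf{H}_a$ is semisimple. The paper then cites Gyoja \cite{gyoja} for the semisimplicity criterion in the non-extended case and notes that the extended case goes through identically once one checks the generic degrees are polynomials (via \cite[Section 4.8]{geck} and \cite[Section 8.1.8]{geckpfeiffer}).

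There is one inaccuracy in your write-up worth flagging. You assert that ``the product of the Schur elements equals, up to a unit in $A$, the Poincar\'e polynomial $f(v)$.'' That is not the correct relationship. The Poincar\'e polynomial is the Schur element of the \emph{index} representation, and the relevant fact (which is the content of Gyoja's argument) is that each Schur element \emph{divides} the Poincar\'e polynomial --- equivalently, that the generic degrees $f(v)/c_E(v)$ are genuine polynomials. This divisibility is what forces all $c_E(a)\neq 0$ whenever $f(a)\neq 0$. Your product formula would not yield that implication. Once you replace the product claim with the divisibility statement, your argument aligns with the paper's; the additional determinant/Azumaya reasoning you sketch is then unnecessary, since \cite[Corollary 2.5]{geck} already delivers the isomorphism directly from semisimplicity.
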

\begin{proof}
    By \cite[Corollary 2.5]{geck}, it suffices to show that $\mathbf H_a$ is semisimple whenever $f(a)\ne 0$.
    When $\Omega$ is trivial, this is a consequence of \cite{gyoja}.
    The proof in the general case is identical.
    Then only point of care is to ensure that the generic degrees are polynomials (c.f. \cite[6.(iii)]{gyoja}) which follows immediately from the analysis of Schur elements in \cite[Section 4.8]{geck} (see \cite[Section 8.1.8]{geckpfeiffer} for the relation between Schur elements and generic degrees).
\end{proof}

Let $\phi_f:\mathbf H_f\to \mathbf J_f$ denote the localisation of $\phi$ at $f\in A$.
\begin{corollary}
    The homomorphism $\phi_f$ is an isomorphism.
\end{corollary}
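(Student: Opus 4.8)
The statement to prove is that $\phi_f:\mathbf{H}_f\to\mathbf{J}_f$, the localization of $\phi$ at the element $f=f(v)=g(v^2)\in A$ (where $g$ is the Poincar\'e polynomial of $W$), is an isomorphism. The plan is to deduce this as a direct consequence of Lemma \ref{lem:poincare} together with standard commutative algebra for localizations.

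First I would recall the general principle: a homomorphism of $A$-modules (here $A$-algebras) is an isomorphism if and only if it induces an isomorphism on localizations at every maximal ideal, or—more to the point here—a homomorphism $\phi_f$ of finitely generated modules over the localized ring $A_f$ becomes an isomorphism once it is an isomorphism after base change to every residue field $A_f/\mathfrak{m}$. Since $A=\CC[v,v^{-1}]$ has Krull dimension $1$ and $\phi:\mathbf{H}\to\mathbf{J}$ is a map of free $A$-modules of the same finite rank $|W|$ (both have the bases $\{T_w\}$ and $\{t_w\}$ respectively), the statement reduces to checking that $\phi_f$ is injective with cokernel a finitely generated torsion $A_f$-module that vanishes. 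Concretely, $\phi$ is represented, in the given $A$-bases, by a square matrix $P(v)$ over $A$, and $\phi_a$ fails to be an isomorphism exactly at the zeros $a$ of $\det P(v)$ in $\CC^\times$. Lemma \ref{lem:poincare} tells us that $\phi_a$ is an isomorphism whenever $f(a)\neq 0$; hence every root of $\det P(v)$ in $\CC^\times$ is also a root of $f(v)$. Therefore $\det P(v)$ divides a power of $f(v)$ in $A$ (up to a unit of $A$, i.e. up to a monomial $cv^k$), so $\det P(v)$ becomes a unit in the localization $A_f=A[f^{-1}]$.

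Given that $\det P(v)\in A_f^{\times}$, the matrix $P(v)$ is invertible over $A_f$, which is precisely the statement that $\phi_f:\mathbf{H}_f\to\mathbf{J}_f$ is an isomorphism of $A_f$-algebras (it is an $A_f$-linear bijection, and being a ring homomorphism its inverse is automatically one as well). I would write this out cleanly: let $N=|W|$ and fix the $A$-bases; then $\phi$ is multiplication by $P\in M_N(A)$; by Lemma \ref{lem:poincare} every zero of $\det P$ in $\CC^\times$ is a zero of $f$, so in the PID-like ring $A$ (a localization of $\CC[v]$, hence a Dedekind domain with the monomials inverted) we have $\det P = c\,v^m\prod_i(v-a_i)$ with each $a_i$ a zero of $f$; consequently $\det P$ is invertible in $A_f$, and Cramer's rule produces the inverse matrix over $A_f$.

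The only subtle point—hence the ``main obstacle''—is ensuring that one really may invoke Lemma \ref{lem:poincare} at \emph{every} point of the complement of $V(f)$ in $\operatorname{Spec} A_f$, including the fact that $\mathbf{J}$ itself is $A$-free of rank $N$ so that $\phi$ is genuinely a map of free modules of equal rank (this is exactly what was recorded just above: $\mathbf{J}\cong\mathbf{J}_1\otimes_{\CC}A$ with $\CC$-basis $\{t_w\}$). Once those two inputs are in hand, the argument is purely formal. I would phrase the proof in one short paragraph: \emph{By Lemma \ref{lem:poincare}, the cokernel and kernel of $\phi$ are supported on $V(f)\subset\operatorname{Spec}A$; since $\mathbf{H}$ and $\mathbf{J}$ are finitely generated free $A$-modules of the same rank, $\det\phi\in A$ vanishes only on $V(f)$, hence is invertible in $A_f$, and therefore $\phi_f$ is an isomorphism.} That is all the statement requires.
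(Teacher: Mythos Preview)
Your proof is correct and rests on the same key input as the paper's: Lemma \ref{lem:poincare} guarantees that $\phi_a$ is an isomorphism at every closed point of $\operatorname{Spec} A_f$. The only cosmetic difference is in packaging---you conclude via the determinant becoming a unit in $A_f$ and Cramer's rule, whereas the paper uses the known injectivity of $\phi$ together with Nakayama's lemma to check surjectivity at each maximal ideal; both are standard and essentially equivalent ways to pass from fiberwise isomorphism to a global one for maps of free modules of equal finite rank.
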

\begin{proof}
    Since $\phi$ is injective, so is $\phi_f$.
    To check surjectivity, it suffices to do so at every maximal ideal $\mathfrak m$ of $A_f$.
    Let $a$ be the point in $\mathbb C^\times$ with $f(a)\ne 0$ corresponding to $\mathfrak m$.
    Since $(A_f)_{\mathfrak m}$ is a local ring, by Nakayama's lemma $(\phi_f)_{\mathfrak m}$ is surjective if $\phi_f \otimes_{A_f} A_f/\mathfrak m = \phi_a$ is surjective.
    Lemma \ref{lem:poincare} implies $\phi_a$ is surjective and this completes the proof.
\end{proof}

Thus we see that on the open set $U = \mathbb C^\times \setminus\{a:f(a) = 0\}$, we have an algebraic collection of isomorphisms 
\[
\eta_a:\mathbb C[W]\to \mathbf H_a\,\quad a\in U
\]
arising from specializitions of 
\[
\eta:A_f[W]\to \mathbf H_f, \quad \eta := \phi_f^{-1}\circ \psi_f\;.
\]
%

The next proposition proves the well-definedness of deformations.

\begin{proposition}\label{prop:deformfinite}

\begin{enumerate}
    \item We have $\sqrt{q}\in U$, and the isomorphism 
\[
\eta_{\sqrt q}:\mathbb C[W] \to \mathbf H_{\sqrt q}
\]
of complex algebras is well-defined.

\item Let $\mathbf{M}$ be an $A$-free $\mathbf{H}$-module. Let $\mathbf{M}_{\sqrt q\to 1}$ denote the $W$-representation obtained by pulling back the module structure of $\mathbf{M}_{\sqrt{q}}$ along the isomorphism $\eta_{\sqrt q}$.

Then, an isomorphism of $W$-representations
\[
\mathbf{M}_1 \cong \mathbf{M}_{\sqrt q\to 1}
\]
holds.

\end{enumerate}

\end{proposition}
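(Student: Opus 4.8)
\textbf{Proof proposal for Proposition \ref{prop:deformfinite}.}

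The plan is to extract both statements from the algebraic family of isomorphisms $\eta\colon A_f[W]\to \mathbf{H}_f$ constructed above. For the first part, I would verify that $\sqrt{q}\in U$, i.e.\ that $f(\sqrt{q})\neq 0$. Since $f(v) = g(v^2)$ with $g$ the Poincar\'e polynomial $g(q)=\sum_{w\in W}q^{\ell(w)}$, we have $f(\sqrt{q}) = g(q)$, which is a sum of positive powers of $q>1$ and hence strictly positive; in particular it is non-zero. Therefore $\sqrt{q}$ lies in the basic open set $U = \mathbb{C}^\times\setminus\{a : f(a)=0\}$, and specializing $\eta$ at $v\mapsto\sqrt{q}$ yields a well-defined algebra isomorphism $\eta_{\sqrt q}\colon\mathbb{C}[W]\to\mathbf{H}_{\sqrt q}$ (by the corollary following Lemma \ref{lem:poincare}, $\phi_f$ is an isomorphism, so its inverse composed with $\psi_f$ specializes correctly at every point of $U$).

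For the second part, the key observation is that $\eta$ is an isomorphism of $A_f$-algebras between $A_f[W]$ and $\mathbf{H}_f$, so that localizing and base-changing a module along $\eta$ commutes with specialization at any point of $U$. Concretely, write $\mathbf{M}_f = \mathbf{M}\otimes_A A_f$, which remains $A_f$-free since $\mathbf{M}$ is $A$-free. Pulling back the $\mathbf{H}_f$-module structure on $\mathbf{M}_f$ along $\eta$ produces an $A_f[W]$-module $\eta^*\mathbf{M}_f$, again free over $A_f$. Now specialize: on the one hand $(\eta^*\mathbf{M}_f)\otimes_{A_f}\mathbb{C}_{\sqrt q}$ is, by construction, exactly $\mathbf{M}_{\sqrt q\to 1}$, the $W$-representation obtained by restricting scalars along $\eta_{\sqrt q}$. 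On the other hand, $(\eta^*\mathbf{M}_f)\otimes_{A_f}\mathbb{C}_1$ is the pullback of $\mathbf{M}_1 = \mathbf{M}\otimes_A\mathbb{C}_1$ along $\eta_1$, and since $\phi_1$ is an isomorphism and $\psi = \phi_1\otimes_{\mathbb C}A$, one checks $\eta_1 = \phi_1^{-1}\circ\psi_1 = \mathrm{id}_{\mathbb{C}[W]}$; hence this specialization is simply $\mathbf{M}_1$ as a $W$-representation (recall $\mathbf{H}_1\cong\mathbb{C}[W]$ canonically via $T_w\mapsto\delta_w$).

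Finally, to conclude $\mathbf{M}_1\cong\mathbf{M}_{\sqrt q\to 1}$ as $W$-representations, I would invoke the standard rigidity of representations of a finite group over a connected base: $\eta^*\mathbf{M}_f$ is a representation of the finite group $W$ on a free $A_f$-module, and $A_f = \mathbb{C}[v,v^{-1}]_f$ is a localization of a polynomial ring, hence its $\mathrm{Spec}$ is connected (indeed irreducible). Over such a base, the isomorphism class of the fibre of a $W$-equivariant vector bundle is locally constant in the Zariski topology, hence constant; therefore all specializations $(\eta^*\mathbf{M}_f)\otimes_{A_f}\mathbb{C}_a$ for $a\in U$ are isomorphic as $W$-representations. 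Applying this to $a=\sqrt q$ and $a=1$ gives the claim. The main obstacle here is making the "locally constant fibre" argument precise: one should either appeal to the fact that the character of $\eta^*\mathbf{M}_f$ is an element of $A_f$ which, being the trace of the action of a finite-order element, takes values in a fixed finite set of algebraic numbers (roots of unity sums) and hence is a constant function on $\mathrm{Spec}(A_f)$, so that the Brauer characters agree at $\sqrt q$ and $1$; or, more robustly since we are in characteristic zero, observe that the character determines the representation and a regular function on an irreducible variety taking values in a finite set is constant. Either route closes the argument.
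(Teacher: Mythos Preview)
Your proof is correct and follows essentially the same approach as the paper: both reduce part (2) to the constancy of the character $a\mapsto \chi_{\mathbf{M}_a}$ over the connected parameter space $U$. The only cosmetic difference is that you argue constancy via Zariski irreducibility of $\mathrm{Spec}(A_f)$ together with the finiteness of possible character values, whereas the paper chooses a continuous path in $U$ from $1$ to $\sqrt{q}$ and invokes discreteness of the character set; both arguments are equally valid.
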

\begin{proof}
The first statement is a consequence of the polynomial in Lemma \ref{lem:poincare} having positive coefficients.

    Let $\rho_f:\mathbf H_f\to \mathrm{End}_{A_f}(\mathbf M_f)$ be the homomorphism induced from the $\mathbf H_f$-module structure of $\mathbf M_f$ and set
    $$\chi_{\mathbf M_f}(w) := \mathrm{tr}(\rho_f(\eta(T_w)) \quad \in A_f.$$
    Similarly, for $a\in U$ we define $\rho_a:\mathbf H_a\to \mathrm{End}_{\mathbb C}(\mathbf M_a)$ and
    $$\chi_{\mathbf M_a}(w) := \mathrm{tr}(\rho_a(\eta_a(T_w))) \quad \in \mathbb C.$$
    Since $(\mathbf H_f)_a \cong \mathbf H_a$ and $(\mathbf M_f)_a \cong \mathbf M_a$ we have
    \begin{equation}
        \label{eq:chareval}
        \chi_{\mathbf M_a}(w) = \theta_a(\chi_{\mathbf M_f}(w)).
    \end{equation}
    Since $U$ is $\mathbb C^\times$ minus finitely many points, it is path connected so there is a continuous $\gamma:[0,1]\to U$ such that $\gamma(0) = 1, \gamma(1) = \sqrt q$ (in fact the straight line path will work).
    By \eqref{eq:chareval}, $\chi_{\mathbf M_{\gamma(t)}}$ defines a continuous (in fact rational with denominator a power of $f$) family of characters of $W$ and hence it must be constant.
    Since $\eta_1 = \id$, we have that $\chi_{\mathbf M_1}$ is the character for $\mathbf M_1$ and so the result follows.
\end{proof}

\subsection{Proof of Theorem \ref{thm:mainSpringer}}

Let $I_G<K<G$ be a compact subgroup and $\zeta$ a complex character of $G$, which is trivial on $I_G$.

We also pick an anti-tempered representation
\[
\delta= \delta(1,\mathcal{O}^\vee,\epsilon)\in \Pi_{1,\mathcal{O}^\vee}
\]
with $\delta\in \Rep_0(G)$, as in the satement of Theorem \ref{thm:mainSpringer}.

By Theorem \ref{thm:borel}, it is enough to prove that an equality
\[
\dim \mathrm{Hom}_{\widehat{W}_K} \left( \widehat{\zeta}_K , \Sigma(\mathcal{O}^\vee,\epsilon)\right) = \dim \mathrm{Hom}_{\mathscr{H}_K} (\zeta, \delta^{I_G})
\]
holds.

Let $\mathbf{H}:=\mathbf{H}^{W_K} < \mathbf{H}^{\widetilde{W_G}}$ be the sub-algebra described in Proposition \ref{prop:generic-affine}.

By Proposition \ref{prop:deformfinite}, it is now enough to find $A$-free $\mathbf{H}$-modules $\mathbf{\Sigma}, \boldsymbol{\zeta}$, for which the identities
\begin{equation}\label{eq:desid1}
\mathbf{\Sigma}_{\sqrt{q}} \cong \delta^{I_G},\quad \boldsymbol{\zeta}_{\sqrt{q}} \cong \zeta
\end{equation}
would hold as $\mathscr{H}_K$-modules, while 
\begin{equation}\label{eq:desid2}
\mathbf{\Sigma}_{1} \cong \mathrm{res}^{\widetilde{W_G}}_{W_K}\overline{\Sigma(\mathcal{O}^\vee,\epsilon)},\quad \boldsymbol{\zeta}_{1} \cong \zeta
\end{equation}
are holding as $W_K$-representations.

Indeed, let $\mathbf{M}(\delta)$ be the $\mathbf{H}^{\widetilde{W}_G}$-module supplied by Proposition \ref{prop:kl-antitemp}. Then, according to Assumption \ref{assumpt:param}, $\mathbf{\Sigma}: = \mathrm{res}^{\mathbf{H}^{\widetilde{W_G}}}_{\mathbf{H}} \mathbf{M}(\delta)$ produces the desired module.

The homomorphism $\boldsymbol{\zeta}:\mathbf{H}\to A$ is then produced by the formula
\[
T_w\mapsto \zeta(w)v^{2\ell(w)}, \quad w\in W_K\;.
\]%

\section{Decomposition of Springer representations}\label{sect:walds-la}
\label{sec:springer}
The goal of this section is to prove Theorem \ref{prop:combinatoricsprop}.

We take note (Definition \ref{def:weaks}) of two sequences $E^{s}_0,\ldots, E^{s}_n$, $s\in \{\pm1\}$, of irreducible $W_n$-representations, and declare a $W_n$-representation to be \textit{weakly $s$-spherical}, whenever it admits one of $\{E^s_i\}_{i=0}^n$ as a sub-representation.

Lemma \ref{lem:weaksspherical} then reduces Theorem \ref{prop:combinatoricsprop} to the following proposition.

\begin{proposition}\label{prop:dagger}
    Let $\lambda\in \mathcal{P}^{s_G}_0(N_G)$ a partition (of good parity), and $\epsilon\in \widehat{A(\mathcal{O}_{\lambda}^\vee)}_0$ a character.
    
    Then, an inclusion $\epsilon\in A^\dagger(\mathcal{O}^\vee_\lambda)$ holds, if and only if, the Springer representation $\Sigma(\mathcal O^\vee_\lambda,\epsilon)$ is weakly $s_G$-spherical.
\end{proposition}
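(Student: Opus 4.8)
The proposition asserts a combinatorial equivalence for Springer representations of the finite Weyl group $W_{n_G}$ of type $B/C$: namely that membership of a character $\epsilon$ in Lusztig's canonical quotient subgroup $A^{\dagger}(\mathcal{O}^\vee_\lambda)$ is detected by the weak $s_G$-sphericity of the associated (full) Springer representation $\Sigma(\mathcal{O}^\vee_\lambda,\epsilon)$. The strategy is to reduce both sides to purely combinatorial statements about the decomposition of full Springer fibre cohomology into irreducibles (``Green theory'') and then invoke the algorithmic descriptions of Waldspurger \cite{waldspurger} and La \cite{la}. First I would set up, in Definition \ref{def:weaks} (referenced but to be made precise in this section), the sequences $E^{\pm1}_0,\dots,E^{\pm1}_n$ of irreducible $W_n$-representations: these should be precisely the irreducible constituents appearing in the $W_{n,i}$-invariants (respectively the $\sgn^\pm\boxtimes\triv$-isotypic parts) of the regular-type representations, so that by Frobenius reciprocity ``$\Sigma$ is weakly $s_G$-spherical'' is literally equivalent to ``$\Hom_{W_{n_G,i}}(\widehat{\zeta}_{K_i},\Sigma(\mathcal{O}^\vee_\lambda,\epsilon))\neq 0$ for some $i$'', which is exactly the right-hand side condition in Theorem \ref{prop:combinatoricsprop} via Corollary \ref{cor:ofsection5}. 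This identification, together with the reduction lemma \ref{lem:weaksspherical}, is what lets Proposition \ref{prop:dagger} imply Theorem \ref{prop:combinatoricsprop}; conversely it is the content one must prove.

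\textbf{The easy direction.} For the ``only if'' direction, suppose $\epsilon\in A^{\dagger}(\mathcal{O}^\vee_\lambda)$. By the very definition \eqref{eq:ach-sg} of $A^{\dagger}$, there exists an irreducible $\sigma\in\irr_{\mathfrak{c}(\mathcal{O}^\vee_\lambda)}(W_{n_G})$ with $(\mathcal{O}^\vee_\sigma,\epsilon_\sigma)=(\mathcal{O}^\vee_\lambda,\epsilon)$, i.e.\ $\sigma=\sigma(\mathcal{O}^\vee_\lambda,\epsilon)$ is the top-degree piece of $\Sigma(\mathcal{O}^\vee_\lambda,\epsilon)$, and it lies in the two-sided cell attached to the special piece of $\mathcal{O}^\vee_\lambda$. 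The plan is to show that every member of $\irr_{\mathfrak{c}}(W_{n_G})$ is weakly $s_G$-spherical --- equivalently, that each family $\irr_{\mathfrak{c}}(W_{n_G})$ contains (indeed, is detected inside) the invariants of some maximal parahoric-type subgroup $W_{n_G,i}$. Here I would use Lusztig's explicit description \cite{lusztigbook} of families in classical type via symbols, matching the hypercube structure of the special piece (Remark after Proposition \ref{prop:spc-comb}) with a corresponding chain of subgroups $W_{n_G,i}$; the key combinatorial input is that the $E^{s}_i$ are precisely the ``extremal'' members of the families $\irr_{\mathfrak{c}}$, so membership of $\sigma$ in such a family forces $E^{s}_i\hookrightarrow\Sigma(\mathcal{O}^\vee_\lambda,\epsilon)$ for a suitable $i$ (the Springer representation $\Sigma$ contains $\sigma$ in top degree but also all the ``smaller'' constituents needed).

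\textbf{The hard direction and the main obstacle.} For the ``if'' direction --- weak $s_G$-sphericity of $\Sigma(\mathcal{O}^\vee_\lambda,\epsilon)$ implies $\epsilon\in A^{\dagger}(\mathcal{O}^\vee_\lambda)$ --- one must control \emph{all} constituents of the full Springer fibre, not merely the top-degree one, and rule out that some $E^{s}_i$ sneaks into a lower cohomological degree of $\Sigma(\mathcal{O}^\vee_\lambda,\epsilon)$ when $\epsilon\notin A^{\dagger}$. This is exactly where Green-function combinatorics is unavoidable: the multiplicity $[\Sigma(\mathcal{O}^\vee_\lambda,\epsilon):E^{s}_i]$ is a specialization of a Green function / Kostka-type polynomial, and I would invoke the algorithmic results of Waldspurger \cite{waldspurger} and La \cite{la} --- which compute these decompositions for classical groups --- to extract the vanishing statement. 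Concretely, the plan is: express $[\Sigma(\mathcal{O}^\vee_\lambda,\epsilon):E^{s}_i]$ via their formulas in terms of the partition $\lambda$, the subset $B_\epsilon\subseteq S(\lambda)$ corresponding to $\epsilon$ (Proposition \ref{prop:comp-comb}), and the combinatorial data $i$; then show this integer is nonzero for some $i$ \emph{if and only if} $B_\epsilon$ is constant on the $\sim$-equivalence classes of $S(\lambda)$ of Section \ref{sect:quot} (that being the characterization of $P^{\dagger}(\lambda)_0$, hence of $A^{\dagger}(\mathcal{O}^\vee_\lambda)$ by Proposition \ref{prop:quotient}). I expect the main obstacle to be precisely this last combinatorial identity: matching the support condition ``$B_\epsilon$ constant on $\sim$-classes'' against the nonvanishing locus of the Waldspurger/La algorithm output. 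This will likely require an inductive argument peeling off the largest part $\max S(\lambda)$ (as in the proof of Theorem \ref{thm:almost-intro}), using Lemmas \ref{lem:even}, \ref{lem:tail}, \ref{lem:head} to handle the boundary blocks, and carefully tracking how restriction to $W_{n_G,i}$ interacts with the recursive structure of Green functions --- a bookkeeping-heavy but, given the cited algorithms, ultimately mechanical verification.
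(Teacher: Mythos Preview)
Your ``only if'' direction has a genuine gap. You write that ``every member of $\irr_{\mathfrak{c}}(W_{n_G})$ is weakly $s_G$-spherical'' and that ``the $E^s_i$ are precisely the extremal members of the families''. Neither claim holds as stated. Weak $s_G$-sphericity is a property of the \emph{full} Springer representation $\Sigma$, not of its irreducible top-degree piece $\sigma$; for an irreducible $\sigma$ the condition would read $\sigma\cong E^{s_G}_i$, which is false for most elements of a family. And the $E^{s_G}_i$, as $i$ varies, are Springer representations attached to \emph{different} orbits and hence live in \emph{different} families, so they cannot collectively be the ``extremal members'' of a single $\irr_{\mathfrak{c}}$. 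Knowing that the top piece $\sigma(\mathcal{O}^\vee_\lambda,\epsilon)$ sits in the correct family gives you no direct handle on whether some $E^{s_G}_i$ occurs in the lower cohomology of $\Sigma(\mathcal{O}^\vee_\lambda,\epsilon)$; that is precisely the content to be proved, and the family/cell language does not supply it.

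The paper's approach treats both directions symmetrically through the Waldspurger--La tableau algorithm, and the linchpin you are missing is the following: for the specific choice $(\Delta_G,\tau_G)$, every $E^{s_G}_i$ is automatically \emph{maximal} in the partial order $\leq_{\Delta_G,\tau_G}$ among constituents of $\Sigma(\mathcal{O}^\vee_\lambda,\epsilon)$ (Lemma~\ref{lem:algoreduction}, via Theorem~\ref{thm:maximal}). Hence $E^{s_G}_i$ occurs in $\Sigma$ if and only if it lies in the algorithmically computed set $P(\lambda,\epsilon,\Delta_G,\tau_G)$. This converts both implications into a single concrete question about tableaux, which is then settled (Proposition~\ref{prop:final}) by showing that the first row of any tableau in $\mathcal{R}(\lambda,\epsilon,\Delta_G,\tau_G)$ omits exactly the index set $X_{\lambda,\epsilon}$ (Proposition~\ref{prop:first-row}), and that the resulting $(\alpha_T,\beta_T)$ has the shape $E^{s_G}_i$ precisely when $X_{\lambda,\epsilon}\subset X^{s_G}_\lambda$, which is the combinatorial form of $\epsilon\in A^\dagger$ (Lemma~\ref{lem:walds-dagger}). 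The delicate point in the ``if'' direction is handling possible zero parts $\gamma^{\lambda,\epsilon}_i=0$, dealt with by a finite case analysis (Lemma~\ref{lem:zeroos}, Proposition~\ref{lem:zerotail}) rather than by the inductive peeling you propose. Your plan for the ``if'' direction points toward the right toolkit but does not isolate this maximality reduction, without which the Green-function multiplicities remain intractable.
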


Singling out the irreducible constituents of Springer representations $\Sigma(\mathcal O^\vee,\epsilon)$, beyond the top degree case of $\sigma(\mathcal O^\vee,\epsilon)$, is in general a  problem of high complexity.

The analogous problem for $S_n$-representations, when the group is viewed as a Weyl group of type $A_{n-1}$, amounts to computations of the ubiquitous Kostka numbers.

%
%

In our case of $W_n$ (Lie types $B$ and $C$), the multiplicities in question are governed by the so-called \textit{double Kostka polynomials}. These are generalisations of the Kosta polynomials introduced by Shoji in \cite{shoji} and studied notably in \cite{archarhenderson,la,shojiliu,waldspurger,}.

For the problem at hand we will utilise recent deep results on special values of double Kostka polynomials obtained by Waldspurger for type $C$, and later generalised to types $B$ (and $D$) by La.

These results are summarised in Theorems \ref{thm:waldsla} and \ref{thm:maximal}. They provide an explicit algorithmic calculus, that may be used to determine weak $s$-sphericity of $\Sigma(\mathcal O^\vee,\epsilon)$.

Such methods are exploited in the final subsection to establish Proposition \ref{prop:dagger}.

\subsection{First reduction}

\subsubsection{Representation theory of $S_n$ and $W_n$}\label{sect:repSn}


We write $S_n$ for the symmetric group on $n$ letters.

For a partition $\lambda \in \mathcal P(n)$, we write $V_\lambda\in \irr(S_n)$ for the Specht module that is determined by $\lambda$.

\begin{proposition}
    \label{prop:pieri}
    (Pieri's formula) Let $0\le k\le n$ and $\lambda \in \mathcal P(n-k)$ be given.
    
    Then,  a decomposition
    $$\ind_{S_{n-k}\times S_{k}}^{S_n} V_{\lambda} \boxtimes V_{(k)} = \bigoplus_\mu V_\mu$$
    holds, so that the sum ranges over all partitions $\mu\in \mathcal{P}(n)$, whose Young diagram can be obtained from that of $\lambda$ by adding $k$ boxes, no two in the same column.
\end{proposition}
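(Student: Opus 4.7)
The plan is to deduce the statement from the classical Pieri identity in the ring of symmetric functions via the Frobenius characteristic isomorphism. First, I would recall that the map
\[
\mathrm{ch}: \bigoplus_{n\geq 0} R(S_n) \longrightarrow \Lambda
\]
from the direct sum of Grothendieck groups of finite-dimensional $S_n$-representations to the graded ring of symmetric functions is an isomorphism of graded rings, where the left-hand side is endowed with the induction product $[U]\cdot[V] = [\ind_{S_m\times S_n}^{S_{m+n}} U\boxtimes V]$. Under $\mathrm{ch}$, the Specht module $V_\lambda$ corresponds to the Schur function $s_\lambda$, while the trivial $S_k$-representation $V_{(k)}$ corresponds to the complete homogeneous symmetric function $h_k$.

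Applying $\mathrm{ch}$ to the left-hand side of the claimed decomposition then yields $s_\lambda \cdot h_k$. Consequently, the proposition reduces to establishing the identity
\[
s_\lambda \cdot h_k \;=\; \sum_{\mu} s_\mu\,,
\]
where the sum ranges over partitions $\mu$ obtained from $\lambda$ by adding $k$ boxes, no two in the same column.

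This last identity is the classical Pieri rule. My preferred route to it passes through the combinatorial interpretation of Schur functions as generating polynomials over semistandard Young tableaux: one expands the product $s_\lambda h_k$ as a sum over pairs consisting of a semistandard tableau of shape $\lambda$ and a weakly increasing row of length $k$, and then exhibits a weight-preserving bijection between such pairs and semistandard tableaux of shape $\mu$ where $\mu/\lambda$ is a horizontal strip of size $k$. An alternative algebraic route uses the Jacobi--Trudi determinantal formula expressing $s_\mu$ in the $h_i$'s, reducing the identity to formal manipulations.

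I anticipate no serious obstacle, since the statement is a well-documented classical result; the symmetric function identity above can be found, for instance, in Macdonald's treatise on symmetric functions or Fulton's book on Young tableaux. The only delicate point in the reduction is confirming that $\mathrm{ch}(V_{(k)})=h_k$, which follows from the power-sum expansion $h_k = \sum_{\nu\in\mathcal{P}(k)} p_\nu / z_\nu$ together with the fact that the character of the trivial representation $V_{(k)}$ is identically $1$ on every conjugacy class.
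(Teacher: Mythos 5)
Your proposal is correct, and it gives the standard proof: transport the problem through the Frobenius characteristic isomorphism, under which $V_\lambda \mapsto s_\lambda$ and $V_{(k)}\mapsto h_k$ and the induction product becomes multiplication of symmetric functions, and then invoke the classical Pieri rule $s_\lambda h_k=\sum_\mu s_\mu$. The paper states this proposition without proof, treating it as classical background, so there is nothing in the paper to compare against; your argument is a complete and appropriate justification.
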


    Irreducible representations of the group of signed permutations $W_n$ are parameterized by (ordered) pairs of partitions $\alpha,\beta\in \mathcal{P}$ with $|\alpha|+|\beta| = n$.

It will be useful to adopt the direct notation $(\alpha,\beta)\in \irr(W_n)$.

Let us now recall the construction of those representations out of the Specht representations for $S_n$.

    Let $\chi_k$ denote the character of $(\mathbb Z/2)^n$ that is trivial on the first $n-k$ components, and non-trivial on the remaining $k$ components.
    The centralizer of $\chi_k$ in $S_n$ is the naturally embedded subgroup $S_{n-k}\times S_k$.

    Let $(V_\alpha\boxtimes V_\beta)\otimes \chi_{|\beta|}$ denote the representation of the group $(S_{|\alpha|}\times S_{|\beta|})\ltimes (\mathbb Z/2)^n \cong W_{|\alpha|}\times W_{|\beta|}$, obtained by extending the $S_{|\alpha|}\times S_{|\beta|}$-representation $V_\alpha\boxtimes V_\beta$ by $\chi_{|\beta|}$ on $(\mathbb Z/2)^n$.
    
    The induced representation
    $$(\alpha,\beta) := \ind_{W_{|\alpha|}\times W_{|\beta|}}^{W_n}(V_\alpha\boxtimes V_\beta)\otimes \chi_{|\beta|}$$
    is then irreducible.
    
Indeed, every isomorphism class of an irreducible $W_n$-representation is obtained uniquely in this manner.


As general notation, for complex finite-dimensional representations $\pi_1,\pi_2$ of a finite group, we write
\[
\langle\pi_1,\pi_2\rangle: = \dim_{\mathbb{C}} \Hom (\pi_1,\pi_2)\;.
\]

\begin{proposition}\footnote{Our conventions differ from those in \cite{geisskinch} by a flip. 
    Namely, our representation $(\alpha,\beta)$ is denoted by $\{\beta,\alpha\}$ in \cite{geisskinch}.}
    \label{prop:gk}
    \cite[Theorem III.2]{geisskinch}
If $(\alpha_1,\beta_1)\in \mathrm{Irr}(W_i),(\alpha_2,\beta_2)\in \mathrm{Irr}(W_{n-i})$ and $(\alpha,\beta)\in \mathrm{Irr}(W_n)$, then, viewing $W_i\times W_{n-i}$ as a subgroup of $W_n$ via $W_i\times W_{n-i}\cong W_{n,i}$, we have
\begin{align} \label{eq:res}
    &\langle\ind_{W_i\times W_{n-i}}^{W_n}\left((\alpha_1,\beta_1))\boxtimes (\alpha_2,\beta_2)\right) , (\alpha,\beta)\rangle \\
    =&\langle\ind_{S_{|\alpha_1|}\times S_{|\alpha_2|}}^{S_{|\alpha|}}( V_{\alpha_1}\boxtimes V_{\alpha_2}), V_\alpha\rangle \cdot \langle \ind_{S_{|\beta_1|}\times S_{|\beta_2|}}^{S_{|\beta|}}(V_{\beta_1}\boxtimes V_{\beta_2}), V_\beta\rangle \nonumber\;,
\end{align}
when $|\alpha_1|+|\alpha_2| = |\alpha|$ and $|\beta_1|+|\beta_2| = |\beta|$.
%
\end{proposition}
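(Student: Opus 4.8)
\textbf{Plan for the proof of Proposition \ref{prop:gk}.}

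The statement is a Clifford-theoretic decomposition of induction from the parabolic subgroup $W_{n,i}\cong W_i\times W_{n-i}$ to $W_n$, and the plan is to reduce it to the corresponding statement for symmetric groups via the wreath-product structure $W_n \cong S_n\ltimes (\mathbb Z/2)^n$. First I would recall the explicit model of the irreducibles given just above the statement: $(\alpha,\beta) = \ind_{W_{|\alpha|}\times W_{|\beta|}}^{W_n} (V_\alpha\boxtimes V_\beta)\otimes \chi_{|\beta|}$, where $\chi_k$ is the character of $(\mathbb Z/2)^n$ supported on the last $k$ coordinates. The key book-keeping device is that a $W_n$-representation is determined by how the normal subgroup $(\mathbb Z/2)^n$ decomposes into characters $\chi_J$ (indexed by subsets $J\subset\{1,\dots,n\}$, up to the $S_n$-action this is just $|J|$) together with the action of the stabilizers $S_{n-|J|}\times S_{|J|}$ on the isotypic pieces. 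Concretely, $(\alpha,\beta)$ is the unique irreducible whose $\chi_{|\beta|}$-isotypic component, as a module over the stabilizer $S_{|\alpha|}\times S_{|\beta|}$, contains (indeed equals, after accounting for the $S_n$-orbit) $V_\alpha\boxtimes V_\beta$.

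The main computation is then to expand the left-hand side $\ind_{W_i\times W_{n-i}}^{W_n}\big((\alpha_1,\beta_1)\boxtimes(\alpha_2,\beta_2)\big)$ and extract its $(\alpha,\beta)$-multiplicity. By transitivity of induction, this equals $\ind$ from $W_{|\alpha_1|}\times W_{|\beta_1|}\times W_{|\alpha_2|}\times W_{|\beta_2|}$ of $(V_{\alpha_1}\boxtimes V_{\beta_1}\boxtimes V_{\alpha_2}\boxtimes V_{\beta_2})$ tensored with the appropriate character of $(\mathbb Z/2)^n$ — namely the one supported on the $\beta_1$ and $\beta_2$ blocks, which under the $S_n$-action is conjugate to $\chi_{|\beta_1|+|\beta_2|}$. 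The point is that the $(\mathbb Z/2)^n$-part of the induction is ``rigid'': Mackey theory on the abelian normal subgroup shows that the only way the induced module can contribute to the $\chi_{|\beta|}$-isotypic line of $(\alpha,\beta)$ is when $|\beta_1|+|\beta_2|=|\beta|$ (and hence $|\alpha_1|+|\alpha_2|=|\alpha|$), and in that case the multiplicity is computed entirely inside the symmetric-group stabilizers. That reduces the problem to: the multiplicity of $V_\alpha$ in $\ind_{S_{|\alpha_1|}\times S_{|\alpha_2|}}^{S_{|\alpha|}}(V_{\alpha_1}\boxtimes V_{\alpha_2})$ times the multiplicity of $V_\beta$ in $\ind_{S_{|\beta_1|}\times S_{|\beta_2|}}^{S_{|\beta|}}(V_{\beta_1}\boxtimes V_{\beta_2})$, which is exactly the right-hand side of \eqref{eq:res}.

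The main obstacle — really the only genuine subtlety — is the careful Mackey-theoretic bookkeeping: one must pass through double cosets $(W_i\times W_{n-i})\backslash W_n/(W_{|\alpha|}\times W_{|\beta|})$, keep track of which $(\mathbb Z/2)^n$-character each double coset contributes, and verify that all cross-terms mixing the $\alpha$-block with the $\beta$-block vanish (because the relevant $(\mathbb Z/2)^n$-characters are then non-conjugate to $\chi_{|\beta|}$). Once this vanishing is established, the surviving terms factor as a product of two symmetric-group inductions and Frobenius reciprocity finishes it. Since \cite{geisskinch} proves exactly this (with their convention flipped, as noted in the footnote), I would present the argument as a recollection: state the wreath-product model, invoke Clifford/Mackey theory for $S_n\ltimes(\mathbb Z/2)^n$ to isolate the $\chi_{|\beta|}$-component, and cite \cite[Theorem III.2]{geisskinch} for the precise multiplicity formula, noting that combined with Pieri's formula (Proposition \ref{prop:pieri}) this gives a completely combinatorial recipe for the constituents of the relevant induced representations.
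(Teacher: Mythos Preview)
Your plan is correct and is essentially the standard Clifford--Mackey argument for wreath products $S_n\ltimes(\mathbb Z/2)^n$; this is indeed how the cited result \cite[Theorem III.2]{geisskinch} is proved. Note, however, that the paper does not give its own proof of this proposition at all: it is stated purely as a citation to Geissinger--Kinch, with only the footnote remarking on the convention flip. So there is nothing to compare against beyond observing that your sketch matches the content of the cited reference.
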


\subsubsection{Weakly $s$-spherical Weyl group representations}


\begin{definition}
    For an integer $0\le i\le n$ and a sign $s\in \{\pm1\}$, we denote
    \begin{equation}
        E^{s}_i = \begin{cases}
            ((n-i,i),\emptyset) & \mbox{if $s=1$}\\
            ((n-i),(i)) & \mbox{if $s=-1$}
        \end{cases}\;\in \irr(W_n)\;.
    \end{equation}
\end{definition}
Note, that $E_0^1 = E_0^{-1} = ((n),\emptyset)$ is the trivial $1$-dimensional $W_n$-representation, while $E_n^{-1} = (\emptyset, (n))$ corresponds to the character $\sgn^\pm$ of Section \ref{sect:sgn-perm}.

\begin{proposition}
    \label{prop:firstreduction}
    Let $\pi$ be a complex $W_n$-representation.
    
    Then,
    \begin{enumerate}
        \item There is an index $0\le i_1\le n$ with  $\pi^{W_{n,i_1}}\ne 0$, if and only if, there is an index $0\le i_2\le n$ with
        $$\langle\pi,E^1_{i_2}\rangle \ne 0\;.$$

        \item For any $0\le i \le n$, an equality
        $$\dim \Hom_{W_{n,i}}(\sgn^\pm\boxtimes\triv,\pi) = \langle \pi,E^{-1}_i\rangle$$
        holds.
    \end{enumerate}
\end{proposition}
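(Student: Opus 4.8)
\textbf{Proof proposal for Proposition \ref{prop:firstreduction}.}

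The plan is to realize both $E^s_i$ as summands of explicit induced representations from the maximal parabolic subgroups $W_{n,i}\cong W_i\times W_{n-i}$, and then invoke Frobenius reciprocity together with the branching rule of Proposition \ref{prop:gk} to convert the vanishing/non-vanishing statements about $\Hom_{W_{n,i}}$ into statements about multiplicities $\langle \pi, E^s_i\rangle$. Concretely, I would first identify the relevant induced modules: by Pieri's formula (Proposition \ref{prop:pieri}) one has $\ind_{S_{n-i}\times S_i}^{S_n} V_{(n-i)}\boxtimes V_{(i)} = \bigoplus_{j=0}^{i} V_{(n-j,j)}$, the sum of Specht modules for two-row partitions obtained by adding $i$ boxes no two in a column. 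Translating via Proposition \ref{prop:gk}, the $W_n$-representation $\ind_{W_{n,i}}^{W_n}\left( ((n-i),\emptyset)\boxtimes ((i),\emptyset)\right)$ decomposes as $\bigoplus_{j=0}^{i} ((n-j,j),\emptyset) = \bigoplus_{j=0}^i E^1_j$, and similarly $\ind_{W_{n,i}}^{W_n}\left(((n-i),\emptyset)\boxtimes(\emptyset,(i))\right) = ((n-i),(i)) = E^{-1}_i$, since the $\beta$-side induction $\ind_{S_0\times S_i}^{S_i} V_\emptyset\boxtimes V_{(i)} = V_{(i)}$ is irreducible and the $\alpha$-side is trivial. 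Thus the trivial $W_{n,i}$-representation induces up to $\bigoplus_{j\le i} E^1_j$ and the one-dimensional $\sgn^\pm\boxtimes\triv$ of $W_{n,i}$ induces up to exactly $E^{-1}_i$.

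For part (1): $\pi^{W_{n,i}}\ne 0$ means $\Hom_{W_{n,i}}(\triv,\pi)\ne 0$, which by Frobenius reciprocity equals $\Hom_{W_n}(\ind_{W_{n,i}}^{W_n}\triv,\pi) = \bigoplus_{j=0}^i \Hom_{W_n}(E^1_j,\pi)$. Hence $\pi^{W_{n,i}}\ne 0$ for some $i$ if and only if $\langle \pi, E^1_j\rangle\ne 0$ for some $j$ — in fact taking $i=n$ (so $W_{n,n}=W_n$, $\pi^{W_n}=\Hom(\triv,\pi)=\langle\pi,E^1_0\rangle$) already shows one implication is subtle; I should instead observe that the union over all $i$ of $\{j: j\le i\}$ is all of $\{0,\dots,n\}$, so $\exists i: \pi^{W_{n,i}}\ne 0 \iff \exists j: \langle \pi,E^1_j\rangle\ne 0$. (One must be slightly careful that $W_{n,0}=W_n$ in the paper's convention, so both indexings range over $0,\dots,n$ and the equivalence is clean.) For part (2): Frobenius reciprocity gives $\dim\Hom_{W_{n,i}}(\sgn^\pm\boxtimes\triv,\pi) = \dim\Hom_{W_n}(\ind_{W_{n,i}}^{W_n}(\sgn^\pm\boxtimes\triv),\pi) = \langle E^{-1}_i,\pi\rangle = \langle\pi,E^{-1}_i\rangle$, using that the induced module is exactly the irreducible $E^{-1}_i$ and that $\Hom$ over a finite group is symmetric in the sense $\langle\rho,\pi\rangle=\langle\pi,\rho\rangle$ for the multiplicity pairing of completely reducible modules.

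The main obstacle I anticipate is bookkeeping around the subgroup $W_{n,i}\cong W_i\times W_{n-i}$ and which factor carries the sign character versus the trivial one, i.e. matching the paper's convention $\sgn^\pm\boxtimes\triv$ (sign on the $W_i$-factor, trivial on $W_{n-i}$) against the formula of Proposition \ref{prop:gk}, which separates the two partitions $\alpha$ and $\beta$ of an irreducible $(\alpha,\beta)$ and pairs $\alpha$-data with $\alpha$-data. I need to check that $\sgn^\pm$ of $W_i$ corresponds to the bipartition $(\emptyset,(i))$ (consistent with the remark after the definition of $E^s_i$, where $E^{-1}_n=(\emptyset,(n))=\sgn^\pm$), so that $\ind((n-i),\emptyset)\boxtimes(\emptyset,(i))$ does indeed pick out $\alpha=(n-i)*(\emptyset)=(n-i)$ on the first coordinate partition and $\beta=\emptyset*(i)=(i)$ on the second, yielding $((n-i),(i))=E^{-1}_i$ with multiplicity one. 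A secondary point to verify is the footnoted flip convention of \cite{geisskinch} relative to the paper's $(\alpha,\beta)$ notation; once that is pinned down the computation via Pieri is entirely routine.
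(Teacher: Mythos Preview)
Your approach is correct and essentially identical to the paper's: compute $\ind_{W_{n,i}}^{W_n}\triv$ and $\ind_{W_{n,i}}^{W_n}(\sgn^\pm\boxtimes\triv)$ via Proposition~\ref{prop:gk} and Pieri, then apply Frobenius reciprocity. One small correction: in part~(1) the Pieri decomposition actually gives $\ind_{W_{n,i}}^{W_n}\triv = \bigoplus_{j\le \min(i,n-i)} E^1_j$, not $\bigoplus_{j\le i} E^1_j$ (the horizontal-strip condition forces the second row to have length at most $n-i$ as well), though this does not affect the if-and-only-if conclusion since the union over all $i$ still hits every $E^1_j$.
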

\begin{proof}
    \begin{enumerate}
        \item 

    Let us write $H_i = \ind_{W_{n,i}}^{W_{n}}\triv$.
    By Frobenius reciprocity,
    $$\dim \pi^{W_{n,i}} =\langle \triv, \res_{W_{n,i}}^{W_{n}}\pi\rangle = \langle H_i,\pi\rangle.$$
    Recalling that $W_{n,i} \cong W_i\times W_{n-i}$, we see that
    
    %
    \begin{align*}
        H_i &= \ind_{W_i\times W_{n-i}}^{W_{n}} ((i),\emptyset)\boxtimes ((n-i),\emptyset)\\
        &=\bigoplus_{\alpha\in \mathcal P(n)}\langle\ind_{S_i\times S_{n-i}}^{S_n}(V_{(i)}\boxtimes V_{(n-i)}),V_\alpha\rangle(\alpha,\emptyset), \quad \text{(Proposition \ref{prop:gk})} \\
        &=\bigoplus_{j\le \min(i,n-i)} ((n-j,j),\emptyset) = \bigoplus_{j\le \min(i,n-i)}E^1_j, \quad \text{(Proposition \ref{prop:pieri})}.
    \end{align*}
    %
    
    \item

    The statement follows similarly, when noting that
    \[
\ind_{W_{n,i}}^{W_{n}}\sgn^\pm\boxtimes \triv =  \ind_{W_{n,i}}^{W_{n}} (\emptyset,(i))\boxtimes ((n-i),\emptyset) = ((n-i),(i)) = E^{-1}_i
\]
holds, by Proposition \ref{prop:gk}.

    \end{enumerate}

\end{proof}

\begin{definition}\label{def:weaks}
    For $n\ge0$ and $s\in \{\pm1\}$ call a $W_n$-representation $\pi$ \textit{weakly $s$-spherical}, whenever an index $0\le i\le n$ exists, so that $\langle \pi,E^s_i\rangle \ne0$.
\end{definition}
\begin{lemma}
    \label{lem:weaksspherical}
    Let $n,m\ge 0$.
    Then a representation $\pi$ of $W_n$ is weakly $s$-spherical, if and only if,  $\ind_{W_{n}\times S_m}^{W_{n+m}}\pi\boxtimes \triv$ is weakly $s$-spherical.
\end{lemma}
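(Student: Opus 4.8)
\textbf{Proof plan for Lemma \ref{lem:weaksspherical}.}

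The plan is to reduce the statement to a purely combinatorial fact about induction of Specht modules, using the branching formula of Proposition \ref{prop:gk} together with Pieri's formula (Proposition \ref{prop:pieri}). First I would decompose $\pi$ into irreducible constituents and, by linearity of both $\mathrm{Hom}$-multiplicities and the functor $\mathrm{Ind}_{W_n\times S_m}^{W_{n+m}}(-\boxtimes\triv)$, reduce to the case where $\pi=(\alpha,\beta)\in\irr(W_n)$ is irreducible. Viewing $S_m$ as $W_m$ with its trivial representation realised as the pair $((m),\emptyset)$, Proposition \ref{prop:gk} gives, for any $(\gamma,\delta)\in\irr(W_{n+m})$,
\[
\langle \ind_{W_n\times W_m}^{W_{n+m}}(\alpha,\beta)\boxtimes((m),\emptyset),(\gamma,\delta)\rangle
= \langle\ind_{S_{|\alpha|}\times S_m}^{S_{|\gamma|}}(V_\alpha\boxtimes V_{(m)}),V_\gamma\rangle\cdot\langle V_\beta,V_\delta\rangle,
\]
so that $\ind_{W_n\times S_m}^{W_{n+m}}\pi\boxtimes\triv=\bigoplus_{\gamma}\langle\ind(V_\alpha\boxtimes V_{(m)}),V_\gamma\rangle\,(\gamma,\beta)$, where by Pieri $\gamma$ ranges over partitions obtained from $\alpha$ by adding $m$ boxes, no two in a column, each with multiplicity one.

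Next I would test weak $s$-sphericity against this decomposition. For $s=-1$: $E^{-1}_i=((n+m-i),(i))$ appears in the induced module iff $\beta=(i)$ for some $i$ and the single-row partition $(n+m-i)$ is obtainable from $\alpha$ by a horizontal strip of $m$ boxes; the latter happens iff $\alpha=(n-i)$ is itself a single row, i.e. iff $(\alpha,\beta)=((n-i),(i))=E^{-1}_i$ is weakly $(-1)$-spherical in $W_n$. For $s=1$: $E^1_j=((n+m-j,j),\emptyset)$ appears iff $\beta=\emptyset$ and the two-row (or one-row) shape $(n+m-j,j)$ arises from $\alpha$ by adding a horizontal $m$-strip; since adding a horizontal strip to $\alpha$ can only produce a shape with at most $\ell(\alpha)$ or $\ell(\alpha)+1$ rows, and $(n+m-j,j)$ has at most two rows, one checks this is possible for some $j$ iff $\alpha$ already has at most two rows, i.e. iff $\alpha=(n-i,i)$ and $(\alpha,\beta)=E^1_i$. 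In both directions the "only if" is the routine one (start from a weakly $s$-spherical $\pi$ and exhibit the strip), and the "if" is the constraint-counting argument just sketched.

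The main obstacle I anticipate is the $s=1$ direction of the "only if": one must verify that if $\ind_{W_n\times S_m}^{W_{n+m}}\pi\boxtimes\triv$ contains some $E^1_j$ then $\pi$ must contain some $E^1_i$, i.e. that a horizontal strip of $m$ boxes added to $\alpha$ cannot create a two-row shape unless $\alpha$ had at most two rows to begin with. This is true because a horizontal strip adds at most one box per column, hence cannot decrease the number of rows, so $\ell(\alpha)\le\ell(\gamma)\le 2$ forces $\ell(\alpha)\le 2$ — but I would want to phrase the contrapositive carefully, treating the degenerate cases $\alpha=\emptyset$, $i=0$, $j=0$, and $m=0$ explicitly, as well as the $S_n=W_n$ versus $W_n$ conventions so that the one-row partitions $(m)$ and the trivial character are correctly matched across the isomorphism $W_{n,i}\cong W_i\times W_{n-i}$. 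Once these bookkeeping points are settled, the lemma follows by combining the two cases with the linearity reduction of the first step.
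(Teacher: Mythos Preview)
Your overall strategy---reduce to an irreducible $\pi=(\alpha,\beta)$, apply Proposition~\ref{prop:gk} with Pieri, then argue via the number of rows of $\alpha$ and $\beta$---is the paper's approach. There is, however, a real error in your computation of the induced module. You treat the trivial $S_m$-representation as the bipartition $((m),\emptyset)$ and invoke Proposition~\ref{prop:gk} directly, but $S_m$ is a \emph{proper} subgroup of $W_m$, and Proposition~\ref{prop:gk} only handles induction from $W_n\times W_m$. Inducing in stages, one has
\[
\ind_{S_m}^{W_m}\triv=\bigoplus_{k=0}^{m}\bigl((k),(m-k)\bigr),
\qquad
\ind_{W_n\times S_m}^{W_{n+m}}(\alpha,\beta)\boxtimes\triv
=\bigoplus_{k=0}^{m}\ind_{W_n\times W_m}^{W_{n+m}}(\alpha,\beta)\boxtimes\bigl((k),(m-k)\bigr),
\]
so every constituent $(\gamma,\delta)$ has $\gamma$ obtained from $\alpha$ by a horizontal strip \emph{and} $\delta$ obtained from $\beta$ by a horizontal strip, of complementary sizes; it is not true that $\delta=\beta$. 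Your $s=-1$ backward step (``$E^{-1}_j$ appears iff $\beta=(j)$'') therefore fails: for instance, if $\pi=((n-i),(i))$ then the induced module contains $E^{-1}_j$ for every $i\le j\le i+m$, not only $j=i$. (For $s=1$ your computation happens to survive, since $\delta=\emptyset$ forces $\beta=\emptyset$ and sends the entire strip onto $\alpha$.)

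The repair is exactly what the paper does: from the corrected decomposition one still has $\ell(\gamma)\ge\ell(\alpha)$ and $\ell(\delta)\ge\ell(\beta)$ for every constituent $(\gamma,\delta)$, and your row-count characterisations of weak $s$-sphericity ($\ell(\alpha)\le2$ and $\beta=\emptyset$ for $s=1$; $\ell(\alpha)\le1$ and $\ell(\beta)\le1$ for $s=-1$) then finish both directions uniformly.
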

\begin{proof}
    It suffices to prove the case when $\pi$ is irreducible so suppose $\pi= (\alpha,\beta)$.
    We have
    \begin{equation}
        \label{eq:inds}
        \mathrm{ind}_{W_n\times S_m}^{W_{n+m}}(\alpha,\beta)\boxtimes \triv = \mathrm{ind}_{W_{n}\times W_m}^{W_{n+m}}\mathrm{ind}_{W_{n}\times S_m}^{W_{n}\times W_m}(\alpha,\beta)\boxtimes \triv.
    \end{equation}
    Now a direct computation shows that
    $$\mathrm{ind}_{S_n}^{W_n}\triv = \bigoplus_i ((i),(n-i)).$$
    Thus equation \ref{eq:inds} is equal to
    \begin{equation}
        \label{eq:inds2}
        \bigoplus_{i=0}^m\mathrm{ind}_{W_{n}\times W_m}^{W_{n+m}}(\alpha,\beta)\boxtimes ((i),(m-i)).
    \end{equation}
    By Proposition \ref{prop:gk} and Pieri's formula, for every constituent $(\gamma,\delta)$ of equation \ref{eq:inds2}, $\gamma$ and $\delta$ have at least as many parts as $\alpha$ and $\beta$ respectively.
    Let us now consider the cases $s=1$ and $s=-1$ separately.
    
    If $s=1$, $(\alpha,\beta)$ is not weakly $s$-spherical, if and only if,  $l(\alpha)\ge 3$ or $l(\beta)\ge 1$. 
    Thus if $(\alpha,\beta)$ is not weakly $s$-spherical then neither is any constitutent $(\gamma,\delta)$ of equation \ref{eq:inds2}.
    Conversely, if $(\alpha,\beta)$ is weakly $s$-spherical, it is of the form $((n-j,j),\emptyset)$ for some $j$, and so by Proposition \ref{prop:gk} and Pieri's formula, equation \ref{eq:inds2} contains $((n+m-j,j),\emptyset)$.
    Thus equation \ref{eq:inds2} has a weakly $s$-spherical constituent.
    
    If $s=-1$, $(\alpha,\beta)$ is not weakly $s$-spherical, if and only if,  $l(\alpha)\ge2$ or $l(\beta)\ge2$. 
    Thus if $(\alpha,\beta)$ is not weakly $s$-spherical then neither is any constitutent $(\gamma,\delta)$ of equation \ref{eq:inds2}.
    Conversely, if $(\alpha,\beta)$ is weakly $s$-spherical, it is of the form $((n-j),(j))$ for some $j$, and so by Proposition \ref{prop:gk} and Pieri's formula, equation \ref{eq:inds2} contains $((n+m-j),(j))$.
    Thus equation \ref{eq:inds2} has a weakly $s$-spherical constituent.
\end{proof}

\begin{lemma}\label{lem:goodparity}
    Theorem \ref{thm:mainSpringer} follows from Proposition \ref{prop:dagger}.
\end{lemma}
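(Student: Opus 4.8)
\textbf{Proof plan for Lemma \ref{lem:goodparity}.}

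The goal is to reduce Theorem \ref{prop:combinatoricsprop} (equivalently, Theorem \ref{thm:mainSpringer} after the chain of reductions already in place) to the special case of partitions of good parity, i.e. $\lambda\in\mathcal{P}^{s_G}_0(N_G)$, which is exactly the content of Proposition \ref{prop:dagger}. The plan is to exploit the block decomposition $\lambda = \lambda^{\#}\cup\lambda^{\#}\cup\bigcup_{\theta}\lambda(\theta)$ from Section \ref{sect:special-piece}, together with the ``bad parts come in pairs'' structure: write $\lambda = \lambda^{gp}\cup\lambda^{bp}\cup\lambda^{bp}$ where $\lambda^{gp}\in\mathcal{P}^{s_G}_0$ is of good parity. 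The key geometric fact I would invoke is that the Springer module $\Sigma(\mathcal{O}^\vee_\lambda,\epsilon)$ is obtained from the good-parity Springer module $\Sigma(\mathcal{O}^\vee_{\lambda^{gp}},\epsilon')$ (for the appropriate rank-$m$ subgroup, $m = |\lambda^{gp}|$, $n_G - m = |\lambda^{bp}|$) by parabolic induction $\ind_{W_m\times S_{*}}^{W_{n_G}}(-\boxtimes\triv)$ — this is the standard compatibility of the Springer correspondence with induction from a Levi of type $B/C\times A$, the bad parts contributing a $\GL$-factor on which the local system is trivial. Under the identification $\widehat{A(\mathcal{O}^\vee_\lambda)}_0 \cong \widehat{A(\mathcal{O}^\vee_{\lambda^{gp}})}_0$ (the component groups depend only on $\lambda^{mf}=(\lambda^{gp})^{mf}$, cf.\ Proposition \ref{prop:comp-comb}), the character $\epsilon$ corresponds to $\epsilon'$, and likewise $A^\dagger(\mathcal{O}^\vee_\lambda)\cong A^\dagger(\mathcal{O}^\vee_{\lambda^{gp}})$ by Proposition \ref{prop:quotient} and the combinatorial description (since $S(\lambda)=\supp(\lambda^{gp})$ and $S^\dagger$ is built from $\lambda^{mf}$ only).

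Granting these two identifications, the lemma follows by a clean two-step argument. First, $\epsilon\in A^\dagger(\mathcal{O}^\vee_\lambda)$ holds if and only if $\epsilon'\in A^\dagger(\mathcal{O}^\vee_{\lambda^{gp}})$, by the combinatorial invariance just described. Second, by Lemma \ref{lem:weaksspherical} applied with $n = m$ and $m_{\mathrm{Lemma}} = n_G - m$, the representation $\Sigma(\mathcal{O}^\vee_\lambda,\epsilon)\cong\ind_{W_m\times S_{n_G-m}}^{W_{n_G}}\Sigma(\mathcal{O}^\vee_{\lambda^{gp}},\epsilon')\boxtimes\triv$ is weakly $s_G$-spherical if and only if $\Sigma(\mathcal{O}^\vee_{\lambda^{gp}},\epsilon')$ is. Combining: $\Sigma(\mathcal{O}^\vee_\lambda,\epsilon)$ is weakly $s_G$-spherical $\iff$ $\Sigma(\mathcal{O}^\vee_{\lambda^{gp}},\epsilon')$ is $\iff$ (by Proposition \ref{prop:dagger}, which handles the good-parity case) $\epsilon'\in A^\dagger(\mathcal{O}^\vee_{\lambda^{gp}})$ $\iff$ $\epsilon\in A^\dagger(\mathcal{O}^\vee_\lambda)$. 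This gives Proposition \ref{prop:dagger} for arbitrary $\lambda$, hence Theorem \ref{prop:combinatoricsprop}, hence (via Corollary \ref{cor:ofsection5}, already established) Theorem \ref{thm:mainSpringer} on the relevant locus; one should also note the nonzeroness hypotheses transport correctly, since $\sigma(\mathcal{O}^\vee_\lambda,\epsilon)\neq 0 \iff \sigma(\mathcal{O}^\vee_{\lambda^{gp}},\epsilon')\neq 0$.

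I expect the main obstacle to be pinning down the induction statement $\Sigma(\mathcal{O}^\vee_\lambda,\epsilon)\cong\ind_{W_m\times S_{n_G-m}}^{W_{n_G}}\Sigma(\mathcal{O}^\vee_{\lambda^{gp}},\epsilon')\boxtimes\triv$ with the correct bookkeeping of the local system. This requires the full (not just top-degree) Springer correspondence's behaviour under induction — which for the \emph{total} cohomology $H^*(\mathcal{B}_{\mathcal{O}^\vee})$, as opposed to the top piece realizing $\sigma$, is a statement about restriction to a Levi and the transitivity of Springer induction; one has to be careful that adding $\lambda^{bp}\cup\lambda^{bp}$ corresponds to an $A$-type (i.e. $\GL$) Levi factor, so that the extra tensor factor is genuinely the trivial $S_{n_G-m}$-representation and not a sign twist, and that the component group and its canonical quotient are literally unchanged (not merely isomorphic) under this operation. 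A secondary, more bookkeeping-level point is to make sure the reduction is iterated/compatible with the rank drop from $N_G$ to the smaller classical group $G'$ used throughout Section \ref{sect:near-temp}, so that the good-parity case to which we appeal is precisely the one proved in Proposition \ref{prop:dagger}. Once the induction identity is in hand, the rest is the two-line equivalence chain above.
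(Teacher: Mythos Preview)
Your approach is essentially the same as the paper's: decompose $\lambda=\lambda^{gp}\cup\lambda^{bp}\cup\lambda^{bp}$, identify the component groups and $A^\dagger$-subgroups for $\lambda$ and $\lambda^{gp}$ via Propositions~\ref{prop:comp-comb} and~\ref{prop:quotient}, express $\Sigma(\mathcal{O}^\vee_\lambda,\epsilon)$ as a parabolic induction of $\Sigma(\mathcal{O}^\vee_{\lambda^{gp}},\epsilon)\boxtimes\triv$, and then use Lemma~\ref{lem:weaksspherical} to transport weak $s_G$-sphericity.

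One correction is needed in your induction statement. The Levi coming from the bad-parity part is not $W_m\times S_{n_G-m}$ but rather $W_{L_1}\times\prod_{i=1}^{l} S_{\lambda_i^{bp}}$, where $\lambda^{bp}=(\lambda_1^{bp},\dots,\lambda_l^{bp})$; the induced representation from $W_m\times S_{|\lambda^{bp}|}$ with trivial on the $S$-factor is \emph{not} $\Sigma(\mathcal{O}^\vee_\lambda,\epsilon)$ in general. The paper's identity is
\[
\ind_{W_{L_1}\times \prod_i S_{\lambda_i^{bp}}}^{W_G}\,\Sigma(\mathcal{O}^\vee_{\lambda^{gp}},\epsilon)\boxtimes\triv \;=\; \Sigma(\mathcal{O}^\vee_\lambda,\epsilon),
\]
and accordingly Lemma~\ref{lem:weaksspherical} is applied \emph{iteratively}, amalgamating one $S_{\lambda_i^{bp}}$ factor at a time. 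As for the obstacle you flagged, the paper dispatches the full-cohomology induction identity by citing \cite[Proposition 3.3.3]{reeder_2001}.
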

\begin{proof}

 Let $\lambda\in \mathcal{P}^{s_G}(N_G)$ be a partition, and $\epsilon\in A(\mathcal{O}_{\lambda}^\vee)_0$ a character.

By Proposition \ref{prop:firstreduction}, we see that Theorem \ref{thm:mainSpringer} would follow, once we show that the inclusion $\epsilon\in A^\dagger(\mathcal{O}^\vee_\lambda)$ is equivalent to the representation $\Sigma(\mathcal O^\vee_\lambda,\epsilon)$ being weakly $s_G$-spherical.

Thus, we are left with the reduction to the case of $\lambda$ of good parity (i.e. in $\mathcal{P}^{s_G}_0(N_G)$).

    
Let $\lambda = \lambda^{gp} \cup \lambda^{bp} \cup \lambda^{bp}$ be the unique decomposition, so that $\lambda^{gp}\in \mathcal{P}^{s_G}_0$ and $\lambda^{bp}\in \mathcal{P}^{-s_G}_0$. We write $\lambda^{bp} = (\lambda_1^{bp},\dots,\lambda_l^{bp})$.

Consider the groups
    $$L_1 = G_{N_G-2|\lambda^{bp}|}, \quad L_2 = \prod_{i=1}^l\mathrm{GL}_{\lambda_i^{bp}}(F), \quad L = L_1\times L_2\;,$$
so that $L$ naturally embeds as a Levi subgroup of $G$, while the Weyl group of $L$ is given as
    $$W_L = W_{L_1}\times \prod_{i=1}^{l}S_{\lambda_i^{bp}}.$$
    Following Propositions \ref{prop:comp-comb} and \ref{prop:quotient}, the character groups $\widehat{A(\mathcal O^\vee_\lambda)}$ and $\widehat{A(\mathcal O^\vee_{\lambda^{gp}})}$ are naturally identified, and under the identification $A^\dagger(\mathcal O^\vee_\lambda)$ matches $A^\dagger(\mathcal O^\vee_{\lambda^{gp}})$.
    
    Viewing $\epsilon$ as a character of both component groups, we have the identity
    $$\mathrm{ind}_{W_{L_1}\times W_{L_2}}^{W_G}\Sigma(\mathcal O^\vee_{\lambda^{gp}},\epsilon)\boxtimes \triv = \Sigma(\mathcal O^\vee_\lambda,\epsilon)$$
    of  \cite[Proposition 3.3.3]{reeder_2001}.
    
    By doing the induction in stages, amalgamating one $S_{\lambda_i^{bp}}$ factor into the $W_{L_1}$ term at a time, and applying Lemma \ref{lem:weaksspherical} at each stage, we see that $\Sigma(\mathcal O^\vee_\lambda,\epsilon)$ is weakly $s_G$-spherical, if and only if, $\Sigma(\mathcal O^\vee_{\lambda^{gp}},\epsilon)$ is weakly $s_G$-spherical.
\end{proof}


\subsection{Algorithmic Green theory}\label{sect:algo-green}
Green theory concerns the decomposition of Springer representations $\Sigma(\mathcal O^\vee_\lambda,\epsilon)$ into irreducible representations.

We would like to recall some of the details of the algorithmic approach to this task that was presented in \cite{waldspurger} and \cite{la}.


\subsubsection{Combinatorics of the Springer correspondence}\label{sect:spring-comb}

Let $\lambda\in \mathcal{P}^{s_G}_0(N_G)$ be a fixed partition of good parity.

Let $\epsilon\in \widehat{A(\mathcal{O}^\vee_{\lambda})}_0$ be a fixed character, for which $\sigma(\mathcal{O}_{\lambda}^\vee, \epsilon)$ is an irreducible $W_G$-representation.

Identifying $W_G\cong W_{n_G}$, we may write 
\[
\sigma(\mathcal{O}_{\lambda}^\vee, \epsilon) \cong (\alpha_{\lambda,\epsilon},\beta_{\lambda,\epsilon})\;,
\]
for partitions $\alpha_{\lambda,\epsilon},\beta_{\lambda,\epsilon}\in \mathcal{P}$ with $|\alpha_{\lambda,\epsilon}|+|\beta_{\lambda,\epsilon}| = n_G$.

Let us now write
\[
\lambda = (\lambda_1 \geq \lambda_2 \geq\ldots \geq \lambda_{\ell(\lambda)}>0)\;,
\]
while harmlessly contrasting the ascending notation of Section \ref{sec:partitions}.

Through the identification of Proposition \ref{prop:comp-comb} we view $\epsilon$ as a boolean function on $S(\lambda)$. 

The character $\epsilon$ now gives rise to the following function
\[
\overline{\epsilon}: \{1,\ldots,\ell(\lambda)\}\;\to \; \{\pm1\}\;,\qquad
\overline{\epsilon}(i) =  (-1)^{\epsilon(\lambda_i) + i-1}\;,
\]

Let us decompose the index set as
\[
\{1\,\ldots, \ell(\lambda)\} = \{e^1_1 < \ldots < e^1_t\} \sqcup \{e^{-1}_1 < \ldots < e^{-1}_{t'}\}\;,
\]
so that $\overline{\epsilon}(e^u_j) = u$, for $u\in \{\pm1\}$ and all possible indices $j$.

Let $\gamma^{\lambda,\epsilon}_1,\ldots, \gamma^{\lambda,\epsilon}_{\ell(\lambda)}\geq0$ be the unique integers, for which
\[
\alpha_{\lambda,\epsilon} = \left(\gamma^{\lambda,\epsilon}_{e^1_1}\geq \ldots \geq \gamma^{\lambda,\epsilon}_{e^1_t}\right)\,,\quad 
\beta_{\lambda,\epsilon} = \left(\gamma^{\lambda,\epsilon}_{e^{-1}_1}\geq \ldots \geq \gamma^{\lambda,\epsilon}_{e^{-1}_{t'}}\right)
\]
holds.

Indeed, such a parameterization of $(\alpha_{\lambda,\epsilon},\beta_{\lambda,\epsilon})$ is possible and is explicit in the standard algorithms for the combinatorial form of the Springer correspondence, as presented for example in \cite[Section 13.3]{carter}. We explicate it further in Section \ref{sect:zeros}.

\subsubsection{Shoji--Waldspurger--La tableaux}

The following is a restatement, using modified notation, of the construction outlined in \cite[Section 1.4]{waldspurger} and \cite[Section 1.2]{la}.

Our use of tableaux is an iterative implementation of the recursive form of the algorithm that appears in the cited references. 

For an integer $\ell\geq1$, we say that $T = (d_{i,j})$ is an \textit{$\ell$-tableau}, when indices $r_1\geq \ldots \geq r_c\geq 1$ are given so that $(i,j) \mapsto d_{i,j}$ is a bijection between the sets
\[
\{(i,j)\::\: 1\leq i \leq c,\; 1\leq j\leq r_i\} \,\to\, \{1,\ldots,\ell\}\;.
\]

For pairs of integers we assume a lexicographic order, so that $(i,1) < (i,2) < \ldots < (i+1,1)$.


For $\lambda,\epsilon$ as before and each choice $\Delta,\tau$ of positive integers, we construct the set $\mathcal{R}(\lambda,\epsilon,\Delta,\tau)$ of $\ell(\lambda)$-tableaux.

The entries of each $T = (d_{i,j})\in \mathcal{R}(\lambda,\epsilon,\Delta,\tau)$ are produced algorithmically. Assuming $d_{i',j'}$ has been defined for all $(i',j')< (i,j)$, we define $d_{i,j}$ as follows:

For $i\geq 1$ and $j\geq 2$, we write $u = -\overline{\epsilon}( d_{i,j-1})$, and set
\[
d_{i,j} = \min\{ e^u_k \::\: 1\leq k,\mbox{s.t. } d_{i,j-1}< e^u_k \neq d_{i',j'},\mbox{ for any }(i',j')< (i,j)\}\;,
\]
if exists ($r_i = j-1$ is set, if not).

For $i\geq1$ and $j=1$ a choice is made. For each sign $u\in \{\pm1\}$ we write
\[
k^u = \min\{ e^u_k \::\: 1\leq k,\mbox{s.t. } e^u_k \neq d_{i',j'},\mbox{ for any }(i',j')< (i,1)\}\;,
\]
when exists. Then, define $d_{i,1} = k^u$, for any sign $u$, for which either the inequality 
\begin{equation}\label{eq:cond-walds}
\gamma_{k^u}^{\lambda,\epsilon}\geq  -u\left( \Delta -\tau \sum_{m=1}^{i-1} \overline{\epsilon}( d_{m,1})\right)
\end{equation}
is satisfied, or $k^{-u}$ does not exist.

\subsubsection{Distinguished constituents}
For each $\ell(\lambda)$-tableau $T = (d_{i,j})\in \mathcal{R}(\lambda,\epsilon,\Delta,\tau)$, we write the tuple of integers $(s^T_1,\ldots, s^T_c)$
given by 
\[
s^T_i = \sum_{j=1}^{r_i}  \gamma^{\lambda,\epsilon}_{d_{i,j}}\,, \quad i=1,\ldots, c\;.
\]
The partitions 
\[
\alpha_T = \bigcup_{1\leq i\leq c,\; \overline{\epsilon}( d_{i,1})=1} (s^T_i)\;,\qquad \beta_T = \bigcup_{1\leq i\leq c,\; \overline{\epsilon}( d_{i,1})=-1} (s^T_i)\;,
\]
are constructed, so that $|\alpha_T| + |\beta_T| = n_G$ still holds.

For each choice of $\Delta,\tau$, we denote by
\[
P(\lambda,\epsilon, \Delta,\tau) = \{(\alpha_T,\beta_T)\::\: T\in \mathcal{R}(\lambda,\epsilon,\Delta,\tau)\} \subset \irr(W_{n_G})
\]
the set of representations that are produced in this manner.

\begin{theorem}\label{thm:waldsla}
Let $\lambda\in \mathcal{P}^{s_G}_0(N_G)$ be a good parity partition.

Let $\epsilon\in \widehat{A(\mathcal{O}^\vee_{\lambda})}_0$ be a character for which $\sigma(\mathcal{O}_{\lambda}^\vee, \epsilon)$ is an irreducible representation (Springer-type). Let $\Delta,\tau$ be a choice of positive integers.

Then, for each $(\alpha,\beta)\in P(\lambda,\epsilon, \Delta,\tau)$, we have
\[
\left\langle (\alpha,\beta)\,,\, \Sigma(\mathcal{O}_{\lambda}^\vee, \epsilon) \right\rangle = 1\;.
\]

\end{theorem}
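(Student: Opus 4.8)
\textbf{Proof proposal for Theorem \ref{thm:waldsla}.}

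The plan is to reduce this statement to the deep special-value results for double Kostka polynomials established by Waldspurger in type $C$ and extended by La to types $B$ and $D$, which is precisely what the algorithmic calculus of Sections \ref{sect:spring-comb}--\ref{sect:zeros} is designed to encode. The key point is that the multiplicity $\langle (\alpha,\beta), \Sigma(\mathcal{O}^\vee_\lambda,\epsilon)\rangle$ is, by the theory of Springer representations together with the graded structure on $H^\ast(\mathcal{B}_{\mathcal{O}^\vee_\lambda})$, the evaluation at a specific parameter of a double Kostka polynomial $\widetilde{K}_{(\alpha,\beta),(\alpha_{\lambda,\epsilon},\beta_{\lambda,\epsilon})}$ (or its suitably normalized variant), the second index being the bipartition attached to the top-degree Springer constituent $\sigma(\mathcal{O}^\vee_\lambda,\epsilon)$. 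First I would make this identification precise: unwind the definition of $\Sigma(\mathcal{O}^\vee_\lambda,\epsilon) = \mathrm{Hom}_{A(\mathcal{O}^\vee_\lambda)}(\epsilon, H^\ast(\mathcal{B}_{\mathcal{O}^\vee_\lambda}))$, pass to the graded multiplicity, and match it with Shoji's double Kostka polynomial via the combinatorial form of the Springer correspondence recalled from \cite[Section 13.3]{carter}; this is where the parameterization $(\alpha_{\lambda,\epsilon},\beta_{\lambda,\epsilon})$ and the auxiliary functions $\overline{\epsilon}$, $\gamma^{\lambda,\epsilon}_\bullet$ enter.

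Next I would invoke the Waldspurger--La theorem in the form relevant here: for the prescribed choice of parameters $(\Delta,\tau)$, the bipartitions $(\alpha_T,\beta_T)$ produced by the tableaux $T \in \mathcal{R}(\lambda,\epsilon,\Delta,\tau)$ are exactly those on which the corresponding double Kostka polynomial takes the value $1$ at the specialization in question. In other words, the set $P(\lambda,\epsilon,\Delta,\tau)$ is, by construction, the image under the Shoji--Waldspurger--La algorithm of the "multiplicity-one locus" of the Green function evaluated at the relevant point. The content of the cited results is that the recursive procedure — here repackaged as the iterative tableau-filling with the branching condition \eqref{eq:cond-walds} — correctly computes these special values; so the bulk of my write-up would consist of carefully checking that my iterative reformulation of the tableaux $(d_{i,j})$, the statistics $s^T_i$, and the assembly of $\alpha_T,\beta_T$ faithfully reproduces the recursion in \cite[Section 1.4]{waldspurger} and \cite[Section 1.2]{la}, so that Theorem \ref{thm:waldsla} is a direct transcription of their output.

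The main obstacle I anticipate is bookkeeping rather than conceptual: verifying that the lexicographic conventions, the sign function $\overline{\epsilon}(i) = (-1)^{\epsilon(\lambda_i)+i-1}$, and the split of $\{1,\dots,\ell(\lambda)\}$ into the $\overline{\epsilon}$-fibers $\{e^u_j\}$ align with the orderings used by Waldspurger and La (whose conventions differ from each other and from those of \cite{carter}), and that the branching inequality \eqref{eq:cond-walds} with its running sum $\sum_{m=1}^{i-1}\overline{\epsilon}(d_{m,1})$ matches their "row-choice" criterion; a sign error or an off-by-one here would break the identification. I would handle this by fixing once and for all the dictionary between the three sets of conventions in Section \ref{sect:zeros}, checking it on a small example (e.g. a two-row $\lambda$), and then arguing by induction on the number of rows $c$ of the tableau that the partial bipartitions built from $(d_{i,j})_{i\le c_0}$ agree with the output of $c_0$ steps of the recursion. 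Once the dictionary is in place, the equality $\langle(\alpha,\beta),\Sigma(\mathcal{O}^\vee_\lambda,\epsilon)\rangle = 1$ for $(\alpha,\beta)\in P(\lambda,\epsilon,\Delta,\tau)$ follows immediately from the Waldspurger--La special-value theorem.
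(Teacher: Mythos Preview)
Your proposal is correct and takes essentially the same approach as the paper: both reduce the statement to the special-value results of Waldspurger (type $C$) and La (type $B$), with the paper's proof being a bare citation of \cite[Propositions 3.1(iii), 4.2]{waldspurger} and \cite[Propositions 3.9, 3.10]{la}. The bookkeeping you worry about is dispatched in the paper by the remark immediately following the theorem, which identifies $P(\lambda,\epsilon,\Delta,\tau)$ directly with the set $P_{\Delta,0,\tau}(\alpha_{\lambda,\epsilon},\beta_{\lambda,\epsilon},<)$ of \cite{waldspurger,la}, so no separate inductive verification of the tableau algorithm is carried out.
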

\begin{proof}
Follows from \cite[Proposition 4.2]{waldspurger} and \cite[Proposition 3.1(iii)]{waldspurger} in the case of $s_G=-1$. The analogous results for $s_G=1$ are stated in \cite[Propositions 3.9,3.10]{la}.
\end{proof}

\begin{remark}

Our set $P(\lambda,\epsilon, \Delta,\tau)$ is denoted as $P_{\Delta,0,\tau}(\alpha_{\lambda,\epsilon},\beta_{\lambda,\epsilon}, <)$ in \cite{waldspurger} and \cite{la}, where $<$ is the specific order on the parts of $(\alpha_{\lambda,\epsilon},\beta_{\lambda,\epsilon})$ that is obtained through the Springer correspondence.

\end{remark}

\subsubsection{On maximality}
The analysis in \cite{waldspurger} and \cite{la} gave another characterization of the irreducible constituents of Springer representations that occur in the setting of Theorem \ref{thm:waldsla}. In particular, it provided sufficient conditions under which the vanishing of the multiplicity $\left\langle (\alpha,\beta)\,,\, \Sigma(\mathcal{O}_{\lambda}^\vee, \epsilon) \right\rangle$ may be determined by an inclusion $(\alpha,\beta)\in P(\lambda,\epsilon, \Delta,\tau)$.

To that aim we would like to define certain families of partial orders on the set $\irr(W_n)$.

Let us extend the domain of partitions to the set $\mathcal{P}\subset \overline{\mathcal{P}}$, so that each $\lambda\in \overline{\mathcal{P}}$ is given by an infinite sequence
\[
\lambda = (\lambda_1 \geq \lambda_2 \geq \lambda_3 \geq\ldots)
\]
of integers. 

The subset $\mathcal{P}$ of partitions is then naturally viewed as sequences with a constant tail of $0$'s.

We say that $\lambda \leq \mu$ holds, for $\lambda = (\lambda_i)_{i=1}^\infty,\mu=(\mu_i)_{i=1}^\infty\in \overline{\mathcal{P}}$, whenever
\[
\sum_{i=1}^k \lambda_i \leq \sum_{i=1}^k \mu_i
\]
holds, for all $1\leq k$. 

Restricting the resulting partial order to $\mathcal{P}(N) \subset \overline{\mathcal{P}}$, we see the familiar dominance order on partitions.

For $\lambda,\mu\in \overline{\mathcal{P}}$, the operation $\lambda\cup \mu\in \overline{\mathcal{P}}$ is similarly defined as in the case of partitions.

For $\lambda = (\lambda_i)_{i=1}^\infty\in \overline{\mathcal{P}}$ and a choice of non-negative integers $\Delta, \tau$, we define
\[
R_{\Delta,s}(\lambda) = (\Delta + \lambda_i - \tau(i-1))_{i=1}^\infty\in \overline{\mathcal{P}}\;.
\]

For a pair $\alpha,\beta\in \overline{\mathcal{P}}$, we write
\[
\Lambda_{\Delta,\tau}(\alpha,\beta): = R_{\Delta,\tau}(\alpha) \cup R_{0,\tau}(\beta)\in \overline{\mathcal{P}}\;.
\]

Now, we impose a partial order $\leq_{\Delta,\tau}$ on $\irr (W_n)$ by setting
\[
(\alpha_1,\beta_1) \leq_{\Delta,\tau} (\alpha_2,\beta_2)\; \Leftrightarrow \; \Lambda_{\Delta,\tau}(\alpha_1,\beta_1) \leq \Lambda_{\Delta,\tau}(\alpha_2,\beta_2)
\]
on each pair of bi-partitions $(\alpha_1,\beta_1),(\alpha_2,\beta_2)\in \irr(W_n)$.

\begin{theorem}\label{thm:maximal}
For $\lambda,\epsilon, \Delta,\tau$ as in Theorem \ref{thm:waldsla}, the set $P(\lambda,\epsilon, \Delta,\tau)$ equals the set of maximal elements in 
\[
\left\{ \sigma \in \irr(W_n)\;:\; \left\langle \sigma\,,\, \Sigma(\mathcal{O}_{\lambda}^\vee, \epsilon) \right\rangle \neq 0 \right\}
\]
with respect to the order $\leq_{\Delta,\tau}$.

\end{theorem}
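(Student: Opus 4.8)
The statement asserts that the explicit set $P(\lambda,\epsilon,\Delta,\tau)$ produced by the Shoji--Waldspurger--La algorithm is precisely the set of $\leq_{\Delta,\tau}$-maximal elements among the constituents of the Springer representation $\Sigma(\mathcal{O}_\lambda^\vee,\epsilon)$. The reference statement in \cite{waldspurger} for $s_G=-1$ and in \cite{la} for $s_G=1$ is phrased in terms of the double Kostka polynomials $\widetilde{K}_{(\alpha,\beta),(\alpha_{\lambda,\epsilon},\beta_{\lambda,\epsilon})}(t)$: the claim there is that these polynomials are monomials (or zero), that their degree-zero part detects membership in $P(\lambda,\epsilon,\Delta,\tau)$, and that the combinatorial set $P(\lambda,\epsilon,\Delta,\tau)$ records exactly the maximal bi-partitions in the $t$-support. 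The plan is therefore to translate this into our normalization. First I would recall (from \cite[Section 13.3]{carter}, as invoked in Section~\ref{sect:spring-comb}) that the multiplicity $\langle (\alpha,\beta),\Sigma(\mathcal{O}^\vee_\lambda,\epsilon)\rangle$ equals $\widetilde{K}_{(\alpha,\beta),(\alpha_{\lambda,\epsilon},\beta_{\lambda,\epsilon})}(1)$, where $(\alpha_{\lambda,\epsilon},\beta_{\lambda,\epsilon})=\sigma(\mathcal{O}^\vee_\lambda,\epsilon)$ is the Springer-type irreducible at the top of the fibre (this is the input which requires $\epsilon$ to be of Springer type, i.e. $\sigma(\mathcal{O}^\vee_\lambda,\epsilon)\neq 0$). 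So a constituent condition $\langle (\alpha,\beta),\Sigma(\mathcal{O}^\vee_\lambda,\epsilon)\rangle\neq 0$ is equivalent to $(\alpha,\beta)$ lying in the $t$-support of the corresponding double Kostka polynomial.

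\textbf{Matching the combinatorial data.} The second step is to identify the symbols-and-partitions bookkeeping. Waldspurger and La parameterize everything through \emph{symbols} attached to bi-partitions (the $\Lambda_{\Delta,\tau}$ construction of this section is exactly the ``$(\Delta,0,\tau)$-symbol'' of those papers, up to the flip of conventions noted in the footnote to Proposition~\ref{prop:gk}), and their partial order ``$\subseteq$ on symbols'' is our $\leq_{\Delta,\tau}$. Here I would carefully check that the order I imposed — comparing $R_{\Delta,\tau}(\alpha)\cup R_{0,\tau}(\beta)$ componentwise in the dominance order on $\overline{\mathcal{P}}$ — agrees, on the nose, with the order on symbols used in \cite[Section~1.4]{waldspurger} and \cite[Section~1.2]{la}; and that the iterative tableau construction of Section~\ref{sect:algo-green} reproduces their recursive algorithm (this is the content of the Remark following Theorem~\ref{thm:waldsla}, where $P(\lambda,\epsilon,\Delta,\tau)$ is matched with their $P_{\Delta,0,\tau}(\alpha_{\lambda,\epsilon},\beta_{\lambda,\epsilon},<)$, the order $<$ being the Springer order on parts). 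Granting this dictionary, the theorem of Waldspurger/La says two things at once: (a) $P(\lambda,\epsilon,\Delta,\tau)$ consists of constituents — which is exactly our Theorem~\ref{thm:waldsla}, already cited — and (b) every constituent is $\leq_{\Delta,\tau}$-dominated by some element of $P(\lambda,\epsilon,\Delta,\tau)$, and conversely every element of $P(\lambda,\epsilon,\Delta,\tau)$ is $\leq_{\Delta,\tau}$-maximal among constituents. Combining (a) and (b) gives precisely the asserted equality of sets.

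\textbf{Writing the argument.} Concretely the proof would run: (i) express $\langle\sigma,\Sigma(\mathcal{O}^\vee_\lambda,\epsilon)\rangle$ as $\widetilde K(1)$ of a double Kostka polynomial via the Springer correspondence; (ii) invoke the monomiality result (\cite[Proposition 3.1(iii)]{waldspurger}, resp. \cite[Proposition 3.9]{la}), which implies $\langle\sigma,\Sigma\rangle\in\{0,1\}$ and that the $t$-support is a single bi-partition of fixed degree determined by the symbol; (iii) invoke the maximality characterization (\cite[Proposition 4.2]{waldspurger}, resp. \cite[Proposition 3.10]{la}) stating that the set of bi-partitions whose double Kostka polynomial with respect to $(\alpha_{\lambda,\epsilon},\beta_{\lambda,\epsilon})$ is nonzero has $P(\lambda,\epsilon,\Delta,\tau)$ as its set of maxima in the symbol order; (iv) transport through the conventions flip and conclude. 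The main obstacle I anticipate is purely bookkeeping rather than conceptual: pinning down that the partial order $\leq_{\Delta,\tau}$ as I have defined it via $\overline{\mathcal{P}}$-sequences coincides with the symbol-inclusion order of the cited works — there are several competing normalizations (charge vs.\ cocharge, the $\Delta$ vs.\ $\Delta+1$ shift, the role of the ``defect'', and the left/right flip of $(\alpha,\beta)$), and it is essential that the same normalization is used in Theorem~\ref{thm:waldsla} and in the maximality statement, so that cited results (a) and (b) refer to the \emph{same} set $P(\lambda,\epsilon,\Delta,\tau)$. Once that compatibility is verified, the proof is a two-line citation as in the proof of Theorem~\ref{thm:waldsla}.

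\begin{proof}
Through the combinatorial form of the Springer correspondence (as in \cite[Section 13.3]{carter}, recalled in Section~\ref{sect:spring-comb}), for any $\sigma=(\alpha,\beta)\in\irr(W_{n_G})$ the multiplicity $\left\langle (\alpha,\beta),\Sigma(\mathcal{O}^\vee_\lambda,\epsilon)\right\rangle$ equals the value at $t=1$ of the double Kostka polynomial attached to the pair $(\alpha,\beta)$ and $(\alpha_{\lambda,\epsilon},\beta_{\lambda,\epsilon})=\sigma(\mathcal{O}^\vee_\lambda,\epsilon)$, with respect to the Springer order $<$ on the parts. In particular $(\alpha,\beta)$ is a constituent of $\Sigma(\mathcal{O}^\vee_\lambda,\epsilon)$ if and only if this polynomial is nonzero. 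By \cite[Proposition 3.1(iii), Proposition 4.2]{waldspurger} in the case $s_G=-1$, and \cite[Propositions 3.9, 3.10]{la} in the case $s_G=1$, the set of bi-partitions for which this polynomial is nonzero admits, as its set of maximal elements with respect to the symbol-inclusion order, precisely the set $P_{\Delta,0,\tau}(\alpha_{\lambda,\epsilon},\beta_{\lambda,\epsilon},<)$, which coincides with our $P(\lambda,\epsilon,\Delta,\tau)$ under the conventions of the Remark following Theorem~\ref{thm:waldsla} and the footnote to Proposition~\ref{prop:gk}. Finally, the symbol-inclusion order corresponds, under the assignment $(\alpha,\beta)\mapsto\Lambda_{\Delta,\tau}(\alpha,\beta)=R_{\Delta,\tau}(\alpha)\cup R_{0,\tau}(\beta)$, to the order $\leq_{\Delta,\tau}$ on $\irr(W_{n_G})$ defined above. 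Combining these facts, $P(\lambda,\epsilon,\Delta,\tau)$ is exactly the set of $\leq_{\Delta,\tau}$-maximal elements in $\left\{\sigma\in\irr(W_{n_G}):\left\langle\sigma,\Sigma(\mathcal{O}^\vee_\lambda,\epsilon)\right\rangle\neq 0\right\}$.
\end{proof}
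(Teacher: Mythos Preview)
Your approach is essentially the same as the paper's: both treat the theorem as a direct translation of results already proved in \cite{waldspurger} and \cite{la}. The paper's own proof is a single sentence citing \cite[Proposition~3.1(ii)]{waldspurger} together with \cite[Lemme~1.6]{waldspurger} (and the analogous statements in \cite{la}).

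One point to correct: you cite \cite[Proposition~3.1(iii), Proposition~4.2]{waldspurger}, but those are the references the paper uses for Theorem~\ref{thm:waldsla} (the multiplicity-one statement), not for the maximality statement. For Theorem~\ref{thm:maximal} the relevant inputs are \cite[Proposition~3.1(ii)]{waldspurger}, which identifies $P_{\Delta,0,\tau}$ with the set of maximal elements, and \cite[Lemme~1.6]{waldspurger}, which matches the symbol order with $\leq_{\Delta,\tau}$. Your detour through double Kostka polynomials and the $t=1$ evaluation is unnecessary here, since the cited propositions are already phrased in terms of constituents and the symbol order; the extra layer does not cause an error, but it obscures that the statement is a direct restatement rather than something requiring unpacking.
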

\begin{proof}
This is \cite[Proposition 3.1(ii)]{waldspurger} combined with \cite[Lemme 1.6]{waldspurger} and the analogous treatment of \cite{la}.
\end{proof}

\subsection{Combinatorics of the canonical character subgroup}

Let $\lambda\in \mathcal{P}^{s_G}_0(N_G)$ (a partition of good parity) and $\epsilon\in\widehat{A(\mathcal O^\vee_\lambda)}_0$ be fixed, along with all additional notation defined in the previous section.

For any $a\in S(\lambda)$, we let $1\leq i(a)\leq \ell(\lambda)$ be the minimal index with $\lambda_{i(a)} = a$. 

We write 
\[
X_{\lambda} = \{i(a)\}_{a\in S(\lambda)}\subset \{1,\ldots,\ell(\lambda)\}
\]
for the resulting set of indices.

We also denote
\[
X_{\lambda,\epsilon} = \{i\in X_{\lambda}\::\: \epsilon(\lambda_i)= 1 -\epsilon(\lambda_{i-1})\}\;,
\]
where $(-1)^{\epsilon(\lambda_0)} = 1$ is assumed.

Let us write
\[
X_{\lambda} = X_{\lambda}^1 \sqcup X_{\lambda}^{-1}\;,
\]
with $X^1_{\lambda}$ denoting the even indices and $X^{-1}_{\lambda}$ the odd ones.


Let us denote the subsets
\[
\widetilde{S(\lambda)}_{\max} = \{\theta_{\max}\}_{\theta\in S^{\dagger}(\lambda)}\,,\;\widetilde{S(\lambda)}_{\min} = \{\theta_{\min}\}_{\theta\in S^{\dagger}(\lambda)}\;\subset\, S(\lambda) \;.
\]

We define $S(\lambda)_{\max}$ as $\widetilde{S(\lambda)}_{\max}$ with the possible exclusion of $\lambda_1\not\in S(\lambda)_{\max}$, in the case of $s_G=1$ and $\lambda_1\in S_0(\lambda)$.

Similarly, $S(\lambda)_{\min}$ is defined as $\widetilde{S(\lambda)}_{\min}$ with the possible exclusion $\lambda_{\ell(\lambda)}\not\in S(\lambda)_{\min}$ in the case of $s_G=-1$ and $\lambda_{\ell(\lambda)} = \theta_{\max}\in S_0(\lambda)$.


\begin{lemma}\label{lem:walds-dagger}
\begin{enumerate}
    \item\label{it:lem-walds1} 
An equality

\[
X^{s_G}_{\lambda} = \{i\in X_{\lambda}\::\: \lambda_i\in S(\lambda)_{\max} \}
\]

holds.
    
\item\label{it:lem-walds2}
The inclusion $\epsilon\in A^{\dagger}(\mathcal{O}^\vee_{\lambda})$ holds, if and only if, we have $X_{\lambda,\epsilon}\subset X^{s_G}_{\lambda} $.

\end{enumerate}
\end{lemma}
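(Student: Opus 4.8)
The statement to be proved (Lemma~\ref{lem:walds-dagger}) is purely combinatorial, relating the index sets $X_\lambda^{s_G}$, $X_{\lambda,\epsilon}$ attached to a good-parity partition $\lambda$ to the block decomposition of $\lambda$ (encoded in $S(\lambda)_{\max}$, $S(\lambda)_{\min}$, $S^\dagger(\lambda)$) and to the canonical subgroup $A^\dagger(\mathcal{O}^\vee_\lambda) = P^\dagger(\lambda)_0$ of Proposition~\ref{prop:quotient}. The plan is to treat the two parts separately, with part~\eqref{it:lem-walds1} feeding into part~\eqref{it:lem-walds2}.

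\textbf{Part \eqref{it:lem-walds1}.} I would unwind the definitions of the equivalence relation $\sim$ on $S(\lambda)$ from Section~\ref{sect:quot} and of $S(\lambda)_{\max}$. Recall that for $\lambda\in\mathcal{P}^{s_G}_0(N_G)$ we have $\lambda = \lambda^{gp}$ (good parity), so $\lambda^{mf} = (\mu_{\alpha_1}\ldots\mu_{\alpha_r})$ and the $\sim$-classes are the ``runs'' of $S(\lambda)$ between consecutive elements of $\lambda^{mf}$, as prescribed case-by-case on the sign $s_G$. The index $i\in X_\lambda$ is the first occurrence of $\lambda_i$ in the descending list; its parity $(-1)^{i-1}$ is governed by $\sum_{c'>\lambda_i}m(c',\lambda)$, i.e.\ by the total multiplicity of all parts strictly larger than $\lambda_i$. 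I would show that $\lambda_i\in S(\lambda)_{\max}$ (equivalently $\lambda_i = \theta_{\max}$ for some block $\theta$, with the stated exceptional exclusion) is \emph{equivalent} to that cumulative multiplicity having the right parity making $i\in X_\lambda^{s_G}$. Concretely: in a special good-parity partition each block $\lambda(\theta)$ lies in $\mathcal{P}_0^{\pm1}$, and the parity of the number of parts above a given block is determined by the type and the number of blocks above; the element $\theta_{\max}$ is precisely the place where this cumulative parity flips. This is a finite case analysis on $s_G\in\{\pm1\}$ and on whether $\lambda_1$ (resp.\ $\lambda_{\ell(\lambda)}$) is the exceptional extremal part — the exclusions in the definitions of $S(\lambda)_{\max}$, $S(\lambda)_{\min}$ are exactly tuned to make the parity bookkeeping come out right, so I would verify those boundary cases explicitly.

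\textbf{Part \eqref{it:lem-walds2}.} By Proposition~\ref{prop:quotient} (with Proposition~\ref{prop:comp-comb}), $\epsilon\in A^\dagger(\mathcal{O}^\vee_\lambda)$ iff the boolean function $B_\epsilon\subset S(\lambda)$ lies in $P^\dagger(\lambda)_0$, i.e.\ $B_\epsilon$ is \emph{constant on each $\sim$-class}. Now $X_{\lambda,\epsilon} = \{i\in X_\lambda : \epsilon(\lambda_i)\neq\epsilon(\lambda_{i-1})\}$ records exactly the places where the value of $B_\epsilon$ (read off along the support $S(\lambda)$, indexed by first occurrences) changes as one passes from one distinct part to the next. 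If $B_\epsilon$ is constant on $\sim$-classes, such a change can only occur at the boundary of two $\sim$-classes, i.e.\ when $\lambda_{i-1}$ is the $\max$ of its block and $\lambda_i$ is the $\min$ of the next block — which by part~\eqref{it:lem-walds1} forces $i\in X_\lambda^{s_G}$ (one has to match up $S(\lambda)_{\min}$ of the upper block with $S(\lambda)_{\max}$ adjacency, using that consecutive distinct parts in $S(\lambda)$ straddle at most one block boundary). Conversely, if $X_{\lambda,\epsilon}\subset X_\lambda^{s_G}$, then every value-change of $B_\epsilon$ sits at a block boundary, hence $B_\epsilon$ is constant within each block, so $\epsilon\in P^\dagger(\lambda)_0$. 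I expect the parity constraint (membership in the ``$_0$'' subgroups $P(\lambda)_0$, $P^\dagger(\lambda)_0$) to be automatically compatible here because $\epsilon$ is assumed in $\widehat{A(\mathcal{O}^\vee_\lambda)}_0 = P(\lambda)_0$ from the outset; I would double-check this, particularly in the symplectic case where $S_0(\lambda)$ has odd cardinality and the relevant subgroups are $P(\lambda)'$ versus $P(\lambda)_0$.

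\textbf{Main obstacle.} The bookkeeping in part~\eqref{it:lem-walds1} is the delicate point: correctly tracking the parity of cumulative multiplicities through the alternating definition of $\sim$ (which runs ``downward from the top'' when $s_G=1$ but ``upward from the bottom'' when $s_G=-1$), and getting the two exceptional boundary exclusions in $S(\lambda)_{\max}$ and $S(\lambda)_{\min}$ to align with the parity of $\ell(\lambda)$ and with whether the extremal part lies in $S_0(\lambda)$. Once part~\eqref{it:lem-walds1} is nailed down, part~\eqref{it:lem-walds2} is a relatively clean ``value changes only at block boundaries'' argument. I would organize the whole proof around a single running picture of $S(\lambda)$ partitioned into $\sim$-classes, with the cumulative-multiplicity parity marked at each step, so that both parts read off from the same diagram.
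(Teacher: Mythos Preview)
Your approach matches the paper's: both treat the lemma as a direct unwinding of the definitions of $S^\dagger(\lambda)$, $P^\dagger(\lambda)_0$, and the identification of Proposition~\ref{prop:quotient}. The paper's own proof is barely three sentences.

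There is one concrete point you have not pinned down, and it is precisely the one the paper singles out. In your forward direction of part~\eqref{it:lem-walds2} (``$\epsilon\in A^\dagger\Rightarrow X_{\lambda,\epsilon}\subset X_\lambda^{s_G}$''), your argument says: if $B_\epsilon$ is constant on $\sim$-classes, then a value-change at $i\in X_\lambda$ can only occur at a boundary between two blocks, hence $\lambda_i\in S(\lambda)_{\max}$, hence $i\in X_\lambda^{s_G}$ by part~\eqref{it:lem-walds1}. But the index $i=1$ is \emph{not} a boundary between two blocks --- there is no block above $\lambda_1$ --- and $1\in X_{\lambda,\epsilon}$ simply means $\epsilon(\lambda_1)=1$ (via the convention $\epsilon(\lambda_0)=0$). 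When $s_G=1$ the index $1$ is odd, so $1\notin X^{1}_\lambda$; thus you must separately verify that $\epsilon\in A^\dagger(\mathcal O^\vee_\lambda)$ forces $\epsilon(\lambda_1)=0$. This is exactly Lemma~\ref{lem:head}. Your closing remark (``double-check the symplectic case'') is in the right direction but does not isolate this; once you invoke Lemma~\ref{lem:head} here, the argument is complete. (Minor aside: in your block-boundary sentence you have the roles of $\min$ and $\max$ swapped --- in the descending list, $\lambda_i$ is the $\max$ of the lower block and $\lambda_{i-1}$ the $\min$ of the upper one.)
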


\begin{proof}
    Most of the statement follows directly from the definition of $S^{\dagger}(\lambda)$, of $P^{\dagger}(\lambda)_0$ (Section \ref{sect:special-piece}) and its identification with $A^{\dagger}(\mathcal{O}^\vee_{\lambda})$ (Proposition \ref{prop:quotient}).

One issue that remains to be clarified for the proof of \eqref{it:lem-walds2} is the behavior of the index $1\not\in X^{1}_{\lambda}$ in the case of $s_G=1$.

Indeed, in that case Lemma \ref{lem:head} implies that $\epsilon(\lambda_1)=0$, whenever $\epsilon\in A^{\dagger}(\mathcal{O}^\vee_{\lambda})_0$. Consequently, the exclusion $1\not\in X_{\lambda,\epsilon}$ becomes part of the defining condition for  $\epsilon\in A^{\dagger}(\mathcal{O}^\vee_{\lambda})$.

\end{proof}

\subsubsection{Zero parts}\label{sect:zeros}

We would like to strengthen the sufficient condition for an inclusion $\epsilon\in A^{\dagger}(\mathcal{O}^\vee_{\lambda})$ that was given in Lemma \ref{lem:walds-dagger}\eqref{it:lem-walds2}.

To that aim, we need to explicate some ingredients that provide the translation of the labelling $\sigma(\mathcal{O}^\vee_{\lambda},\epsilon)\in \irr(W_{n_G})$ into the bi-partition parameterization.

In particular, we are now interested in characterizing occurrences of $\gamma_i^{\lambda,\epsilon}=0$ in the sequence $\{\gamma_i^{\lambda,\epsilon}\}_{i=1}^{\ell(\lambda)}$ produced in Section \ref{sect:spring-comb}.

We recall that $\lambda\in \mathcal{P}^{s_G}_0(N_G)$ is a partition of good parity.

We define the relative defect
\[
D_{\epsilon}(i) = \sum_{\lambda_i>a\in S(\lambda)_{\max}} \epsilon(a) - \sum_{\lambda_i>a\in S(\lambda)_{\min}} \epsilon(a)\;,
\]
for all $0\leq i\leq \ell(\lambda)$, assuming $\lambda_0 > \lambda_1$.

Our terminology differs slightly from the notion of defect for $\epsilon$ as appears in various sources. In particular, the value of $D_\epsilon(0)$ measures the difference between the defect of the symbol associated with the trivial local system and the defect of the symbol associated to $\epsilon$.

Thus by \cite[\S 12-13]{Lusztig1984}, the non-vanishing of $\sigma(\mathcal{O}^\vee_{\lambda},\epsilon)$ (Springer-type) is equivalent to $D_{\epsilon}(0) = 0$.



We recall the formulas that produce those integers, following \cite[Section 13.3]{carter}.

We first denote
\[
\widetilde{\gamma}^{\lambda}_i =  \left\{ \begin{array}{ll} \lceil \lambda_i /2 \rceil &  i\mbox{ even,} \\
\lfloor \lambda_i /2 \rfloor &  i\mbox{ odd,}
\end{array} \right.
\]
for all $1\leq i \leq \ell(\lambda)$. In these terms, we have
\begin{equation}\label{eq:formula-springer}
\gamma^{\lambda,\epsilon}_i =   \left\{ \begin{array}{ll}   \widetilde{\gamma}^{\lambda}_i -2s_G\overline{\epsilon}(i)D_{\epsilon}(i)+ (-1)^i\epsilon(\lambda_i)m_G &  \lambda_i\not\in S(\lambda)_{\min} \\  
\widetilde{\gamma}^{\lambda}_i -2s_G\overline{\epsilon}(i)D_{\epsilon}(i) + (-1)^i\epsilon(\lambda_i)m'_G  &  \lambda_i\in  S(\lambda)_{\min}
\end{array} \right.\;,
\end{equation}
where 
\[
(m_G,m'_G) = \left\{\begin{array}{ll}
  (-2,0)    & s_G=1 \\
   (1,-1)  & s_G=-1 
\end{array}
\right.\;.
\]

\begin{lemma}\label{lem:zeroos}
    Suppose that $1\leq i\leq \ell(\lambda)$ is an index, for which $D_{\epsilon}(i)\in \{1, 0,-1\}$ and $\gamma_i^{\lambda,\epsilon}=0$ hold.
Then,
\begin{enumerate}
    \item\label{it-zeros:0} We have $\lambda_i\in \{1,2,3,4,5,6,7\}$.

    \item\label{it-zeros:2} When $\lambda_i =2$ and $2\not\in S(\lambda)_{\min}$, $i$ is odd.

    \item\label{it-zeros:3} When $\lambda_i=3$, $i$ is even.
    
    \item\label{it-zeros:4} When $\lambda_i =4$, we have one of:
    \begin{enumerate}
        \item\label{it-zeros:4a} $D_{\epsilon}(i)=1$ and $\epsilon(4)=0$.
        \item\label{it-zeros:4b}  $4\not\in S(\lambda)_{\min}$ and $i$ is odd.
    \end{enumerate}

    \item\label{it-zeros:5} When $\lambda_i =5$, we have one of:
    \begin{enumerate}
        \item\label{it-zeros:5a} $D_{\epsilon}(i)=1$ and $\epsilon(5)=0$.
        \item\label{it-zeros:5b}  $1,3\in S_0(\lambda)$ and $\epsilon(1) =1$, $\epsilon(3) = 0$, $\epsilon(5) = 1$.
    \end{enumerate}
        
\item\label{it-zeros:6} When $\lambda_i =6$, we have one of
\begin{enumerate}
    \item\label{it-zeros:6a} $D_{\epsilon}(i)=1$ and $i$ is odd.
    \item\label{it-zeros:6b} $2,4\in S_0(\lambda)$, $6\in S(\lambda)_{\min}$ and $\epsilon(2) =1$, $\epsilon(4) = 0$, $\epsilon(6) = 1$.

\end{enumerate}
\item\label{it-zeros:7} When $\lambda_i =7$, we have $D_{\epsilon}(i) = 1$ and $7\not\in S(\lambda)_{\min}$.

\end{enumerate}
\end{lemma}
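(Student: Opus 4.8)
\textbf{Proof plan for Lemma \ref{lem:zeroos}.}

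The strategy is a direct case analysis of the explicit Springer formula \eqref{eq:formula-springer} under the hypotheses $D_\epsilon(i)\in\{-1,0,1\}$ and $\gamma_i^{\lambda,\epsilon}=0$. Writing $m=|\{j<i\,:\,\lambda_j=\lambda_i\}|$-type data aside, the key bounded quantities are $\widetilde\gamma_i^\lambda\approx\lambda_i/2$, the correction $-2s_G\overline\epsilon(i)D_\epsilon(i)$ (of absolute value at most $2$), and the endpoint correction $(-1)^i\epsilon(\lambda_i)m_G$ or $(-1)^i\epsilon(\lambda_i)m'_G$ (of absolute value at most $2$). First I would extract from \eqref{eq:formula-springer} the crude inequality
\[
\widetilde\gamma_i^\lambda \le 2|D_\epsilon(i)| + \max(|m_G|,|m'_G|) \le 2 + 2 = 4,
\]
which forces $\lambda_i\le 7$ once one accounts for the parity in $\widetilde\gamma_i^\lambda=\lceil\lambda_i/2\rceil$ or $\lfloor\lambda_i/2\rfloor$; this gives item \eqref{it-zeros:0} and pins the case list to $\lambda_i\in\{1,\dots,7\}$. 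The remaining items are then obtained by substituting each value of $\lambda_i$, each parity of $i$, each value of $D_\epsilon(i)\in\{-1,0,1\}$, each of the two branches (whether $\lambda_i\in S(\lambda)_{\min}$ or not), and each value of $\epsilon(\lambda_i)\in\{0,1\}$ into \eqref{eq:formula-springer}, retaining exactly those combinations that yield $\gamma_i^{\lambda,\epsilon}=0$.

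The one genuinely non-mechanical point is that for $\lambda_i\in\{5,6,7\}$ the bound above is only met in corner configurations where $D_\epsilon(i)$ is forced to a specific sign and $\epsilon$ is forced on several small parts simultaneously; to obtain the precise statements in \eqref{it-zeros:5b}, \eqref{it-zeros:6b}, \eqref{it-zeros:7} I would expand $D_\epsilon(i)$ through its definition as a difference of partial sums of $\epsilon$ over $S(\lambda)_{\max}$ and $S(\lambda)_{\min}$, and translate the extremal value $D_\epsilon(i)=1$ back into constraints on $\epsilon(1),\epsilon(2),\epsilon(3)$ (resp.\ $\epsilon(2),\epsilon(4),\epsilon(6)$), using that the parts $<\lambda_i$ of good parity $s_G$ are among $\{1,3,5\}$ when $s_G=1$ and among $\{2,4,6\}$ when $s_G=-1$, and that $\epsilon\in\widehat{A(\mathcal O^\vee_\lambda)}_0$ satisfies the parity constraint on $S_0(\lambda)$. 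The $\lambda_i=4$ case similarly splits according to whether the endpoint correction is present, giving \eqref{it-zeros:4a}--\eqref{it-zeros:4b}, and the small cases $\lambda_i\in\{2,3\}$ in items \eqref{it-zeros:2}, \eqref{it-zeros:3} follow by noting that for these values $\widetilde\gamma_i^\lambda$ itself already forces the parity of $i$ once the correction terms are subtracted.

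I expect the main obstacle to be purely bookkeeping: keeping the two branches of \eqref{eq:formula-springer} (membership in $S(\lambda)_{\min}$), the sign $(-1)^i$, and the value of $\overline\epsilon(i)=(-1)^{\epsilon(\lambda_i)+i-1}$ consistent throughout, since $\overline\epsilon(i)$ couples the parity of $i$ to the value of $\epsilon(\lambda_i)$ and hence interacts with both correction terms. A clean way to organize this is to first fix $s_G$ (handling $s_G=-1$, following \cite{waldspurger}, and $s_G=1$, following \cite{la}, as parallel but separate computations), then tabulate $\gamma_i^{\lambda,\epsilon}$ as a function of the quadruple $(\lambda_i,\ i\bmod 2,\ D_\epsilon(i),\ \epsilon(\lambda_i))$ in each branch, and simply read off the zero entries. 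No deeper input than the displayed formula \eqref{eq:formula-springer} and the definition of $D_\epsilon$ is needed; in particular the theorems of Waldspurger and La are not invoked here, only the classical combinatorial Springer correspondence of \cite[Section 13.3]{carter}.
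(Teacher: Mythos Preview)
Your plan is essentially the paper's own argument: a direct case analysis of formula \eqref{eq:formula-springer}, first bounding $\widetilde\gamma_i^\lambda$ by the sum of the two correction terms to force $\lambda_i\le 7$, and then tabulating the possible zeros value by value. One small correction: in items \eqref{it-zeros:5b} and \eqref{it-zeros:6b} the extremal case that you must unwind through the definition of $D_\epsilon$ is $D_\epsilon(i)=-1$ (not $+1$), which is what forces the specific membership $1,3\in S_0(\lambda)$ (resp.\ $2,4\in S_0(\lambda)$) and the alternating $\epsilon$-values; otherwise your outline matches the paper's proof.
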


\begin{proof}
\begin{enumerate}
    \item 

In case of $s_G=-1$, according to \eqref{eq:formula-springer} we have $|\gamma_i^{\lambda,\epsilon}-\widetilde{\gamma}_i^{\lambda}|\leq 3$. Thus,  $\gamma_i^{\lambda,\epsilon}=0$ can be reached only when $\lambda_i\in \{2,4,6\}$.

In case of $s_G=1$, according to \eqref{eq:formula-springer} we have $|\gamma_i^{\lambda,\epsilon}-\widetilde{\gamma}_i^{\lambda}|\in \{0,2, 4\}$.

For $\lambda_i =9$, with odd $i$ and $\widetilde{\gamma}_i^{\lambda}= 4$, we would obtain 
\[
\gamma_i^{\lambda,\epsilon}\geq 4 - 2\overline{\epsilon}(i)D_{\epsilon}(i)\geq 2\;.
\]

\item Evident from the formula, since $D_{\epsilon}(2) = 0$.

\item Follows from the same argument as in \eqref{it-zeros:0}.

\item When $\lambda_i = 4$ and $D_{\epsilon}(4) = 0$, we have $\gamma_i^{\lambda,\epsilon}\geq 1$.

If $D_{\epsilon}(4) = -1$ holds, it necessarily implies $2\in S(\lambda)_{\min}$, $\epsilon(4) = 0$ and $\overline{\epsilon}(i) = 1$. The statement follows.

Otherwise, $D_{\epsilon}(4)=1$ implies $\epsilon(4) = 0$. 

\item When $\lambda_i = 5$, in similarity to a previous argument we must have an odd $i$ and $\widetilde{\gamma}_i^{\lambda}= 2$. 

If $D_{\epsilon}(i)=0$, the formula implies that $\gamma_i^{\lambda,\epsilon}\geq \widetilde{\gamma}_i^{\lambda}$, which is a contradiction.

If $D_{\epsilon}(i) = 1$, we see $\overline{\epsilon}(i)= 1$. In turn, that shows that $\epsilon(5) = 0$ ($i$ being odd) holds.

Otherwise, $D_{\epsilon}(i)=-1$ and $\overline{\epsilon}(i)= -1$ are implied. Thus, $\epsilon(5) = 1$. In particular, we must be in the $5\in S(\lambda)_{\min}$ case of \eqref{eq:formula-springer}.

Now, $D_{\epsilon}(i)=-1$ and $\lambda_i = 5\in S(\lambda)_{\min}$ can happen, only when $\theta\in S^{\dagger}(\lambda)$ exists, so that $\theta_{\min} = 1$, $\theta_{\max} = 3$, $\epsilon(1) =1$ and $\epsilon(3) =0$.

\item For $\lambda_i=6$, must have $\epsilon(6) =1$. Now, either $D_{\epsilon}(i)=1$ and $\overline{\epsilon}(i)=-1$ hold, in which case $i$ must be odd, or $D_{\epsilon}(i)=-1$ and $\overline{\epsilon}(i)=1$ hold.

The latter case implies that $i$ is even. It also implies that we are in the $6\in S(\lambda)_{\min}$ case of the formula.

Now, $D_{\epsilon}(i)=-1$ and $\lambda_i = 6\in S(\lambda)_{\min}$ can happen, only when $\theta\in S^{\dagger}(\lambda)$ exists, so that $\theta_{\min} = 2$, $\theta_{\max} = 4$, $\epsilon(2) =1$ and $\epsilon(4) =0$.

\item 
Suppose that $\lambda_i =7$, with even $i$ and $\widetilde{\gamma}_i^{\lambda}= 4$. 

We must have $7\not\in S(\lambda)_{\min}$ for the formula to reach $\gamma_i^{\lambda, \epsilon} - \widetilde{\gamma}_i^{\lambda}= 4$.

The possibility of $D_{\epsilon}(i)\neq 1$ would force $\overline{\epsilon}(i) = -1$ and $\epsilon(7) = 1$, causing a contradiction.

\end{enumerate}
 
\end{proof}

\begin{proposition}\label{lem:zerotail}
    Let $\epsilon\in \widehat{A(\mathcal{O}^\vee_{\lambda})}_0$ be a Springer-type character, and $i\in  X_{\lambda}$ be an index with $\gamma^{\lambda,\epsilon}_i=0$.
    
    Suppose that for all $i> a\in X_{\lambda,\epsilon}$, we have $a\in X^{s_G}_{\lambda}$. 
    
    Then, $\epsilon\in A^{\dagger}(\mathcal{O}^\vee_{\lambda})$ holds.
\end{proposition}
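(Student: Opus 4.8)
The plan is to reduce everything to the combinatorial criterion of Lemma~\ref{lem:walds-dagger}\eqref{it:lem-walds2}: the inclusion $\epsilon\in A^{\dagger}(\mathcal{O}^\vee_{\lambda})$ holds precisely when $X_{\lambda,\epsilon}\subset X^{s_G}_{\lambda}$. By hypothesis every element of $X_{\lambda,\epsilon}$ lying strictly below $i$ already belongs to $X^{s_G}_{\lambda}$, so the content is to show that no index $a\in X_{\lambda,\epsilon}$ with $a\ge i$ can fail to lie in $X^{s_G}_{\lambda}$. Translating through Lemma~\ref{lem:walds-dagger}\eqref{it:lem-walds1}, such a bad index would be an $a\ge i$ with $\lambda_{a-1}>\lambda_a$, $\epsilon(\lambda_a)\ne\epsilon(\lambda_{a-1})$ and $\lambda_a\notin S(\lambda)_{\max}$, and the goal is to exclude this.

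First I would use the hypothesis to control the relative defect at $i$. The condition $X_{\lambda,\epsilon}\cap\{a<i\}\subset X^{s_G}_{\lambda}$ says exactly that $\epsilon$ is constant along the portion of every block of $\lambda$ consisting of parts exceeding $\lambda_i$. Substituting this into the definition of $D_{\epsilon}$, the blocks lying entirely above $\lambda_i$ contribute $\epsilon(\theta_{\max})-\epsilon(\theta_{\min})=0$ each, and only the block $p(\lambda_i)$ is traversed partially, contributing at most $\pm1$; a careful accounting of the two boundary corrections distinguishing $S(\lambda)_{\max},S(\lambda)_{\min}$ from $\widetilde{S(\lambda)}_{\max},\widetilde{S(\lambda)}_{\min}$ (the omission of $\lambda_1$ when $s_G=1$ and of $\lambda_{\ell(\lambda)}$ when $s_G=-1$), together with the normalization $D_{\epsilon}(0)=0$ valid since $\epsilon$ is Springer-type, then yields $D_{\epsilon}(i)\in\{-1,0,1\}$.

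With $D_{\epsilon}(i)\in\{-1,0,1\}$ and $\gamma^{\lambda,\epsilon}_i=0$, Lemma~\ref{lem:zeroos} becomes available: it forces $\lambda_i\in\{1,\dots,7\}$, and for each of these seven values it pins down the parity of $i$, the values $D_{\epsilon}(i)$ and $\epsilon(\lambda_i)$, and the local block shape around $\lambda_i$. In particular every part $\lambda_j$ with $j\ge i$ lies in $\{1,\dots,7\}$, so the indices of $X_{\lambda}$ lying $\ge i$ index a bounded sub-configuration of $\lambda$; moreover, within the sign class $\{e^{u}_k\}$ containing $i$ the sequence $\gamma^{\lambda,\epsilon}$ is weakly decreasing, hence vanishes from $i$ onward, and feeding this back into formula~\eqref{eq:formula-springer} for the small values constrains the other sign class as well. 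I would then go through the cases of Lemma~\ref{lem:zeroos} one by one: in each case the rigid shape of the final block(s), the $\epsilon$-values dictated there, the parity condition defining $\widehat{A(\mathcal{O}^\vee_{\lambda})}_0$, and $D_{\epsilon}(0)=0$ leave no room for a bad index $a\ge i$ --- either $\epsilon$ undergoes no further change on parts $\le\lambda_i$, or any such change occurs exactly at a block maximum, i.e.\ inside $X^{s_G}_{\lambda}$. This gives $X_{\lambda,\epsilon}\subset X^{s_G}_{\lambda}$ and hence $\epsilon\in A^{\dagger}(\mathcal{O}^\vee_{\lambda})$.

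The main obstacle is the middle step: establishing $D_{\epsilon}(i)\in\{-1,0,1\}$ requires threading the two boundary exclusions correctly in both types, and the subsequent exhaustion of the seven cases of Lemma~\ref{lem:zeroos} is mechanical but lengthy. No idea beyond Lemmas~\ref{lem:walds-dagger} and~\ref{lem:zeroos} and the Springer-correspondence formula~\eqref{eq:formula-springer} should be needed.
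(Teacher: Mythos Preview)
Your proposal is correct and follows essentially the same approach as the paper: bound $D_{\epsilon}(i)\in\{-1,0,1\}$ from the hypothesis together with the Springer-type condition $D_{\epsilon}(0)=0$, invoke Lemma~\ref{lem:zeroos} to force $\lambda_i\le 7$, and then exhaust the cases. The paper's case analysis is slightly more direct than the feedback-through-$\gamma$ idea you sketch --- it simply checks in each subcase of Lemma~\ref{lem:zeroos} whether a contradiction arises or whether $\epsilon$ is already determined to lie in $A^{\dagger}$ --- but the architecture is the same.
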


\begin{proof}
By the assumption we know that $D_\epsilon(0) - D_{\epsilon}(i-1) \in \{0,1\}$. 

Since $\epsilon$ is of Springer-type, we have  $D_{\epsilon}(0)=0$, and so $D_{\epsilon}(i-1) \in \{0,-1\}$. 

In case $\lambda_i\in S(\lambda)_{\max}$, we can further deduce that $D_{\epsilon}(i-1)=0$. Thus, in all cases we have $D_{\epsilon}(i)\in \{1,0,-1\}$. This fact situates us in the scope of Lemma \ref{lem:zeroos}.

In particular, we have $1\leq \lambda_i\leq 7$.

When $\lambda_i\in \{1,2\}$, we have $\lambda_i = \lambda_{\ell(\lambda)}$. If in addition $\lambda_i\in S_0(\lambda)$, the value of $\epsilon(\lambda_i)$ is determined uniquely by $D_{\epsilon}(i-1)$, so that the inclusion $\epsilon\in A^{\dagger}(\mathcal{O}^\vee_{\lambda})$ holds.

When $\lambda_i=1\not\in S_0(\lambda)$, the value of $\epsilon(1)$ is inconsequential to the inclusion. 

When $\lambda_i=2\not\in S_0(\lambda)$, we may have $2\in S(\lambda)_{\max}$ and then $\epsilon(2)$ is similarly inconsequential. Otherwise, when $2\not\in S(\lambda)_{\max}$, we must also have $2\not\in S(\lambda)_{\min}$ and by Lemma \ref{lem:zeroos}\eqref{it-zeros:2}, $i$ is odd. Hence, $i\in X_{\lambda}^{s_G}$ contradicts Lemma \ref{lem:walds-dagger}\eqref{it:lem-walds1}.

The case of $\lambda_i=3$ follows similarly: Either $\{1,3\}\cap S_0(\lambda)$ is empty, or it is a singleton set $\{\mu\}$, in which case $\epsilon(\mu)$ is favorably determined by the value of $D_{\epsilon}(i-1)$. When $\mu=1$, we have $3\not\in S(\lambda)_{\max}$. Yet, by Lemma \ref{lem:zeroos}\eqref{it-zeros:3}, $i\in X_{\lambda}^{s_G}$ gives a similar contradiction.

The last option could be $(1,3) = (\theta_{\min},\theta_{\max})$, for $\theta\in S^{\dagger}(\lambda)$. Here, $D_{\epsilon}(i-1) = D_{\epsilon}(0) = 0$ implies $\epsilon(1) = \epsilon(3)$, and  $\epsilon\in A^{\dagger}(\mathcal{O}^\vee_{\lambda})$.

Cases \eqref{it-zeros:4a} and \eqref{it-zeros:5a} of Lemma \ref{lem:zeroos} are impossible, since these would imply 
\[
D_{\epsilon}(i-1) = D_{\epsilon}(i) = 1\;.
\]

Suppose now that $i\in S_0(\lambda)\cap S(\lambda)_{\min}$.

This assumption directly rules out cases \eqref{it-zeros:4b} and \eqref{it-zeros:7} of the lemma. 
Cases \eqref{it-zeros:5b} and \eqref{it-zeros:6b} are ruled out, since those suggest $D_{\epsilon}(i-1) = -2$.

Finally, the case of Lemma \ref{lem:zeroos}\eqref{it-zeros:6a} has $\lambda_i =6$ with an odd $i$. Here we note that $i\in X_{\lambda}^{s_G}$ holds, and that $S_0(\lambda)\cap S(\lambda)_{\min}, S(\lambda)_{\max}$ are disjoint sets. Hence, Lemma \ref{lem:walds-dagger}\eqref{it:lem-walds1} rules out this case.

We are left with the situation of $i\not\in S_0(\lambda)\cap S(\lambda)_{\min}$.

Here, the case of $\lambda_i=4$ is treated identically to the prior case of $\lambda_i=3$.

We now also have $D_{\epsilon}(i)<1$, which takes cases \eqref{it-zeros:6a} and \eqref{it-zeros:7} of the lemma out of our scope.

Thus, we are left with \eqref{it-zeros:5b} or \eqref{it-zeros:6b}. In these cases we see that $\lambda_i\in S(\lambda)_{\max}$. That implies $D_{\epsilon}(i-1) = 0$ on one hand, but also $D_{\epsilon}(i-1) = -1$ follows from the description of those cases.

\end{proof}

\subsection{Proof of Proposition \ref{prop:dagger}}
\label{sec:dagger}

We fix the integers
\[
(\Delta_G,\tau_G) = \left\{ \begin{array}{ll}  (n_G +1 , 1) & s_G = 1 \\
(n_G+1, 2n_G+1) & s_G=-1
\end{array} \right.  \;,
\]
which will provide us with a useful partial order $\leq_{\Delta_G,\tau_G}$ in each of the cases.

We first reduce the property of weak $s$-sphericity into an algorithmic study as in the previous section.

\begin{lemma}\label{lem:algoreduction}
    Let $\lambda\in \mathcal P_0^{s_G}(N_G)$ be a good parity partition, and $\epsilon\in \widehat{A(\mathcal O^\vee_\lambda)}_0$ a character.
    
    The representation $\Sigma(\mathcal O^\vee_\lambda,\epsilon)$ is weakly $s_G$-spherical, if and only if, an index $0\leq i\leq n_G$ exists, so that  
    \[
    E^{s_G}_i\in P(\lambda,\epsilon,\Delta_G,\tau_G)
    \]
holds.
    
\end{lemma}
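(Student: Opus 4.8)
The statement connects two characterizations of the irreducible constituents of the Springer representation $\Sigma(\mathcal{O}^\vee_\lambda,\epsilon)$: on one side the property of containing some $E^{s_G}_i$ as a sub-representation (weak $s_G$-sphericity, Definition \ref{def:weaks}), on the other side the membership $E^{s_G}_i \in P(\lambda,\epsilon,\Delta_G,\tau_G)$ in the Shoji--Waldspurger--La ``distinguished constituent'' set. Combining Theorem \ref{thm:waldsla} and Theorem \ref{thm:maximal}, one knows that $P(\lambda,\epsilon,\Delta_G,\tau_G)$ is precisely the set of $\leq_{\Delta_G,\tau_G}$-maximal elements among those $\sigma \in \irr(W_{n_G})$ that actually occur in $\Sigma(\mathcal{O}^\vee_\lambda,\epsilon)$, each occurring with multiplicity one. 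So the ``if'' direction is immediate: if $E^{s_G}_i \in P(\lambda,\epsilon,\Delta_G,\tau_G)$ then by Theorem \ref{thm:waldsla} it occurs in $\Sigma(\mathcal{O}^\vee_\lambda,\epsilon)$, hence $\Sigma(\mathcal{O}^\vee_\lambda,\epsilon)$ is weakly $s_G$-spherical. The real content is the converse.

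For the ``only if'' direction, suppose $\Sigma(\mathcal{O}^\vee_\lambda,\epsilon)$ is weakly $s_G$-spherical, i.e. $\langle E^{s_G}_j,\Sigma(\mathcal{O}^\vee_\lambda,\epsilon)\rangle \neq 0$ for some $0\le j\le n_G$. By Theorem \ref{thm:maximal}, $E^{s_G}_j$ is dominated (in $\leq_{\Delta_G,\tau_G}$) by some maximal element $\sigma \in P(\lambda,\epsilon,\Delta_G,\tau_G)$. The plan is to show that any element of $P(\lambda,\epsilon,\Delta_G,\tau_G)$ that dominates some $E^{s_G}_j$ must itself be of the form $E^{s_G}_i$; i.e. the $E^{s_G}_\bullet$ are ``upward closed'' within $P(\lambda,\epsilon,\Delta_G,\tau_G)$ in the relevant sense, or more precisely that the only bipartitions $(\alpha,\beta)$ that can dominate an $E^{s_G}_j$ and simultaneously arise as $(\alpha_T,\beta_T)$ for $T\in\mathcal{R}(\lambda,\epsilon,\Delta_G,\tau_G)$ are themselves $E^{s_G}_i$'s. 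Concretely: for $s_G=1$, $E^1_j = ((n_G-j,j),\emptyset)$ has $\beta=\emptyset$ and $\alpha$ of length $\le 2$; a bipartition $(\alpha_T,\beta_T)$ with $\Lambda_{\Delta_G,\tau_G}(\alpha_T,\beta_T) \geq \Lambda_{\Delta_G,\tau_G}(E^1_j)$ is forced — because of the specific gap $\tau_G = 1$ and $\Delta_G = n_G+1$ chosen to separate the $\alpha$- and $\beta$-rows in $\Lambda$ — to have $\beta_T$ empty and $\alpha_T$ of length $\le 2$, hence $(\alpha_T,\beta_T) = E^1_i$ with $i = |\alpha_T| - (n_G - |\alpha_T|)$-second-part; similarly for $s_G = -1$ with $\tau_G = 2n_G+1$, which is large enough that domination over $E^{-1}_j = ((n_G-j),(j))$ forces both $\alpha_T$ and $\beta_T$ to have length $\le 1$.

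Thus the key technical step is a combinatorial analysis of how tableaux $T\in\mathcal{R}(\lambda,\epsilon,\Delta_G,\tau_G)$ with few rows (equivalently, with $(\alpha_T,\beta_T)$ of small length) arise, and showing they exist exactly when the corresponding $E^{s_G}_i$ is weakly $s_G$-spherical, i.e. when $\epsilon \in A^\dagger(\mathcal{O}^\vee_\lambda)$. Here is where the preparatory Lemmas \ref{lem:walds-dagger}, \ref{lem:zeroos} and Proposition \ref{lem:zerotail} enter: the branches in the tableau-construction (condition \eqref{eq:cond-walds}, choice of sign $u$ at $j=1$) that would force a short tableau are governed by the sets $X_\lambda$, $X_{\lambda,\epsilon}$, $X^{s_G}_\lambda$, and the vanishing loci $\gamma^{\lambda,\epsilon}_i = 0$; the inclusion $X_{\lambda,\epsilon}\subset X^{s_G}_\lambda$ (which is equivalent to $\epsilon\in A^\dagger(\mathcal{O}^\vee_\lambda)$ by Lemma \ref{lem:walds-dagger}\eqref{it:lem-walds2}) is exactly what makes such a short tableau constructible, while Proposition \ref{lem:zerotail} handles the subtle endpoint cases where a zero-part appears at an index in $X_\lambda$. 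I expect the main obstacle to be a clean bookkeeping argument that the tableau-building algorithm, fed with the data of an $\epsilon$ satisfying $X_{\lambda,\epsilon}\subset X^{s_G}_\lambda$, can be steered (by choosing the free signs $u$ at the $j=1$ slots, checking \eqref{eq:cond-walds} with $\Delta=\Delta_G$, $\tau=\tau_G$) to produce a tableau with at most two nonempty $\alpha$-rows and no $\beta$-rows (for $s_G=1$), respectively one $\alpha$-row and one $\beta$-row (for $s_G=-1$); and conversely that failure of the inclusion obstructs every such steering. This is the heart of the matter, and is essentially a careful case-by-case verification parallel to — and reusing — the case analysis already done in Lemma \ref{lem:zeroos} and Proposition \ref{lem:zerotail}.
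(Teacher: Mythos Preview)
Your first two paragraphs are correct and match the paper's proof exactly: the ``if'' direction is Theorem~\ref{thm:waldsla}, and for ``only if'' one takes a $\leq_{\Delta_G,\tau_G}$-maximal $(\alpha',\beta')\in P(\lambda,\epsilon,\Delta_G,\tau_G)$ dominating $E^{s_G}_j$ (via Theorem~\ref{thm:maximal}) and checks, using the specific values $(\Delta_G,\tau_G)$, that domination forces $(\alpha',\beta')$ to itself be an $E^{s_G}_{j'}$. The paper makes this last step completely explicit: for $s_G=1$ one has $\Delta_G-\tau_G=n_G$, so the two largest entries of $\Lambda_{\Delta_G,\tau_G}(\alpha',\beta')$ are $\Delta_G+\alpha'_1$ and $\Delta_G+\alpha'_2-\tau_G$, and comparing with $\mu_1+\mu_2=2\Delta_G-\tau_G+n_G$ yields $\alpha'_1+\alpha'_2\ge n_G$; for $s_G=-1$ one has $\Delta_G-\tau_G+a<0$ for all $0\le a<n_G$, so the two largest entries are $\Delta_G+\alpha'_1$ and $\beta'_1$, and the same comparison gives $\alpha'_1+\beta'_1\ge n_G$. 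Either way $(\alpha',\beta')=E^{s_G}_{j'}$, and the proof is \emph{finished}.

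Your third paragraph is a misreading of the lemma's scope. Nothing about $A^\dagger(\mathcal O^\vee_\lambda)$, the sets $X_{\lambda,\epsilon}$, Lemma~\ref{lem:walds-dagger}, Lemma~\ref{lem:zeroos}, or Proposition~\ref{lem:zerotail} is needed here: those tools are used in the paper only for the \emph{separate} Proposition~\ref{prop:final}, which establishes the equivalence between ``some $E^{s_G}_i$ lies in $P(\lambda,\epsilon,\Delta_G,\tau_G)$'' and ``$\epsilon\in A^\dagger(\mathcal O^\vee_\lambda)$''. Lemma~\ref{lem:algoreduction} itself is purely a statement about the partial order $\leq_{\Delta_G,\tau_G}$ and makes no reference to $A^\dagger$. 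In particular, no tableau-steering argument and no analysis of zero parts $\gamma^{\lambda,\epsilon}_i=0$ is required; you have imported the content of the next proposition into this one. Strip out the third paragraph and your plan is already a complete proof.
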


\begin{proof}
One direction follows immediately from Theorem \ref{thm:waldsla}.

Conversely, let us assume that $E^{s_G}_j$ appears as a sub-representation of $\Sigma(\mathcal O^\vee_\lambda,\epsilon)$.

Writing $E^{s_G}_j = (\alpha,\beta)$, it follows from Theorem \ref{thm:maximal} that $(\alpha',\beta')\in P(\lambda,\epsilon,\Delta_G,\tau_G)$ exists, for which an inequality
\[
(\mu_i)_{i=1}^\infty = \Lambda_{\Delta_G,\tau_G}(\alpha,\beta) \leq \Lambda_{\Delta_G,\tau_G}(\alpha',\beta') = (\mu'_i)_{i=1}^\infty 
\]
holds.

Let us also write $\alpha' = (\alpha'_1 \geq \alpha'_2 \geq \ldots)$ and $\beta' = (\beta'_1 \geq \beta'_2 \geq \ldots)$.

In case $s_G=1$, we have $(\alpha,\beta) = ((n_G-j,j),\emptyset)$. Since $\Delta_G - \tau_G \geq n_G$ holds in that case, we clearly have 
\[
\mu_1 = \Delta_G + n_G- j,\; \mu_2 = \Delta_G + j -\tau_G,\; \mu'_1 = \Delta_G + \alpha'_1,\; \mu'_2 = \Delta_G + \alpha'_2 - \tau_G\;.
\]

The inequality $\mu_1 + \mu_2\leq \mu'_1 + \mu'_2$ then forces $n_G \leq \alpha'_1 + \alpha'_2$. The last inequality evidently implies that $(\alpha',\beta') = E^1_{j'}$, for an index $j'$.

In case $s_G=-1$, we have $(\alpha,\beta) = ((n_G-j),(j))$. Since $n_G < \Delta_G$ and $\Delta_G - \tau_G  + a< 0$ holds for any $0\leq a < n_G$ in that case, we clearly have 
\[
\mu_1 = \Delta_G + n_G- j,\; \mu_2 = j,\; \mu'_1 = \Delta_G + \alpha'_1,\; \mu'_2 = \beta'_1\;.
\]

Same as in the previous case, we obtain $n_G \leq \alpha'_1 + \beta'_1$, which implies that $(\alpha',\beta') = E^{-1}_{j'}$, for an index $j'$.

\end{proof}

Let $\lambda\in \mathcal{P}^{s_G}_0(N_G)$ and $\epsilon\in\widehat{A(\mathcal O^\vee_\lambda)}_0$ now be fixed, along with all additional notation defined in Section \ref{sect:algo-green}.

\begin{proposition}\label{prop:first-row}
For any $\ell(\lambda)$-tableau $T = (d_{i,j})\in \mathcal{R}(\lambda,\epsilon,\Delta_G,\tau_G)$, we have
\[
\{1,\ldots, \ell(\lambda)\}\setminus \{d_{1,1}, d_{1,2},\ldots, d_{1,r_1}\} = X_{\lambda,\epsilon}\;.
\]

\end{proposition}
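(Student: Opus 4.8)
The plan is to compute the first row of an arbitrary $T=(d_{i,j})\in\mathcal R(\lambda,\epsilon,\Delta_G,\tau_G)$ directly and show that it is in fact forced, independently of every choice the algorithm of Section~\ref{sect:algo-green} leaves open. The key observation is that the inequality \eqref{eq:cond-walds} is irrelevant in the first row beyond fixing the very first entry: the numbers $\gamma^{\lambda,\epsilon}_i$ are, up to reindexing, the parts of the bipartition $(\alpha_{\lambda,\epsilon},\beta_{\lambda,\epsilon})$ with $|\alpha_{\lambda,\epsilon}|+|\beta_{\lambda,\epsilon}|=n_G$, so $0\le\gamma^{\lambda,\epsilon}_i\le n_G<n_G+1=\Delta_G$ for all $i$. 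Hence, in the rule defining $d_{1,1}$ (where the sum in \eqref{eq:cond-walds} is empty), the inequality holds for $u=+1$ and fails for $u=-1$; so, once we know some index carries $\overline{\epsilon}$-value $+1$ (the remaining case is disposed of below), $d_{1,1}=e^1_1$ is forced, and for $j\ge 2$ the rule is deterministic — a minimum, with the sign forced to $u=-\overline{\epsilon}(d_{1,j-1})$. Thus the first row is exactly the greedy $\overline{\epsilon}$-alternating subsequence of $\bigl(\overline{\epsilon}(1),\dots,\overline{\epsilon}(\ell(\lambda))\bigr)$: discard the initial run of $-1$'s, take the first $+1$, then repeatedly jump to the next position carrying the opposite sign, stopping when impossible.

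\textbf{A scan with a state invariant.} Next I would model this as a left-to-right scan carrying a state $\mathrm{st}(i)\in\{\pm1\}$ (the sign sought at position $i$), with $\mathrm{st}(1)=+1$: at position $i$, if $\mathrm{st}(i)=\overline{\epsilon}(i)$ then $i$ is selected and $\mathrm{st}(i+1)=-\mathrm{st}(i)$, otherwise $i$ is skipped and $\mathrm{st}(i+1)=\mathrm{st}(i)$. A one-line case distinction — separating whether $i-1$ was selected — yields the invariant $\mathrm{st}(i)=-\overline{\epsilon}(i-1)$ for every $i\ge 2$. Consequently, for $i\ge 2$,
\[
i\in\{d_{1,1},\dots,d_{1,r_1}\}\iff\overline{\epsilon}(i)=\mathrm{st}(i)=-\overline{\epsilon}(i-1)\iff\overline{\epsilon}(i)\neq\overline{\epsilon}(i-1).
\]

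\textbf{Identifying $X_{\lambda,\epsilon}$ with the sign-change locus.} It then remains to show that, for $i\ge 2$, $i\in X_{\lambda,\epsilon}\iff\overline{\epsilon}(i)=\overline{\epsilon}(i-1)$. Here I would use that $\lambda\in\mathcal P^{s_G}_0(N_G)$ has good parity, so consecutive equal parts carry opposite $\overline{\epsilon}$-values: if $\lambda_i=\lambda_{i-1}$ then $\overline{\epsilon}(i)=-\overline{\epsilon}(i-1)$ and $i\notin X_\lambda$, so both sides fail; if $\lambda_i<\lambda_{i-1}$ (i.e.\ $i\in X_\lambda$), then unwinding $\overline{\epsilon}(i)=(-1)^{\epsilon(\lambda_i)+i-1}$ gives $\overline{\epsilon}(i)=\overline{\epsilon}(i-1)\iff\epsilon(\lambda_i)\neq\epsilon(\lambda_{i-1})$, which is precisely the defining condition of $X_{\lambda,\epsilon}$. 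Combined with the previous display, the first row meets $\{2,\dots,\ell(\lambda)\}$ in exactly the complement of $X_{\lambda,\epsilon}$.

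\textbf{Index $1$, the degenerate case, and the main obstacle.} Finally I would settle index $1$: by definition $1\in X_{\lambda,\epsilon}\iff\epsilon(\lambda_1)=1\iff\overline{\epsilon}(1)=-1$, so whenever some index has $\overline{\epsilon}$-value $+1$ we get $1\in\{d_{1,1},\dots,d_{1,r_1}\}\iff e^1_1=1\iff\overline{\epsilon}(1)=+1\iff1\notin X_{\lambda,\epsilon}$, completing the proof. The only remaining possibility, $\overline{\epsilon}(i)=-1$ for all $i$ (equivalently $\alpha_{\lambda,\epsilon}=\emptyset$ with $\epsilon(\lambda_1)=1$), I expect to rule out using the standing hypothesis that $\sigma(\mathcal{O}^\vee_\lambda,\epsilon)$ is irreducible: it forces $\lambda$ to be multiplicity-free with $\epsilon(\lambda_i)\equiv i\pmod 2$, and then a short computation of the defect $D_\epsilon(0)$ of Section~\ref{sect:zeros} gives $D_\epsilon(0)\neq 0$, contradicting that $\epsilon$ is of Springer type (by \cite[\S 12-13]{Lusztig1984} this defect must vanish). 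The main obstacle is the bookkeeping in the middle two steps — faithfully reconciling the recursive tableau rule with the scan, and the good-parity definition-chase identifying $X_{\lambda,\epsilon}$ with the locus where $\overline{\epsilon}$ is constant — together with the routine but case-heavy defect computation that disposes of the degenerate case.
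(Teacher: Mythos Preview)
Your proof is correct and follows essentially the same approach as the paper's. Both arguments identify $X_{\lambda,\epsilon}$ with the locus $\{i:\overline\epsilon(i)=\overline\epsilon(i-1)\}$, observe that $\Delta_G>n_G$ forces $\overline\epsilon(d_{1,1})=+1$, and then read off that the first row is the greedy sign-alternating subsequence, i.e.\ the complement of $X_{\lambda,\epsilon}$. The paper streamlines your case analysis by adopting the convention $\overline\epsilon(0)=-1$, which absorbs your separate treatment of index $1$ into the uniform description; and it leaves the degenerate case (all $\overline\epsilon(i)=-1$) entirely implicit, relying on the standing Springer-type hypothesis fixed at the start of Section~\ref{sect:algo-green}. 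Your explicit defect argument for that case is a genuine addition of rigor that the paper omits.
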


\begin{proof}
By definition of $\overline{\epsilon}$ it is easily verified that a description
\[
X_{\lambda,\epsilon} = \{1\leq i\leq \ell(\lambda)\::\: \overline{\epsilon}(i)= \overline{\epsilon}(i-1)\}\;
\]
holds, where $\overline{\epsilon}(0) = -1$ is assumed.

Since $\Delta_G> n_G$, equation \eqref{eq:cond-walds} forces $\overline{\epsilon}(d_{1,1})=1$. 

In particular, $d_{1,1}$ is defined to be the minimial index in $\{1,\ldots,\ell(\lambda)\}\setminus X_{\lambda,\epsilon}$. Similarly, $d_{1,2}$ is defined to be the minimal index in $\{1,\ldots,\ell(\lambda)\}\setminus X_{\lambda,\epsilon}$ that is larger than $d_{1,1}$, and so on.

\end{proof}

\begin{proposition}\label{prop:final}
An index $0\leq i\leq n_G$ exists, so that  
    \[
    E^{s_G}_i\in P(\lambda,\epsilon,\Delta_G,\tau_G)\;,
    \]
if and only if, an inclusion $\epsilon\in A^{\dagger}(\mathcal{O}^\vee_{\lambda})$ holds.
\end{proposition}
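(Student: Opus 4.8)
\textbf{Proof plan for Proposition \ref{prop:final}.}
The strategy is to combine the structural description of the algorithm's tableaux from Proposition \ref{prop:first-row} with the combinatorial characterization of $A^{\dagger}(\mathcal{O}^\vee_{\lambda})$ from Lemmas \ref{lem:walds-dagger} and \ref{lem:zeroos} and Proposition \ref{lem:zerotail}. First I would fix a tableau $T = (d_{i,j})\in \mathcal{R}(\lambda,\epsilon,\Delta_G,\tau_G)$ and recall from Proposition \ref{prop:first-row} that the complement of the first row of $T$ is precisely $X_{\lambda,\epsilon}$. The shape $(\alpha_T,\beta_T)$ equals $E^{s_G}_i$ for some $i$ exactly when all rows of $T$ beyond the first contribute to only one of $\alpha_T,\beta_T$ and each such row has length one (so that $s^T_m = \gamma^{\lambda,\epsilon}_{d_{m,1}}$ for $m\geq 2$), with the first row $d_{1,1},\ldots,d_{1,r_1}$ contributing the dominant part; in the $s_G=-1$ case the second partition must moreover be a single part. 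Translating this shape condition through the formula \eqref{eq:formula-springer} and the defect bookkeeping is the core of the argument.

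For the forward direction, I would assume $E^{s_G}_i\in P(\lambda,\epsilon,\Delta_G,\tau_G)$, realized by some $T$. The key observation is that for a row of $T$ indexed $m\geq 2$ to have length exactly one, the next element in the $\overline{\epsilon}$-alternating scan must fail to exist, which—given $\Delta_G> n_G$ and the sign constraint \eqref{eq:cond-walds}—forces $\overline{\epsilon}(d_{m,1}) = s_G$ and pins down the parity of $d_{m,1}$; one then checks that $\{d_{m,1}\}_{m\geq 2}\subseteq X^{s_G}_{\lambda}$. Combined with Proposition \ref{prop:first-row}, which identifies $X_{\lambda,\epsilon}$ with the off-first-row indices, this yields $X_{\lambda,\epsilon}\subseteq X^{s_G}_{\lambda}$, hence $\epsilon\in A^{\dagger}(\mathcal{O}^\vee_{\lambda})$ by Lemma \ref{lem:walds-dagger}\eqref{it:lem-walds2}. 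I would need to handle separately the rows of length one with $\gamma^{\lambda,\epsilon}_{d_{m,1}}=0$ using the $\lambda_i\leq 7$ case analysis of Lemma \ref{lem:zeroos}; the subtlety is that a zero part does not by itself force $d_{m,1}\in X^{s_G}_{\lambda}$, but it does constrain the block structure enough that the argument of Proposition \ref{lem:zerotail} still applies.

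For the converse, assume $\epsilon\in A^{\dagger}(\mathcal{O}^\vee_{\lambda})$, i.e. $X_{\lambda,\epsilon}\subseteq X^{s_G}_{\lambda}$. I would explicitly build a tableau $T\in \mathcal{R}(\lambda,\epsilon,\Delta_G,\tau_G)$ realizing some $E^{s_G}_i$: run the algorithm so the first row $d_{1,1},d_{1,2},\ldots$ greedily absorbs the complement of $X_{\lambda,\epsilon}$ (which Proposition \ref{prop:first-row} shows is forced anyway since $\Delta_G>n_G$), and then show each remaining index in $X_{\lambda,\epsilon}\subseteq X^{s_G}_{\lambda}$ starts and ends its own singleton row with sign $s_G$, so that $\alpha_T = (s^T_1)$ when $s_G=1$ (a two-part partition after the first row merges appropriately) and $\beta_T$ is forced to a single part when $s_G=-1$. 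The verification that the algorithm actually produces these singleton rows—rather than longer rows—reduces to checking that for $i\in X^{s_G}_{\lambda}$ the $\overline{\epsilon}$-scan from $d_{m,1}=i$ terminates immediately, which again follows from the $S(\lambda)_{\max}$ description in Lemma \ref{lem:walds-dagger}\eqref{it:lem-walds1} together with the sign of $\Delta_G-\tau_G\sum\overline{\epsilon}$.

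\textbf{Main obstacle.} I expect the principal difficulty to be the careful treatment of zero parts $\gamma^{\lambda,\epsilon}_i=0$ and of the boundary indices (the index $1$ when $s_G=1$, the index $\ell(\lambda)$ when $s_G=-1$, and blocks with $\theta_{\min}\in\{1,2\}$). In those cases the generic bookkeeping that a row of length one sits at an index of $X^{s_G}_{\lambda}$ can fail, and one must instead invoke the finer case analysis of Lemma \ref{lem:zeroos} and the closure-type argument of Proposition \ref{lem:zerotail} to see that the membership $\epsilon\in A^{\dagger}(\mathcal{O}^\vee_{\lambda})$ is neither gained nor lost. Keeping the defect function $D_{\epsilon}$ consistently tracked through the iterative tableau construction, and matching it against \eqref{eq:cond-walds} at each first-column choice, is where the bulk of the technical care will go.
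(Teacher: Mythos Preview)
There is a genuine structural gap in your plan: you have the tableau picture wrong. You assert that $(\alpha_T,\beta_T)=E^{s_G}_i$ is equivalent to ``all rows $m\geq2$ are singletons with $\overline{\epsilon}(d_{m,1})=s_G$'', and in the converse you propose to build a tableau where each element of $X_{\lambda,\epsilon}$ occupies its own row. Neither holds. Write $X_{\lambda,\epsilon}=\{i_1<\cdots<i_f\}$; since these are exactly the sign-change indices of $\epsilon$ one has $(-1)^{\epsilon(\lambda_{i_j})}=(-1)^j$, hence $\overline{\epsilon}(i_j)=(-1)^{j+i_j-1}$. The condition $X_{\lambda,\epsilon}\subset X^{s_G}_\lambda$ from Lemma~\ref{lem:walds-dagger} therefore rewrites as $\overline{\epsilon}(i_j)=s_G(-1)^{j-1}$: an \emph{alternating} sequence beginning with $s_G$. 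The algorithm for row~2 scans for alternating $\overline{\epsilon}$-values, so under this hypothesis it absorbs all of $i_1,\ldots,i_f$ into a single second row; the tableau has $c\leq2$, not $f+1$ singleton rows. Your proposed singleton construction is not a member of $\mathcal{R}(\lambda,\epsilon,\Delta_G,\tau_G)$ at all, because after $d_{2,1}=i_1$ the scan finds $i_2$ (which has the opposite sign) and continues row~2.

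The correct shape of the argument is: for $\epsilon\in A^{\dagger}$, the alternating identity above forces $c\leq2$, so $(\alpha_T,\beta_T)$ has the form $E^{s_G}_i$ immediately. For the other direction, argue by contradiction via the \emph{first failure} $j_0$ of the alternating identity. Then $i_1,\ldots,i_{j_0-1}$ sit in row~2 while $i_{j_0}$, having the same $\overline{\epsilon}$-value as $i_{j_0-1}$, cannot be reached by the row-2 scan and becomes $d_{c',1}$ for some $c'>2$. If $\gamma^{\lambda,\epsilon}_{i_{j_0}}>0$ this produces a third nonzero part (in $\alpha_T$ when $s_G=1$, since rows $1,2,c'$ all start with sign $+1$; or a second part in one of $\alpha_T,\beta_T$ when $s_G=-1$), contradicting the $E^{s_G}_i$ shape. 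If $\gamma^{\lambda,\epsilon}_{i_{j_0}}=0$ you invoke Proposition~\ref{lem:zerotail}. Your instinct that the zero-parts analysis is the technical crux is right, but it enters only at this last step; the main mechanism is the alternation-to-row-2 correspondence, which your singleton picture misses.
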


\begin{proof}

Let us denote the set of indices $X_{\lambda,\epsilon} = \{i_1 < \ldots < i_f\}$.

Thus, $(-1)^{\epsilon(\lambda_{i_j})} = (-1)^j$ holds, for all $1\leq j \leq f$. In other words, we have
\[
\overline{\epsilon}(i_j) = (-1)^{j + i_j-1}\;,
\]
for all $1\leq j \leq f$.

By Lemma \ref{lem:walds-dagger}, we see that the inclusion $\epsilon\in A^{\dagger}(\mathcal{O}^\vee_{\lambda})$ becomes  equivalent to the validity of
\begin{equation}\label{eq:proof-dagger}
\overline{\epsilon}(i_j) = s_G(-1)^{j -1}\;,
\end{equation}
for all $1\leq j \leq f$.

One consequence of Theorem \ref{thm:maximal} is that an $\ell(\lambda)$-tableau $T = (d_{i,j})\in \mathcal{R}(\lambda,\epsilon,\Delta_G,\tau_G)$ exists, with $c\geq1$ rows.

By Proposition \ref{prop:first-row}, we know that 
\[
\{1,\ldots,\ell(\lambda)\}\setminus\{d_{1,1},\ldots,d_{1,r_1}\} = \{i_1, \ldots , i_f\}\;.
\]

Because of $\Delta_G> n_G$, equation \eqref{eq:cond-walds} implies $\overline{\epsilon}(d_{1,1})=1$. 

Also, $s_G(\Delta_G- \tau_G)\geq n_G$ implies by same reasoning that when $c>1$, $\overline{\epsilon}(d_{2,1})=s_G$ holds, unless $\overline{\epsilon}(i_j)= -s_G$ is valid for all $j$. 

Suppose first that $\epsilon\in A^{\dagger}(\mathcal{O}^\vee_{\lambda})$. Then, from \eqref{eq:proof-dagger} and the algorithm that produces $T$, we see that $c\leq 2$ and the bi-partition $(\alpha_T,\beta_T)$ is of the form $E^{s_G}_i$.

Conversely, suppose that $(\alpha_T,\beta_T)$ is of the form $E^{s_G}_i$.

Let $1\leq j_0\leq f$ be the minimal index for which $\overline{\epsilon}(i_{j_0}) = s_G(-1)^{j_0}$ holds, if exists.

When $j_0$ does not exist, \eqref{eq:proof-dagger} is valid and $\epsilon\in A^{\dagger}(\mathcal{O}^\vee_{\lambda})$. Otherwise, for all $i_{j_0} > a\in X_{\lambda,\epsilon}$, we have $a\in X^{s_G}_{\lambda}$. 

If $\gamma:= \gamma^{\lambda,\epsilon}_{i_{j_0}}= 0$ holds, from Proposition \ref{lem:zerotail} we still have $\epsilon\in A^{\dagger}(\mathcal{O}^\vee_{\lambda})$.

Thus, we are left to treat the case of $\gamma> 0$.

By construction of $T$, we have $d_{2,j} = i_j$, for all $1\leq j <j_0$, and $i_{j_0} = d_{c',1}$, for an index $2< c'$. 

It follows that $s^T_{c'} \geq \gamma$ contributes a non-zero part to $\alpha_T$, when $\overline{\epsilon}(i_{j_0})=1$, or to $\beta_T$, when $\overline{\epsilon}(i_{j_0})=-1$.

Suppose first that $s_G=1$.

Then, $|\beta_T|=0$ and $\overline{\epsilon}(i_{j_0})=1$ must hold. Also, $\overline{\epsilon}(d_{1,1})=\overline{\epsilon}(d_{2,1})=1$. This implies 
\[
\gamma^{\lambda,\epsilon}_{d_{1,1}}\geq \gamma^{\lambda,\epsilon}_{d_{2,1}} \geq \gamma^{\lambda,\epsilon}_{d_{c',1}} = \gamma >0\;,
\]
exhibiting three non-zero parts $s^T_1, s^T_2, s^T_{c'}$ for $\alpha_T$. This is a contradiction.

Now suppose that $s_G=-1$.

We have either $\overline{\epsilon}(i_{j_0})=1$, and consequently $\gamma^{\lambda,\epsilon}_{d_{1,1}}\geq \gamma^{\lambda,\epsilon}_{d_{c',1}}>0$ holds, or $\overline{\epsilon}(i_{j_0})=-1$, which similarly implies $\gamma^{\lambda,\epsilon}_{d_{2,1}}\geq \gamma^{\lambda,\epsilon}_{d_{c',1}}>0$.

In both cases, one of $\alpha_T, \beta_T$ will have two non-zero parts, which is a contradiction.


\end{proof}

Proposition \ref{prop:dagger} now follows from Lemma \ref{lem:algoreduction} and Proposition \ref{prop:final}.




\begin{sloppypar} 
\printbibliography[title={References}] 
\end{sloppypar}

\end{document}